\newtheorem{theorem}{Theorem}[section]
\newtheorem{lemma}[theorem]{Lemma}
\newtheorem{definition}[theorem]{Definition}
\theoremstyle{remark}
\newtheorem{remark}[theorem]{Remark}
\setlist[itemize]{label=$\diamond$}
\numberwithin{equation}{section}
\let\vph=\varphi
\let\eps=\varepsilon
\DeclarePairedDelimiter\absb\lvert\rvert
\DeclarePairedDelimiter\normb\lVert\rVert
\DeclarePairedDelimiter\scalarb\langle\rangle
\newcommand\scalar{\scalarb*}
\newcommand\norm{\normb*}
\newcommand\abs{\absb*}
\DeclareMathOperator\dvg{div}
\newcommand\R{\mathbb{R}}
\newcommand\N{\mathbb{N}}
\newcommand\D{\mathbb{D}}
\newcommand\dpt{\partial_t}
\newcommand\cC{\mathscr{C}}
\newcommand\cF{\mathscr{F}}
\newcommand\loc{\text{loc}}
\newcommand\pw{\text{pw}}
\definecolor{mycolor}{HTML}{D35400}
\definecolor{hide}{HTML}{A0AC81}
\definecolor{myred}{HTML}{8A1538}
\begin{document}

\title[{D}iscontinuous {S}olutions for the {T}wo-{F}luid {M}odel]{{G}lobal-in-time {D}iscontinuous {S}olutions for the {T}wo-{P}hase {M}odel of {C}ompressible {F}luids with {D}ensity-{D}ependent {V}iscosity}

\author{Marcel Zodji}
\address{23 Boulevard de France, 91037 Évry-Courcouronnes, France}
\curraddr{}
\email{marcel.zodji@univ-evry.fr}
\thanks{}


\subjclass[2020]{76T10, 76N10, 	35Q30, 35Q35}

\date{\today}

\dedicatory{}

\keywords{Two fluids model, Compressible Flows, Navier Stokes Equations, Density dependent viscosity, Density patch problem, Global weak solutions, Sharp interface}

\begin{abstract}
We are concerned with a model describing the motion of two compressible, immiscible fluids with density-dependent viscosity in the whole $\mathbb R^3$. The phases of the flow may have different pressure and
viscosity laws and are separated by a sharp interface, across which the (total) density is discontinuous. Our goal is to study the persistence of the regularity of this sharp interface over time.  More precisely,  
the dynamics of the flow are governed by three coupled equations: two hyperbolic equations (for the
volume fraction of one phase and for the density) and a parabolic equation for the velocity field.
We assume that, at the initial time, the density is $\alpha$-H\"older continuous on both sides of
a $\mathscr C^{1+\alpha}$-regular surface across which it may be discontinuous. We prove the existence
and uniqueness of a global-in-time weak solution in an intermediate regularity class that ensures the persistence
of the piecewise H\"older regularity of the density and the $\mathscr C^{1+\alpha}$ regularity of
the sharp interface.
\end{abstract}

\maketitle

\tableofcontents

\section{Introduction}
This paper deals with the following system of PDEs describing the motion of two compressible viscous fluids with density-dependent viscosity in the three-dimensional space $\R^3$:
\begin{equation}\label{intro:twofluid}
    \begin{cases}
        \dpt c+u\cdot \nabla c=0,\\
        \dpt \rho +\dvg (\rho u)=0,\\
        \dpt (\rho u)+\dvg (\rho u\otimes u)+\nabla P(\rho,c)=\dvg \big(2\mu(\rho,c)\D u\big)+\nabla \big(\lambda(\rho,c)\dvg u\big).
    \end{cases}
\end{equation}
Above, $c = c(t,x) \in [0,1]$ denotes the volume fraction, $\rho = \rho(t,x) \in [0,\infty)$ is the total density, and $ u = u(t,x) \in \R^3$ represents the single velocity field of the fluid mixture. These are the unknowns of the problem. Meanwhile, $P= P(\rho,c)$, $\mu=\mu(\rho,c)$, and $\lambda=\lambda(\rho,c)$ are respectively the pressure, shear viscosity, and bulk viscosity laws of the mixture. They are given $W_\loc^{2,\infty}$-regular functions of $\rho$ and $c$. The derivation of system \eqref{intro:twofluid} from the classical two-fluid model can be found in \cite{zodji2023well}. Specifically, this system corresponds to the single-velocity Baer-Nunziato model, where $\rho_+=\rho c$ and $\rho_-=\rho (1 - c)$ represent the densities of the fluid phases. 

For initially immiscible flows, each phase occupies its own domain, and they are separated by a sharp interface in such a way that we have ${\rho_+}_{|t=0}{\rho_-}_{|t=0} = 0$, or equivalently (for positive total density) $c_0(1 - c_0) = 0$. In order words, the volume fraction $c_0$ is the characteristic function of a  domain $D_0$. Based on DiPerna–Lions theory (see e.g \cite{DiPernaLions,vasseur2019global}), any weak solution of \eqref{intro:twofluid} in the classical energy framework, with $\dvg u\in L^1_\loc ([0,\infty), L^\infty(\R^3))$, preserves the patch structure of the volume fraction (and hence the immiscibility of the phases):
\begin{equation}\label{intropatchfrac}
c(1-c)=0,
\end{equation}
so that $c(t)$ remains the characteristic function of a  domain $D(t)$. In general, \(D(t)\) corresponds to the image 
of the initial domain \(D_0\) under the flow map associated with the fluid velocity.

In the case where both phases have the same constant viscosity coefficients, the existence of weak solutions in the classical energy framework has been the subject of several works since 2019 (see, for instance, \cite[Vasseur, Wen and Yu]{vasseur2019global}, \cite[Kra\u{c}mar, Kwon, Ne\u{c}esov\'a and Novotn\'y]{kracmar2022weak}, \cite[Novotn\'y]{novotny2020weak}, \cite[Novotn\'y and Pokorn\'y]{novotny2020weak1}, \cite[Wen]{wen2021global}, and others). The velocity field is relatively weak 
in the sense that it is only known to satisfy 
\begin{equation}\label{intro:enerregul}
\sqrt{\rho} u\in L^\infty((0,\infty), L^2(\R^3)), \nabla u \in L^2((0,\infty)\times \R^3).
\end{equation}
This regularity is not sufficient to define a flow map for the velocity field, and thus to describe the dynamics of the sharp interface that separates the fluid phases. Conversely, classical solutions (see, for instance, \cite{Burtea_pressurerelaxation,WangChanxinZhangglobalregultwophase}) are too strong, in the sense that they do not allow for discontinuities in the densities or in the volume fraction.

The main purpose of this paper is to construct global-in-time weak solutions of \eqref{intro:twofluid} in an intermediate regularity class which allow for the immiscibility of the fluid phases and  for the persistence of  the regularity of the sharp interface over time. 
In Section \ref{mainresult} below, we state our main results, which are then compared to existing results in Section \ref{review}.
\subsection{Main Results}\label{mainresult} 
This section is devoted to the statement of the main results.  To simplify the presentation, we first specify the assumptions on the initial data and on the pressure and viscosity laws.

\noindent
\paragraph{\textbf{Assumptions on the pressure and viscosity laws}}
Given that the density of the phase labeled $"+"$ (resp. $"-"$) is $\rho_+=\rho c$ (resp. $\rho_-=\rho(1-c)$), if $P_\pm$ denotes the pressure law of the phase $"\pm"$, then the pressure $P=P(\rho,c)$ in \eqref{intro:twofluid} is defined as follows:
 \begin{gather}\label{pressureG}
     P(\rho,c)= P_+(\rho c)+ P_-(\rho(1-c)).
 \end{gather}
A similar definition holds for the shear and bulk viscosity laws.

We always assume that the shear viscosity is strictly positive, the bulk viscosity is non-negative, and the pressure  is an increasing function of the density: for all   $\rho>0$, $c\in \{0,1\}$
\begin{gather}\label{intronovacuum}
\mu(\rho,c)>0,\quad \lambda(\rho,c)\geqslant 0,\quad \partial_\rho P(\rho,c)>0,
\end{gather}
Following our computations, we believe that the non-negativity assumption on the bulk viscosity can be waived.

We identify two particular cases where the algebraic structure of the model allows for proving that the pressure and shear viscosity jumps are stable: either
\begin{itemize}
    \item the shear viscosity depends only on the volume fraction, $\mu(\rho,c) = \mu(c)$ and there exists a $W^{2,\infty}_\loc$-regular  function $\Psi_P$ such that
    \begin{gather} \label{sec1:jumpcondition}
    P(\rho,c)= \Psi_P\big(f(\rho,c)\big),
\end{gather}
where the $(\rho,c)$-dependent function $f$ satisfies 
\begin{gather}\label{sec1:eq1}
\rho \partial_\rho f(\rho,c)=\nu(\rho,c), \;\text{ and where }\; \nu(\rho,c)= 2\mu(\rho,c)+\lambda(\rho,c)
\end{gather}
denotes the total viscosity. Differentiating \eqref{sec1:jumpcondition} with respect to $\rho$, we obtain 
\begin{gather}\label{psiderivative}
\Psi'_P\big(f(\rho,c)\big)=\frac{\rho \partial_\rho P(\rho,c)}{\nu(\rho,c)}.
\end{gather}
\item or the shear viscosity of one phase depends on the density  (namely $\partial_\rho\mu(\rho,c) \not\equiv0$) and there exist $W^{2,\infty}_\loc$-regular functions $\Psi_P$,  and  $\Psi_\mu$ such that 
\begin{gather}\label{sec1:jumpconditionbis}
P(\rho,c)= \Psi_P\big(f(\rho,c)\big),\text{ and } \mu(\rho,c)=\Psi_\mu \big(f(\rho,c)\big).
\end{gather}
\end{itemize}
In fact, in the latter situation, the jump of the pressure and shear viscosity at the sharp interface decay 
exponentially in time. If, in addition to \eqref{sec1:jumpconditionbis}, there exists a $W^{2,\infty}_\loc$-regular function $\Psi_\lambda$ such that 
\begin{gather}\label{sec1:exponentialdecay}
    \lambda(\rho,c)=\Psi_\lambda \big(f(\rho,c)\big),
\end{gather}
then the jump of the viscosity bulk viscosity and velocity gradient decay exponentially in time as well. 
Assumptions \eqref{sec1:jumpconditionbis}-\eqref{sec1:exponentialdecay} hold true, for example, when $\mu$, $\lambda$, and $P$ are independent of the volume fraction, which corresponds to the Navier–Stokes model.  Moreover, for power constitutive laws
\[
P_\pm (\rho)= b_\pm \rho^{\gamma_\pm},\quad \nu_\pm(\rho)=  a_\pm \rho^{\alpha_\pm}, \quad \text{with}\quad \gamma_\pm\in [1,\infty), \;\text{ and }\;  b_\pm, a_\pm, \alpha_\pm\in (0,\infty), 
\]
the assumption \eqref{sec1:jumpcondition} holds  if 
\[
\frac{\gamma_+}{\alpha_+}=\frac{\gamma_-}{\alpha_-}=\check{\gamma},\; \text{ and }\; b_+ \Bigg(\frac{\alpha_+}{a_+}\Bigg)^{\check{\gamma}}=b_- \Bigg(\frac{\alpha_-}{a_-}\Bigg)^{\check{\gamma}}=\check{b}, \;\text{ and we have }\; \Psi_P(\varrho)=\check{b} \varrho^{\check{\gamma}}.
\]
\noindent 
\paragraph{\textbf{Assumptions on the initial data}}
We consider the system \eqref{intro:twofluid}, supplemented with initial conditions 
\begin{gather}\label{intro:initialdata}
    (c,\rho,\rho u)_{|t=0}=(c_0,\rho_0,\rho_0u_0)
\end{gather}
satisfying the following assumptions: there exists 
\begin{itemize}
    \item a simply connected, bounded and open domain $D_0\subset \R^3$ of class  $\cC^{1+\alpha}$, such that
    \begin{equation}\label{initialvolum}
        c_0=\mathbb{1}_{D_0^c}.
    \end{equation}
    \item a constant pressure equilibrium $\widetilde P>0$ such that
    \begin{equation}\label{init:density-velocity}
        P(\rho_0,c_0)-\widetilde P\in L^2(\R^3),\, \text{ and }\, u_0\in H^1(\R^3),\;\text{ } \rho_0\in L^\infty(\R^3),\;\text{ } \inf_{x\in \R^3} \rho_0(x)>0.
    \end{equation}
    \item some $p_0\in (1, 6/5)$, such that  
    \begin{equation}\label{init:lowfrequency}
  P(\rho_0,c_0)-\widetilde P, \rho_0 u_0\in L^{p_0}(\R^3).
\end{equation}
This (low-frequency) assumption allows us to derive time-decay estimates for the solutions.
Finally, we consider a shear-viscosity equilibrium state, $\widetilde\mu$, such that:
\[
\mu(\rho_0,c_0)-\widetilde\mu\in L^2(\R^3).
\]
\end{itemize}

What we mean by the $\mathcal{C}^{1+\alpha}$ regularity of $D_0$ is the following.
\begin{definition}\label{def:interface} Let $D$ be an open, bounded and simply connected subset of $\R^3$, with boundary $\Sigma=\partial D$. 

We say that $D$ (or $\Sigma$) is of class $\cC^{1+\alpha}$ if there exist a finite number of open sets  $B_j\subset \R^2$, $j=1,\dots J$, and  maps $\tau_j \colon B_j \longrightarrow \Sigma\subset\R^3$ such that 
\begin{itemize}
    \item $\tau_j$ are $\mathcal{C}^{1+\alpha}$-regular, and are homeomorphisms onto their images; and $\Sigma = \bigcup_{j=1}^{J} \tau_j(B_j)$.

    \item for all $j, k$ such that $\tau_j(B_j) \cap \tau_k(B_k) \neq \emptyset$, the map $\tau_j^{-1} \circ \tau_k$ is a $\mathcal{C}^{1+\alpha}$-diffeomorphism from\\
$\tau_k^{-1}\big( \tau_j(B_j) \cap \tau_k(B_k) \big)$ onto $\tau_j^{-1}\big( \tau_j(B_j) \cap \tau_k(B_k) \big)$.
\end{itemize}
We introduce the following notations to quantify the regularity of $\Sigma$ (or $\tau_j$ if preferred):
\begin{equation}
    \norm{\Sigma}_{\text{Lip}}=\sup_j\norm{\nabla \tau_j}_{L^\infty}, \;\big|\Sigma\big|_{\dot \cC^{1+\alpha}}=\sup_j\abs{\nabla \tau_j}_{\dot \cC^\alpha},\;\text{ and }\;\; \big|\Sigma\big|_{\text{inf}}=\inf_j\inf_{\substack{s\neq s'\\
    s,s\in B_j}}\Bigg(\frac{\abs{\tau_j(s)-\tau_j(s')}}{\abs{s-s'}}\Bigg)\in (0,\infty).
\end{equation}
Furthermore, there exists\footnote{Following \cite{kiselev2016finite}, we construct $\varphi$  as solution to $-\Delta \varphi=g$ in $D$, and  $\varphi=0$ on $\Sigma$, where $0< g\in \cC^\infty(D)$. Standard estimates for elliptic equations (see, for instance, \cite{gilbarg1977elliptic}) imply that $0 < \varphi \in \mathcal{C}^{1+\alpha}(D)$. The fact that $\abs{\nabla \varphi}_{\text{inf}}>0$ is a consequence of Hopf’s lemma at boundary points, which holds for $\mathcal{C}^{1+\alpha}$-regular manifolds (see \cite{finn1957asymptotic}).} a $\mathcal{C}^{1+\alpha}$-regular function $\varphi \colon \R^3 \to \R$ such that
\[
\varphi=0 \text{ on } \Sigma,\;\;D=\big\{x\in \R^3\colon \vph(x)>0\big\}, \;\text{ and }\;  \abs{\nabla \vph }_{\text{inf}}=\inf_{x\in \Sigma}\abs{\nabla\varphi(x)}>0.
\]
Since $\varphi \circ \tau_j = 0$ on $B_j$, the outward unit normal vector field to $D$ is given by
\begin{gather}\label{sec2:eq16}
n_x(\tau_j(s)) = \frac{\nabla \varphi(\tau_j(s))}{\abs{\nabla \varphi(\tau_j(s))}}.
\end{gather}

Recall that the relevant situations for the immiscible two-phase model is when the total density $\rho=\rho_++\rho_-$ is discontinuous. Since the densities $\rho_\pm$ of the fluid phases are different and $\rho_+ \rho_-=0$, the class of piecewise regular functions (for the total density $\rho$) is suitable for the study of \eqref{intro:twofluid}.

    In this paper, we will work with the space $\cC^\alpha_{\pw,\Sigma}(\R^3)$ of discontinuous functions on $\R^3$, which are $\alpha$-H\"older continuous on either side of $\Sigma$ (recall that\;\;$\R^3= D \cup \Sigma \cup (\R^3\setminus \overline{D})$):
    \[
    \cC^\alpha_{\pw,\Sigma}(\R^3):= \Big\{g\in L^\infty(\R^3)\colon g\in \dot \cC^{\alpha}_{\pw,\Sigma}(\R^3)\Big\}, \;\text{ where }\;\dot \cC^{\alpha}_{\pw,\Sigma}(\R^3):= \dot \cC^{\alpha}(\overline{D})\cap \dot \cC^{\alpha}(\R^3\setminus D).
    \]
    The semi-norm (resp. norm) associated with $\dot \cC^\alpha_{\pw,\Sigma}(\R^3)$ (resp. $\cC^\alpha_{\pw,\Sigma}(\R^3)$) is defined as follows:
    \[
    \abs{g}_{\dot \cC^\alpha_{\pw,\Sigma}(\R^3)}= \abs{g}_{\dot \cC^\alpha(\overline{D})} + \abs{g}_{\dot \cC^\alpha(\R^3 \setminus D)} \quad\big(\text{resp.} \quad \norm{g}_{\cC^\alpha_{\pw,\Sigma}(\R^3)} = \norm{g}_{L^\infty(\R^3)} + \abs{g}_{\dot \cC^\alpha_{\pw,\Sigma}(\R^3)}\big).
    \]
Given a function $g \in \cC^{\alpha}_{\pw,\Sigma}(\R^3)$, we define its traces $g^+$ and $g^-$ on $\Sigma$, along with its average $\scalar{g}_{\text{avg}}$ and its jump $\llbracket g\rrbracket$ across $\Sigma$, as follows: for all $\sigma\in \Sigma$,
    \begin{gather}
        g^+(\sigma)=\lim_{r\to 0^+} g\big(\sigma+rn_x(\sigma)\big), \;\, g^-(\sigma)=\lim_{r\to 0^+} g\big(\sigma-rn_x(\sigma)\big),\;\,\scalar{g}_{\text{avg}}=\frac{g^++g^-}{2},\quad \llbracket g\rrbracket= g^+-g^-.
    \end{gather}
\end{definition}
In addition to the assumptions \eqref{initialvolum}-\eqref{init:density-velocity}-\eqref{init:lowfrequency}, we further suppose that the total density is H\"older continuous on both sides of $\Sigma_0 = \partial D_0$, across which it may be discontinuous:
\begin{equation}\label{density:piecewise}
    \rho_0\in \dot\cC^\alpha_{\pw,\Sigma_0}(\R^3), \;\text{ where }\; \alpha\in (0,1).
\end{equation}
We denote by $\Pi$ the Cauchy's tensor 
\[
\Pi =2\mu(\rho,c)\D u+\big(\lambda(\rho,c)\dvg u- P(\rho,c)+\widetilde P\big) I_3.
\]
When $\alpha\in [1/2,1)$, we additionally assume that there exists $\check{\alpha}\in (0,3/2-\alpha)$ such that ($\dot v=\dpt v+ (u\cdot\nabla) v$)
\begin{gather}\label{sec:intro:eq1}
(\rho \dot u)_{|t=0}=\dvg(\Pi)_{|t=0}\in \dot H^{-\check{\alpha}}(\R^3).
\end{gather}
If $\alpha\in (0,1/2)$, we will instead take $\check{\alpha}=1$.

Our result reads as follows:
\begin{theorem}\label{th1}
    Assume the above assumptions hold for the constitutive coefficients and the initial data. There exist constants 
    $\epsilon,\epsilon'>0$ such that if 
    \begin{gather}\label{intro:eq2}
    \normb{P(\rho_0,c_0)-\widetilde P}_{L^2\cap L^{p_0}(\R^3)}^2+\normb{\rho_0  u_0}_{L^2\cap L^{p_0}(\R^3)}^2+\normb{\nabla u_0}_{L^2(\R^3)}^2\leqslant \epsilon,
    \end{gather}
    and 
    \begin{itemize}
        \item either  
        \[
        \normb{\mu(\rho_0,c_0)-\widetilde\mu}_{L^\infty(\R^3)}\leqslant \epsilon',
        \]
        \item or  \eqref{sec1:jumpcondition}  holds and 
        \begin{gather}\label{intro:eq3}
        \normb{\llbracket \mu(\rho_0,c_0),\; P(\rho_0,c_0)\rrbracket}_{L^\infty(\partial D_0)} \leqslant \epsilon',
        \end{gather}
    \end{itemize}
    then there exists a unique global-in-time weak solution $(c,\rho, u)$ to the Cauchy problem \eqref{intro:twofluid}-\eqref{intro:initialdata} verifying: 
    \begin{gather*}
        P(\rho,c)-\widetilde P,\; u,\; \nabla u \in L^\infty((0,\infty), L^2(\R^3));\\
        \dot u,\;\sigma^{\frac{\check{\alpha}}{2}}\nabla \dot u  \in L^2((0,\infty)\times \R^3),\,\text{ and }\, \sigma^{\frac{\check\alpha}{2}} \dot u,\; \sigma^{\frac{1+\check{\alpha}}{2}}\nabla\dot u\in L^\infty((0,\infty), L^2(\R^3)); \\
        \sigma^{\frac{1+\check{\alpha}}{2}}\ddot u \in L^2((0,\infty)\times \R^3);\;\text{ where }\;\sigma(t)=\min \{1,t\}.
    \end{gather*}
    Additionally, we have 
    \[
    \nabla u\in L^1((0,\infty), L^\infty(\R^3)),
    \]
    and hence the velocity admits a well-defined flow map. This flow map transports the initial domain $D_0$ to a $\cC^{1+\alpha}$-regular domain $D=D(t)$ and 
    \[
    \int_0^\infty \normb{\nabla u(t')}_{\dot \cC^{\alpha}_{\pw,\partial D(t')}(\R^3)}dt' <\infty,\;\;\text{ and, \;for all }\;\; t\in (0,\infty),\;\,  \rho(t)\in \cC^\alpha_{\pw,\partial D(t)}(\R^3).
    \]
    Furthermore, if \eqref{sec1:jumpconditionbis}-\eqref{sec1:exponentialdecay} hold, then the jumps of the pressure, and the velocity gradient across  $\partial D(t)$ decay exponentially to zero in time. 
\end{theorem}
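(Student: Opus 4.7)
The plan is to adapt the Hoff--Danchin framework for compressible Navier--Stokes with discontinuous density to the two-fluid system. I would first mollify the initial data $(c_0,\rho_0,u_0)$ by convolution to produce a family of smooth approximations whose $L^2\cap L^{p_0}$ norms remain controlled by $\epsilon$, for which classical theory yields a unique smooth solution $(c_n,\rho_n,u_n)$ on a maximal time interval $[0,T_n)$. Global existence then reduces to establishing a priori estimates uniform in $n$ that prevent blow-up of the regularity norms, followed by a compactness argument to pass to the limit. The core of the whole proof is an estimate giving $\nabla u\in L^1((0,\infty);L^\infty(\R^3))$, which simultaneously justifies the existence of the flow map $X(t,\cdot)$ and drives the propagation of the interface regularity.

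\textbf{A priori estimates.} The basic energy identity (testing the momentum equation against $u$ and combining with the entropy-type identity for $P(\rho,c)-\widetilde P$ allowed by $\partial_\rho P>0$) controls $\normb{P(\rho,c)-\widetilde P}_{L^\infty_t L^2}$, $\normb{\sqrt\rho u}_{L^\infty_t L^2}$ and $\normb{\nabla u}_{L^2_t L^2}$. Coupling this with the low-frequency assumption $(P(\rho_0,c_0)-\widetilde P,\rho_0 u_0)\in L^{p_0}$ and a Duhamel formula for the compressible linearization around $(\widetilde P,0,\widetilde\mu)$ delivers polynomial-in-time decay for the energy norms, in the spirit of Matsumura--Nishida. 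I would then test the momentum equation against $\dot u$ and, successively, against $\sigma^{\check\alpha}\ddot u$, with $\sigma(t)=\min\{1,t\}$ absorbing the lack of regularity of $u_0$, to obtain the weighted bounds for $\dot u$, $\nabla\dot u$ and $\ddot u$ announced in the theorem. The assumption \eqref{sec:intro:eq1} on $\dvg\Pi_{|t=0}\in\dot H^{-\check\alpha}$ is precisely what is needed to start the $\dot u$-estimate at $t=0$ when $\alpha\geqslant 1/2$.

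\textbf{Lipschitz estimate and persistence of the interface.} The hard part is closing the bound on $\nabla u$ in $L^1_t L^\infty_x$ despite the discontinuities of $\mu,\lambda,P$. The approach is to split $\nabla u$ via the effective flux $F:=\nu(\rho,c)\dvg u-(P(\rho,c)-\widetilde P)$ and the vorticity $\omega=\rot u$, which solve elliptic equations with right-hand side essentially $\rho\dot u$ plus commutators arising from the $(\rho,c)$-dependence of $\mu,\lambda$. Under the first alternative $\normb{\mu-\widetilde\mu}_{L^\infty}\leqslant\epsilon'$ these commutators are directly small; under \eqref{sec1:jumpcondition}, the identity $\Psi'_P(f)=\rho\,\partial_\rho P/\nu$ in \eqref{psiderivative} together with the transport structure of $f(\rho,c)$ along the flow forces $F$ to be continuous across $\Sigma(t)$ up to a quantity proportional to $\llbracket\mu\rrbracket\D u$, providing the algebraic cancellation that replaces smallness of $\mu$. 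Riesz-transform estimates on $F$ and $\omega$, cast in the piecewise H\"older framework of Definition~\ref{def:interface}, then deliver $\nabla u\in L^1_t L^\infty_x\cap L^1_t\dot\cC^\alpha_{\pw,\partial D(t)}$. Once this is in hand, $X(t,\cdot)$ is globally defined and $\cC^{1+\alpha}$-regular; pushing the local parametrizations $\tau_j$ of $\Sigma_0$ along $X$ yields the $\cC^{1+\alpha}$ regularity of $\Sigma(t)=X(t,\Sigma_0)$, and composition of $\rho_0$ with $X^{-1}$ combined with the mass equation propagates the piecewise H\"older regularity of $\rho$.

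\textbf{Exponential decay and uniqueness.} When \eqref{sec1:jumpconditionbis}--\eqref{sec1:exponentialdecay} hold, the combination of $\rho\dpt f+\rho u\cdot\nabla f=-\rho\,\nu(\rho,c)\dvg u$ derived from \eqref{sec1:eq1} with the continuity of $F$ across $\Sigma(t)$ produces, after taking the jump across $\Sigma(t)$, a linear ODE $\tfrac{d}{dt}\llbracket f\rrbracket+a(t)\llbracket f\rrbracket=0$ with coefficient $a(t)\gtrsim 1$ thanks to the smallness \eqref{intro:eq2}. This yields exponential decay of $\llbracket f\rrbracket$ and hence of $\llbracket P\rrbracket,\llbracket\mu\rrbracket,\llbracket\lambda\rrbracket$, and finally of $\llbracket\nabla u\rrbracket$ through the continuity of $F$. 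Uniqueness is established in Lagrangian coordinates, in which the interface is frozen at $\Sigma_0$ and the system becomes a quasilinear parabolic equation for $u$ coupled to genuine ODEs for $\rho$ and $c$ along particle trajectories; the Lipschitz bound on $u$ is sufficient to control the commutators arising from the change of variables and to close the stability estimate for the difference of two solutions by a standard $L^2$-type energy argument.
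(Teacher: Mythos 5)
The first step of your proposal -- mollifying $(c_0,\rho_0,u_0)$ by convolution to produce smooth approximations -- is precisely the step the paper identifies as unsuitable. Smoothing $\rho_0$ and $c_0$ removes the density discontinuity and dissolves the sharp interface $\Sigma_0=\partial D_0$, so the approximate solutions $(c_n,\rho_n,u_n)$ would not belong to the piecewise H\"older class $\cC^\alpha_{\pw,\Sigma}$; one would then be trying to close estimates whose very formulation requires a fixed interface, and the limit would not recover the discontinuous structure. The paper instead keeps $c_0$ and $\rho_0$ fixed and only approximates the \emph{velocity} $u_0$ by a sequence $u_0^n$ constructed so that the compatibility condition $(\rho\dot u)_{|t=0}\in L^2$ holds for each $n$ (Lemma~\ref{sec4:lemma:1}), with convergence in $H^1\cap L^{p_0}$. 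Local existence for discontinuous data is then supplied by a separate local well-posedness result in the piecewise H\"older framework (Theorem~\ref{sec2:thlocal}). This is not a cosmetic difference: without it, the iteration has no way to carry the piecewise structure through. You would need to replace your mollification step by some variant of this construction before the rest of the argument can get off the ground.

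\textbf{On closing the Lipschitz estimate.} Your plan to use the effective flux $F$ and vorticity $\omega$ is in the right spirit and would probably work, but there are two points you gloss over that are the actual crux of the paper. First, the operators appearing are even-order Riesz operators acting on discontinuous functions, and their boundedness on $\cC^\alpha_{\pw,\Sigma}\cap L^p$ (Gancedo et al.) carries a constant that depends on $\|\Sigma\|_{\text{Lip}}$, $|\Sigma|_{\text{inf}}^{-1}$ and $|\Sigma|_{\dot\cC^{1+\alpha}}$, all of which grow like $e^{C\int_0^t\|\nabla u\|_{L^\infty}}$. So the Gr\"onwall argument for $\int_0^t\|\nabla u\|_{L^\infty}$ is not self-closing: a non-uniform bound feeds back exponentially. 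The paper resolves this by using the low-frequency assumption to get a \emph{uniform-in-time} bound on $\int_0^\infty\|\nabla u\|_{L^\infty}$, rather than relying (as in the one-phase density-patch literature) on exponential decay of the jumps, which the paper shows (Remark~\ref{intro:rema1}) does not hold for general two-fluid pressure laws. Your proposal mentions the polynomial decay from the $L^{p_0}$ assumption but does not connect it to this specific obstruction. Second, in the bootstrap you need the smallness of the viscosity \emph{jump} (not the full fluctuation) to absorb the commutator terms; this requires the commutator estimates of Lemma~\ref{ap:commutator}, where the $L^\infty(\Sigma)$-norm of $\llbracket\mu\rrbracket$ appears as the leading small factor while the full H\"older seminorm of $\mu$ appears only against more regular quantities. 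Your statement that ``under the first alternative these commutators are directly small'' is correct, but the second alternative (only the jump is small) needs the extension-operator splitting $w=w^e+(w-w^e)$ underlying those commutator estimates, which your proposal does not supply.

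\textbf{What is essentially aligned.} The remainder of your outline -- energy/time-weighted estimates against $u$, $\dot u$, $\sigma^{\check\alpha}\ddot u$; the duality argument to handle the initial layer when $\alpha\geqslant 1/2$; the jump ODE along the interface giving exponential decay of $\llbracket f\rrbracket$ under \eqref{sec1:jumpconditionbis}-\eqref{sec1:exponentialdecay}; and uniqueness in Lagrangian coordinates using the Lipschitz bound -- matches the paper's strategy and would go through once the approximation scheme and the Gr\"onwall loop above are repaired.
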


The proof of Theorem \ref{th1} is presented in Section \ref{sec:theorem},  based on the a priori
estimates in Section \ref{part2} whose proofs is given in Section \ref{lemmasProof}.

\begin{remark} The solution possesses additional regularity and decay rates described in Section \ref{part2}. 
\begin{itemize}
    \item In particular, the classical energy controls the volume of the moving domain $D(t)$ as follows:
    \begin{gather}
        \forall \, t\in (0,\infty),\;\; \abs{D(t)}\leqslant  \abs{D_0}+(\widetilde P)^{-1}\int_{\R^3} \rho_0(x)\left(\dfrac{\abs{u_0(x)}^2}{2}+\int_{\widetilde\rho_+\mathbb{1}_{D_0}(x)}^{\rho_0(x)} s^{-2}\left(P(s,c_0(x))-c_0(x) \widetilde P\right)ds\right)dx.
    \end{gather}
    This estimate is by no means predictable. Above, $\widetilde\rho_+$ denotes the constant equilibrium state of the density of the fluid in $\R^3\setminus D(t)$; it satisfies 
    \[
    P(\widetilde\rho_+,1)=\widetilde P. 
    \]
    Likewise, define $\widetilde\rho_-$ by
    \[
    P(\widetilde \rho_-,0)=\widetilde P.
    \]
    For brevity, we  will  write $\widetilde \rho(c)$ in place of $\widetilde \rho_\pm$ such that 
    \[
    P(\widetilde \rho(c),c)=\widetilde P. 
    \]
    The smallness conditions on the pressure fluctuation \eqref{intro:eq2}–\eqref{intro:eq3} mean that the 
    "+"-fluid density is close to $\widetilde\rho_+$, while the  "-"-fluid density is close to $\widetilde\rho_-$.  
    
    \item The low-frequency assumption (see \eqref{init:lowfrequency}) helps to derive sufficient time-decay estimates for the solution, which in turn imply the following uniform-in-time $\cC^{1+\alpha}$-regularity of $D$:
    \[
    \sup_{t>0}\Big(\norm{\partial D(t)}_{\text{Lip}}+\abs{\partial D(t)}_{\dot \cC^{1+\alpha}}+\abs{\partial D(t)}_{\text{inf}}^{-1}\Big)<\infty.
    \]
    \item Finally, note that assumption \eqref{sec:intro:eq1} implies 
    \[
    \nabla u_0\in L^{6/(1+2\check{\alpha})}(\R^3). 
    \]
    In general, this is the maximal integrability implied by \eqref{sec:intro:eq1}. 
\end{itemize}
    
\end{remark}
\begin{remark} A cornerstone of the proof of Theorem \ref{th1} is to derive a Lipschitz bound for the velocity. This is the minimal regularity needed to propagate the sharp interface’s regularity. To achieve this, we express the velocity 
gradient as (see \eqref{sec2:eq3}): 
\begin{align}
    \nabla u
            &=-\frac{1}{\mu(\rho,c)}\nabla(-\Delta)^{-1}\mathbb P (\rho \dot u)+\nabla (-\Delta)^{-1}\nabla\bigg(\frac{1}{\nu(\rho,c)}(-\Delta)^{-1}\dvg (\rho \dot u)\bigg)-\nabla (-\Delta)^{-1}\nabla \Bigg(\frac{P(\rho,c)-\widetilde P }{\nu(\rho,c)}\Bigg)\notag\\
            &+\frac{1}{\mu(\rho,c)}\big[\mathcal{K}, \mu(\rho,c)-\widetilde \mu\big](2\D u)-\nabla (-\Delta)^{-1}\nabla\bigg(\frac{1}{\nu(\rho,c)}\big[\check{\mathcal{K}},\mu(\rho,c)-\widetilde \mu\big](2\D u)\bigg),\label{intro:eq4}
\end{align}
where $\mathcal{K}$ and $\check{\mathcal{K}}$ are combinations of second-order Riesz operators.
    \begin{itemize}
        \item  As a first step, we perform energy estimates (\textit{\`a} la Hoff) for $\dot u$, which yield 
         $L^p(\R^3)$ estimate  for $\dot u$, with $p\in (3,\infty)$. These energy estimates involve $L^q(\R^3)$ norm of the velocity gradient and the pressure fluctuation (with $q\in \{3,4,6\}$). To derive these bounds, we use the fact that Riesz operators  are continuous on $L^q(\R^3)$. In particular, the norms of the first two terms of \eqref{intro:eq4} are obtained via an interpolation inequality, whereas the third is bounded by
        \[
        \norm{\nabla (-\Delta)^{-1}\nabla \Bigg(\frac{P(\rho,c)-\widetilde P}{\nu(\rho,c)}\Bigg)}_{L^q(\R^3)}\leqslant \frac{C}{\displaystyle \inf_{\R^3} \mu(\rho,c)} \norm{P(\rho,c)-\widetilde P}_{L^q(\R^3)}.
        \]
        Next, we observe that 
        \[
        \dfrac{d}{dt}\int_{\R^3} H_{q-1}(\rho,c)dx+\frac{1}{q}\int_{\R^3}\abs{P(\rho,c)-\widetilde P}^{q}dx\leqslant \frac{1}{q}\int_{\R^3}\abs{F}^{q}dx,
        \]
        where 
        \[
        \rho \partial_\rho H_{q-1}(\rho,c)- H_{q-1}(\rho,c)= \abs{P(\rho,c)-\widetilde P}^{q-2} (P(\rho,c)-\widetilde P), 
        \]
        and 
        \[
        F=-(-\Delta)^{-1}\dvg (\rho \dot u) +[\check{\mathcal{K}}, \mu(\rho,c)-\widetilde\mu](2\D u). 
        \]
        As for the first two terms of \eqref{intro:eq4}, we use an interpolation inequality to obtain an estimate for the $L^q(\R^3)$-norm of the first term of $F$. Next, we prove that the $L^q(\R^3)$-norm of the last term of $F$, as well as the last terms in \eqref{intro:eq4} are small compared to the $L^q$-norm of $\nabla u$, 
        provided the jump of  the shear viscosity $\mu(\rho,c)$ is small in $L^\infty(\partial D)$. In total, 
        one can close the energy estimate for $\dot u$, assuming the shear viscosity is piecewise 
         H\"older regular, its jump across $\partial D$ is small; and the density is upper bounded and far away from vacuum. Note that we use time weights, which makes the computations more technical than the heuristic above. 

         Let us notice that for constant shear viscosity coefficient, the $L^p(\R^3)$-bound for $\dot u$ 
         immediately yields the propagation of the $L^\infty$-bound of the density (see \cite{Hoff95a}). However, this seems not to hold in the case of density-dependent shear viscosity (see \cite{zodji2023discontinuous}), or in the case of general viscous tensor (see \cite{bresch2023extension,bocchi2022anisotropy}). Indeed, the pressure and the velocity gradient appear to be of the same order, and propagating the $L^\infty(\R^3)$-bound of the density requires a Lipschitz velocity.

        \item Given the integrability of $\dot u$ obtained above, the first term in \eqref{intro:eq4} 
        belongs to $L^\infty(\R^3)$. In fact, modulo the multiplicative factor $1/\mu(\rho,c)$, this term is H\"older continuous on the whole space.  The other terms are even-order Riesz transform of discontinuous functions, and  it is well-known that even-order Riesz operators are  not continuous on $L^\infty(\R^3)$. Recent results (see \cite{gancedo2022quantitative,gancedo2023global}) prove that these operators are actually continuous on $\cC^\alpha_{\pw,\partial D}(\R^3)\cap L^p(\R^3)$, with $p\in (1,\infty)$, with a norm depending on $\norm{\partial D(t)}_{\text{Lip}}$, $\abs{\partial D(t)}_{\text{inf}}$ and $\abs{\partial D(t)}_{\dot \cC^{1+\alpha}}$.  These quantities are known to grow exponentially with the time integral of the Lipschitz norm of the velocity: for instance
        \[
        \norm{\partial D(t)}_{\text{Lip}} \leqslant \norm{\partial D_0}_{\text{Lip}} e^{\int_0^t \norm{\nabla u}_{L^\infty(\R^3)}},\quad \abs{\partial D(t)}_{\text{inf}} \geqslant \abs{\partial D_0}_{\text{inf}} e^{-\int_0^t \norm{\nabla u}_{L^\infty(\R^3)}}. 
        \]
     Thus, a non-uniform-in-time estimate for 
     \begin{gather}\label{intro:eq5}
     \int_0^t \norm{\nabla u}_{L^\infty(\R^3)}
     \end{gather}
     leads to an exponential growth in time. For the one-phase model (Navier–Stokes system for compressible fluid), this exponential growth is compensated by the exponential-in-time decay of the pressure and velocity jumps. 
     For the two-phase model, we are able to prove the exponential-in-time decay for the pressure and velocity gradient jumps under the assumptions \eqref{sec1:jumpconditionbis}-\eqref{sec1:exponentialdecay}. This decay does not hold in general, as we will discuss in the following remark. Our strategy is to use the low-frequency assumption \eqref{init:lowfrequency} to derive uniform-in-time estimate for \eqref{intro:eq5}.  

     Leveraging the continuity of Riesz operators on $\cC^\alpha_{\pw,\partial D}(\R^3)\cap L^p(\R^3)$, we prove that 
     the $L^\infty(\R^3)$-bound  (resp. $\dot\cC^{\alpha}_{\pw,\partial D}(\R^3)$-bound) of the last two terms of \eqref{intro:eq4} is small compared to the $L^\infty(\R^3)$-bound (resp. $\dot\cC^{\alpha}_{\pw,\partial D}(\R^3)$-bound) of $\nabla u$; provided that the viscosity jump is small in $L^\infty(\partial D)$. We apply similar arguments to derive piecewise H\"older estimate for $F$, from which the propagation of  the density's 
     piecewise H\"older regularity   follows, since we have (from the mass equation): 
     \[
     \dpt f(\rho,c)+ u\cdot\nabla f(\rho,c)+ P(\rho,c)-\widetilde P=-F, \;\text{ where }\; \rho \partial_\rho f(\rho,c)= \nu(\rho,c).
     \]
     Above, the pressure fluctuation term is treated as a damping term. 
    \end{itemize}
\end{remark}
\begin{remark}\label{intro:rema1}
 We consider a simplified case of \eqref{intro:twofluid} in which the two phases have the same constant viscosity coefficient but different power-law pressures:
 \[
 \mu(\rho,c)=\mu, \;\, \lambda(\rho,c)=\lambda,\; \text{ and }\; P(\rho,c)= A(c)\rho ^{\beta(c)}.
 \]
 From the mass equation, one readily obtains 
 \[
(2\mu+\lambda) \big(\dpt \log P(\rho,c) +u\cdot\nabla \log P(\rho,c)\big)+ \beta(c)\big( P(\rho,c)-\widetilde P\big)=\beta(c) (-\Delta)^{-1}\dvg (\rho \dot u),
 \]
 and the jump across the moving interface is:
 \begin{align}
 (2\mu+\lambda)\dfrac{d}{dt}\llbracket \log \big(P(\rho,c)\big) \circ \tau_j\rrbracket&+ \scalar{\beta(c_0\circ \tau_{0,j})}_{\text{avg}}\llbracket \big(P(\rho,c)\big) \circ \tau_j\rrbracket\notag\\
 &=\llbracket\beta(c_0\circ \tau_{0,j})\rrbracket \Big((-\Delta)^{-1}\dvg (\rho \dot u)\circ \tau_j-\scalar{P(\rho,c)\circ \tau_j}_{\text{avg}}+\widetilde P\Big).\label{intro:eq6}
 \end{align}
 Similarly, from the mass equation we obtain 
 \[
 (2\mu+\lambda)\big[\dpt\log \rho+ u\cdot\nabla \log \rho\big]+ P(\rho,c)-\widetilde P= (-\Delta)^{-1}\dvg (\rho \dot u),
 \]
 and hence 
 \begin{gather}\label{intro:eq7}
 (2\mu+\lambda)\frac{d}{dt}\scalar{\log (\rho\circ \tau_j)}_{\text{avg}}=(-\Delta)^{-1}\dvg (\rho \dot u)\circ \tau_j-\scalar{P(\rho,c)\circ \tau_j}_{\text{avg}}+\widetilde P.
 \end{gather}
 From \eqref{intro:eq6}-\eqref{intro:eq7}, it follows that 
 \[
 \dfrac{d}{dt}\bigg\llbracket \log\big( A(c)^{\frac{2}{\beta(1)+\beta(0))}}\rho \big)\circ \tau_j \bigg\rrbracket+\frac{1}{2\mu+\lambda} \llbracket \big(P(\rho,c)\big) \circ \tau_j\rrbracket=0. 
 \]
 Under the assumption \eqref{sec1:jumpcondition}, the jumps appearing above are comparable in the sense that 
 \[
 \inf \Bigg(\frac{\llbracket \big(P(\rho,c)\big) \circ \tau_j\rrbracket}{\llbracket \log\big( A(c)^{\frac{2}{\beta(1)+\beta(0))}}\rho \big)\circ \tau_j \rrbracket}\Bigg)>0
 \]
 and the exponential-in-time decay of jumps follows. Otherwise, it is unclear whether the exponential-in-time holds in general. Specifically, under the assumption $A(c)=1$, we should require
 \[
 \frac{\varrho_1^{\beta(1)}-\varrho_2^{\beta(0)}}{\log(\varrho_1)- \log(\varrho_2)}>0,\quad \text{for}\; \varrho_1 \;\text{close to } \; \widetilde\rho(1), \;\text{ and } \;\varrho_2\; \text{ close to }\;\widetilde\rho(0);
 \]
 which does not hold in general.

\end{remark}
\subsection{Review of known results}\label{review}
Understanding the interaction between two or more fluids has become a central issue in modern science. Mixture flow occurs in various contexts:  biological systems (blood, respiratory tract fluid); geophysical phenomena (ice formation, river flooding, landslides, and snow-slides) and industrial applications (flows in chemical reactors or nuclear reactor channels), to name a few examples. For more interesting examples, we refer to  \cite{ishii2010thermo}. The constituents of the mixture
(or phases) may have different densities, pressure laws, temperatures and viscosities. 

An interesting problem is the study of the dynamic of the interface separating the components of a mixture. From a physical point of view, the interface is a narrow layer across which the fluid states (density, temperature) may experience large variations or discontinuities,  giving rise to two categories of interface models. On one hand, there are \emph{diffuse interface models} for which the components of the mixture are separated by a thin layer (non-zero thickness). Within this thin layer, flow states experience large gradients while continuously passing from one pure zone to another.   On the other hand, there are \emph{sharp interface models} for which the interface is a hypersurface, with zero thickness. Although these models are derived from physical principles naturally, it is very expensive to study them numerically. Moreover, changes in the topology of the interface are harder to track. 

The mathematical study of the sharp interface model was first addressed by Tani in 1984 \cite{tani1984two}, where the solutions were constructed in piecewise H\"older spaces. The initial density and velocity are $\cC^{1+\alpha}$ and $\cC^{2+\alpha}$, respectively, on both sides of a suitable $\cC^{2+\alpha}$ interface.  Taking into account surface tension, Denisova  constructed  solutions with Sobolev regularity for both initial data and interface in \cite{denisova2000problem}. 

In contrast to the aforementioned works based on a change in Lagrangian coordinates, other studies rely on a transformation to a flattened interface. Piecewise Sobolev-regular solutions of high order ($H^{4k}, \; k\geqslant 3$)  were constructed in \cite{jang2016compressible}, where the interface is represented by a graph, considering scenarios with and without surface tension. The Rayleigh-Taylor instability problem has been investigated in \cite{jang2016compressible-taylor,jang2016compressible-stability}, with the latter reference addressing the zero surface tension limit problem. 
Finally, akin to the previous works, there is a study by Kubo, Shibata and Soga \cite{kubo2016some} that relies on the maximal regularity property of the linearized system. 

Note that only Tani’s work addresses density-dependent viscosity coefficients, and the result is only local in time.
The other results concern cases where each phase of the flow has constant viscosity coefficients, possibly different.
Existence of global-in-time solutions is established in this latter case under smallness of the piecewise $H^{4k},\; k\geqslant 3$ norm, and with almost flat interface.  The present paper allows for density-dependent viscosity coefficients and low regular initial data and interface and establishes global-in-time result. 

In the aforementioned papers, the system is solved on either side of the sharp interface, with a transmission boundary condition. A different approach to tracking the interface is to construct weak solutions for the full model in a class that allows for the study of its dynamic. The construction of weak solutions in the classical energy framework to \eqref{intro:twofluid} was first addressed in 2019 by Vasseur, Wen, and Yu (see \cite{vasseur2019global}), in the case where \emph{the viscosities coefficients are constant and common for both phases}. The authors considered power-law for the pressure of the phases, with a closeness constraint on the adiabatic exponents. This result has been improved by Novotn\'y \cite{novotny2020weak1}, Wen \cite{wen2021global} and Kra$\check{c}$mar, Kwon, Ne$\check{c}$esov\'a,  Novotn\'y \cite{kracmar2022weak} by allowing a 
more general pressure law, and  also  by Novotn\'y and Pokorn\'y \cite{novotny2020weak} for multi-component fluid models. It is important to note that although the pressure laws 
of the components of the mixture are different, the viscosities of the components are supposed to be the same constant. 
The solutions that they constructed are too weak to track down the discontinuity in the density or to study the dynamic of the 
interface between the components of the mixture.  

In one-dimensional space, Bresch, Burtea and Lagoutière \cite{bresch2022mathematical,bresch2024mathematical} achieved global well-posedness for non-common constant viscosity, with numerical considerations in \cite{bresch2023mathematical}. However,  for higher dimensions, $d\in \{2,3\}$, it has been explained for instance in \cite{bocchi2022anisotropy,bresch2023extension,bresch2018global} that, even the algebraic structure of the anisotropic one-phase model completely eludes the theories developed in the isotropic case with constant viscosity coefficients \cite{Feiresisl2001,Hoff95a,LionsCompressible}. 
The issue stems from the complete degeneration of the quantity (namely, the effective flux) that links the velocity's divergence to the pressure, as it is now of the same order as the velocity gradient.

In a recent paper (see \cite{zodji2023well}), we establish the local-in-time well-posedness for the system 
\eqref{intro:twofluid} in a functional framework tailored to our analysis. This serves as a building block 
for proving the existence and uniqueness of discontinuous solutions to the two-dimensional (one-phase) Navier–Stokes system 
for a compressible fluid with density-dependent viscosity (see \cite{zodji2023discontinuous}). We account for 
viscosity coefficients arising from homogenization (see, e.g., \cite{gerard2022correction,AnalysisViscosityGerardMathieu}), 
which are not covered by the work of Bresch and Desjardins \cite{BreschDesjardins2007}. Notice that the proof relies strongly on
the fact that the jumps of the pressure and the velocity gradient across the discontinuity curve decay exponentially in time. 
As discussed in Remark \ref{intro:rema1} above, this exponential-in-time decay does not seem to hold in general for the two-fluid model. 
Thus, we impose a low-frequency condition on the initial data to derive sufficient decay for the solution, so that exponential decay of the jumps is not needed. Moreover, unlike  the aforementioned result (see \cite{zodji2023discontinuous}), which imposes
smallness of the shear viscosity fluctuation in $\cC^\alpha_{\pw,\partial D_0}$, Theorem \ref{th1} only requires 
the smallness  of the shear viscosity fluctuation in $L^\infty$ and, in some cases, only of its jump. 

We conclude this section by noting that, for the two dimensional incompressible model, a smallness condition on the viscosity fluctuation also appears in \cite{gancedo2023global,gancedo20252d}. Recent work (see \cite{liao2024globalInc}) shows that, when the interface has low regularity ($\partial D_0\in W^{2,2+\eps}$, for some $\eps$ in a sharp range $(0,\eps_0)$), the smallness on the viscosity fluctuation  can be replaced by a smallness of the initial data. In this work, the key argument depends heavily on the fact that the system is considered in the two dimensional space. The extension to three dimensional space, or to the propagation  of $W^{2,p}$-regularity (with $p\in (2+\eps_0,\infty)$) of interfaces, or to the compressible model  
appears particularly involved.

\paragraph{\textbf{Organization of the paper}}
The rest of the paper is organized as follows. In Section \ref{part2} we state all the a priori estimates 
needed for the proof of  Theorem \ref{th1}. Their proofs are presented in Section \ref{lemmasProof}. 
Finally, we prove Theorem \ref{th1} in  Section \ref{sec:theorem}.

\section{Statement of a priori estimates}\label{part2}
This section is devoted to deriving all the key a priori estimates needed for the proof of Theorem \ref{th1}. 
These estimates consist of piecewise H\"older and energy-type bounds for approximate solutions of system \eqref{ep2.1} below, 
from which the global-in-time persistence of the regularity of the discontinuity surface will follow. This final step, concluding the proof of Theorem \ref{th1}, is the focus of Section \ref{sec:theorem} below.

More precisely, we consider the following Cauchy problem (we temporarily omit the superscript  $n$):
\begin{gather}\label{ep2.1}
\begin{cases}
    \dpt c+u\cdot\nabla c=0,\\
    \dpt \rho +\dvg (\rho u)=0,\\
    \dpt (\rho u)+\dvg (\rho u\otimes u)+\nabla P(\rho,c)=\dvg \big(2\mu(\rho,c)\D u\big)+\nabla\big(\lambda(\rho,c)\dvg u\big),\\
    (c,\rho,u)_{|t=0}= (c_0,\rho_0, u_0^n),
\end{cases}
\end{gather}
where $ \big(u_0^n\big)_{n \in \N}$ is a sequence of initial velocities satisfying the same assumptions as $u_0$ in Theorem \ref{th1}, along with the compatibility condition
\begin{equation}\label{sec2:compa}
    (\rho \dot u)_{|t=0}=\dvg \big(2\mu(\rho_0,c_0)\D u_0^n+\big(\lambda(\rho_0,c_0)\dvg u_0^n-P(\rho_0,c_0)+\widetilde P\big) I_3\big)\in L^2(\R^3).
\end{equation}
The construction of the sequence $\big(u_0^n\big)_n$ and the proof of its convergence toward 
$u_0$ are given in Section \ref{sec:theorem} below.  Our main objective is to derive estimate for 
$\normb{\nabla u}_{L^1((0,t), \cC^\alpha_{\pw,\Sigma}(\R^3))}$, from which we will deduce the global-in-time 
existence of the solution, as well as the persistence of interface regularity. This will be the focus of the 
next two subsections.

\subsection{Piecewise H\"older bounds}\label{sec:2:1}
This subsection is devoted to deriving piecewise H\"older estimates for the velocity gradient and the pressure fluctuation. 
The first step in obtaining these bounds is to derive an expression for the velocity gradient. To this end, we begin by 
rewriting the momentum equations $\eqref{ep2.1}_3$ as follows \big(hereafter, $G(\rho,c)$ denotes the  pressure fluctuation, namely $G(\rho,c)=P(\rho,c)-\widetilde P$\big):
\begin{equation}\label{sec2:eq23}
\widetilde\mu \Delta u+\nabla \big( (\widetilde\mu+\lambda(\rho,c))\dvg u-G(\rho,c)\big)=\rho \dot u-\dvg \big( 2(\mu(\rho,c)-\widetilde \mu)\D u\big).
\end{equation}
Next, we apply the Leray projector, $\mathbb{P}$, to obtain:
\[
\widetilde\mu \Delta \mathbb{P} u=\mathbb{P}(\rho \dot u)-\dvg \mathbb{P}\big( 2(\mu(\rho,c)-\widetilde \mu)\D u\big),
\]
and whence
\begin{align*}
    \widetilde\mu \nabla \mathbb{P} u&=-\nabla(-\Delta)^{-1}\mathbb P (\rho \dot u)+\nabla (-\Delta)^{-1}\dvg \mathbb{P}\big( 2(\mu(\rho,c)-\widetilde \mu)\D u\big)\\
    &=-\nabla(-\Delta)^{-1}\mathbb P (\rho \dot u)-(\mu(\rho,c)-\widetilde\mu)\nabla \mathbb{P}u+\big[\nabla (-\Delta)^{-1}\dvg \mathbb{P}, \mu(\rho,c)-\widetilde \mu\big](2\D u).
\end{align*}
It follows that 
\begin{equation}\label{sec2:eq1}
    \mu(\rho,c)\nabla \mathbb{P} u=-\nabla(-\Delta)^{-1}\mathbb P (\rho \dot u)+\big[\nabla (-\Delta)^{-1}\dvg \mathbb{P}, \mu(\rho,c)-\widetilde \mu\big](2\D u).
\end{equation}
Similarly, applying the divergence operator to \eqref{sec2:eq23} yields
\[
\Delta \big( (2\widetilde\mu+\lambda(\rho,c))\dvg u- G(\rho,c)\big)=\dvg (\rho \dot u)-\dvg \dvg\big( 2(\mu(\rho,c)-\widetilde \mu)\D u\big),
\]
and hence 
\begin{align*}
    (2\widetilde\mu+\lambda(\rho,c))\dvg u- G(\rho,c)&= -(-\Delta)^{-1}\dvg (\rho \dot u)+(-\Delta)^{-1}\dvg \dvg\big( 2(\mu(\rho,c)-\widetilde \mu)\D u\big)\\
    &=-(-\Delta)^{-1}\dvg (\rho \dot u)-2(\mu(\rho,c)-\widetilde\mu)\dvg u\\
    &+\big[(-\Delta)^{-1}\dvg \dvg,\mu(\rho,c)-\widetilde \mu\big](2\D u).
\end{align*}
It follows that 
\begin{equation}\label{sec2:eq2}
\big(2\mu(\rho,c)+\lambda(\rho,c)\big)\dvg u- G(\rho,c)=-(-\Delta)^{-1}\dvg (\rho \dot u)+\big[(-\Delta)^{-1}\dvg \dvg,\mu(\rho,c)-\widetilde \mu\big](2\D u).
\end{equation}
Combining \eqref{sec2:eq1} and~\eqref{sec2:eq2}, we obtain the following expression for the velocity gradient:
\begin{align}
    \nabla u&=\nabla \mathbb{P} u+ \nabla (\mathbb I-\mathbb P) u\notag\\
            &=\nabla \mathbb{P} u-\nabla (-\Delta)^{-1}\nabla \dvg u\notag\\
            &=-\frac{1}{\mu(\rho,c)}\nabla(-\Delta)^{-1}\mathbb P (\rho \dot u)+\nabla (-\Delta)^{-1}\nabla\bigg(\frac{1}{\nu(\rho,c)}(-\Delta)^{-1}\dvg (\rho \dot u)\bigg)-\nabla (-\Delta)^{-1}\nabla \Bigg(\frac{G(\rho,c)}{\nu(\rho,c)}\Bigg)\notag\\
            &+\frac{1}{\mu(\rho,c)}\big[\mathcal{K}, \mu(\rho,c)-\widetilde \mu\big](2\D u)-\nabla (-\Delta)^{-1}\nabla\bigg(\frac{1}{\nu(\rho,c)}\big[\check{\mathcal{K}},\mu(\rho,c)-\widetilde \mu\big](2\D u)\bigg).\label{sec2:eq3}
\end{align}
Above, $\mathcal{K}$ and $\check{\mathcal{K}}$ denote the following even order singular operators:
\[
\mathcal{K}= \nabla (-\Delta)^{-1}\dvg \mathbb{P} \;\text{ and } \,\check{\mathcal{K}}=(-\Delta)^{-1}\dvg \dvg.
\]

We now assume that the density and the shear viscosity coefficients are bounded from above and below as follows:
\begin{equation}\label{sec2:eq29}
\begin{cases}
    \displaystyle 0<\rho_*:=\inf_{t,x}\rho(t,x)\leqslant \sup_{t,x}\rho(t,x):=\rho^*<\infty;\\
    \displaystyle 0<\mu_*:=\inf_{t,x}\mu(\rho,c))(t,x)\leqslant \sup_{t,x}\mu(\rho,c))(t,x)=:\mu^*<\infty.
\end{cases}
\end{equation}
Also, given that the pressure is strictly increasing with respect to the density, we have the following bounds
    \begin{gather}\label{sec2:eq25}
    0<a_*(c):=\inf_{\rho\in[\rho_*,\rho^*]} \Bigg(\frac{\rho \partial_\rho P(\rho,c)}{\nu(\rho,c)}\Bigg)\leqslant \sup_{\rho\in[\rho_*,\rho^*]} \Bigg(\frac{\rho \partial_\rho P(\rho,c)}{\nu(\rho,c)}\Bigg)=:a^*(c)<\infty,
    \end{gather}
 from which we define the ratio:
    \begin{gather}\label{sec2:eq26}
    \overline{a}=\sup_c \frac{a^*(c)}{a_*(c)}.
    \end{gather}
Throughout what follows, $C$ will denote a generic constant that is independent of the bounds of the solutions. 
It may depend on $\widetilde P$, $\widetilde \mu$, $\widetilde \lambda$, $\alpha$, or arise from embedding inequalities etc.
In contrast, $C_*$ will depend on $\rho^*$, $\mu_*$, $\mu^*$. Both $C$ and $C_*$ may vary from line to line.

Let us denote by $\varphi(t)=\varphi_0(\mathcal{X}^{-1}(t))$ the level-set function defining $D(t)$: 
\[
D(t)= \big\{x\in \R^3\colon \vph(t,x)>0\big\},\;\text{ and let } \Sigma(t)=\partial D(t).
\]
Under a smallness condition on the shear viscosity jump we establish, in the following lemma, bounds for the  functionals $\vartheta$, and $\check{\vartheta}$ defined as (we recall the definition of the $(\rho,c)$-dependent function $f$ in \eqref{sec1:eq1}, which we supplement with $f(\widetilde\rho_+,1)=f(\widetilde\rho_-,0)=0$):
\begin{align}
    \check{\vartheta}(t)&= \abs{\Sigma(t)}_{\dot\cC^{1+\alpha}}+\abs{\nabla\varphi(t)}_{\dot\cC^\alpha}+\abs{f(\rho(t),c(t))}_{\dot\cC^\alpha_{\pw,\Sigma(t)}(\R^3)}+\int_0^t\abs{\nabla u, G(\rho,c)}_{\dot\cC^\alpha_{\pw,\Sigma}(\R^3)};\notag\\
    \vartheta(t)&=\int_0^t\norm{G(\rho,c),\,\nabla u,\,\nabla \mathbb P u,\, \big[\mathcal{K},\mu(\rho,c)-\widetilde \mu\big](\D u),\, \big[\check{\mathcal{K}},\mu(\rho,c)-\widetilde \mu\big](\D u)}_{L^\infty(\R^3)}\notag.
\end{align}
Our result reads as follows.
\begin{lemma}\label{sec2:lem1}  Let us introduce the functionals $\phi_\Sigma$, $\check{\phi}_\Sigma$ and $\delta_{}$ defined by
\begin{gather}\label{sec2:eq27}
\begin{cases}
\phi_\Sigma(t)\displaystyle &=\displaystyle\sup_{[0,t]}\Big(\abs{\mu(\rho,c),\,\lambda(\rho,c)}_{\dot \cC^{\alpha/2}_{\pw,\Sigma}(\R^3)}+\ell_{\Sigma}^{-\alpha/2}\norm{\mu(\rho,c)-\widetilde\mu,\, \lambda(\rho,c)-\widetilde\lambda}_{L^\infty(\R^3)}\Big),\\
\check{\phi}_{\Sigma}(t)&=\displaystyle\sup_{[0,t]}\Big(\abs{\mu(\rho,c)}_{\dot \cC_{\pw,\Sigma}^{\alpha/2}(\R^3)}+ \big(\ell^{-\alpha/2}_{\Sigma}+\mathfrak{P}_{\Sigma}\abs{\Sigma}_{\dot \cC^{1+\alpha/2}}\big)\norm{\llbracket \mu(\rho,c)\rrbracket}_{L^\infty(\Sigma)}\Big),\\
\delta(t)&= \displaystyle \sup_{[0,t]} \norm{\llbracket \mu(\rho,c),\, \nu(\rho,c)\rrbracket}_{L^\infty\cap L^{p}(\Sigma)}
\end{cases}
\end{gather}
where $\mathfrak{P}_\Sigma$ is a polynomial in $\abs{\Sigma}$, $\norm{\Sigma}_{\text{Lip}}$ and $\abs{\Sigma}_{\text{inf}}^{-1}$.

We now fix some $p\in (6/\alpha, \infty)$. There exists a constant $C>0$ such that if 
    \begin{align}\label{sec2:eq7}
\frac{4C}{\mu_*}\Bigg[\overline{a}e^{\alpha\int_0^t\normb{\nabla u}_{L^\infty(\R^3)}}&\sup_{[0,t]}\norm{\llbracket \mu(\rho,c)\rrbracket}_{L^\infty(\Sigma)}\Big(1+\frac{1}{\mu_*}\norm{\llbracket\nu(\rho,c)\rrbracket}_{L^\infty(\Sigma)}\Big)\notag\\
&+\sup_{[0,t]} \Big(\overline{a}\abs{\llbracket \mu(\rho,c)\rrbracket}_{\dot \cC^{\alpha/2}(\Sigma)}+\frac{\mu^*}{\mu_*}\norm{\llbracket\nu(\rho,c)\rrbracket}_{L^\infty(\Sigma)}\Big)\Bigg]\leqslant 1,
    \end{align}
    then the following inequalities hold true:
    \begin{itemize}
        \item \textbf{Piecewise H\"older estimate}:
    \begin{align}
    \check{\vartheta}(t)&\leqslant C_*\Bigg( \check{\vartheta}(0)+\int_0^t\norm{\rho \dot u}_{L^{3/(1-\alpha)}(\R^3)}\Bigg) \exp\Bigg[C_*K_0e^{C\vartheta(t)}\Bigg(\vartheta(t)+\int_0^t \norm{\rho \dot u}_{L^6(\R^3)}^{1/2}\norm{\rho\dot u}_{L^2(\R^3)}^{1/2}\Bigg)\Bigg],\label{sec2:pwholder}
\end{align}
where $K_0$ depends on $\norm{\Sigma_0}_{\text{Lip}}$, $\abs{\nabla\vph_0}_{\text{inf}}^{-1}$, $\abs{\Sigma_0}_{\text{inf}}$, and $\abs{\Sigma}$.
\item \textbf{Lipschitz estimate:}
\begin{align}\label{sec2:Lipschtz}
  \vartheta(t)&\leqslant C_* \Bigg\{\norm{f(\rho_0,c_0)}_{L^\infty(\R^3)}+\int_0^t\norm{\dot u}_{L^{2}(\R^3)}+\int_0^t\phi_\Sigma\norm{G(\rho,c),\,\nabla u,\, (-\Delta)^{-1}\dvg (\rho \dot u)}_{L^{p}(\R^3)}\notag\\
&+\Big(1+\delta(t) e^{\alpha\vartheta(t)}\Big)\Bigg[\abs{f(\rho_0,c_0)}_{\dot \cC^{\alpha/2}_{\pw,\Sigma_0}(\R^3)}+\int_0^t\norm{\rho \dot u}_{L^{6/(2-\alpha)}(\R^3)}+\Big(\sup_{[0,t]}\abs{\mu(\rho,c)}_{\dot\cC^\alpha(\R^3)}+ \delta(t)\ell_{\Sigma}^{-\alpha}\Big)\int_0^t\norm{\nabla u}_{L^{6/\alpha}(\R^3)}\Bigg]\notag\\
&+\check{\vartheta}(t)\sup_{[0,t]} \norm{\llbracket \mu(\rho,c)\rrbracket}_{\cC^{\alpha/2}(\Sigma)}\Big(1+\delta(t)e^{\alpha\vartheta(t)}\Big)+ \int_0^t \Big(\ell_{\Sigma}^{-\frac{\alpha}{2}}\norm{\llbracket G(\rho,c)\rrbracket}_{L^\infty\cap L^{p}(\Sigma)}
        +\abs{\llbracket G(\rho,c)\rrbracket}_{\dot \cC^{\alpha/2}(\Sigma)}\Big)\notag\\
    &+\vartheta(t)\delta(t)\Bigg( \check{\phi}_\Sigma e^{\alpha \vartheta(t)}  +\big(\ell^{-\alpha/2}_{\Sigma}+\mathfrak{P}_{\Sigma} \abs{\Sigma}_{\dot \cC^{1+\alpha/2}}\big)\Bigg)\Bigg\}e^{C_*\int_0^t\normb{w}_{L^\infty(\R^3)}},
\end{align}
with
\begin{align*}
   \sup_{[0,t]}\norm{f(\rho,c)}_{L^\infty(\R^3)}+\int_0^t\norm{w}_{L^\infty(\R^3)}&\leqslant C\overline{a}\Bigg(\norm{f(\rho_0,c_0)}_{L^\infty(\R^3)}+\delta(t)\int_0^t\norm{\nabla u}_{\cC^{\alpha/2}_{\pw,\Sigma}(\R^3)}\Bigg)\notag\\
    &+C\overline{a}\int_0^t \big(\norm{\rho \dot u}_{L^2(\R^3)}+\norm{\rho \dot u}_{L^{6/(2-\alpha)}(\R^3)}+\phi_\Sigma\norm{\nabla u}_{L^{p}(\R^3)}\big). 
\end{align*}
\end{itemize}
\end{lemma}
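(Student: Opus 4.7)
The plan is to combine the representation \eqref{sec2:eq3} of $\nabla u$ with the damped-transport equation
\[
\dpt f(\rho,c)+u\cdot\nabla f(\rho,c)+G(\rho,c)=-F,\qquad F:=-(-\Delta)^{-1}\dvg(\rho\dot u)+\big[\check{\mathcal{K}},\mu(\rho,c)-\widetilde{\mu}\big](2\D u),
\]
which one derives by multiplying the continuity equation by $\nu(\rho,c)$ and invoking \eqref{sec2:eq2} together with $\rho\partial_\rho f=\nu$. Three ingredients then close the estimates: propagation of the geometric quantities $\|\Sigma(t)\|_{\text{Lip}}$, $|\Sigma(t)|_{\text{inf}}^{-1}$, $|\Sigma(t)|_{\dot\cC^{1+\alpha}}$, $|\nabla\varphi(t)|_{\dot\cC^\alpha}$ through the flow of $u$, producing the exponential factors in $\vartheta(t)$ and the initial-geometry constant $K_0$; propagation of $f(\rho,c)$ in the piecewise H\"older class by integrating the displayed transport equation on each side of $\Sigma(t)$; and bounds on the singular-integral pieces of $\nabla u$ in $L^\infty(\R^3)$ and in $\dot\cC^\alpha_{\pw,\Sigma}(\R^3)$, using the continuity of Riesz-type operators on $\cC^\alpha_{\pw,\Sigma}(\R^3)\cap L^p(\R^3)$ established in \cite{gancedo2022quantitative,gancedo2023global}, supplemented by commutator estimates.

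For the piecewise H\"older bound \eqref{sec2:pwholder}, the source $F$ is controlled in $\dot\cC^\alpha_{\pw,\Sigma}$ by combining the continuous embedding $\nabla(-\Delta)^{-1}\colon L^{3/(1-\alpha)}\to\dot\cC^\alpha$ applied to $\rho\dot u$—with the $L^{3/(1-\alpha)}$-norm of $\rho\dot u$ itself controlled by interpolation between $L^2$ and $L^6$, producing the factor $\|\rho\dot u\|_{L^6}^{1/2}\|\rho\dot u\|_{L^2}^{1/2}$ inside the exponent—with the piecewise H\"older continuity of the commutator $[\check{\mathcal{K}},\mu(\rho,c)-\widetilde\mu]$, whose norm is controlled by $\phi_\Sigma$ and the smallness parameter $\delta$. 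A Gr\"onwall loop, in which the geometric factors are inflated by $\exp(C\vartheta(t))$ (since $\|\Sigma(t)\|_{\text{Lip}}$ and $|\Sigma(t)|_{\text{inf}}^{-1}$ grow at exponential rate $\|\nabla u\|_{L^\infty}$), then yields \eqref{sec2:pwholder}.

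For the Lipschitz bound \eqref{sec2:Lipschtz}, I would estimate each of the five terms of \eqref{sec2:eq3} separately in $L^\infty(\R^3)$. The first two terms---a first-order Riesz of $\rho\dot u$ divided by $\mu(\rho,c)$ and the associated double Riesz---fall in $L^\infty$ by interpolation between $\|\dot u\|_{L^2}$ and $\|\rho\dot u\|_{L^{6/(2-\alpha)}}$, using that $\rho\in[\rho_*,\rho^*]$ makes $\rho\dot u$ and $\dot u$ comparable in Lebesgue norms. The third term is the even-order Riesz of the piecewise H\"older function $G/\nu$; here the Gancedo continuity produces the contribution involving $\phi_\Sigma\|G\|_{L^p}$, $\|\llbracket G\rrbracket\|_{L^p\cap L^\infty(\Sigma)}$ and $|\llbracket G\rrbracket|_{\dot\cC^{\alpha/2}(\Sigma)}$. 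The commutator terms are handled by the same theory and yield the geometric factor $\big(\check{\phi}_\Sigma e^{\alpha\vartheta}+\ell_\Sigma^{-\alpha/2}+\mathfrak{P}_\Sigma|\Sigma|_{\dot\cC^{1+\alpha/2}}\big)\delta\,\vartheta$. The transport equation for $f$ yields the $L^\infty$ estimate for $f(\rho,c)$ and, via Gr\"onwall against $\int_0^t\|w\|_{L^\infty}$, the exponential factor closing \eqref{sec2:Lipschtz}.

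The main obstacle is the absorption argument hidden in the smallness condition \eqref{sec2:eq7}. The commutators $[\mathcal{K},\mu-\widetilde\mu](2\D u)$ and $[\check{\mathcal{K}},\mu-\widetilde\mu](2\D u)$, when bounded by the Gancedo theory, produce terms of the form
\[
\frac{C}{\mu_*}\|\llbracket\mu\rrbracket\|_{L^\infty(\Sigma)}\Big(1+\frac{1}{\mu_*}\|\llbracket\nu\rrbracket\|_{L^\infty(\Sigma)}\Big)\,e^{\alpha\int_0^t\|\nabla u\|_{L^\infty}}\|\nabla u\|_{L^\infty},
\]
together with a Hölder analog proportional to $\overline{a}\,|\llbracket\mu\rrbracket|_{\dot\cC^{\alpha/2}(\Sigma)}\|\nabla u\|_{L^\infty}$. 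Assumption \eqref{sec2:eq7} is calibrated so that the sum of these is at most $\tfrac{1}{4}\mu_*\|\nabla u\|_{L^\infty}$, enabling absorption on the left-hand side of the estimate for $\|\nabla u\|_{L^\infty}$ and, analogously, for its piecewise H\"older seminorm. The remainder of the proof is the (rather involved) bookkeeping needed to interpolate $\nabla u$ between $L^p$ and $\dot\cC^\alpha_{\pw,\Sigma}$, to track the $\ell_\Sigma^{-\alpha/2}$ and $\mathfrak{P}_\Sigma$ geometric factors through each commutator estimate, and to carry out the Gr\"onwall closure in the coupled system $(\vartheta,\check\vartheta)$.
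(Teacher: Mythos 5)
Your proposal captures the correct overall architecture---the representation \eqref{sec2:eq3} of $\nabla u$, the damped-transport equation for $f(\rho,c)$ integrated along the flow, the Gancedo-type continuity of even Riesz operators on $\cC^\alpha_{\pw,\Sigma}\cap L^p$, the propagation of the geometric quantities, and the Gr\"onwall closure with absorption under \eqref{sec2:eq7}---and this is indeed the route the paper follows. However there are two substantive technical steps the proposal omits, and one misattribution, each of which matters for the estimate to close.

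First, for the Lipschitz bound \eqref{sec2:Lipschtz}, you treat the pressure term $\nabla(-\Delta)^{-1}\nabla(G/\nu)$ and the corresponding commutator term by direct Gancedo continuity. That would give an $L^\infty$ bound proportional to the \emph{full} piecewise H\"older norm $\|G\|_{\cC^{\alpha}_{\pw,\Sigma}}$, which is at the level of $\check\vartheta$ with an $O(1)$ coefficient---fatal for the bootstrap, since $\check\vartheta$ can be much larger than $\vartheta$. The paper instead groups the third and fifth terms as in \eqref{sec2:eq49}, splits $w:=G+[\check{\mathcal K},\mu-\widetilde\mu](2\D u)$ into an extension $w^e$ plus a jump piece $w-w^e$, and crucially bounds the extension contribution via the logarithmic interpolation inequality \eqref{sec2:eq33}, so that only $\|w\|_{L^\infty}$ (times a logarithm of the H\"older quotient) enters the estimate; the resulting logarithm is then controlled by $\log^+ x\lesssim\sqrt x$ and reabsorbed. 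Without this step, $\check\vartheta$ cannot be confined to appear only multiplied by a small viscosity-jump quantity.

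Second, to control the jump piece $w-w^e$, one needs bounds on $\|\llbracket w\rrbracket\|_{L^\infty\cap L^p(\Sigma)}$ and $|\llbracket w\rrbracket|_{\dot\cC^{\alpha/2}(\Sigma)}$. The paper obtains these from the balance of forces at the interface, \eqref{sec2:eq45}--\eqref{sec2:eq36}, which show $\llbracket\nu\dvg u-G\rrbracket=2\llbracket\mu\rrbracket(\tau\cdot\partial_\tau u+\bar\tau\cdot\partial_{\bar\tau} u)$ and hence express $\llbracket[\check{\mathcal K},\mu-\widetilde\mu](2\D u)\rrbracket$ in terms of $\llbracket\mu\rrbracket$ times \emph{tangential} velocity derivatives, which are continuous. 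This structural identity is what makes the jump contribution proportional to $\|\llbracket\mu\rrbracket\|_{L^\infty(\Sigma)}\|\nabla u\|_{L^\infty}$ rather than to a full trace norm of $\nabla u$; your outline does not explain how $\llbracket w\rrbracket$ is controlled.

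Finally, a misattribution in the H\"older part: the factor $\int_0^t\|\rho\dot u\|_{L^6}^{1/2}\|\rho\dot u\|_{L^2}^{1/2}$ inside the exponent of \eqref{sec2:pwholder} does not arise from interpolating the $L^{3/(1-\alpha)}$ norm of $\rho\dot u$ (indeed $3/(1-\alpha)>6$ for $\alpha>1/2$, so no such interpolation exists). It comes from the Gagliardo--Nirenberg bound $\|\nabla(-\Delta)^{-1}\mathbb P(\rho\dot u)\|_{L^\infty}\lesssim\|\rho\dot u\|_{L^6}^{1/2}\|\rho\dot u\|_{L^2}^{1/2}$, which is needed because $L^\infty$ bounds of the singular-integral pieces of $\nabla u$ multiply the growing geometric quantities $|\Sigma|_{\dot\cC^{1+\alpha}}$, $\mathfrak P_\Sigma$ in the Gr\"onwall step; the $L^{3/(1-\alpha)}$ norm of $\rho\dot u$ contributes instead to the prefactor outside the exponential.
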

The proof of the above Lemma is the purpose of Section \ref{sec3:proof:lem1} below.  The estimate of the functional $\vartheta$
involves some norms of the jump of the  pressure fluctuation $G(\rho,c)$.  These norms can be roughly be estimated as follows:
 \begin{equation}\label{sec2:eq22}
 \norm{\llbracket G(\rho,c)\rrbracket}_{L^q(\Sigma)}\leqslant \abs{\Sigma}^{1/q}  \norm{G(\rho,c)}_{L^\infty(\R^3)},\;\text{ with } q\in [1,\infty], \;\text{ and } \abs{\llbracket G(\rho,c)\rrbracket}_{\dot \cC^{\alpha/2}(\Sigma)}\leqslant 2 \abs{G(\rho,c)}_{\dot \cC^{\alpha/2}_{\pw,\Sigma}(\R^3)}, 
 \end{equation}
and we will instead  estimate the $\cC^{\alpha/2}_{\pw,\Sigma}(\R^3)$-bound  as in \eqref{sec2:eq50}-\eqref{sec2:eq30} and we will require the pressure fluctuations to be small. In particular situations such as \eqref{sec1:jumpcondition} or \eqref{sec1:jumpconditionbis}, we can derive sharper estimates for the norms of the pressure jump. 
\begin{lemma}\label{sec2:lem3}  Let $q\in [1,\infty]$, and $0<a_*=\min_c a_*(c)$, $a^*=\max_c a^*(c)$. The following 
estimates hold.
\begin{itemize}
    \item Assume that \eqref{sec1:jumpcondition} holds true. Then we have
    \begin{align}
        \sup_{[0,t]}\norm{\llbracket f(\rho,c)\rrbracket}_{L^q(\Sigma)}&+ \int_0^t\norm{\llbracket G(\rho,c)\rrbracket}_{L^q(\Sigma)}\notag\\
        &\leqslant \frac{a^*}{a_*} e^{\frac{1}{q}\int_0^t\normb{\nabla u}_{L^\infty(\R^3)}}\Bigg(\norm{\llbracket f(\rho_0,c_0)\rrbracket}_{L^q(\Sigma_0)}+C\sup_{[0,t]}\norm{\llbracket \mu(\rho,c)\rrbracket}_{L^q(\Sigma)}\int_0^t\normb{\nabla u}_{L^\infty(\R^3)}\Bigg),\label{sec2:eq37}
    \end{align}
    and 
    \begin{align}
\sup_{[0,t]}\abs{\llbracket f(\rho,c)\rrbracket}_{\dot\cC^\alpha(\Sigma)}&+\int_0^t\abs{\llbracket G(\rho,c)\rrbracket}_{\dot\cC^\alpha(\Sigma)}\notag\\
     &\leqslant  \frac{a^*}{a_*}e^{\alpha\int_0^t\normb{\nabla u}_{L^\infty(\R^3)}} \Bigg[\abs{\llbracket f(\varrho_0,c_0)\rrbracket}_{\dot\cC^\alpha(\Sigma_0)}+\sup_{[0,t]}\abs{\llbracket \mu(\rho,c)\rrbracket}_{\dot\cC^\alpha(\Sigma)}\int_0^t\norm{\nabla u}_{L^\infty(\R^3)}\notag\\
     &+\sup_{[0,t]}\norm{\llbracket \mu(\rho,c)\rrbracket}_{L^\infty(\Sigma)}\int_0^t\big(\ell_{\Sigma}^{-\alpha}\norm{\nabla u}_{L^\infty(\R^3)}+\abs{\nabla u}_{\dot \cC^\alpha_{\pw,\Sigma}(\R^3)}\big)\notag\\
     &+C_*\sup_{[0,t]}\abs{f(\rho,c)}_{\dot \cC^\alpha_{\pw,\Sigma}(\R^3)}\int_0^t\norm{\llbracket f(\rho,c)\rrbracket}_{L^\infty(\R^3)}\Bigg]\label{sec2:eq38}.
\end{align}
    \item Assume that \eqref{sec1:jumpconditionbis} holds true. Then we have, 
        \begin{align}\label{sec2:eq40}
      \norm{\llbracket f(\rho,c)(t)\rrbracket}_{L^q(\Sigma(t))}&\leqslant e^{- a_{*} t+C_*\int_0^t\norm{\nabla u}_{L^\infty(\R^3)}} \norm{\llbracket f(\rho_0,c_0)\rrbracket}_{L^q(\Sigma_0)},
    \end{align}
    and 
    \begin{align}\label{sec2:eq41}
     \abs{\llbracket f(\rho,c)(t)\rrbracket}_{\dot\cC^\alpha(\Sigma(t))}&\leqslant  C_*e^{-a_{*}t+ C_*\int_0^t\norm{\nabla u}_{L^\infty(\R^3)}e^{ \alpha\int_0^t\norm{\nabla u}_{L^\infty(\R^3)}}}\Bigg[\abs{\llbracket f(\varrho_0,c_0)\rrbracket}_{\dot\cC^\alpha(\Sigma_0)}\notag\\
     &+\norm{\llbracket f(\rho_0,c_0)\rrbracket}_{L^\infty(\Sigma_0)}\int_0^t\Big(\big(\ell_{\Sigma}^{-\alpha}+\abs{f(\rho,c)}_{\dot \cC^\alpha(\Sigma)}\big)\norm{\nabla u}_{L^\infty(\R^3)}+\abs{f(\rho,c),\,\nabla u}_{\dot\cC^\alpha_{\pw,\Sigma}(\R^3)}\Big)\Bigg].
\end{align}
\item  In addition to \eqref{sec1:jumpconditionbis}, assume that \eqref{sec1:exponentialdecay} holds true. Then  
\begin{align}\label{sec2:eq46}
    \norm{\llbracket \nabla u(t)\rrbracket}_{L^q(\Sigma(t))}\leqslant C_* e^{- a_{*} t+C_*\int_0^t\norm{\nabla u}_{L^\infty(\R^3)}} \norm{\llbracket f(\rho_0,c_0)\rrbracket}_{L^q(\Sigma_0)}\big(1+\norm{\nabla u(t)}_{L^\infty(\R^3)}\big).
\end{align}
\end{itemize}
\end{lemma}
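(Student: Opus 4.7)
The plan is to derive a transport equation for $f(\rho,c)$ and then take its jump across the moving interface. Combining the mass equation in the form $\dpt\rho + u\cdot\nabla\rho = -\rho\dvg u$, the transport of $c$, and the identity $\rho\partial_\rho f = \nu$ from \eqref{sec1:eq1}, I obtain
\[
\dpt f(\rho,c) + u\cdot\nabla f(\rho,c) + \nu(\rho,c)\dvg u = 0.
\]
Substituting $\nu\dvg u = G(\rho,c) + F$ from \eqref{sec2:eq2}, where $F := -(-\Delta)^{-1}\dvg(\rho\dot u) + [\check{\mathcal{K}},\mu(\rho,c)-\widetilde\mu](2\D u)$, yields the scalar transport law $\dpt f + u\cdot\nabla f + G = -F$. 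Parametrizing $\Sigma(t) = \mathcal{X}(t,\Sigma_0)$ by the flow $\sigma(t) = \mathcal{X}(t,\sigma_0)$, evaluating the equation on each side and subtracting gives the ODE along the flow
\[
\frac{d}{dt}\llbracket f(\rho,c)\rrbracket(t,\sigma(t)) + \llbracket G(\rho,c)\rrbracket(t,\sigma(t)) = -\llbracket F\rrbracket(t,\sigma(t)).
\]

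The crux is then to control $\llbracket F\rrbracket$ on $\Sigma(t)$. The first term of $F$ is a first-order Riesz transform of $\rho\dot u \in L^p(\R^3)$ for some $p>3$ (consistent with the a priori bounds on $\dot u$ used in the paper), hence it is Hölder-continuous on the whole space and contributes no jump. For the commutator, I would invoke the Calderón--Zygmund boundary calculus for even-order Riesz operators paired with piecewise Hölder coefficients, as developed in \cite{gancedo2022quantitative,gancedo2023global}, to derive
\[
\|\llbracket F\rrbracket\|_{L^q(\Sigma)} \lesssim \|\llbracket \mu(\rho,c)\rrbracket\|_{L^q(\Sigma)}\,\|\nabla u\|_{L^\infty(\R^3)},
\]
together with an analogous $\dot\cC^\alpha(\Sigma)$ estimate producing the cross terms that appear on the right-hand side of \eqref{sec2:eq38}. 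Under \eqref{sec1:jumpcondition}, the identity $G = \Psi_P(f)-\widetilde P$ with a single $\Psi_P$ common to both phases yields, via the mean-value theorem, $\llbracket G\rrbracket = \widehat{a}\,\llbracket f\rrbracket$ with $\widehat{a}\in[a_*(c),a^*(c)]$. Solving the ODE with integrating factor $e^{\int_0^t\widehat{a}}\in[e^{a_*t},e^{a^*t}]$ produces the pointwise bound
\[
|\llbracket f\rrbracket(t,\sigma(t))| \leq |\llbracket f_0\rrbracket(\sigma_0)| + \int_0^t |\llbracket F\rrbracket(s,\sigma(s))|\,ds.
\]
Taking the $L^q(\Sigma(t))$ norm, pulling back to $\Sigma_0$ (the surface Jacobian being bounded by $e^{\int\|\nabla u\|_{L^\infty}}$) yields the prefactor $e^{(1/q)\int\|\nabla u\|_{L^\infty}}$; substituting the bound on $\llbracket F\rrbracket$ proves the $\llbracket f\rrbracket$ part of \eqref{sec2:eq37}. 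The time integral of $\|\llbracket G\rrbracket\|_{L^q}$ then follows from $|\llbracket G\rrbracket|\leq a^*|\llbracket f\rrbracket|$ combined with $e^{-\int\widehat{a}}\leq e^{-a_* t}$, producing the $a^*/a_*$ prefactor. Estimate \eqref{sec2:eq38} is the $\dot\cC^\alpha(\Sigma)$ analogue, where Hölder differences along the flow produce the supplementary terms involving $|\mu(\rho,c)|_{\dot\cC^\alpha}$, $|\nabla u|_{\dot\cC^\alpha_{\pw,\Sigma}}$, $\ell_\Sigma^{-\alpha}$, and the quadratic contribution $|f|_{\dot\cC^\alpha_{\pw,\Sigma}}\|\llbracket f\rrbracket\|_{L^\infty}$ arising from the nonlinearity of $\Psi_P$ and of the flow Jacobian.

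Under the stronger assumption \eqref{sec1:jumpconditionbis} one also has $\mu = \Psi_\mu(f)$, so $|\llbracket\mu\rrbracket|\lesssim|\llbracket f\rrbracket|$ by the mean-value theorem; the previous bound on $\llbracket F\rrbracket$ becomes $|\llbracket F\rrbracket|\lesssim|\llbracket f\rrbracket|\,\|\nabla u\|_{L^\infty}$, which can be absorbed on the left of the ODE as a multiplicative damping perturbation. Grönwall's inequality then delivers the exponential decay
\[
|\llbracket f\rrbracket(t)| \lesssim e^{-a_* t + C_*\int_0^t\|\nabla u\|_{L^\infty}}|\llbracket f_0\rrbracket|,
\]
yielding \eqref{sec2:eq40} in the $L^q$ setting and, by the same Hölder bookkeeping as for \eqref{sec2:eq38}, its Hölder counterpart \eqref{sec2:eq41}. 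If additionally \eqref{sec1:exponentialdecay} holds, then $\nu = 2\Psi_\mu(f)+\Psi_\lambda(f)$ and $\llbracket\nu\rrbracket$ is likewise controlled by $\llbracket f\rrbracket$. Taking the jump of the velocity-gradient representation \eqref{sec2:eq3} across $\Sigma$—noting that $(-\Delta)^{-1}\dvg(\rho\dot u)$ remains continuous—expresses $\llbracket\nabla u\rrbracket$ as a combination of $\llbracket G/\nu\rrbracket$, commutator boundary terms, and terms proportional to $\llbracket\mu\rrbracket$ or $\llbracket\nu\rrbracket$, each bounded by $|\llbracket f\rrbracket|$ modulated by $(1+\|\nabla u\|_{L^\infty})$; chaining with \eqref{sec2:eq40} yields \eqref{sec2:eq46}. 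The hard part throughout is the interfacial analysis of $[\check{\mathcal{K}},\mu-\widetilde\mu](2\D u)$: controlling its jump (and piecewise Hölder seminorm) requires the fine Calderón--Zygmund calculus for even-order Riesz operators on $\cC^\alpha_{\pw,\Sigma}$ spaces developed in \cite{gancedo2022quantitative,gancedo2023global}, together with the stability of those estimates under transport by the flow $\mathcal{X}$.
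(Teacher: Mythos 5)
Your setup is right: you derive the transport law $\dot f(\rho,c)+G(\rho,c)=-F$ from the mass equation and $\rho\partial_\rho f=\nu$, take its jump along the flow, and want to control $\llbracket F\rrbracket$ on $\Sigma(t)$. You also correctly note that $-(-\Delta)^{-1}\dvg(\rho\dot u)$ is continuous across $\Sigma$, so only the commutator contributes to $\llbracket F\rrbracket$. But the step where you ``invoke the Calderón--Zygmund boundary calculus \dots to derive $\|\llbracket F\rrbracket\|_{L^q(\Sigma)}\lesssim\|\llbracket\mu\rrbracket\|_{L^q(\Sigma)}\|\nabla u\|_{L^\infty}$'' is a genuine gap, and in fact this bound does not follow from CZ theory with only $\|\nabla u\|_{L^\infty}$ on the right. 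If you compute the jump of $[\check{\mathcal K},\mu-\widetilde\mu](2\D u)$ using the known jump formulas for even-order Riesz transforms of piecewise regular data, you get a term proportional to $\llbracket\mu\rrbracket$ times the boundary trace of $\check{\mathcal K}(2\D u)$. Since $\check{\mathcal K}=(-\Delta)^{-1}\dvg\dvg$ is an even-order Riesz transform, it is \emph{not} bounded on $L^\infty$: controlling $\check{\mathcal K}(2\D u)$ pointwise on $\Sigma$ requires the piecewise H\"older seminorm $|\nabla u|_{\dot\cC^\alpha_{\pw,\Sigma}}$, not merely $\|\nabla u\|_{L^\infty}$. So the CZ route does not produce the $L^q$ estimates \eqref{sec2:eq37} and \eqref{sec2:eq40} in the form stated, which are critical precisely because they avoid the H\"older seminorm.

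The paper sidesteps this entirely with a physical argument. Since the normal component of the Cauchy stress is continuous across $\Sigma$ (equation \eqref{sec2:eq45}), projecting onto $n_x$ and decomposing $\dvg u$ into normal and tangential parts gives the \emph{exact} algebraic identity \eqref{sec2:eq36},
\[
\llbracket F\rrbracket=\llbracket\nu(\rho,c)\dvg u - G(\rho,c)\rrbracket = 2\llbracket\mu(\rho,c)\rrbracket\big(\tau_x\cdot\partial_{\tau_x}u+\overline\tau_x\cdot\partial_{\overline\tau_x}u\big),
\]
which involves only \emph{tangential} derivatives of $u$. These are continuous across $\Sigma$ and pointwise controlled by $\|\nabla u\|_{L^\infty}$, which is why the $L^q$ estimate is clean. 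The same mechanism gives \eqref{sec2:eq46}: the paper does not take the jump of the representation \eqref{sec2:eq3} as you propose; instead it uses $\llbracket\nabla u^j\rrbracket=\llbracket\partial_{n_x}u^j\rrbracket\,n_x$ (because tangential derivatives are continuous, see \eqref{sec2:eq47}) and then solves the normal and tangential components of the stress balance to express $\llbracket\partial_{n_x}u\rrbracket$ algebraically in terms of $\llbracket P\rrbracket$, $\llbracket\mu\rrbracket$, $\llbracket\lambda\rrbracket$, and averages of $\nabla u$. You should replace your CZ step with this stress-balance identity; everything downstream (the integrating factor $g_P\in[a_*,a^*]$ from \eqref{psiderivative}, the absorption of $\llbracket\mu\rrbracket\lesssim\llbracket f\rrbracket$ under \eqref{sec1:jumpconditionbis}, the Gr\"onwall step giving $e^{-a_*t}$ decay, and the H\"older bookkeeping involving $g_\mu$ and $g_P$ differences) is as you describe.
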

The proof of the above lemma is the purpose of Section \ref{prooflem3} below. From these bounds, we immediately deduce that the jump of $f(\rho,c)$, viscosities, pressure, and velocity gradient decay exponentially in time, once the functionals 
$\vartheta$ and $\check{\vartheta}$ are  uniformly bounded in time. The estimate obtained for these functionals  in Lemma \ref{sec2:lem1} above, involves the $L^p(\R^3)$ norm of $\nabla u$, $G(\rho,c)$, with $p\in (6/\alpha,\infty)$; and  the $L^2(\R^3)\cap L^{3/(1-\alpha)}(\R^3)$ norm of the material acceleration $\dot u$. In the following section, we derive these bounds from time-weighted energy estimates.

\subsection{Time-weighted energy estimates}\label{sec:2:2}
This section is devoted to deriving $L^1((0,t),L^q(\R^3))$-bounds for the velocity gradient, pressure fluctuation, and for the material acceleration as these  appear in the estimates of the functionals $\vartheta$ and $\check{\vartheta}$ in Lemma \ref{sec2:lem1}. These $L^1((0,t),L^q(\R^3)$-bounds will follow from the fact that the following energy functionals are uniformly bounded in time:
\begin{align}\label{sec2:eq48}
\begin{cases}
    \mathcal{A}_1(t)&=\displaystyle\int_0^t\scalar{\cdot}^{1+r_0}\norm{\sqrt{\rho}\dot u}_{L^2(\R^3)}^2+\frac{1}{2}\sup_{[0,t]}\scalar{\cdot}^{1+r_0}\int_{\R^3} \big( 2\mu(\rho,c)\abs{\D u}^2+\lambda(\rho,c)\abs{\dvg u}^2\big),\\
    \mathcal{A}_2(t)&=\displaystyle\frac{1}{2}\sup_{[0,t]}\sigma\scalar{\cdot}^{\frac{3}{2}(1+r_0)}\norm{\sqrt{\rho}\dot u}_{L^2(\R^3)}^2
    +\int_0^t\sigma\scalar{\cdot}^{\frac{3}{2}(1+r_0)}
    \int_{\R^3}\big( 2\mu(\rho,c)\abs{\D \dot u}^2+\lambda(\rho,c)\abs{\dvg \dot u}^2\big),\\
    \mathcal{A}_3(t)&=\displaystyle\int_0^t\sigma^{1+\check{\alpha}}\scalar{\cdot}^{\frac{3}{2}(1+r_0)}\norm{\sqrt{\rho}\ddot u}_{L^2(\R^3)}^2+\frac{1}{2}\sup_{[0,t]}\sigma^{1+\check{\alpha}}\scalar{\cdot}^{\frac{3}{2}(1+r_0)}\int_{\R^3} \big( 2\mu(\rho,c)\abs{\D \dot u}^2+\lambda(\rho,c)\abs{\dvg \dot u}^2\big).
    \end{cases}
\end{align}
Above $0<r_0<\frac{3}{p_0}-\frac{5}{2}$ (we recall that $1< p_0<6/5$, see \eqref{init:lowfrequency}); $\check{\alpha}\in (0,1]$ is introduced in \eqref{sec:intro:eq1}; and the time weights $\scalar{\cdot}$ and  $\sigma$ are defined by:
\[
\sigma(t)=\min \{1,t\},\;\text{ and }\; \scalar{t}=\sqrt{1+t^2}.
\]
From now on, the constants $C$ and $C_*$may also depend on $r_0$, and $\check{\alpha}$.  $C_0$ will stand for a constant depending on $\norm{\rho_0 u_0,\, G(\rho_0,c_0)}_{L^{p_0}(\R^3)}^2$ and on the initial energies $E_0$ and $\check{E}_0$ (see their definitions in ) as positive powers. We denote by $\check{C}_0$ a constant that depends on the same quantities as $C_0$, and additionally on $\norm{\nabla u_0}_{L^2(\R^3)}^2$ and $\normb{(\rho \dot u)_{|t=0}}_{\dot H^{-\check\alpha}(\R^3)}^2$.

We obtain the following bounds.
\begin{lemma}\label{sec2:lem4}
   There exist constants $C=C(\alpha)>0$  and $c_*=c_*(\rho_*,\rho^*)$ such that if 
   \begin{gather}
       \frac{C}{\mu_*}\sup_{[0,t]} \norm{\llbracket \mu(\rho,c)\rrbracket}_{L^\infty(\Sigma)}\leqslant 1, \quad\text{and}\quad E_0\leqslant c_*, 
   \end{gather}
   then the following estimates holds true:
\begin{align}
\mathcal{A}_1(t)\leqslant C_*\Big(C_0\big(1+\check \phi_\Sigma^{\frac{6}{\alpha}} E_0\big)+ \norm{\nabla u_0}_{L^2(\R^3)}^2+E^{\frac{1}{4}}_0\big(\mathcal{A}_1(t)^{\frac{5}{4}}+C_0^{\frac{5}{4}}\big)\Big)\label{sec2:eq53},
\end{align}
\begin{align}
\mathcal{A}_2(t) \leqslant C_*C_0\Big(1+\mathcal{A}_1(t)^2\Big)+ C_*\mathcal{A}_1(t)\Big(1+\mathcal{A}_1(t)^2+\check\phi_\Sigma^{\frac{6}{\alpha}} C_0\Big),\label{sec2:eq54}
\end{align}
\begin{gather}\label{sec2:eq55}
    \mathcal{A}_3(t)\leqslant C_*\Big( \check C_0+ \mathcal{L}\bigg(C_0\phi_\Sigma^{\frac{2}{\alpha}},\,\mathcal{A}_1(t)\bigg)\Big)e^{C_*C_0\big(1+\check \phi_\Sigma^{\frac{6}{\alpha}}+\mathcal{A}_1(t)\big)},
\end{gather}
where $\mathcal{L}$ is a polynomial. 
\end{lemma}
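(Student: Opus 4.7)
The plan is to run three nested Hoff-type energy estimates, testing the momentum equation $\eqref{ep2.1}_3$ successively against $\dot u$, against $\sigma\scalar{t}^{\frac{3}{2}(1+r_0)}\dot u$, and finally against $\sigma^{1+\check\alpha}\scalar{t}^{\frac{3}{2}(1+r_0)}\ddot u$. The algebraic weights $\scalar{t}^{1+r_0}$ convert the low-frequency hypothesis \eqref{init:lowfrequency} into polynomial time-decay of the basic energy (this is the origin of the constant $C_0$), while $\sigma$ and $\sigma^{1+\check\alpha}$ absorb the initial layer produced by each extra material differentiation. All occurrences of $\nabla u$ on the right-hand sides will be traded, via the decomposition \eqref{sec2:eq3}, for $\dot u$, $G(\rho,c)$, and the viscosity commutators $[\mathcal K,\mu(\rho,c)-\widetilde\mu]$, $[\check{\mathcal K},\mu(\rho,c)-\widetilde\mu]$, so that the piecewise-H\"older data on $\mu,\lambda$ enter only through the functionals $\phi_\Sigma$, $\check\phi_\Sigma$ and the smallness condition $C\mu_*^{-1}\sup\norm{\llbracket\mu(\rho,c)\rrbracket}_{L^\infty(\Sigma)}\le 1$ that closes the commutator estimates.

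For $\mathcal{A}_1$ I would multiply $\eqref{ep2.1}_3$ by $\dot u$ and integrate by parts to obtain
\begin{equation*}
\frac{d}{dt}\int_{\R^3}\Big(\mu(\rho,c)\abs{\D u}^2+\tfrac{1}{2}\lambda(\rho,c)\abs{\dvg u}^2\Big)+\int_{\R^3}\rho\abs{\dot u}^2=\frac{d}{dt}\int_{\R^3}G(\rho,c)\dvg u+R(t),
\end{equation*}
with $R$ cubic in $\nabla u$ and involving the transport derivatives of $\mu,\lambda$. Multiplying by $\scalar{t}^{1+r_0}$ and integrating in time produces an extra $\scalar{t}^{r_0}$-term which is absorbed once we know, from the classical energy identity combined with the $L^{p_0}$ hypothesis, that the basic energy decays like $\scalar{t}^{-(1+r_0)}$. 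The pressure work is rewritten via \eqref{sec2:eq2} as a $(-\Delta)^{-1}\dvg(\rho\dot u)$ term plus a viscosity commutator; the latter is the source of the $\check\phi_\Sigma^{6/\alpha}E_0$ factor after interpolation against $L^{6/\alpha}$. After Sobolev-type interpolation, the worst remaining contribution takes the schematic form $E_0^{1/4}\norm{\sqrt\rho\dot u}_{L^2}^{5/2}$, which accounts for the $E_0^{1/4}\mathcal{A}_1(t)^{5/4}$ term in \eqref{sec2:eq53} and is absorbable into $\mathcal A_1$ precisely when $E_0\le c_*$.

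The bound on $\mathcal{A}_2$ is obtained by applying $\dpt+u\cdot\nabla$ to $\eqref{ep2.1}_3$ and testing the resulting equation for $\dot u$ with $\sigma\scalar{t}^{\frac{3}{2}(1+r_0)}\dot u$; this yields the Hoff-type identity
\begin{equation*}
\tfrac{1}{2}\tfrac{d}{dt}\int\rho\abs{\dot u}^2+\int\mu(\rho,c)\abs{\D\dot u}^2+\tfrac{1}{2}\int\lambda(\rho,c)\abs{\dvg\dot u}^2=\widetilde R(t),
\end{equation*}
where $\widetilde R$ collects the transport commutators of $\mu,\lambda,P$ together with the cubic terms $\nabla u\cdot\nabla u\cdot\nabla u$ and $\nabla u\cdot\nabla u\cdot\nabla\dot u$ coming from $[\dpt+u\cdot\nabla,\,\Delta]u$. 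Each of these is controlled through $\mathcal{A}_1$ by combining Sobolev embeddings with \eqref{sec2:eq3}; for instance $\int\mu\abs{\nabla u}^4$ is bounded by $\norm{\nabla u}_{L^\infty}\norm{\nabla u}_{L^2}^3\lesssim\mathcal{A}_1(t)^2$ after interpolation, which explains the quadratic dependence in \eqref{sec2:eq54}, while the $\check\phi_\Sigma^{6/\alpha}C_0$ term records the piecewise-H\"older commutators. Since $\sigma(0)=0$, no compatibility assumption is needed at this level.

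The estimate on $\mathcal{A}_3$ is the main obstacle, and I expect it to require the most care. I would differentiate the equation for $\dot u$ once more and test against $\sigma^{1+\check\alpha}\scalar{t}^{\frac{3}{2}(1+r_0)}\ddot u$, producing
\begin{equation*}
\tfrac{1}{2}\tfrac{d}{dt}\int\big(2\mu(\rho,c)\abs{\D\dot u}^2+\lambda(\rho,c)\abs{\dvg\dot u}^2\big)+\int\rho\abs{\ddot u}^2\leqslant\text{cubic terms in }(\nabla u,\nabla\dot u,\dot u).
\end{equation*}
Two features make this step delicate. First, when $\check\alpha\in[1/2,1)$ the weight $\sigma^{1+\check\alpha}$ is by itself insufficient to make the $\norm{\sqrt\rho\ddot u}_{L^2}^2$ term integrable near $t=0$; here the compatibility assumption \eqref{sec:intro:eq1}, which asserts $\dvg\Pi_{|t=0}\in\dot H^{-\check\alpha}(\R^3)$, is propagated by the equation for $\rho\dot u$ and provides the missing $\sigma^{\check\alpha}$-control of $\int\mu\abs{\D\dot u}^2$ at $t=0^+$ — this is precisely why $\check C_0$ depends on $\normb{(\rho\dot u)_{|t=0}}_{\dot H^{-\check\alpha}}^2$. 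Secondly, the coefficients multiplying $\int\mu\abs{\D\dot u}^2$ on the right-hand side grow linearly in $\norm{\nabla u}_{L^p}$ and $\norm{\nabla u}_{L^\infty}$, both of which are $L^1_t$-controlled through \eqref{sec2:eq3} by $\mathcal{A}_1$ and $\check\phi_\Sigma$; a Gr\"onwall closure then generates the exponential factor $e^{C_*C_0(1+\check\phi_\Sigma^{6/\alpha}+\mathcal{A}_1(t))}$ of \eqref{sec2:eq55}, and the polynomial $\mathcal L\big(C_0\phi_\Sigma^{2/\alpha},\mathcal{A}_1(t)\big)$ encodes the cubic contributions accumulated before Gr\"onwall is invoked.
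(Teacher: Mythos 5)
Your outline for $\mathcal A_1$ and $\mathcal A_2$ matches the paper's strategy: test $\eqref{ep2.1}_3$ against $\dot u$ with weight $\kappa_1=\scalar{t}^{1+r_0}$, then apply $\dpt+\dvg(\cdot\,u)$ and test against $\dot u$ with weight $\kappa_2=\sigma\scalar{t}^{\frac32(1+r_0)}$, converting all $\nabla u$ right-hand sides to $\dot u$, $G(\rho,c)$ and viscosity commutators via \eqref{sec2:eq3}, and closing via the potential energies $H_l$. The small-$E_0$ absorption and the $\check\phi_\Sigma^{6/\alpha}$ factor come out exactly as you describe.

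There is, however, a genuine gap in your treatment of $\mathcal A_3$. After testing the twice-differentiated equation against $\sigma^{1+\check\alpha}\scalar{t}^{\frac32(1+r_0)}\ddot u$, the resulting identity \eqref{sec3:eq25} carries the initial-layer term $\frac12\int_0^t\kappa_5'\int(2\mu|\D\dot u|^2+\lambda(\dvg\dot u)^2)$, and $\kappa_5'\sim\sigma^{\check\alpha}$ near $t=0$. When $\check\alpha<1$ the weight $\sigma^{\check\alpha}$ is strictly lighter than the weight $\sigma$ that $\mathcal A_2$ provides, so the bound does \emph{not} follow from $\mathcal A_2$, and the mechanism you invoke --- "the compatibility assumption is propagated by the equation for $\rho\dot u$ and provides the missing $\sigma^{\check\alpha}$-control" --- is the conclusion, not a proof. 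The paper fills this gap by a duality argument: it decomposes $\dot u=v+\check v$ with $v$ solving the homogeneous linear parabolic problem \eqref{sec3:eq29} carrying the initial data $(\rho\dot u)_{|t=0}$ and $\check v$ solving the inhomogeneous problem \eqref{sec3:eq33} with zero initial data and source $\dvg\mathscr R$. For $v$, one tests a backward dual problem \eqref{sec3:eq26}, obtains on one hand an $\dot H^{-1}$-to-$L^2$ bound and on the other an $L^2$-to-$L^2$ bound, and \emph{interpolates by Riesz--Thorin} to convert $\normb{(\rho\dot u)_{|t=0}}_{\dot H^{-\check\alpha}}$ into a $\sigma^{\check\alpha}$-weighted $H^1$-estimate \eqref{sec3:eq30}. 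For $\check v$, one uses a second backward dual problem \eqref{sec3:eq31}, Hardy--Littlewood--Sobolev in time and a further duality to get \eqref{sec3:eq34}. The Gr\"onwall producing the exponential of $\int_0^\tau(\norm{\nabla u}_{L^3}^2+\check\phi_\Sigma^{2/\alpha}\norm{\nabla u}_{L^3})$ happens inside these dual estimates, not directly on the $\mathcal A_3$ inequality. Without this split and interpolation, the term $\int_0^{\sigma(t)}\sigma^{\check\alpha}\int\mu|\D\dot u|^2$ is simply uncontrolled and the proposed argument does not close.
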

The proof of the above lemma is the purpose of Section \ref{proof:sec2:lem4}. Prior to that, we derive in Section \ref{basic:estimate} some basic a priori estimates for the solutions to the system \eqref{ep2.1}. From these estimates, we immediately deduce: 
\begin{lemma}\label{sec2:lem5} Let $p\in [6,\infty)$. There exists a constant $\check k=\check k(p,\rho_*,\rho^*)>0$, such that if 
\[
 C\frac{\check{k}}{\mu_*}\sup_{[0,t]}\norm{\llbracket\mu(\rho,c)\rrbracket}_{L^\infty(\Sigma)}\leqslant \frac{1}{2},
\]
then we have
\begin{gather*}
    \int_0^t\Big(\norm{\nabla u, \, G(\rho,c)}_{L^p(\R^3)}+\norm{\dot u}_{L^{6/(2-\alpha)}(\R^3)}\Big)\leqslant C_*C_0\Bigg(1+\check{\phi}_\Sigma^{\frac{3}{\alpha}\big(1-\frac{2}{p}\big)}\Bigg)+ C_*\big(\mathcal{A}_1(t)^{\frac{1}{2}}+\mathcal{A}_2(t)^{\frac{1}{2}}\big);
\end{gather*}
and 
\[
\int_0^t \norm{\rho\dot u}_{L^{3/(1-\alpha)}(\R^3)}\leqslant C_*\check{\mathcal{L}}\big(\check{C}_0,C_0\check{\phi}_\Sigma^{\frac{2}{\alpha}}, \check{\phi}_\Sigma^{\frac{6}{\alpha}}\mathcal{A}_1(t),\, \mathcal{A}_1(t),\, \mathcal{A}_3(t)\big),
\]
where $\check{\mathcal{L}}$ is a polynomial. 
\end{lemma}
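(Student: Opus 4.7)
The plan is to derive both $L^1_t L^p_x$ bounds by combining the decomposition \eqref{sec2:eq3} of $\nabla u$, the pressure ODE recalled after \eqref{intro:eq4}, and the time-weighted energy bounds from Lemma \ref{sec2:lem4}. At each time $t$ I first establish \emph{instantaneous} $L^p$ bounds for $\nabla u(t)$ and $G(\rho,c)(t)$ in terms of $\rho\dot u(t)$ (and its inverse derivative); then I integrate in time, converting $\dot u$-norms into the $\mathcal A_j$'s by spatial Sobolev/Gagliardo--Nirenberg interpolation and H\"older in time against the weights $\sigma,\langle\cdot\rangle$.

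\textbf{Spatial $L^p$ estimate.} Applying $L^p$-continuity of the Riesz-type operators $\mathcal K,\check{\mathcal K}$ and of $\nabla(-\Delta)^{-1}\mathbb P$ to \eqref{sec2:eq1}--\eqref{sec2:eq2} gives
\[
\mu_*\norm{\nabla \mathbb P u}_{L^p}+\norm{\nu(\rho,c)\dvg u-G(\rho,c)}_{L^p}\lesssim \norm{\rho\dot u}_{L^{3p/(3+p)}}+\norm{(-\Delta)^{-1}\dvg(\rho\dot u)}_{L^p}+\norm{\mu(\rho,c)-\widetilde\mu}_{L^\infty(\R^3)}\norm{\nabla u}_{L^p},
\]
while the pressure obeys
\[
\dfrac{d}{dt}\int_{\R^3}H_{p-1}(\rho,c)\,dx+\tfrac{1}{p}\norm{G(\rho,c)}_{L^p}^p\leqslant \tfrac{1}{p}\norm{F}_{L^p}^p,\quad F=-(-\Delta)^{-1}\dvg(\rho\dot u)+[\check{\mathcal K},\mu(\rho,c)-\widetilde\mu](2\D u).
\]
The commutator contributions on the right of both displays are absorbed into the left thanks to the smallness hypothesis: piecewise H\"older regularity (encoded in $\phi_\Sigma$) combined with the closeness of $\rho$ to its equilibria (from \eqref{intro:eq2}) translates the smallness of $\norm{\llbracket\mu(\rho,c)\rrbracket}_{L^\infty(\Sigma)}$ into the smallness of $\norm{\mu(\rho,c)-\widetilde\mu}_{L^\infty(\R^3)}$, whence the constant $\check k(p,\rho_*,\rho^*)$ in the hypothesis.

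\textbf{Time integration of the first bound.} I use Gagliardo--Nirenberg:
\[
\norm{\rho\dot u}_{L^{3p/(3+p)}}\lesssim\norm{\dot u}_{L^2}^{\theta(p)}\norm{\nabla\dot u}_{L^2}^{1-\theta(p)},\qquad \norm{\dot u}_{L^{6/(2-\alpha)}}\lesssim \norm{\dot u}_{L^2}^{(1-\alpha)/2}\norm{\nabla\dot u}_{L^2}^{(1+\alpha)/2},
\]
with $\theta(p)\in(0,1)$ for $p\geqslant 6$. Splitting $\int_0^t=\int_0^1+\int_1^t$ and pairing, via H\"older in time, the factor $\norm{\dot u}_{L^2}$ with the weight $\langle\cdot\rangle^{(1+r_0)/2}$ of $\mathcal A_1$ and the factor $\norm{\nabla\dot u}_{L^2}$ with the weight $\sigma^{1/2}\langle\cdot\rangle^{3(1+r_0)/4}$ of $\mathcal A_2$ yields the desired control by $\mathcal A_1(t)^{1/2}+\mathcal A_2(t)^{1/2}$. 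The low-frequency decay rate $r_0>0$ from \eqref{init:lowfrequency} provides integrability at infinity, while $\sigma$ absorbs the singularity at $t=0$ created by the weakened compatibility condition \eqref{sec:intro:eq1}. The dependence $\check\phi_\Sigma^{3(1-2/p)/\alpha}$ on the piecewise H\"older weight arises by interpolating the spatial estimate between $p=2$ (pure energy, no $\check\phi_\Sigma$) and $p=\infty$, where the Morrey-type embedding $\dot\cC^\alpha_{\pw,\Sigma}\hookrightarrow L^\infty$ costs a factor $\check\phi_\Sigma^{3/\alpha}$.

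\textbf{Second estimate and main obstacle.} For $\int_0^t\norm{\rho\dot u}_{L^{3/(1-\alpha)}}$, when $\alpha\in[1/2,1)$ the Sobolev embedding $H^1\hookrightarrow L^6$ no longer suffices. One then exploits the additional information encoded in $\mathcal A_3$---namely the $\sup$-in-time bound on $\sigma^{1+\check\alpha}\langle\cdot\rangle^{3(1+r_0)/2}\norm{\sqrt\mu\D\dot u}_{L^2}^2$ and the $L^2_{t,x}$ control of $\sqrt\rho\ddot u$ with the same weight---to upgrade the integrability of $\nabla\dot u$. Interpolating $\norm{\rho\dot u}_{L^{3/(1-\alpha)}}$ between the various $\dot u$- and $\nabla\dot u$-norms controlled by $\mathcal A_1,\mathcal A_2,\mathcal A_3$, and paying the time weights via H\"older in time, produces the polynomial $\check{\mathcal L}$. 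The principal technical difficulty of the proof is the bookkeeping of exponents: one must simultaneously balance (i) the spatial interpolation exponents (which force $p\geqslant 6$), (ii) the time integrability at infinity (secured by the decay rate $r_0$ from \eqref{init:lowfrequency}), and (iii) the $\sigma$-weights compensating for the borderline initial regularity \eqref{sec:intro:eq1}. Once these balances are settled, the two displayed estimates follow directly from H\"older in time and the uniform bounds furnished by Lemma \ref{sec2:lem4}.
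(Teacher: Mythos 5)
There is a genuine gap in your absorption mechanism. You write that the $L^p$-estimate coming from \eqref{sec2:eq1}--\eqref{sec2:eq2} produces a commutator contribution bounded by $\norm{\mu(\rho,c)-\widetilde\mu}_{L^\infty(\R^3)}\norm{\nabla u}_{L^p}$, and that the smallness of the \emph{jump} $\norm{\llbracket\mu(\rho,c)\rrbracket}_{L^\infty(\Sigma)}$ can be ``translated'' into smallness of the full \emph{fluctuation} $\norm{\mu(\rho,c)-\widetilde\mu}_{L^\infty(\R^3)}$ via the piecewise H\"older norm and the smallness \eqref{intro:eq2}. That translation fails: \eqref{intro:eq2} only controls the pressure fluctuation in $L^2\cap L^{p_0}$, not in $L^\infty$, so $\rho$ need not be uniformly close to $\widetilde\rho(c)$; and even a piecewise-H\"older $\mu$ with a tiny jump can have an $O(1)$ fluctuation away from the interface. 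The hypothesis of the lemma deliberately bounds only the jump, not the fluctuation, and the whole technical point of the paper (cf.~\eqref{sec1:jumpcondition} and Theorem~\ref{th1}) is to avoid smallness of the fluctuation. The paper's actual absorption uses the refined commutator estimate \eqref{ap:eq6} (Lemma~\ref{ap:commutator}), which splits $[\check{\mathcal{K}},\mu-\widetilde\mu](2\D u)$ into a term proportional to the \emph{jump} times $\norm{\nabla u}_{L^p}$ (the only piece absorbed via the smallness hypothesis, and the source of the constant $\check k$) and a term proportional to the piecewise H\"older semi-norm $\check\phi_\Sigma$ times the lower-integrability norm $\norm{\nabla u}_{L^{6p/(1+p\alpha)}}$; the latter is handled by interpolation and Young, producing the factor $\check\phi_\Sigma^{\frac{3}{\alpha}(1-\frac{2}{p})}$, whose exponent you cannot recover from a naive Morrey argument.

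A second gap: your time-integrated inequality
\[
\dfrac{d}{dt}\int_{\R^3}H_{p-1}(\rho,c)\,dx+\tfrac{1}{p}\norm{G(\rho,c)}_{L^p}^p\leqslant \tfrac{1}{p}\norm{F}_{L^p}^p
\]
yields an $L^p_t L^p_x$ bound on $G$, not the $L^1_t L^p_x$ bound that the lemma states. To get the latter, the paper takes the $p$-th root of the potential energy and exploits the equivalence $\abs{G}^{l+1}\simeq g(\rho,c)H_l$ (see \eqref{sec3:eq46}--\eqref{ep2.8}) to produce the \emph{damped} ODE
\[
p\dfrac{d}{dt}\Big[\int_{\R^3}H_{p-1}(\rho,c)\Big]^{1/p}+\dfrac{\rho_*g_*}{\nu^*}\Big[\int_{\R^3}H_{p-1}(\rho,c)\Big]^{1/p}\leqslant \dfrac{1}{\nu_*}\Big(\tfrac{2\rho^*g^*}{a_{p,*}}\Big)^{1/p'}\norm{F}_{L^p(\R^3)},
\]
whose integration in time directly produces $\int_0^t\norm{G}_{L^p}\lesssim C_*C_0+\int_0^t\norm{F}_{L^p}$. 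Without this damping structure the $L^1_t$ control does not follow, and your H\"older-in-time bookkeeping at the end cannot repair it because it still leaves an $\int_0^t(\dots)^p$ on the wrong side. The remaining parts of your proposal (splitting $\alpha\in(0,1/2)$ vs.\ $\alpha\in[1/2,1)$ and invoking $\mathcal{A}_3$ for the stronger material-acceleration norm, the role of $r_0$ at infinity and $\sigma$ at $t=0$) are aligned with the paper.
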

The proof of Lemma \ref{sec2:lem5} is the purpose of Section \ref{proof:sec2:lem5} below. This concludes the present section.

All the a priori estimates derived in this section will be closed with the help of a bootstrap argument in Section \ref{sec:theorem}. We also prove the compactness of the approximate solutions, thereby obtaining Theorem \ref{th1}.


\section{Proofs of the a priori estimates}\label{lemmasProof}
\subsection{Proof of Lemma \ref{sec2:lem1}}\label{sec3:proof:lem1}
We begin the proof of Lemma \ref{sec2:lem1} by rewriting the expression of the velocity gradient (see \eqref{sec2:eq3}):
\begin{align}
    \nabla u
            =&-\frac{1}{\mu(\rho,c)}\nabla(-\Delta)^{-1}\mathbb P (\rho \dot u)+\nabla (-\Delta)^{-1}\nabla\bigg(\frac{1}{\nu(\rho,c)}(-\Delta)^{-1}\dvg (\rho \dot u)\bigg)-\nabla (-\Delta)^{-1}\nabla \Bigg(\frac{G(\rho,c)}{\nu(\rho,c)}\Bigg)\notag\\
            &+\frac{1}{\mu(\rho,c)}\big[\mathcal{K}, \mu(\rho,c)-\widetilde \mu\big](2\D u)-\nabla (-\Delta)^{-1}\nabla\bigg(\frac{1}{\nu(\rho,c)}\big[\check{\mathcal{K}},\mu(\rho,c)-\widetilde \mu\big](2\D u)\bigg).\label{sec2:eq4}
\end{align}
\begin{proof}[Proof of \eqref{sec2:pwholder}]
We estimate each term appearing on the RHS of the expression above as follows:
\begin{itemize}
    \item \textbf{First term}: The first term in  \eqref{sec2:eq4} is estimated as follows:
    \begin{align*}
        &\abs{\frac{1}{\mu(\rho,c)}\nabla(-\Delta)^{-1}\mathbb P (\rho \dot u)}_{\dot \cC^\alpha_{\pw,\Sigma}(\R^3)}\notag\\
        &\leqslant \frac{1}{\mu_*^2}\abs{\mu(\rho,c)}_{\dot \cC^\alpha_{\pw,\Sigma}(\R^3)}\norm{\nabla(-\Delta)^{-1}\mathbb P (\rho \dot u)}_{L^\infty(\R^3)}
        +\frac{1}{\mu_*}\norm{\nabla(-\Delta)^{-1}\mathbb P (\rho \dot u)}_{\dot \cC^{\alpha}(\R^3)}\notag\\
        &\leqslant \frac{C}{\mu_*^2}\abs{\mu(\rho,c)}_{\dot \cC^\alpha_{\pw,\Sigma}(\R^3)}\norm{\nabla\nabla(-\Delta)^{-1}\mathbb P (\rho \dot u)}_{L^6(\R^3)}^{1/2}\norm{\nabla(-\Delta)^{-1}\mathbb P (\rho \dot u)}_{L^6(\R^3)}^{1/2}+\frac{C}{\mu_*}\norm{\rho \dot u}_{L^{3/(1-\alpha)}(\R^3)}.\\
        &\leqslant \frac{C}{\mu_*^2}\abs{\mu(\rho,c)}_{\dot \cC^\alpha_{\pw,\Sigma}(\R^3)}\norm{\rho\dot u}_{L^6(\R^3)}^{1/2}\norm{\rho \dot u}_{L^2(\R^3)}^{1/2}+\frac{C}{\mu_*}\norm{\rho \dot u}_{L^{3/(1-\alpha)}(\R^3)}.
    \end{align*}
    We used the Gagliardo-Nirenberg's inequality $\lVert g\rVert_{L^\infty(\R^3)}\leqslant \lVert\nabla g\rVert_{L^6(\R^3)}^{1/2}\lVert g\rVert_{L^6(\R^3)}^{1/2}$, the embedding  \\
    $\dot W^{1,3/(1-\alpha)}(\R^3)\hookrightarrow \dot\cC^{\alpha}(\R^3)$ and the continuity of Riesz transforms on $L^p(\R^3)$, with $p\in (1,\infty)$.
    \item \textbf{Second and third  terms}:  The second term in \eqref{sec2:eq4} is a second-order Riesz transform of a discontinuous function. Thus, we apply \eqref{ap:eq23} to obtain:
  \begin{align*}
        &\abs{\nabla (-\Delta)^{-1}\nabla\bigg(\frac{1}{\nu(\rho,c)}(-\Delta)^{-1}\dvg (\rho \dot u)\bigg)}_{\dot \cC^\alpha_{\pw,\Sigma}(\R^3)}\notag\\
        &\leqslant C \abs{\frac{1}{\nu(\rho,c)}(-\Delta)^{-1}\dvg (\rho \dot u)}_{\dot \cC^\alpha_{\pw,\Sigma}(\R^3)}+ C \mathfrak{P}_{\Sigma} \norm{\frac{1}{\nu(\rho,c)}(-\Delta)^{-1}\dvg (\rho \dot u)}_{L^\infty(\R^3)}\abs{\Sigma}_{\dot \cC^{1+\alpha}}\notag\\
        &\leqslant \frac{C}{\mu_*}\norm{\rho \dot u}_{L^{3/(1-\alpha)}(\R^3)} +\frac{C}{\mu_*}\norm{\rho \dot u}_{L^6(\R^3)}^{1/2}\norm{\rho \dot u}_{L^2(\R^3)}^{1/2}\Bigg(\frac{1}{\mu_*}\abs{\nu(\rho,c)}_{\dot \cC^\alpha_{\pw,\Sigma}(\R^3)}+\mathfrak{P}_{\Sigma}\abs{\Sigma}_{\dot \cC^{1+\alpha}}\Bigg).
    \end{align*}
    We derive a similar estimate for the third term
    \begin{align*}
        \abs{\nabla (-\Delta)^{-1}\nabla \Bigg(\frac{G(\rho,c)}{\nu(\rho,c)}\Bigg)}_{\dot \cC^\alpha_{\pw,\Sigma}(\R^3)}&\leqslant \frac{C}{\mu_*}\abs{G(\rho,c)}_{\dot\cC^\alpha_{\pw,\Sigma}(\R^3)}\\
        &+\frac{C}{\mu_*} \norm{G(\rho,c)}_{L^\infty(\R^3)}\Bigg(\frac{1}{\mu_*}\abs{\nu(\rho,c)}_{\dot \cC^\alpha_{\pw,\Sigma}(\R^3)}+\mathfrak{P}_{\Sigma}\abs{\Sigma}_{\dot \cC^{1+\alpha}}\Bigg).
    \end{align*}
    \item\textbf{Remaining terms}: Using \eqref{ap:eq19}, the next-to-last term in \eqref{sec2:eq4} is estimated as follows:
    {\small
    \begin{align}
        &\abs{\frac{1}{\mu(\rho,c)}\big[\mathcal{K}, \mu(\rho,c)-\widetilde \mu\big](2\D u)}_{\dot\cC^\alpha_{\pw,\Sigma}(\R^3)}\notag\\
        &\leqslant \frac{1}{\mu_*^2}\abs{\mu(\rho,c)}_{\dot \cC^\alpha_{\pw,\Sigma}(\R^3)}\norm{\big[\mathcal{K}, \mu(\rho,c)-\widetilde \mu\big]\D u}_{L^\infty(\R^3)}
        +\frac{1}{\mu_*}\abs{\big[\mathcal{K}, \mu(\rho,c)-\widetilde \mu\big]\D u}_{\dot \cC^\alpha_{\pw,\Sigma}(\R^3)}\notag\\
        &\leqslant \frac{1}{\mu_*^2}\abs{\mu(\rho,c)}_{\dot \cC^\alpha_{\pw,\Sigma}(\R^3)}\norm{\big[\mathcal{K}, \mu(\rho,c)-\widetilde \mu\big]\D u}_{L^\infty(\R^3)}\notag\\
        &+\frac{C}{\mu_*}\Big(\abs{\mu(\rho,c)}_{\dot \cC^{\alpha}_{\pw,\Sigma}(\R^3)}+ \big(\ell^{-\alpha}_{\Sigma}+\mathfrak{P}_{\Sigma}\abs{\Sigma}_{\dot \cC^{1+\alpha}}\big)\norm{\llbracket \mu(\rho,c)\rrbracket}_{L^\infty(\Sigma)}\Big)\norm{\nabla \mathbb P u, \D u}_{L^\infty(\R^3)}\notag\\
        &+\frac{C}{\mu_*}\Big( \abs{\mu(\rho,c)}_{\dot \cC^{\alpha/2}_{\pw,\Sigma}(\R^3)}+ \ell^{-\alpha/2}_{\Sigma}\norm{\llbracket \mu(\rho,c)\rrbracket}_{L^\infty(\Sigma)}\Big)\abs{\D u}_{\dot \cC^{\alpha/2}_{\pw,\Sigma}(\R^3)}+\frac{C}{\mu_*}\norm{\llbracket \mu(\rho,c)\rrbracket}_{L^\infty(\Sigma)}\abs{\nabla u}_{\dot \cC^\alpha_{\pw,\Sigma}(\R^3)}.\label{sec2:eq6}
    \end{align}
    }
    The same estimate also holds for $\absb{\frac{1}{\mu(\rho,c)}[\nabla(-\Delta)^{-1}\nabla, \mu(\rho,c)-\widetilde \mu](2\D u)}_{\dot\cC^\alpha_{\pw,\Sigma}(\R^3)}$.
    
      Similarly, we apply \eqref{ap:eq23}-\eqref{ap:eq19} to obtain:
\begin{align}
    &\abs{\nabla (-\Delta)^{-1}\nabla\bigg(\frac{1}{\nu(\rho,c)}\big[\check{\mathcal{K}},\mu(\rho,c)-\widetilde \mu\big](2\D u)\bigg)}_{\dot \cC^\alpha_{\pw,\Sigma}(\R^3)}\notag\\
    &\leqslant \abs{\frac{1}{\nu(\rho,c)}\big[\check{\mathcal{K}},\mu(\rho,c)-\widetilde \mu\big]\D u}_{\dot \cC^\alpha_{\pw,\Sigma}(\R^3)}+\frac{C}{\mu_*}\norm{\big[\check{\mathcal{K}},\mu(\rho,c)-\widetilde \mu\big]\D u}_{L^\infty(\R^3)}\mathfrak{P}_{\Sigma} \abs{\Sigma}_{\dot \cC^{1+\alpha}}\notag\\
    &\leqslant \frac{C}{\mu_*}\Big(\frac{1}{\mu_*}\abs{\nu(\rho,c)}_{\dot\cC^\alpha_{\pw,\Sigma}(\R^3)}+\mathfrak{P}_{\Sigma} \abs{\Sigma}_{\dot \cC^{1+\alpha}}\Big)\norm{\big[\check{\mathcal{K}},\mu(\rho,c)-\widetilde \mu\big]\D u}_{L^\infty(\R^3)}\notag\\
    &+\frac{C}{\mu_*}\Big(\abs{\mu(\rho,c)}_{\dot \cC^{\alpha}_{\pw,\Sigma}(\R^3)}+ \big(\ell^{-\alpha}_{\Sigma}+\mathfrak{P}_{\Sigma} \abs{\Sigma}_{\dot \cC^{1+\alpha}}\big)\norm{\llbracket \mu(\rho,c)\rrbracket}_{L^\infty(\Sigma)}\Big)\norm{ \nabla u}_{L^\infty(\R^3)}\notag\\
        &+\frac{C}{\mu_*}\Big( \abs{\mu(\rho,c)}_{\dot \cC^{\alpha/2}_{\pw,\Sigma}(\R^3)}+ \ell^{-\alpha/2}_{\Sigma}\norm{\llbracket \mu(\rho,c)\rrbracket}_{L^\infty(\Sigma)}\Big)\abs{\D u}_{\dot \cC^{\alpha/2}_{\pw,\Sigma}(\R^3)}+\frac{C}{\mu_*}\norm{\llbracket \mu(\rho,c)\rrbracket}_{L^\infty(\Sigma)}\abs{\nabla u}_{\dot \cC^\alpha_{\pw,\Sigma}(\R^3)}.\notag
\end{align}
\end{itemize}
Combining all these estimates, we arrive at:
{\small
\begin{align}
    \int_0^t\abs{\nabla u}_{\dot \cC^\alpha_{\pw,\Sigma}(\R^3)}
    &\leqslant \frac{C}{\mu_*}\int_0^t\norm{\rho \dot u}_{L^{3/(1-\alpha)}(\R^3)}+\frac{C}{\mu_*} \int_0^t\psi(s)\Big(\frac{1}{\mu_*}\abs{\mu(\rho,c)(s),\nu(\rho,c)(s)}_{\dot \cC^\alpha_{\pw,\Sigma(s)}(\R^3)}+\mathfrak{P}_{\Sigma(s)} \big|\Sigma(s)\big|_{\dot \cC^{1+\alpha}}\Big)\notag\\
    &+\frac{C}{\mu_*}\int_0^t\Big(\abs{\mu(\rho,c)}_{\dot \cC^{\alpha}_{\pw,\Sigma}(\R^3)}+ \big(\ell^{-\alpha}_{\Sigma}+\mathfrak{P}_{\Sigma} \abs{\Sigma}_{\dot \cC^{1+\alpha}}\big)\norm{\llbracket \mu(\rho,c)\rrbracket}_{L^\infty(\Sigma)}\Big)\norm{\nabla\mathbb P u,\nabla u}_{L^\infty(\R^3)}\notag\\
    &+\frac{C}{\mu_*}\Big( \abs{\mu(\rho,c)}_{\dot \cC^{\alpha/2}_{\pw,\Sigma}(\R^3)}+ \ell^{-\alpha/2}_{\Sigma}\norm{\llbracket \mu(\rho,c)\rrbracket}_{L^\infty(\Sigma)}\Big)\abs{\D u}_{\dot \cC^{\alpha/2}_{\pw,\Sigma}(\R^3)}\notag\\
        &+\frac{C}{\mu_*}\int_0^t\norm{\llbracket \mu(\rho,c)\rrbracket}_{L^\infty(\Sigma)}\abs{\nabla u}_{\dot \cC^\alpha_{\pw,\Sigma}(\R^3)}+\frac{C}{\mu_*}\int_0^t\abs{G(\rho,c)}_{\dot\cC^\alpha_{\pw,\Sigma}(\R^3)},\label{sec2:eq8}
\end{align}
}
where 
\[
\psi= \norm{\rho \dot u}_{L^6(\R^3)}^{1/2}\norm{\rho \dot u}_{L^2(\R^3)}^{1/2}+\norm{G(\rho,c), \big[\mathcal{K}, \mu(\rho,c)-\widetilde \mu\big]\D u, \big[\check{\mathcal{K}}, \mu(\rho,c)-\widetilde \mu\big]\D u}_{L^\infty(\R^3)}.
\]

To  estimate the last term above, we first observe that the $(\rho,c)$-dependent function  $f=f(\rho,c)$ satisfies 
\begin{equation*}
    \dpt f(\rho,c)+ u\cdot \nabla f(\rho,c)+ G(\rho,c)=-F.
\end{equation*}
We write the above equation along the velocity flow map $\mathcal{X}$ of the velocity and we obtain 
\begin{equation}\label{sec2:eq18}
\dpt f\big(\varrho(t,y),c_0(y)\big)+G\big(\varrho(t,y),c_0(y)\big)=-F\big(t,\mathcal{X}(t,y)\big),
\end{equation}
where $\varrho(t,y)=\rho(t,\mathcal{X}(t,y)$. Let $y_1$ and $y_2$ be two points located on the same side of $\Sigma$. Evaluating \eqref{sec2:eq18} at $y \in \{y_1, y_2\}$ and taking the difference, we obtain:
\begin{equation*}
    \dpt f\big(\varrho(t,y_j),c_0(y_1)\big)\Big|_{j=1}^{j=2} + g(t,y_1,y_2)f\big(\varrho(t,y_j),c_0(y_1)\big)\Big|_{j=1}^{j=2}= -F\big(t,\mathcal{X}(t,y_j)\big)\Big|_{j=1}^{j=2},
\end{equation*}
where (notice that $c_0(y_1)=c_0(y_2)$)
\[
g(t,y_1,y_2)=\frac{G\big(\varrho(t,y_j),c_0(y_1)\big)\Big|_{j=1}^{j=2}}{f\big(\varrho(t,y_j),c_0(y_1)\big)\Big|_{j=1}^{j=2}}.
\]
It follows that
\begin{align}
f\big(\varrho(t,y_j),c_0(y_1)\big)\Big|_{j=1}^{j=2}&=e^{-\int_0^t g(\tau,y_1,y_2)d\tau}f\big(\rho_0(y_j),c_0(y_1)\big)\Big|_{j=1}^{j=2}\notag\\
&-\int_0^t e^{\int_\tau^t g(\tau',y_1,y_2)d\tau' }F\big(\tau,\mathcal{X}(\tau,y_j)\big)\Big|_{j=1}^{j=2}d\tau,\label{sec2:eq12}
\end{align}
and 
\begin{align*}
    G\big(\varrho(t,y_j),c_0(y_1)\big)\Big|_{j=1}^{j=2}&=g(t,y_1,y_2) e^{-\int_0^t g(\tau,y_1,y_2)d\tau}f\big(\rho_0(y_j),c_0(y_1)\big)\Big|_{j=1}^{j=2}\\
    &-g(t,y_1,y_2)\int_0^t e^{-\int_\tau^t g(\tau',y_1,y_2)d\tau' }F\big(\tau,\mathcal{X}(\tau,y_j)\big)\Big|_{j=1}^{j=2}d\tau.
\end{align*}
Under the assumption \eqref{sec2:eq25}, we have  (recall that $c_0(y_1)=c_0(y_2)$)
\[
 g(t,y_1,y_2)\in \big[a_*(c_0(y_1)),a^*(c_0(y_1))\big],
\]
and hence
\begin{align}
    \abs{G\big(\varrho(t,y_j),c_0(y_1)\big)\Big|_{j=1}^{j=2}}&\leqslant a^*(c_0(y_1)) e^{- t a_*(c_0(y_1)) }\abs{f(\rho_0,c_0)}_{\dot \cC^\alpha_{\pw,\Sigma_0}(\R^3)}\abs{y_2-y_1}^\alpha\notag\\
    &+a^*(c_0(y_1))\int_0^t e^{-(t-\tau)a_*(c_0(y_1))}\abs{F(\tau)}_{\dot\cC^\alpha_{\pw,\Sigma(\tau)}(\R^3)}\abs{\mathcal{X}(\tau,y_2)-\mathcal{X}(\tau,y_1)}^{\alpha}d\tau.\label{sec2:eq5}
\end{align}
Recalling that the velocity flow map satisfies 
\[
\mathcal{X}(t,y_j)=y_j +\int_0^{t} u\big(s,\mathcal{X}(s,y_j)\big) d s,
\]
we deduce that for all $0\leqslant t'\leqslant t$
\[
\abs{\mathcal{X}(t',y_2)-\mathcal{X}(t',y_1)}\leqslant \abs{\mathcal{X}(t,y_2)-\mathcal{X}(t,y_1)}+\int_{t'}^{t} \norm{\nabla u(s)}_{L^\infty(\R^3)}\abs{\mathcal{X}(s,y_2)-\mathcal{X}(s,y_1)} d s,
\]
and Gr\"onwall's Lemma yields
\begin{equation*}
    \abs{\mathcal{X}(t',y_2)-\mathcal{X}(t',y_1)}\leqslant e^{\int_{t'}^t\norm{\nabla u(s)}_{L^\infty(\R^3)}ds} \abs{\mathcal{X}(t,y_2)-\mathcal{X}(t,y_1)}.
\end{equation*}
Returning to \eqref{sec2:eq5}, we find that
\begin{align}
    &\abs{G\big(\varrho(t,y_j),c_0(y_1)\big)\Big|_{j=1}^{j=2}}\notag\\
    &\leqslant a^*(c_0(y_1)) e^{- t a_*(c_0(y_1)) +\alpha\int_0^t\norm{\nabla u(s)}_{L^\infty(\R^3)}ds}\abs{f(\rho_0,c_0)}_{\dot \cC^\alpha_{\pw,\Sigma_0}(\R^3)}\abs{\mathcal{X}(t,y_2)-\mathcal{X}(t,y_1)}^\alpha\notag\\
    &+a^*(c_0(y_1))\abs{\mathcal{X}(t,y_2)-\mathcal{X}(t,y_1)}^\alpha\int_0^t e^{-(t-\tau) a_*(c_0(y_1))+\alpha\int_\tau^t\norm{\nabla u(s)}_{L^\infty(\R^3)}ds}\abs{F(\tau)}_{\dot\cC^\alpha_{\pw,\Sigma(\tau)}(\R^3)}d\tau,\notag
\end{align}
and hence
\begin{align*}
\abs{G(\rho(t),c(t))}_{\dot \cC^\alpha(\overline D(t))}&\leqslant \kappa^*(0) e^{- a_*(0) t+\alpha\int_0^t\norm{\nabla u(s)}_{L^\infty(\R^3)}ds}\abs{f(\rho_0,c_0)}_{\dot \cC^\alpha_{\pw,\Sigma_0}(\R^3)}\notag\\
    &+a^*(0)\int_0^t e^{-(t-\tau)a_*(0)+\alpha\int_\tau^t\norm{\nabla u(s)}_{L^\infty(\R^3)}ds}\abs{F(\tau)}_{\dot\cC^\alpha_{\pw,\Sigma(\tau)}(\R^3)}d\tau,
\end{align*}
and 
\begin{align*}
\abs{G(\rho(t),c(t))}_{\dot \cC^\alpha(\R^3\setminus D(t))}&\leqslant a^*(1) e^{- a_*(1) t+\alpha\int_0^t\norm{\nabla u(s)}_{L^\infty(\R^3)}ds}\abs{f(\rho_0,c_0)}_{\dot \cC^\alpha_{\pw,\Sigma_0}(\R^3)}\notag\\
    &+a^*(1)\int_0^t e^{-(t-\tau)a_*(1)+\alpha\int_\tau^t\norm{\nabla u(s)}_{L^\infty(\R^3)}ds}\abs{F(\tau)}_{\dot\cC^\alpha_{\pw,\Sigma(\tau)}(\R^3)}d\tau.
\end{align*}
Then, we deduce (recalling \eqref{sec2:eq12})
\begin{align}
\abs{f(\rho(t),c(t))}_{\dot \cC^\alpha_{\pw,\Sigma(t)}(\R^3)}&+\int_0^t\abs{G(\rho,c)}_{\dot \cC^\alpha_{\pw,\Sigma}(\R^3)}\notag\\
&\leqslant \overline{a} e^{\alpha\int_0^t\norm{\nabla u}_{L^\infty(\R^3)}}\Bigg(\abs{f(\rho_0,c_0)}_{\dot \cC^\alpha_{\pw,\Sigma_0}(\R^3)}+\int_0^t\abs{F(\tau)}_{\dot\cC^\alpha_{\pw,\Sigma}(\R^3)}\Bigg).\label{sec2:eq14}
\end{align}

To obtain an estimate for the piecewise H\"older semi-norm of 
\begin{equation}\label{sec2:eq15}
F=-(-\Delta)^{-1}\dvg (\rho \dot u)+\big[\check{\mathcal{K}}, \mu(\rho,c)-\widetilde\mu\big](2\D u),
\end{equation}
we use the embedding $\dot W^{1,3/(1-\alpha)}(\R^3)\hookrightarrow \dot\cC^\alpha(\R^3)$, and we follow the computations leading to  \eqref{sec2:eq6} and we arrive at 
{\small
\begin{align}
  &\abs{f(\rho(t),c(t))}_{\dot \cC^\alpha_{\pw,\Sigma(t)}(\R^3)}+\int_0^t\abs{G(\rho,c)}_{\dot \cC^\alpha_{\pw,\Sigma}(\R^3)}\notag\\
  &\leqslant  \overline a e^{\alpha \int_0^t\norm{\nabla u}_{L^\infty(\R^3)}}\Bigg(\abs{f(\rho_0,c_0)}_{\dot \cC^\alpha_{\pw,\Sigma_0}(\R^3)}+C\int_0^t\norm{\rho \dot u}_{L^{3/(1-\alpha)}(\R^3)}\Bigg)\notag\\
  &+C\overline a e^{\alpha\int_0^t\norm{\nabla u}_{L^\infty(\R^3)}}\int_0^t\Big(\abs{\mu(\rho,c)}_{\dot \cC^{\alpha}_{\pw,\Sigma}(\R^3)}+ \big(\ell^{-\alpha}_{\Sigma}+\mathfrak{P}_{\Sigma} \abs{\Sigma}_{\dot \cC^{1+\alpha}}\big)\norm{\llbracket \mu(\rho,c)\rrbracket}_{L^\infty(\Sigma)}\Big)\norm{ \nabla u}_{L^\infty(\R^3)}\notag\\
  &+C\overline a e^{\alpha\int_0^t\norm{\nabla u}_{L^\infty(\R^3)}}\int_0^t\Big( \abs{\mu(\rho,c)}_{\dot \cC^{\alpha/2}_{\pw,\Sigma}(\R^3)}+ \ell^{-\alpha/2}_{\Sigma}\norm{\llbracket \mu(\rho,c)\rrbracket}_{L^\infty(\Sigma)}\Big)\abs{\nabla u}_{\dot \cC^{\alpha/2}_{\pw,\Sigma}(\R^3)}\notag\\
  &+C\overline a e^{\alpha\int_0^t\norm{\nabla u}_{L^\infty(\R^3)}}\sup_{[0,t]}\norm{\llbracket \mu(\rho,c)\rrbracket}_{L^\infty(\Sigma)}\int_0^t\abs{\nabla u}_{\dot \cC^\alpha_{\pw,\Sigma}(\R^3)}. \label{sec3:eq1}
\end{align}
}
Now, we combine the above estimate with \eqref{sec2:eq8}, and we use the smallness condition on the shear viscosity jump \eqref{sec2:eq7} to derive (recall that $\absb{ \mu(\rho,c), \nu(\rho,c)}_{\dot \cC^\beta_{\pw,\Sigma}(\R^3)}\leqslant C_*\absb{f(\rho,c)}_{\dot \cC^\beta_{\pw,\Sigma}(\R^3)}$):
\begin{align}
    &\abs{f(\rho(t),c(t))}_{\dot \cC^\alpha_{\pw,\Sigma(t)}(\R^3)}+\int_0^t\abs{\nabla u, G(\rho,c)}_{\dot \cC^\alpha_{\pw,\Sigma}(\R^3)}\notag\\
    &\leqslant C_* e^{\alpha \int_0^t\norm{\nabla u}_{L^\infty(\R^3)}}\Bigg(\abs{f(\rho_0,c_0)}_{\dot \cC^\alpha_{\pw,\Sigma_0}(\R^3)}+\int_0^t\norm{\rho \dot u}_{L^{3/(1-\alpha)}(\R^3)}\Bigg)\notag\\
    &+C_* e^{\alpha\int_0^t\norm{\nabla u}_{L^\infty(\R^3)}} \int_0^t\Big(\abs{f(\rho,c)}_{\dot \cC^{\alpha}_{\pw,\Sigma}(\R^3)}+ \ell^{-\alpha}_{\Sigma}+\mathfrak{P}_{\Sigma} \abs{\Sigma}_{\dot \cC^{1+\alpha}}\Big)\Big( \psi+\norm{\nabla \mathbb P u,\, \nabla u}_{L^\infty(\R^3)}\Big)\notag\\
  &+C_* e^{\alpha\int_0^t\norm{\nabla u}_{L^\infty(\R^3)}}\int_0^t\Big( \abs{f(\rho,c)}_{\dot \cC^{\alpha/2}_{\pw,\Sigma}(\R^3)}+ \ell^{-\alpha/2}_{\Sigma}\norm{\llbracket \mu(\rho,c)\rrbracket}_{L^\infty(\Sigma)}\Big)\abs{\nabla u}_{\dot \cC^{\alpha/2}_{\pw,\Sigma}(\R^3)}.\notag
\end{align}
With the help of Young's and interpolation inequalities, we estimate the last term above as follows ($\eta>0$):
\begin{align*}
    &C_* e^{\alpha\int_0^t\norm{\nabla u}_{L^\infty(\R^3)}}\int_0^t\Big( \abs{f(\rho,c)}_{\dot \cC^{\alpha/2}_{\pw,\Sigma}(\R^3)}+ \ell^{-\alpha/2}_{\Sigma}\norm{\llbracket \mu(\rho,c)\rrbracket}_{L^\infty(\Sigma)}\Big)\abs{\nabla u}_{\dot \cC^{\alpha/2}_{\pw,\Sigma}(\R^3)}\\
    &\leqslant C_* e^{\alpha\int_0^t\norm{\nabla u}_{L^\infty(\R^3)}}\int_0^t\Big(\abs{f(\rho,c)}_{\dot \cC^{\alpha/2}_{\pw,\Sigma}(\R^3)}+ \ell^{-\alpha/2}_{\Sigma}\norm{\llbracket \mu(\rho,c)\rrbracket}_{L^\infty(\Sigma)}\Big)\norm{\nabla u}_{L^\infty(\R^3)}^{\frac{1}{2}}\abs{\nabla u}_{\dot \cC^{\alpha}_{\pw,\Sigma}(\R^3)}^{\frac{1}{2}}\\
    &\leqslant \eta \int_0^t \abs{\nabla u}_{\dot \cC^{\alpha}_{\pw,\Sigma}(\R^3)} +\frac{C_*}{\eta} e^{2\alpha\int_0^t\norm{\nabla u}_{L^\infty(\R^3)}}\int_0^t\Big( \abs{f(\rho,c)}_{\dot \cC^{\alpha/2}_{\pw,\Sigma}(\R^3)}^2+ \ell^{-\alpha}_{\Sigma}\norm{\llbracket \mu(\rho,c)\rrbracket}_{L^\infty(\Sigma)}^2\Big)\norm{\nabla u}_{L^\infty(\R^3)}\\
    &\leqslant \eta \int_0^t \abs{\nabla u}_{\dot \cC^{\alpha}_{\pw,\Sigma}(\R^3)} +\frac{C_*}{\eta} e^{2\alpha\int_0^t\norm{\nabla u}_{L^\infty(\R^3)}}\int_0^t\Big( \abs{f(\rho,c)}_{\dot \cC^{\alpha}_{\pw,\Sigma}(\R^3)}+ \ell^{-\alpha}_{\Sigma}\Big)\norm{\nabla u}_{L^\infty(\R^3)}.
\end{align*}
Choosing $\eta$ small, we finally obtain:
{\small
\begin{align}
    \abs{f(\rho(t),c(t))}_{\dot \cC^\alpha_{\pw,\Sigma(t)}(\R^3)}&+\int_0^t\abs{\nabla u, P(\rho,c)}_{\dot \cC^\alpha_{\pw,\Sigma}(\R^3)}\notag\\
    &\leqslant C_* e^{2\alpha \int_0^t\norm{\nabla u}_{L^\infty(\R^3)}}\Bigg[\abs{f(\rho_0,c_0)}_{\dot \cC^\alpha_{\pw,\Sigma_0}(\R^3)}+\int_0^t\norm{\rho \dot u}_{L^{3/(1-\alpha)}(\R^3)}\notag\\
    &+ \int_0^t\Big(\abs{f(\rho,c)}_{\dot \cC^{\alpha}_{\pw,\Sigma}(\R^3)}+ \ell^{-\alpha}_{\Sigma}+\mathfrak{P}_{\Sigma} \abs{\Sigma}_{\dot \cC^{1+\alpha}}\Big)\Big( \psi+\norm{\nabla \mathbb P u, \nabla u}_{L^\infty(\R^3)}\Big)\Bigg].\label{sec2:eq9}
\end{align}
}
We now proceed to estimate $\abs{\Sigma(t)}_{\dot \cC^{1+\alpha}}$, $\ell_{\Sigma(t)}^{-\alpha}$ and $\mathfrak{P}_{\Sigma(t)}$ as they appear in the above bound.

First, let $\big(\tau_{0,j}, B_j\big)$, $j = 1, \dots, J$, denote the charts introduced in Definition \ref{def:interface} corresponding to the initial interface $\Sigma_0$. Since $\Sigma(t)$ is a free surface, it is covered by the family of charts $\big(\tau_j(t), B_j\big)$, where 
\[
\tau_j(t,s)= \tau_{0,j}(s)+\int_0^t u\big(t',\tau_j(t',s)\big) dt'.
\]
From this, it follows that 
\[
\norm{\nabla \tau_j(t)}_{L^\infty}\leqslant \norm{\nabla \tau_{0,j}}_{L^\infty} e^{\int_0^t\norm{\nabla u}_{L^\infty(\R^3)}}, \quad \abs{\Sigma(t)}_{\text{inf}}\geqslant  \abs{\Sigma_0}_{\text{inf}} e^{-\int_0^t \norm{\nabla u}_{L^\infty(\R^3)}}
\]
and
\[
\abs{\nabla\tau_j(t)}_{\dot \cC^\alpha}\leqslant  \abs{\nabla \tau_{0,j}}_{\dot \cC^\alpha}+e^{\alpha\int_0^t\norm{\nabla u}_{L^\infty(\R^3)}}\int_0^t\Big(\norm{\nabla u}_{\dot \cC^\alpha_{\pw,\Sigma}(\R^3)}+\norm{\nabla u}_{L^\infty(\R^3)}\abs{\nabla\tau_j}_{\dot \cC^\alpha}\Big).
\]
We conclude that 
\[
\mathfrak{P}_{\Sigma(t)}\leqslant \mathfrak{P}_{\Sigma_0} e^{C\int_0^t \norm{\nabla u}_{L^\infty(\R^3)}}
\]
and 
\[
\big|\Sigma(t)\big|_{\dot \cC^{1+\alpha}}\leqslant  \big|\Sigma_0\big|_{\dot \cC^{1+\alpha}}+e^{\alpha\int_0^t\norm{\nabla u}_{L^\infty(\R^3)}}\int_0^t\Big(\norm{\nabla u}_{\dot \cC^\alpha_{\pw,\Sigma}(\R^3)}+\norm{\nabla u}_{L^\infty(\R^3)}\big|\Sigma\big|_{\dot \cC^{1+\alpha}}\Big).
\]

On the other hand,  we have  
\[
D(t)=\big\{x\in \R^3\colon \varphi(t,x)>0\big\}, \text{ where } \varphi(t)=  \varphi_0\circ \mathcal{X}^{-1}(t).
\]
It is not difficult to prove
\[
\abs{\nabla \varphi(t)}_{\text{inf}}\geqslant \abs{\nabla \varphi_0}_{\text{inf}} e^{-\int_0^t\norm{\nabla u}_{L^\infty(\R^3)}},
\]
and 
\[
\abs{\nabla \varphi(t)}_{\dot\cC^\alpha}\leqslant \abs{\nabla \varphi_{0}}_{\dot \cC^\alpha}+e^{\alpha\int_0^t\norm{\nabla u}_{L^\infty(\R^3)}}\int_0^t\Big(\norm{\nabla u}_{\dot \cC^\alpha_{\pw,\Sigma}(\R^3)}+\norm{\nabla u}_{L^\infty(\R^3)}\abs{\nabla\varphi}_{\dot \cC^\alpha}\Big).
\]

 Finally, we come back to  \eqref{sec2:eq9}  and we obtain
\begin{align}
\check\vartheta(t) &\leqslant C_* e^{C \int_0^t\norm{\nabla u}_{L^\infty(\R^3)}}\Bigg(\big|\Sigma_0\big|_{\dot \cC^{1+\alpha}}+\abs{\nabla \varphi_{0}}_{\dot \cC^\alpha}+\abs{f(\rho_0,c_0)}_{\dot \cC^\alpha_{\pw,\Sigma_0}(\R^3)}+\int_0^t\norm{\rho \dot u}_{L^{3/(1-\alpha)}(\R^3)}\Bigg)\notag\\
    &+C_* e^{C\int_0^t\norm{\nabla u}_{L^\infty(\R^3)}} \int_0^t\check\psi\Big(\abs{f(\rho,c)}_{\dot \cC^{\alpha}_{\pw,\Sigma}(\R^3)}+ \abs{\nabla \varphi}_{\dot\cC^\alpha}+ \abs{\Sigma}_{\dot \cC^{1+\alpha}}\Big),\label{sec2:eq10}
\end{align}
where 
\begin{equation*}
\check\psi= K_0 \Big( \psi+\norm{\nabla\mathbb P u, \nabla u}_{L^\infty(\R^3)}\Big),\;\text{ and }\; K_0=1+\abs{\nabla\varphi_0}_{\text{inf}}^{-1}+\mathfrak{P}_{\Sigma_0}.
\end{equation*}
 Gr\"onwall's Lemma yields
\begin{align}
    \check\vartheta(t)
    &\leqslant C_*\Bigg( \abs{ \Sigma_{0}}_{\dot \cC^{1+\alpha}}+\abs{\nabla \varphi_0}_{\dot\cC^\alpha}+\abs{f(\rho_0,c_0)}_{\dot \cC^\alpha_{\pw,\Sigma_0}(\R^3)}+\int_0^t\norm{\rho \dot u}_{L^{3/(1-\alpha)}(\R^3)}\Bigg)\notag\\
    &\times \exp\Bigg(C\int_0^t\norm{\nabla u}_{L^\infty(\R^3)}+C_* e^{C\int_0^t\norm{\nabla u}_{L^\infty(\R^3)}}\int_0^t \check\psi(s)ds\Bigg).\notag
\end{align}
Estimate \eqref{sec2:pwholder} finally follows from the fact that  
\begin{align}
\int_0^t \psi(s)ds\leqslant C_*\vartheta(t)+C_*\int_0^t\norm{ \rho\dot u}_{L^6(\R^3)}^{1/2}\norm{\rho \dot u}_{L^2(\R^3)}^{1/2}.\label{sec2:eq13}
\end{align}
This concludes the first part of the proof of Lemma \ref{sec2:lem1}.
\end{proof}
\begin{proof}[Proof of \eqref{sec2:Lipschtz}]
In this second part of the proof of Lemma \ref{sec2:lem1}, we aim to obtain a Lipschitz estimate for the velocity field. To this end, we proceed by estimating each term appearing in \eqref{sec2:eq4}. For clarity, our estimates will be expressed in terms of the functional $\phi_\Sigma$ and $\check{\phi}_\Sigma$ defined in \eqref{sec2:eq27} above.
\begin{itemize}
\item \textbf{First, second and fourth terms}: Similarly to \eqref{sec2:eq13}, we apply the Gagliardo–Nirenberg's, Sobolev's and Young's inequalities to obtain:
\begin{gather}\label{sec2:eq31}
\int_0^t\norm{\frac{1}{\mu(\rho,c)}\nabla(-\Delta)^{-1}\mathbb P (\rho \dot u)}_{L^\infty(\R^3)}\leqslant C_*\int_0^t\Big(\norm{\sqrt\rho \dot u}_{L^2(\R^3)}+\norm{\rho \dot u}_{L^{6/(2-\alpha)}(\R^3)}\Big).
\end{gather}
Next, we express the second term in \eqref{sec2:eq4} as follows:
\begin{align*}
\nabla (-\Delta)^{-1}\nabla\bigg(\frac{1}{\nu(\rho,c)}(-\Delta)^{-1}\dvg (\rho \dot u)\bigg)&=\Big[\nabla (-\Delta)^{-1}\nabla,\frac{1}{\nu(\rho,c)}\Big](-\Delta)^{-1}\dvg (\rho \dot u)\\
&+\frac{1}{\nu(\rho,c)}\nabla (-\Delta)^{-1}\nabla(-\Delta)^{-1}\dvg (\rho \dot u).
\end{align*}
Estimate \eqref{sec2:eq31} also applies to the last term above. For the remaining term, we use the commutator estimate \eqref{ap:eq10} with $\alpha'=\alpha/2$, $p=r\in (6/\alpha,\infty)$, and we obtain:
\begin{align}
    &\int_0^t\norm{\Big[\nabla (-\Delta)^{-1}\nabla,\frac{1}{\nu(\rho,c)}\Big](-\Delta)^{-1}\dvg (\rho \dot u)}_{L^\infty(\R^3)}\notag\\
    &\leqslant \frac{C}{\mu_*^2}\int_0^t\Big(\abs{\nu(\rho,c)}_{\dot \cC^{\alpha/2}_{\pw,\Sigma}(\R^3)}+ \ell_{\Sigma}^{-\alpha/2}\norm{\nu(\rho,c)-\widetilde\nu}_{L^\infty(\R^3)}\Big)\norm{(-\Delta)^{-1}\dvg (\rho \dot u)}_{L^{p}(\R^3)}\notag\\
    &+\frac{C}{\mu_*^2}\int_0^t\norm{\llbracket\nu(\rho,c)\rrbracket}_{L^\infty(\Sigma)}\norm{(-\Delta)^{-1}\dvg(\rho \dot u)}_{\cC^{\alpha/2}(\R^3)}\notag\\
    &\leqslant C_* \int_0^t \phi_\Sigma\norm{(-\Delta)^{-1}\dvg (\rho \dot u)}_{L^{p}(\R^3)}+C_*\int_0^t\Big(\norm{\sqrt\rho \dot u}_{L^2(\R^3)}+\norm{\rho \dot u}_{L^{6/(2-\alpha)}(\R^3)}\Big).\label{sec2:eq24}
\end{align}
We also apply \eqref{ap:eq10} to estimate the second-to-last term in \eqref{sec2:eq4} as follows:
    \begin{align}
        \int_0^t\norm{\frac{1}{\mu(\rho,c)}\big[\mathcal{K},\mu(\rho,c)-\widetilde \mu\big](2\D u)}_{L^\infty(\R^3)}&\leqslant C_*\int_0^t\phi_\Sigma\norm{\nabla u}_{L^{p}(\R^3)}\notag\\
        &+ \frac{C}{\mu_*}\sup_{[0,t]}\norm{\llbracket \mu(\rho,c)\rrbracket}_{L^\infty(\Sigma)}\int_0^t\norm{\nabla u}_{\cC^{\alpha/2}_{\pw,\Sigma}(\R^3)}.\label{sec2:eq19}
    \end{align}
\item \textbf{Remaining terms}:
We express the third and last terms in \eqref{sec2:eq4} as follows:
    \begin{align}\label{sec2:eq49}
-\nabla (-\Delta)^{-1}\nabla &\Bigg(\frac{1}{\nu(\rho,c)}\Big(G(\rho,c)+\big[\check{\mathcal{K}},\mu(\rho,c)-\widetilde \mu\big](2\D u)\Big)\Bigg)\notag\\
&=-\Big[\nabla (-\Delta)^{-1}\nabla, \frac{1}{\nu(\rho,c)}\Big]\Big(G(\rho,c)+\big[\check{\mathcal{K}},\mu(\rho,c)-\widetilde \mu\big](2\D u)\Big) \notag\\
&-\frac{1}{\nu(\rho,c)}\nabla (-\Delta)^{-1}\nabla \Big(G(\rho,c)+\big[\check{\mathcal{K}},\mu(\rho,c)-\widetilde \mu\big](2\D u)\Big).
    \end{align}
Similarly as in the preceding step, we apply \eqref{ap:eq10} and we obtain 
\begin{align*}
    &\int_0^t\norm{\Big[\nabla (-\Delta)^{-1}\nabla,\frac{1}{\nu(\rho,c)}\Big]\Big(G(\rho,c)+\big[\check{\mathcal{K}},\mu(\rho,c)-\widetilde \mu\big](2\D u)\Big)}_{L^\infty(\R^3)}\\
    &\leqslant \frac{C}{\mu_*}\int_0^t\phi_\Sigma\Big(\norm{G(\rho,c)}_{L^p(\R^3)}+\norm{\mu(\rho,c)-\widetilde\mu}_{L^\infty(\R^3)}\norm{\nabla u}_{L^{p}(\R^3)}\Big)\\
    &+\frac{C}{\mu_*^2}\sup_{[0,t]}\norm{\llbracket\nu(\rho,c)\rrbracket}_{L^\infty(\Sigma)}\int_0^t\norm{G(\rho,c)+\big[\check{\mathcal{K}},\mu(\rho,c)-\widetilde \mu\big](2\D u)}_{\cC^{\alpha/2}_{\pw,\Sigma}(\R^3)}\\
    &\leqslant \frac{C}{\mu_*}\int_0^t\phi_\Sigma\Big(\norm{G(\rho,c)}_{L^p(\R^3)}+\norm{\mu(\rho,c)-\widetilde\mu}_{L^\infty(\R^3)}\norm{\nabla u}_{L^{p}(\R^3)}\Big)+\frac{C\mu^*}{\mu_*^2}\vartheta(t)\sup_{[0,t]}\norm{\llbracket\nu(\rho,c)\rrbracket}_{L^\infty(\Sigma)}\\
    &+\frac{C}{\mu_*^2}\sup_{[0,t]}\norm{\llbracket\nu(\rho,c)\rrbracket}_{L^\infty(\Sigma)}\int_0^t\abs{G(\rho,c)+\big[\check{\mathcal{K}},\mu(\rho,c)-\widetilde \mu\big](2\D u)}_{\dot \cC^{\alpha/2}_{\pw,\Sigma}(\R^3)}.
\end{align*}
Applying \eqref{ap:eq41} with $\beta=\alpha/2$, we obtain:
\begin{align}\label{sec2:eq51}
\int_0^t \abs{\big[\check{\mathcal{K}},\mu(\rho,c)-\widetilde \mu\big](2\D u)}_{\dot \cC^{\alpha/2}_{\pw,\Sigma}(\R^3)}&\leqslant C\Big(\abs{\mu(\rho,c)}_{\dot\cC^\alpha(\R^3)}+ \ell_{\Sigma}^{-\alpha}\norm{\llbracket \mu(\rho,c)\rrbracket}_{L^\infty(\Sigma)}\Big)\norm{\nabla u}_{L^{6/\alpha}(\R^3)}\notag\\
&+C\int_0^t\Big(\abs{\llbracket \mu(\rho,c)\rrbracket }_{\dot \cC^{\alpha/2}(\Sigma)}+ \big(\ell^{-\alpha/2}_{\Sigma}+\mathfrak{P}_{\Sigma} \abs{\Sigma}_{\dot \cC^{1+\alpha/2}}\big)\norm{\llbracket \mu(\rho,c)\rrbracket}_{L^\infty(\Sigma)}\Big)\norm{\nabla u}_{L^\infty(\R^3)}\notag\\
&+ C\int_0^t\norm{\llbracket\mu(\rho,c)\rrbracket}_{L^\infty(\Sigma)}\abs{\nabla u}_{\dot \cC^{\alpha/2}_{\pw,\Sigma}(\R^3)}, 
\end{align}
and following the computations leading to \eqref{sec2:eq14}, we arrive at: 
\begin{align}
    &\abs{f(\rho(t),c(t))}_{\dot \cC^{\alpha/2}_{\pw,\Sigma(t)}(\R^3)}+\int_0^t\abs{G(\rho,c)}_{\dot \cC^{\alpha/2}_{\pw,\Sigma}(\R^3)}\notag\\
&\leqslant \overline{a} e^{\alpha\int_0^t\norm{\nabla u}_{L^\infty(\R^3)}}\Bigg(\abs{f(\rho_0,c_0)}_{\dot \cC^{\alpha/2}_{\pw,\Sigma_0}(\R^3)}+\int_0^t\abs{F(\tau)}_{\dot\cC^{\alpha/2}_{\pw,\Sigma}(\R^3)}\Bigg)\notag\\
&\leqslant C\overline{a} e^{\alpha\int_0^t\norm{\nabla u}_{L^\infty(\R^3)}}\Bigg(\abs{f(\rho_0,c_0)}_{\dot \cC^{\alpha/2}_{\pw,\Sigma_0}(\R^3)}+\int_0^t\big(\norm{\rho \dot u}_{L^{6/(2-\alpha)}(\R^3)}+\norm{\llbracket \mu(\rho,c)\rrbracket}_{L^\infty(\Sigma)}\abs{\nabla u}_{\dot \cC^{\alpha/2}_{\pw,\Sigma}(\R^3)}\big)\Bigg)\notag\\
&+C\overline{a} e^{\alpha\int_0^t\norm{\nabla u}_{L^\infty(\R^3)}}\sup_{[0,t]}\Big(\abs{\mu(\rho,c)}_{\dot\cC^\alpha(\R^3)}+ \ell_{\Sigma}^{-\alpha}\norm{\llbracket \mu(\rho,c)\rrbracket}_{L^\infty(\Sigma)}\Big)\int_0^t\norm{\nabla u}_{L^{6/\alpha}(\R^3)}\notag\\
&+C\overline{a} e^{\alpha\int_0^t\norm{\nabla u}_{L^\infty(\R^3)}}\int_0^t\Big(\abs{\llbracket \mu(\rho,c)\rrbracket }_{\dot \cC^{\alpha/2}(\Sigma)}+ \big(\ell^{-\alpha/2}_{\Sigma}+\mathfrak{P}_{\Sigma} \abs{\Sigma}_{\dot \cC^{1+\alpha/2}}\big)\norm{\llbracket \mu(\rho,c)\rrbracket}_{L^\infty(\Sigma)}\Big)\norm{\nabla u}_{L^\infty(\R^3)}.\label{sec2:eq50}
\end{align}
Gathering all the above estimates, we deduce the following bound for the first term in the expression \eqref{sec2:eq49}:
\begin{align}
    &\int_0^t\norm{\Big[\nabla (-\Delta)^{-1}\nabla,\frac{1}{\nu(\rho,c)}\Big]\Big(G(\rho,c)+\big[\check{\mathcal{K}},\mu(\rho,c)-\widetilde \mu\big](2\D u)\Big)}_{L^\infty(\R^3)}\notag\\
    &\leqslant \frac{C\mu^*}{\mu_*^2}\vartheta(t)\sup_{[0,t]}\norm{\llbracket\nu(\rho,c)\rrbracket}_{L^\infty(\Sigma)}+\frac{C}{\mu_*}\int_0^t\phi_\Sigma\Big(\norm{G(\rho,c)}_{L^p(\R^3)}+\norm{\mu(\rho,c)-\widetilde\mu}_{L^\infty(\R^3)}\norm{\nabla u}_{L^{p}(\R^3)}\Big)\notag\\
    &+\frac{C\overline{a}}{\mu_*^2}\sup_{[0,t]}\norm{\llbracket\nu(\rho,c)\rrbracket}_{L^\infty(\Sigma)}e^{\alpha\int_0^t\norm{\nabla u}_{L^\infty(\R^3)}}\Bigg\{\abs{f(\rho_0,c_0)}_{\dot \cC^{\alpha/2}_{\pw,\Sigma_0}(\R^3)}+\int_0^t\norm{\rho \dot u}_{L^{6/(2-\alpha)}(\R^3)}\notag\\
    &+\sup_{[0,t]}\norm{\llbracket \mu(\rho,c)\rrbracket}_{L^\infty(\Sigma)}\int_0^t\abs{\nabla u}_{\dot \cC^{\alpha/2}_{\pw,\Sigma}(\R^3)}+\sup_{[0,t]}\Big(\abs{\mu(\rho,c)}_{\dot\cC^\alpha(\R^3)}+ \ell_{\Sigma}^{-\alpha}\norm{\llbracket \mu(\rho,c)\rrbracket}_{L^\infty(\Sigma)}\Big)\int_0^t\norm{\nabla u}_{L^{6/\alpha}(\R^3)}\notag\\
    &+\sup_{[0,t]}\Big(\abs{\llbracket \mu(\rho,c)\rrbracket }_{\dot \cC^{\alpha/2}(\Sigma)}+ \big(\ell^{-\alpha/2}_{\Sigma}+\mathfrak{P}_{\Sigma} \abs{\Sigma}_{\dot \cC^{1+\alpha/2}}\big)\norm{\llbracket \mu(\rho,c)\rrbracket}_{L^\infty(\Sigma)}\Big)\int_0^t\norm{\nabla u}_{L^\infty(\R^3)}\Bigg\}.\label{sec2:eq17}
\end{align}

    For the remaining term of \eqref{sec2:eq49}, we shall write 
    \[
    w:= G(\rho,c)+\big[\check{\mathcal{K}},\mu(\rho,c)-\widetilde \mu\big](2\D u)=w^e+ (w-w^e),
    \] 
    where $w^e$ denotes a $\cC^\alpha$-extension of  $w$ that verifies \eqref{ap:eq11}-\eqref{ap:eq5}-\eqref{ap:eq15}-\eqref{ap:eq9}. With the help of \eqref{ap:eq17}, we estimate
    \begin{align}
        \int_0^t \norm{\frac{1}{\nu(\rho,c)}\nabla (-\Delta)^{-1}\nabla\big(w-w^e\big)}_{L^\infty(\R^3)}&\leqslant \frac{C}{\mu_*} \int_0^t \Big(\ell_{\Sigma}^{-\frac{3}{p}}\norm{w-w^e}_{L^{p}(\R^3)}+\norm{w-w^e}_{\cC^{\alpha/2}_{\pw,\Sigma}(\R^3)} \Big)\notag\\
        &\leqslant \frac{C}{\mu_*} \int_0^t \Big(\ell_{\Sigma}^{-\frac{\alpha}{2}}\norm{\llbracket w\rrbracket}_{L^\infty\cap L^{p}(\Sigma)}
        +\abs{\llbracket w\rrbracket}_{\dot \cC^{\alpha/2}(\Sigma)}\Big).\label{sec2:eq11}
    \end{align}
Then, we use the logarithmic interpolation inequality 
\begin{gather}\label{sec2:eq33}
\norm{\nabla (-\Delta)^{-1}\nabla w^e}_{L^\infty(\R^3)}\leqslant C \norm{w^e}_{L^\infty(\R^3)}\Bigg(1+\log^+\Bigg(\frac{\abs{w^e}_{\dot \cC^{\alpha/2}(\R^3)}}{\norm{ w^e}_{L^\infty(\R^3)}}\Bigg)\Bigg)
\end{gather}
and we obtain  (with the help of \eqref{ap:eq11}-\eqref{ap:eq9})
\begin{align*}
 \norm{\frac{1}{\nu(\rho,c)}\nabla (-\Delta)^{-1}\nabla w^e}_{L^\infty(\R^3)}\leqslant \frac{C}{\mu_*}\norm{w}_{L^\infty(\R^3)} \Bigg(1+\log^+\Bigg(\frac{\abs{w}_{\dot \cC^{\alpha/2}_{\pw,\Sigma}(\R^3)}+\ell_{\Sigma}^{-\alpha/2}\norm{\llbracket w\rrbracket}_{L^\infty(\Sigma)}}{\norm{w}_{L^\infty(\R^3)}}\Bigg)\Bigg).
\end{align*}
Following  the computations that lead to \eqref{sec2:eq14}, we arrive at (here $\underline{a}_*=\min_c a_*(c)$)
\begin{align*}
    &\abs{G(\rho,c)(t)}_{\dot \cC^{\alpha/2}_{\pw,\Sigma(t)}(\R^3)}\\
    &\leqslant C_*e^{\alpha\int_0^t\norm{\nabla u}_{L^\infty(\R^3)}}\Bigg(e^{-\underline{a}_* t}\abs{f(\rho_0,c_0)}_{\dot \cC^{\alpha/2}_{\pw,\Sigma_0}(\R^3)}+\int_0^t e^{-\underline{a}_* (t-s)}\abs{F(s)}_{\dot \cC^{\alpha/2}_{\pw,\Sigma(s)}(\R^3)}ds\bigg),
\end{align*}
and \eqref{sec2:eq51} implies 
\begin{align*}
    &\abs{w(t)}_{\dot \cC^{\alpha/2}_{\pw,\Sigma(t)}(\R^3)}\leqslant C_*e^{\alpha\int_0^t\norm{\nabla u}_{L^\infty(\R^3)}}\Bigg\{ e^{-\underline{a}_* t}\abs{f(\rho_0,c_0)}_{\dot \cC^{\alpha/2}_{\pw,\Sigma_0}(\R^3)}+\int_0^te^{-\underline{a}_* (t-s)}\norm{\rho \dot u}_{L^{6/(2-\alpha)}(\R^3)}\\
    &+\abs{\big[\check{\mathcal{K}},\mu(\rho,c)-\widetilde \mu\big](2\D u)(t)}_{\dot \cC^{\alpha/2}_{\pw,\Sigma(t)}(\R^3)}+\int_0^t e^{-\underline{a}_* (t-s)}\abs{\big[\check{\mathcal{K}},\mu(\rho,c)-\widetilde \mu\big](2\D u)(s)}_{\dot \cC^{\alpha/2}_{\pw,\Sigma(s)}(\R^3)}ds\Bigg\}.
\end{align*}
It follows that 
\begin{align}
    &\int_0^t\norm{\frac{1}{\nu(\rho,c)}\nabla (-\Delta)^{-1}\nabla w^e}_{L^\infty(\R^3)}\leqslant C_*\int_0^t \norm{w(s)}_{L^\infty(\R^3)}\Bigg\{1+ \int_0^s \norm{\nabla u}_{L^\infty(\R^3)}\notag\\
    &+\log^+\Bigg[\frac{C_*}{\norm{w(s)}_{L^\infty(\R^3)}}\Bigg(e^{-\underline{a}_* s}\abs{f(\rho_0,c_0)}_{\dot \cC^{\alpha/2}_{\pw,\Sigma_0}(\R^3)}+\ell_{\Sigma(s)}^{-\alpha/2}\norm{\llbracket w(s)\rrbracket}_{L^\infty(\Sigma(s))}+\int_0^se^{-\underline{a}_* (s-s')}\norm{\dot u(s')}_{L^{6/(2-\alpha)}(\R^3)}ds'\notag\\
    &+\abs{\big[\check{\mathcal{K}},\mu(\rho,c)-\widetilde \mu\big](2\D u)(s)}_{\dot \cC^{\alpha/2}_{\pw,\Sigma(s)}(\R^3)}+ \int_0^se^{-\underline{a}_* (s-s')}\abs{\big[\check{\mathcal{K}},\mu(\rho,c)-\widetilde \mu\big](2\D u)}_{\dot \cC^{\alpha/2}_{\pw,\Sigma}(\R^3)}(s')ds'\Bigg)\Bigg]\Bigg\}\notag.
\end{align}
Next, using the elementary bound $\log^+(x)\leqslant \frac{2}{e}\sqrt{x}$, H\"older's inequality and \eqref{sec2:eq51} we infer that
\begin{align}
    &\int_0^t\norm{\frac{1}{\nu(\rho,c)}\nabla (-\Delta)^{-1}\nabla w^e}_{L^\infty(\R^3)}\notag\\
 &\leqslant C_*\abs{f(\rho_0,c_0)}_{\dot \cC^{\alpha/2}_{\pw,\Sigma_0}(\R^3)}+C_*\int_0^t\Big(\norm{\rho \dot u}_{L^{6/(2-\alpha)}(\R^3)}+\norm{w}_{L^\infty(\R^3)} \big(1+\norm{\nabla u}_{L^1((0,s),L^\infty(\R^3))}\big)\Big)ds\notag\\
 &+C_*\Bigg(\int_0^t\norm{w}_{L^\infty(\R^3)}\Bigg)^{1/2} \Bigg(\int_0^t\bigg(\ell_{\Sigma}^{-\alpha/2}\norm{\llbracket w\rrbracket}_{L^\infty(\Sigma)} + \abs{\big[\check{\mathcal{K}},\mu(\rho,c)-\widetilde \mu\big](2\D u)}_{\dot \cC^{\alpha/2}_{\pw,\Sigma}(\R^3)}\bigg)\Bigg)^{1/2}\notag\\
 &\leqslant  C_*\abs{f(\rho_0,c_0)}_{\dot \cC^{\alpha/2}_{\pw,\Sigma_0}(\R^3)}+C_*\int_0^t\Big(\norm{\rho \dot u}_{L^{6/(2-\alpha)}(\R^3)}+\norm{w}_{L^\infty(\R^3)} \big(1+\norm{\nabla u}_{L^1((0,s),L^\infty(\R^3))}\big)\Big)ds\notag\\
 &+C_*\Bigg(\int_0^t\norm{w}_{L^\infty(\R^3)}\Bigg)^{1/2} \Bigg\{\int_0^t\Big(\ell_{\Sigma}^{-\alpha/2}\norm{\llbracket w\rrbracket}_{L^\infty(\Sigma)}+\norm{\llbracket \mu(\rho,c)\rrbracket}_{L^\infty(\Sigma)}\abs{\nabla u}_{\dot \cC^{\alpha/2}_{\pw,\Sigma}(\R^3)}\Big)\notag\\
 &+\vartheta(t)\sup_{[0,t]}\Big(\abs{\llbracket \mu(\rho,c)\rrbracket }_{\dot \cC^{\alpha/2}(\Sigma)}+ \big(\ell^{-\alpha/2}_{\Sigma}+\mathfrak{P}_{\Sigma} \abs{\Sigma}_{\dot \cC^{1+\alpha/2}}\big)\norm{\llbracket \mu(\rho,c)\rrbracket}_{L^\infty(\Sigma)}\Big)\notag\\
 &+\sup_{[0,t]}\Big(\abs{\mu(\rho,c)}_{\dot\cC^\alpha(\R^3)}+ \ell_{\Sigma}^{-\alpha}\norm{\llbracket \mu(\rho,c)\rrbracket}_{L^\infty(\Sigma)}\Big)\int_0^t\norm{\nabla u}_{L^{6/\alpha}(\R^3)}\Bigg\}^{1/2}.\label{sec2:eq21}
\end{align}

We will now estimate the $L^\infty$-bound of $w$ and $\llbracket w\rrbracket$ appearing in the above estimate and \eqref{sec2:eq11}. To achieve this, we go back to \eqref{sec2:eq18} and we write
\begin{equation}\label{sec2:eq28}
    f\big(\varrho(t,y),c_0(y)\big)= e^{-\int_0^t \check{g}(t',y)dt'} f\big(\rho_0(y),c_0(y)\big) -\int_0^t e^{-\int_{t'}^t \check{g}(\tau,y)d\tau}  F\big(t', \mathcal{X}(t',y)\big)d t',
\end{equation}
where (from \eqref{sec2:eq25})
\[
\check{g}(t,y)=\frac{G\big(\varrho(t,y),c_0(y)\big)}{f\big(\varrho(t,y),c_0(y)\big)}\in \big[a_*(c_0(y)), a^*(c_0(y))\big].
\]
We can now readily deduce that
\begin{align*}
\norm{f(\rho,c)(t)}_{L^\infty(\R^3)}+\int_0^t \norm{G(\rho,c)}_{L^\infty(\R^3)}\leqslant \overline{a}\norm{f(\rho_0,c_0)}_{L^\infty(\R^3)}+\overline{a}\int_0^t \norm{F(t')}_{L^\infty(\R^3)}dt'.
\end{align*}
Recalling the expression of $F$ in \eqref{sec2:eq15} and applying the Gagliardo-Nirenberg's inequality together 
with \eqref{sec2:eq19}, we obtain
\begin{align}
    \norm{f(\rho,c)(t)}_{L^\infty(\R^3)}&+\int_0^t \norm{G(\rho,c),[\check{\mathcal{K}}, \mu(\rho,c)-\widetilde\mu]\D u}_{L^\infty(\R^3)}\leqslant \overline{a}\norm{f(\rho_0,c_0)}_{L^\infty(\R^3)}\notag\\
    &+C\overline{a}\int_0^t \big(\norm{\rho \dot u}_{L^2(\R^3)}+\norm{\rho \dot u}_{L^{6/(2-\alpha)}(\R^3)}+\phi_\Sigma\norm{\nabla u}_{L^{p}(\R^3)}\big)\notag\\
        &+ C\overline{a}\sup_{[0,t]}\norm{\llbracket \mu(\rho,c)\rrbracket}_{L^\infty(\Sigma)}\int_0^t\norm{\nabla u}_{\cC^{\alpha/2}_{\pw,\Sigma}(\R^3)}.\label{sec2:eq30}
\end{align}
 
Next, we take the jump across the discontinuity surface in \eqref{sec2:eq2} and we obtain:
\begin{gather}\label{sec2:eq35}
\left\llbracket \nu(\rho,c)\dvg u -G(\rho,c)\right\rrbracket=\Big\llbracket \big[\check{\mathcal{K}},\mu(\rho,c)-\widetilde \mu\big](2\D u)\Big\rrbracket.
\end{gather}
We observe that the balance of forces at the discontinuity surface suggests that the normal component of the stress tensor is continuous, namely: 
\begin{gather}\label{sec2:eq45}
\llbracket 2\mu(\rho,c)\D u\cdot n_x+\big(\lambda(\rho,c)\dvg u- G(\rho,c)\big) n_x\rrbracket=0.
\end{gather}
where $n_x$ denotes the outward normal vector of the surface. Scalar product with $n_x$ leads to: 
\begin{gather}\label{sec2:eq34}
\llbracket 2\mu(\rho,c) n_x\cdot \D u\cdot n_x + \lambda(\rho,c)\dvg u - G(\rho,c)\rrbracket=0.
\end{gather}
Let $\tau_x$ and $\overline\tau_x$ be two  orthonormal vector fields spanning the tangent plane to $\Sigma$, which can be obtained from $n_x\times \vec{e}_j$, where $\vec e_j$, $j=1,2,3$ are the vectors of the canonical basis of $\R^3$.  Then we have (here we use the notation $\partial_w g= w\cdot \nabla g$):
\[
n_x\cdot \D u\cdot n_x= n_x\cdot \partial_{n_x} u=\dvg u-\tau_x\cdot \partial_{\tau_x} u-\overline\tau_x\cdot \partial_{\overline\tau_x} u.
\]
Hence \eqref{sec2:eq34} implies
\begin{equation}\label{sec2:eq36}
\llbracket \nu(\rho,c)\dvg u- G(\rho,c)\rrbracket= 2\llbracket \mu(\rho,c)\rrbracket \big(\tau\cdot \partial_{\tau} u+\overline\tau\cdot \partial_{\overline\tau} u\big),
\end{equation}
and consequently (see \eqref{sec2:eq35})
\[
\llbracket [\check{\mathcal{K}},\mu(\rho,c)-\widetilde \mu ](2\D u)\rrbracket= 2\llbracket \mu(\rho,c)\rrbracket  \big(\tau\cdot \partial_{\tau} u+\overline\tau\cdot \partial_{\overline\tau} u\big).
\]
It immediately follows that for all $q\in [1,\infty]$,
\begin{gather*}
    \norm{\llbracket [\check{\mathcal{K}},\mu(\rho,c)-\widetilde \mu](2\D u)\rrbracket}_{L^q(\Sigma)}\leqslant C\norm{\llbracket \mu(\rho,c)\rrbracket}_{L^q(\Sigma)}\norm{\nabla u}_{L^\infty(\R^3)}.
\end{gather*}
From \eqref{sec2:eq16}, we readily obtain
\begin{gather}\label{sec2:eq52}
\abs{n_x,\,\tau_x,\,\overline{\tau}_x}_{\dot \cC^{\alpha}(\Sigma)}\leqslant C \ell_\Sigma^{-\alpha},
\end{gather}
and therefore 
\begin{align*}
\abs{\llbracket [\check{\mathcal{K}},\mu(\rho,c)-\widetilde \mu](2\D u)\rrbracket}_{\dot \cC^\beta(\Sigma)}&\leqslant C\Big(\abs{\llbracket \mu(\rho,c)\rrbracket}_{\dot \cC^\beta(\Sigma)}+\ell_\Sigma^{-\beta}\norm{\llbracket\mu(\rho,c)\rrbracket}_{L^\infty(\Sigma)}\Big)\norm{\nabla u}_{L^\infty(\R^3)}\\
&+C\norm{\llbracket\mu(\rho,c)\rrbracket}_{L^\infty(\Sigma)}\abs{\nabla u}_{\dot\cC^\beta_{\pw,\Sigma}(\R^3)}.
\end{align*}
Gathering \eqref{sec2:eq17}-\eqref{sec2:eq11}-\eqref{sec2:eq21}-\eqref{sec2:eq30} together with the computations above, we infer:
{\small
\begin{align}
    &\int_0^t\norm{\nabla (-\Delta)^{-1}\nabla\Bigg(\frac{1}{\nu(\rho,c)}\Big(G(\rho,c)+\big[\check{\mathcal{K}},\mu(\rho,c)-\widetilde \mu\big](2\D u)\Big)\Bigg)}_{L^\infty(\R^3)}\notag\\
    &\leqslant C_* \norm{f(\rho_0,c_0)}_{L^\infty(\R^3)}+\int_0^t\norm{\rho \dot u}_{L^{2}(\R^3)}+C_*\int_0^t\phi_\Sigma\norm{G(\rho,c),\,\nabla u}_{L^{p}(\R^3)}+C_*\int_0^t\norm{w(s)}_{L^\infty(\R^3)}\vartheta(s)ds\notag\\
&+C_*\Big(1+e^{\alpha\vartheta(t)}\sup_{[0,t]}\norm{\llbracket\nu(\rho,c)\rrbracket}_{L^\infty(\Sigma)}\Big)\Bigg\{\abs{f(\rho_0,c_0)}_{\dot \cC^{\alpha/2}_{\pw,\Sigma_0}(\R^3)}+\int_0^t\norm{\rho \dot u}_{L^{6/(2-\alpha)}(\R^3)}\notag\\ 
&+\sup_{[0,t]}\Big(\abs{\mu(\rho,c)}_{\dot\cC^\alpha(\R^3)}+ \ell_{\Sigma}^{-\alpha}\norm{\llbracket \mu(\rho,c)\rrbracket}_{L^\infty(\Sigma)}\Big)\int_0^t\norm{\nabla u}_{L^{6/\alpha}(\R^3)}\Bigg\}+C_* \int_0^t \Big(\ell_{\Sigma}^{-\frac{\alpha}{2}}\norm{\llbracket G(\rho,c)\rrbracket}_{L^\infty\cap L^{p}(\Sigma)}
        +\abs{\llbracket G(\rho,c)\rrbracket}_{\dot \cC^{\alpha/2}(\Sigma)}\Big)\notag\\
    &+C_*\check{\vartheta}(t)\sup_{[0,t]} \norm{\llbracket \mu(\rho,c)\rrbracket}_{\cC^{\alpha/2}(\Sigma)}\Big(1+e^{\alpha\vartheta(t)}\sup_{[0,t]}\norm{\llbracket\nu(\rho,c)\rrbracket}_{L^\infty(\Sigma)}\Big)\notag\\
    &+C_*\vartheta(t)\Bigg( \check{\phi}_\Sigma e^{\alpha \vartheta(t)} \sup_{[0,t]}\norm{\llbracket\nu(\rho,c)\rrbracket}_{L^\infty(\Sigma)} +\big(\ell^{-\alpha/2}_{\Sigma}+\mathfrak{P}_{\Sigma} \abs{\Sigma}_{\dot \cC^{1+\alpha/2}}\big) \sup_{[0,t]} \norm{\llbracket \mu(\rho,c)\rrbracket}_{L^\infty\cap L^{p}(\Sigma)}\Bigg)\notag\\
    &+\frac{C}{\mu_*}\vartheta(t)\Bigg(\sup_{[0,t]} \abs{\llbracket \mu(\rho,c)\rrbracket}_{\dot \cC^{\alpha/2}(\Sigma)}+\frac{1}{\mu_*}\sup_{[0,t]}\norm{\llbracket\nu(\rho,c)\rrbracket}_{L^\infty(\Sigma)}\Big(\mu^*+\overline{a}e^{\alpha\int_0^t\norm{\nabla u}_{L^\infty(\R^3)}}\sup_{[0,t]}\norm{\llbracket \mu(\rho,c)\rrbracket}_{L^\infty(\Sigma)}\Big)\Bigg).\label{sec2:eq20}
\end{align}
}
We have thus completed the estimate for the third and last terms of \eqref{sec2:eq4}.
\end{itemize}
{\small 
Gathering \eqref{sec2:eq20}-\eqref{sec2:eq31}-\eqref{sec2:eq24}-\eqref{sec2:eq19} we deduce the bound below:
\begin{align}
    &\int_0^t\norm{\nabla u}_{L^\infty(\R^3)}+\frac{1}{\mu_*}\int_0^t \norm{G(\rho,c),\, \big[\mathcal{K},\mu(\rho,c)-\widetilde \mu\big](\D u),\, \big[\check{\mathcal{K}},\mu(\rho,c)-\widetilde \mu\big](\D u}_{L^\infty(\R^3)}\notag\\
    &\leqslant C_* \norm{f(\rho_0,c_0)}_{L^\infty(\R^3)}+C_*\int_0^t\norm{\dot u}_{L^{2}(\R^3)}+C_*\int_0^t\phi_\Sigma\norm{G(\rho,c),\,\nabla u\, (-\Delta)^{-1}\dvg (\rho \dot u)}_{L^{p}(\R^3)}+C_*\int_0^t\norm{w(s)}_{L^\infty(\R^3)}\vartheta(s)ds\notag\\
&+C_*\Big(1+e^{\alpha\vartheta(t)}\sup_{[0,t]}\norm{\llbracket\nu(\rho,c)\rrbracket}_{L^\infty(\Sigma)}\Big)\Bigg\{\abs{f(\rho_0,c_0)}_{\dot \cC^{\alpha/2}_{\pw,\Sigma_0}(\R^3)}+\int_0^t\norm{\rho \dot u}_{L^{6/(2-\alpha)}(\R^3)}\notag\\ 
&+\sup_{[0,t]}\Big(\abs{\mu(\rho,c)}_{\dot\cC^\alpha(\R^3)}+ \ell_{\Sigma}^{-\alpha}\norm{\llbracket \mu(\rho,c)\rrbracket}_{L^\infty(\Sigma)}\Big)\int_0^t\norm{\nabla u}_{L^{6/\alpha}(\R^3)}\Bigg\}+C_* \int_0^t \Big(\ell_{\Sigma}^{-\frac{\alpha}{2}}\norm{\llbracket G(\rho,c)\rrbracket}_{L^\infty\cap L^{p}(\Sigma)}
        +\abs{\llbracket G(\rho,c)\rrbracket}_{\dot \cC^{\alpha/2}(\Sigma)}\Big)\notag\\
    &+C_*\check{\vartheta}(t)\sup_{[0,t]} \norm{\llbracket \mu(\rho,c)\rrbracket}_{\cC^{\alpha/2}(\Sigma)}\Big(1+e^{\alpha\vartheta(t)}\sup_{[0,t]}\norm{\llbracket\nu(\rho,c)\rrbracket}_{L^\infty(\Sigma)}\Big)\notag\\
    &+C_*\vartheta(t)\Bigg( \check{\phi}_\Sigma e^{\alpha \vartheta(t)} \sup_{[0,t]}\norm{\llbracket\nu(\rho,c)\rrbracket}_{L^\infty(\Sigma)} +\big(\ell^{-\alpha/2}_{\Sigma}+\mathfrak{P}_{\Sigma} \abs{\Sigma}_{\dot \cC^{1+\alpha/2}}\big) \sup_{[0,t]} \norm{\llbracket \mu(\rho,c)\rrbracket}_{L^\infty\cap L^{p}(\Sigma)}\Bigg)\notag\\
    &+\frac{C}{\mu_*}\vartheta(t)\Bigg(\sup_{[0,t]} \abs{\llbracket \mu(\rho,c)\rrbracket}_{\dot \cC^{\alpha/2}(\Sigma)}+\frac{1}{\mu_*}\sup_{[0,t]}\norm{\llbracket\nu(\rho,c)\rrbracket}_{L^\infty(\Sigma)}\Big(\mu^*+\overline{a}e^{\alpha\int_0^t\norm{\nabla u}_{L^\infty(\R^3)}}\sup_{[0,t]}\norm{\llbracket \mu(\rho,c)\rrbracket}_{L^\infty(\Sigma)}\Big)\Bigg). \label{sec2:eq32}
\end{align}
}
By applying the Leray projector $\mathbb P$ to \eqref{sec2:eq4} and repeating the same computations as above, we recover estimate \eqref{sec2:eq32} for $\nabla \mathbb P u$, so that we can include  $\normb{\nabla\mathbb P u)}_{L^\infty(\R^3)}$  on the LHS of \eqref{sec2:eq32}. Then, 
we use the smallness of
\[
\frac{C}{\mu_*}\Bigg[\sup_{[0,t]} \abs{\llbracket \mu(\rho,c)\rrbracket}_{\dot \cC^{\alpha/2}(\Sigma)}+\frac{1}{\mu_*}\sup_{[0,t]}\norm{\llbracket\nu(\rho,c)\rrbracket}_{L^\infty(\Sigma)}\Big(\mu^*+\overline{a}e^{\alpha\int_0^t\norm{\nabla u}_{L^\infty(\R^3)}}\sup_{[0,t]}\norm{\llbracket \mu(\rho,c)\rrbracket}_{L^\infty(\Sigma)}\Big)\Bigg]
\]
to absorb the last term above into the LHS. Finally, we apply Gr\"onwall's Lemma to obtain
{\small
\begin{align}
    \vartheta(t)&\leqslant C_* \Bigg\{\norm{f(\rho_0,c_0)}_{L^\infty(\R^3)}+\int_0^t\norm{\dot u}_{L^{2}(\R^3)}+\int_0^t\phi_\Sigma\norm{G(\rho,c),\,\nabla u\, (-\Delta)^{-1}\dvg (\rho \dot u)}_{L^{p}(\R^3)}\notag\\
&+\Big(1+\delta(t) e^{\alpha\vartheta(t)}\Big)\Bigg[\abs{f(\rho_0,c_0)}_{\dot \cC^{\alpha/2}_{\pw,\Sigma_0}(\R^3)}+\int_0^t\norm{\rho \dot u}_{L^{6/(2-\alpha)}(\R^3)}+\Big(\sup_{[0,t]}\abs{\mu(\rho,c)}_{\dot\cC^\alpha(\R^3)}+ \delta(t)\ell_{\Sigma}^{-\alpha}\Big)\int_0^t\norm{\nabla u}_{L^{6/\alpha}(\R^3)}\Bigg]\notag\\
&+\check{\vartheta}(t)\sup_{[0,t]} \norm{\llbracket \mu(\rho,c)\rrbracket}_{\cC^{\alpha/2}(\Sigma)}\Big(1+\delta(t)e^{\alpha\vartheta(t)}\Big)+ \int_0^t \Big(\ell_{\Sigma}^{-\frac{\alpha}{2}}\norm{\llbracket G(\rho,c)\rrbracket}_{L^\infty\cap L^{p}(\Sigma)}
        +\abs{\llbracket G(\rho,c)\rrbracket}_{\dot \cC^{\alpha/2}(\Sigma)}\Big)\notag\\
    &+\vartheta(t)\delta(t)\Bigg( \check{\phi}_\Sigma e^{\alpha \vartheta(t)}  +\big(\ell^{-\alpha/2}_{\Sigma}+\mathfrak{P}_{\Sigma} \abs{\Sigma}_{\dot \cC^{1+\alpha/2}}\big)\Bigg)\Bigg\}e^{C_*\int_0^t\normb{w}_{L^\infty(\R^3)}},\notag
\end{align}
}
where 
\[
\delta(t)= \sup_{[0,t]} \norm{\llbracket \mu(\rho,c),\, \nu(\rho,c)\rrbracket}_{L^\infty\cap L^{p}(\Sigma)}. 
\]
The estimate \eqref{sec2:Lipschtz} follows upon recalling \eqref{sec2:eq30}.
\end{proof}

\subsection{Proof of Lemma \ref{sec2:lem3}}\label{prooflem3}
\begin{proof}
    We first recall that the $(\rho,c)$-dependent function $f=f(\rho,c)$ satisfies 
    \[
    \dpt f(\rho,c)+ u\cdot\nabla f(\rho,c)+ G(\rho,c)= -F.
    \]
    We write this equation along the flow map of the velocity  and we then take jump at the discontinuity surface to obtain:
    \[
    \dpt \llbracket f(\varrho,c_0)\rrbracket + \llbracket G(\varrho,c_0)\rrbracket=-\llbracket F\circ \mathcal{X}\rrbracket.
    \]
    Here, $\mathcal{X}$ denotes the flow map of the fluid velocity and $\varrho=\rho\circ \mathcal{X}$. The expression \eqref{sec2:eq36} then implies
    \begin{gather*}
    \dpt \llbracket f(\varrho,c_0)\rrbracket + \llbracket G(\varrho,c_0)\rrbracket=-2\llbracket \mu(\varrho,c_0)\rrbracket\Big( \tau_x \cdot\partial_{\tau_x} u+\overline{\tau}_x\cdot\partial_{\overline{\tau}_x} u\Big)\circ \mathcal{X}.
    \end{gather*}
 Under the assumption \eqref{sec1:jumpcondition}, we write the pressure jump in term of the jump of $f(\rho,c)$, and we  obtain 
    \begin{gather}\label{sec3:eq2}
    \dpt \llbracket f(\varrho,c_0)\rrbracket + g_P\llbracket f(\varrho,c_0)\rrbracket= -2\llbracket \mu(\varrho,c_0)\rrbracket\big( \tau_x \cdot\partial_{\tau_x} u+\overline{\tau}_x\cdot\partial_{\overline{\tau}_x} u\big)\circ \mathcal{X},
    \end{gather}
    where 
    \[
    g_P=\frac{\llbracket \Psi_P\big(f(\varrho,c_0)\big)\rrbracket}{\llbracket f(\varrho,c_0)\rrbracket}=\int_0^1\Psi_P'\big(rf(\varrho^{+},1)+(1-r)f(\varrho^{-},0)\big)dr.
    \]
From \eqref{psiderivative}-\eqref{sec2:eq25}, we have $g_P\in  [a_{*},a^{*}]\subset (0,\infty)$; and it follows that 
    \begin{align}\label{sec2:eq39}
      \abs{\llbracket f(\varrho(t),c_0)\rrbracket}&\leqslant e^{- a_{*} t} \abs{\llbracket f(\rho_0,c_0)\rrbracket}+C\int_0^te^{- a_{*} (t-t')} \abs{\llbracket \mu(\varrho(t'),c_0)\rrbracket}\norm{\nabla u(t')}_{L^\infty(\R^3)}dt'.
    \end{align}
    From this, it is straightforward to deduce \eqref{sec2:eq37}.

Taking difference in \eqref{sec3:eq2}, we obtain
\begin{align*}
    \dpt \llbracket f(\varrho,c_0)\rrbracket\Big|_{s_1}^{s_2} + g_P(s_2)\llbracket f(\varrho,c_0)\rrbracket\Big|_{s_1}^{s_2}&= -2\llbracket \mu(\varrho,c_0)\rrbracket\big( \tau_x \cdot\partial_{\tau_x} u+\overline{\tau}_x\cdot\partial_{\overline{\tau}_x} u\big)\circ \mathcal{X}\Big|_{s_1}^{s_2}\\
    &-\big(g_P(s_2)-g_P(s_1)\big)\llbracket f(\varrho(s_1),c_0)\rrbracket,
\end{align*}
and whence, 
\begin{align}
     \abs{\llbracket f(\varrho(t),c_0)\rrbracket\Big|_{s_1}^{s_2}} &\leqslant  e^{- a_{*}t}\abs{\llbracket f(\varrho_0,c_0)\rrbracket\Big|_{s_1}^{s_2}}+\int_0^te^{-a_{*}(t-t')}\abs{g_P(s_2)-g_P(s_1)}\abs{\llbracket f(\varrho,c_0)(s_1)\rrbracket} dt'\notag\\
    &+C\int_0^te^{-a_{*}(t-t')}\abs{\llbracket \mu(\varrho,c_0)\rrbracket\big( \tau_x \cdot\partial_{\tau_x} u+\overline{\tau}_x\cdot\partial_{\overline{\tau}_x} u\big)\circ \mathcal{X}\Big|_{s_1}^{s_2}}dt'.\label{sec2:eq43}
\end{align}
Since we have 
\begin{align}
g_P(s_2)-g_P(s_1)&=\int_0^1\int_0^1 \Big( r f\big(\varrho^+(s),1\big)\Big|_{s_1}^{s_2}+(1-r) f\big(\varrho^-(s),0\big)\Big|_{s_1}^{s_2}\Big)\Psi_P''\big( r'\big(rf(\varrho^{+}(s_2),1)+(1-r)f(\varrho^{-}(s_2),0)\big)\notag\\
& +(1-r')\big(r f(\varrho^{+}(s_1),1)+(1-r)f(\varrho^{-}(s_1),0)\big)\big)drdr',\label{sec2:eq42}
\end{align}
it is not difficult to derive (we use \eqref{sec2:eq52}) 
\begin{align*}
     \abs{\llbracket f(\rho,c)(t)\rrbracket}_{\dot\cC^\alpha(\Sigma(t))}&\leqslant  e^{-a_{*}t+ \alpha\int_0^t\normb{\nabla u}_{L^\infty(\R^3)}} \abs{\llbracket f(\rho_0,c_0)\rrbracket}_{\dot\cC^\alpha(\Sigma_0)}\\
     &+C\int_0^te^{-a_{*}(t-t')+\alpha\int_0^t\normb{\nabla u}_{L^\infty(\R^3)}}\abs{\llbracket \mu(\rho,c)\rrbracket}_{\dot\cC^\alpha(\Sigma)}\norm{\nabla u}_{L^\infty(\R^3)}dt'\notag\\
     &+C\int_0^te^{-a_{*}(t-t')+\alpha\int_0^t\normb{\nabla u}_{L^\infty(\R^3)}}\norm{\llbracket \mu(\rho,c)\rrbracket}_{L^\infty(\Sigma)}\big( \ell_{\Sigma}^{-\alpha}\norm{\nabla u}_{L^\infty(\R^3)}+\abs{\nabla u}_{\dot\cC^\alpha_{\pw,\Sigma}(\R^3)}\big)dt'\notag\\
    &+C_*\int_0^te^{-a_{*}(t-t')+\alpha\int_0^t\normb{\nabla u}_{L^\infty(\R^3)}}\abs{f(\rho,c)}_{\dot \cC^\alpha_{\pw,\Sigma}(\R^3)}\norm{\llbracket f(\rho,c)\rrbracket}_{L^\infty(\R^3)} dt',
\end{align*}
and hence 
\begin{align*}
     \sup_{[0,t]}\abs{\llbracket f(\rho,c)\rrbracket}_{\dot\cC^\alpha(\Sigma)}&+\int_0^t\abs{\llbracket G(\rho,c)\rrbracket}_{\dot\cC^\alpha(\Sigma)}\\
     &\leqslant  \frac{a^*}{a_*}e^{\alpha\int_0^t\normb{\nabla u}_{L^\infty(\R^3)}} \Bigg[\abs{\llbracket f(\varrho_0,c_0)\rrbracket}_{\dot\cC^\alpha(\Sigma_0)}+\sup_{[0,t]}\abs{\llbracket \mu(\rho,c)\rrbracket}_{\dot\cC^\alpha(\Sigma)}\int_0^t\norm{\nabla u}_{L^\infty(\R^3)}\notag\\
     &+\sup_{[0,t]}\norm{\llbracket \mu(\rho,c)\rrbracket}_{L^\infty(\Sigma)}\Bigg(\int_0^t\ell_{\Sigma}^{-\alpha}\norm{\nabla u}_{L^\infty(\R^3)}+\int_0^t\abs{\nabla u}_{\dot \cC^\alpha_{\pw,\Sigma}(\R^3)}\Bigg)\\
     &+C_*\sup_{[0,t]}\abs{f(\rho,c)}_{\dot \cC^\alpha_{\pw,\Sigma}(\R^3)}\int_0^t\norm{\llbracket f(\rho,c)\rrbracket}_{L^\infty(\R^3)}\Bigg].
\end{align*}
This concludes the proof of \eqref{sec2:eq38}.

Under the assumption \eqref{sec1:jumpconditionbis}, we have  
    \[
    \abs{\llbracket \mu(\varrho,c_0)\rrbracket}\leqslant C_*\abs{\llbracket f(\varrho,c_0)\rrbracket},
    \]
    and by applying Gr\"onwall's lemma to \eqref{sec2:eq39}, we find:
    \begin{align}\label{sec2:eq44}
      \abs{\llbracket f(\varrho(t),c_0)\rrbracket}&\leqslant e^{- a_{*} t+C_*\int_0^t\norm{\nabla u}_{L^\infty(\R^3)}} \abs{\llbracket f(\rho_0,c_0)\rrbracket}.
    \end{align}
    It is then straightforward to deduce \eqref{sec2:eq40}. 
    
To prove \eqref{sec2:eq41}, we express
\[
\llbracket \mu(\varrho,c_0)(s)\rrbracket\Big|_{s_1}^{s_2}=g_\mu(s_2)\llbracket f\big(\varrho,c_0\big)(s)\rrbracket\Big|_{s_1}^{s_2}+ \big(g_\mu(s_2)-g_{\mu}(s_1)\big)\llbracket f(\varrho,c_0)(s_1)\rrbracket,
\]
where 
\[
g_\mu=\int_0^1\Psi_\mu'\big(rf(\varrho^{+},1)+(1-r)f(\varrho^{-},0)\big)dr.
\]
Hence, proceeding as in \eqref{sec2:eq42}, we find that \eqref{sec2:eq43} implies: 
\begin{align*}
     \abs{\llbracket f(\rho,c)(t)\rrbracket}_{\dot\cC^\alpha(\Sigma)}&\leqslant  e^{-a_{*}t+ \alpha \int_0^t\norm{\nabla u}_{L^\infty(\R^3)}} \abs{\llbracket f(\rho_0,c_0)\rrbracket}_{\dot\cC^\alpha(\Sigma_0)}\\
     &+C_*\int_0^te^{- a_{*}(t-t')+\alpha\int_0^t\norm{\nabla u}_{L^\infty(\R^3)}}\abs{\llbracket f(\rho,c)\rrbracket}_{\dot\cC^\alpha(\Sigma)}\norm{\nabla u}_{L^\infty(\R^3)}dt'\notag\\
     &+C_*\int_0^te^{-a_{*}(t-t')+\alpha \int_0^t\norm{\nabla u}_{L^\infty(\R^3)}}\norm{\llbracket f(\rho,c)\rrbracket}_{L^\infty(\Sigma)}\big( \ell_{\Sigma}^{-\alpha}\norm{\nabla u}_{L^\infty(\R^3)}+\abs{\nabla u}_{\dot\cC^\alpha_{\pw,\Sigma}(\R^3)}\big)dt'\notag\\
    &+C_*\int_0^te^{- a_{*}(t-t')+\alpha\int_0^t\norm{\nabla u}_{L^\infty(\R^3)}}\abs{f(\rho,c)}_{\dot \cC^\alpha_{\pw,\Sigma}(\R^3)}\norm{\llbracket f(\rho,c)\rrbracket}_{L^\infty(\Sigma)}\big(1+\norm{\nabla u}_{L^\infty(\R^3)}\big) dt'.
\end{align*}
Whence, Gr\"onwall's lemma and \eqref{sec2:eq44} yields 
\begin{align*}
     \abs{\llbracket f(\rho,c)(t)\rrbracket}_{\dot\cC^\alpha(\Sigma(t))}&\leqslant  C_*e^{-a_{*}t+ C_*\int_0^t\norm{\nabla u}_{L^\infty(\R^3)}e^{ \alpha\int_0^t\norm{\nabla u}_{L^\infty(\R^3)}}}\Bigg[\abs{\llbracket f(\varrho_0,c_0)\rrbracket}_{\dot\cC^\alpha(\Sigma_0)}\\
     &+\norm{\llbracket f(\rho_0,c_0)\rrbracket}_{L^\infty(\Sigma_0)}\int_0^t\Big(\big(\ell_{\Sigma}^{-\alpha}+\abs{f(\rho,c)}_{\dot \cC^\alpha(\Sigma)}\big)\norm{\nabla u}_{L^\infty(\R^3)}+\abs{f(\rho,c),\,\nabla u}_{\dot\cC^\alpha_{\pw,\Sigma}(\R^3)}\Big)\Bigg].
\end{align*}
This proves \eqref{sec2:eq41} and  we now turn to the decay estimate for the velocity gradient jump.

To this end, we first express 
\[
\nabla u^j = \partial_{\tau_x} u^j \tau_x + \partial_{\overline\tau_x} u^j \overline{\tau}_x +\partial_{n_x} u^j n_x,
\]
and since  $\partial_{\tau_x} u^j$ and $\partial_{\overline\tau_x} u^j$ are continuous across $\Sigma$, we have
\begin{align}\label{sec2:eq47}
\llbracket \nabla u^j\rrbracket&= \llbracket\partial_{n_x} u^j\rrbracket n_x.
\end{align}
From \eqref{sec2:eq34}, we have
\[
\llbracket 2\mu(\rho,c)n_x\cdot \partial_{n_x} u+\lambda(\rho,c)\dvg u -P(\rho,c)\rrbracket=0,
\]
and since 
\[
\dvg u=\tau_x\cdot \partial_{\tau_x} u+\overline\tau_x\cdot \partial_{\overline\tau_x} u+n_x\cdot \partial_{n_x} u,
\]
we deduce that 
\[
\llbracket \nu(\rho,c)n_x\cdot \partial_{n_x} u\rrbracket =\llbracket P(\rho,c)\rrbracket-\llbracket \lambda(\rho,c)\rrbracket\big(\tau_x\cdot \partial_{\tau_x} u+\overline{\tau}_x\partial_{\overline{\tau}_x} u\big).
\]
and hence
\begin{equation*}
   n_x\cdot \llbracket \partial_{n_x} u\rrbracket=\frac{1}{\scalar{\nu(\rho,c)}_{\text{avg}}}\llbracket P(\rho,c)\rrbracket-\llbracket \lambda(\rho,c)\rrbracket\frac{\scalar{\dvg u}_{\text{avg}}}{\scalar{\nu(\rho,c)}_{\text{avg}}}-2\llbracket\mu(\rho,c)\rrbracket\frac{n_x\cdot \scalar{\partial_{n_x} u}_{\text{avg}}}{\scalar{\nu(\rho,c)}_{\text{avg}}}.
\end{equation*}
Similarly, scalar product of \eqref{sec2:eq45} with the tangential vector fields $\tau_x$ and $\overline{\tau}_x$ yields:
\begin{gather*}
    \tau_x \cdot \llbracket \partial_{n_x} u\rrbracket =-\frac{\llbracket\mu(\rho,c)\rrbracket}{\scalar{\mu(\rho,c)}_{\text{avg}}}\big( n_x \cdot \partial_{\tau_x} u+ \tau_x\cdot\partial_{n_x} u\big), 
\end{gather*}
and 
\begin{gather*}
    \overline\tau_x \cdot \llbracket \partial_{n_x} u\rrbracket =-\frac{\llbracket\mu(\rho,c)\rrbracket}{\scalar{\mu(\rho,c)}_{\text{avg}}}\big( n_x \cdot \partial_{\overline\tau_x} u+ \overline\tau_x\cdot\partial_{n_x} u\big), 
\end{gather*}
Estimate \eqref{sec2:eq46} simply follows from \eqref{sec2:eq47} and these expressions. This completes the proof of Lemma \ref{sec2:lem3}.
\end{proof}
\subsection{Basic a priori estimates}\label{basic:estimate}
In this subsection, we derive some basic a priori bounds for the solutions to the system \eqref{ep2.1}. 
These estimates consist of mass conservation and energy balance, and follow from the renormalized form of the equations 
for the volume fraction and the density (see $\eqref{ep2.1}_{1,2}$):
\begin{gather}\label{ep2.2}
    \dpt b(\rho,c)+\dvg \big(b(\rho,c)u\big)+\big(\rho \partial_\rho b(\rho,c)-b(\rho,c)\big)\dvg u=0.
\end{gather}
Above, $b$ is any $W^{1,\infty}_{\loc}$-regular function of the density and volume fraction. 

\noindent
\textbf{Conservation of mass.} Let $\mathcal{H}=\mathcal{H}(c)$ be a $W^{1,\infty}_\loc$-regular function. We observe that 
$b(\rho,c)=\rho \mathcal{H}(c)$ satisfies 
\[
\rho \partial_{\rho} b(\rho,c)-b(\rho,c)=0,
\]
and  \eqref{ep2.2} becomes
\[
\dpt \big( \rho \mathcal{H}(c)\big)+\dvg \big(\rho \mathcal{H}(c) u\big)=0.
\]
As a result, we have
\begin{gather*}
    \int_{\R^3}\rho(t,x) \mathcal{H}(c(t,x))dx=    \int_{\R^3}\rho_0(x) \mathcal{H}(c_0(x))dx,
\end{gather*}
as soon as $\rho_0 \mathcal{H}(c_0)\in L^1(\R^3)$. This bound, as well as those in the following paragraph, holds true even when the volume fraction is not an indicator function.

\noindent
\textbf{Energy balance.} Here, we consider $b$ as solution of the ODE  
\begin{gather}\label{ep2.3}
\rho \partial_\rho b(\rho,c)-b(\rho,c)=P(\rho,c)-\widetilde P. 
\end{gather}
Since this is a first-order  ODE, an additional condition is required to uniquely determine $b$.  
In order to work within a framework that allows for densities close to two constants ($\widetilde\rho(1)$ in $D^c(t)$, and  $\widetilde\rho(0)$ in $D(t)$)  we shall impose the following condition:
\[
b(\widetilde\rho(c),c)=0.
\]
Such a function $b$, now denoted by $H_1(\rho,c))$, is therefore given by
\[
H_1(\rho,c)= \rho \int_{\widetilde\rho(c)}^\rho\dfrac{1}{\varrho^2}\Big(P(\varrho,c)-\widetilde P\Big)d\varrho.
\]
Moreover, in view of \eqref{ep2.2}, it satisfies the equation
\[
\dpt H_1(\rho,c)+\dvg \big(H_1(\rho,c) u\big)+ \big( P(\rho,c)-\widetilde P\big)\dvg u=0,
\]
and consequently
\begin{gather}\label{ep2.4}
    \dfrac{d}{dt}\int_{\R^3} H_1(\rho,c)=-\int_{\R^3}\big( P(\rho,c)-\widetilde P\big)\dvg u.
\end{gather}
Differentiating \eqref{ep2.3} with respect to the density, and using the fact that the map $\rho \mapsto P(\rho,c)$ is non-decreasing, we deduce that $\rho \mapsto H_1(\rho,c)$ is convex. The non-negativity of $(\rho,c)\mapsto H_1(\rho,c)$ then follows from  $H_1(\widetilde\rho(c),c)=\partial_\rho H_1(\widetilde\rho(c),c)=0$.

As is standard, to compute the classical energy balance, we use the velocity field $u$ as a test function in  $\eqref{ep2.1}_3$ and we obtain
\begin{gather}\label{ep2.5}
\dfrac{d}{dt}\int_{\R^3}\rho\dfrac{\abs{u}^2}{2}-\int_{\R^3}\dvg u\big(P(\rho,c)-\widetilde P\big)+\int_{\R^2}\Big(2\mu(\rho,c)\D^{jk} u\partial_k u^j+\lambda(\rho,c)(\dvg u)^2\Big)=0.
\end{gather}
We have already computed the second term above in \eqref{ep2.4}, and now, using the symmetry of  $\D u$  we have
\[
\sum_{j,k}\D^{jk} u\partial_k u^j=\sum_{j,k}\D^{jk} u\partial_j u^k\implies\sum_{j,k}\D^{jk} u\partial_k u^j=\dfrac{1}{2}\sum_{j,k}\D^{jk} u\big(\partial_k u^j+\partial_j u^k\big)= \abs{\D u}^2,
\]
and therefore \eqref{ep2.5} leads to 
\begin{gather}\label{sec3:eq4}
    \dfrac{d}{dt}E+\int_{\R^3}\Big(2\mu(\rho,c)\abs{\D u}^2+\lambda(\rho,c)(\dvg u)^2\Big)=0,
\end{gather}
where the energy functional $E$ is given by
\[
E(t)=\int_{\R^3}\bigg(\rho\dfrac{\abs{u}^2}{2}+H_1\big(\rho,c\big)\bigg)(t,x)dx, \text{ and we set } E(0)= E_0.
\]
 
 The term contributing to the energy potential in \eqref{ep2.5} can be computed as follows: 
\[
-\int_{\R^3} \big(P(\rho,c)-\widetilde P\big)\dvg u= -\int_{\R^3} (P(\rho,c)-c\widetilde P)\dvg u- \widetilde P\int_{\R^3}(c-1)\dvg u.
\]
From the equation on the volume fraction (see $\eqref{ep2.1}_1$), we express the last term above as:
\[
- \widetilde P\int_{\R^3}(c-1)\dvg u=\widetilde P\dfrac{d}{dt}\int_{\R^3}(1-c),
\]
 and thanks to \eqref{ep2.2}, the previous one is
 \[
 -\int_{\R^3} (P(\rho,c)-c\widetilde P)\dvg u=\dfrac{d}{dt}\int_{\R^3}\check H_1(\rho,c),\text{ where } \check H_1(\rho,c)=\rho \int_{c\widetilde\rho}^\rho\dfrac{1}{\varrho^2}\Big(P(\varrho,c)-c\widetilde P\Big)d\varrho\geqslant 0.
 \]
 As a result, we have 
 \begin{gather}\label{sec3:eq3}
 \check E(t)+\int_0^t\int_{\R^3}\Big(2\mu(\rho,c)\abs{\D u}^2+\lambda(\rho,c)(\dvg u)^2\Big)= \check E(0)=: \check E_{0},
\end{gather}
where
\[
\check E(t)=\int_{\R^3}\bigg(\rho\dfrac{\abs{u}^2}{2}+\check H_1\big(\rho,c\big)+\widetilde P\big(1-c\big)\bigg)(t,x)dx.
\]

All the computations above remain valid even when the volume fraction is not an indicator function. In the indicator-function setting, we have  $1-c(t)=\mathbb{1}_{D(t)}$ and \eqref{sec3:eq3} yields 
\begin{gather}\label{ep2.11}
\abs{D(t)}=\int_{\R^3}\big(1-c\big)(t,x)dx\leqslant \widetilde P^{-1} \check E_{0}.
\end{gather}
This provides a sharper estimate of the volume of the moving domain than the one  obtained
using the flow of the velocity field.

\noindent
\textbf{The potential energies $H_l$.} In the spirit of \eqref{ep2.3}, for $l\in [1,\infty)$ we define the 
potential energy $H_l=H_l(\rho,c)$ as the solution of
\begin{gather}\label{ep2.6}
\begin{cases}
\rho \partial_\rho H_l(\rho,c)-H_l(\rho,c)=\abs{P(\rho,c)-\widetilde P}^{l-1}\big(P(\rho,c)-\widetilde P\big),\\
H_l(\widetilde\rho(c),c)=0.
\end{cases}
\end{gather}
It reads
\[
H_l(\rho,c)=\rho \int_{\widetilde\rho(c)}^\rho \dfrac{1}{\varrho^2}\abs{P(\varrho,c)-\widetilde P}^{l-1}\big(P(\varrho,c)-\widetilde P\big)d\varrho,
\]
and is clearly nonnegative as
\[
\rho \partial^2_{\rho} H_l(\rho,c)=l\partial_\rho P(\rho,c)\abs{P(\rho,c)-\widetilde P}^{l-1}\geqslant 0,\text{ and } \partial_{\rho} H_l(\widetilde \rho(c),c)= H_l(\widetilde\rho(c),c)=0.
\]
In particular, the map $\rho \mapsto H_l(\rho,c)$ is convex, which leads to the inequality
\[
0=H_l(\widetilde\rho(c),c)\geqslant H_l(\rho,c)+\partial_\rho H_l(\rho,c)\big(\widetilde\rho(c)-\rho\big),
\]
and combined with \eqref{ep2.6}, this implies 
\begin{gather}\label{ep2.7}
\widetilde\rho(c) H_l (\rho,c)\leqslant \abs{P(\rho,c)-\widetilde P}^{l-1}\big(P(\rho,c)-\widetilde P\big)\big(\rho-\widetilde \rho(c)\big).
\end{gather}

For some functions $g=g(\rho,c)$, and $a_l=a_l(c)$ to be specified later, let us consider 
\[
g_l(\rho,c)=\dfrac{H_l(\rho,c)}{\rho}-\dfrac{a_l(c)}{\rho}\abs{P(\rho,c)-\widetilde P}^{l-1} \big( P(\rho,c)-\widetilde P\big) g(\rho,c),
\]
and compute
\begin{align*}
&\rho^2\partial_\rho g_l(\rho,c)=\big(P(\rho,c)-\widetilde P\big)\abs{P(\rho,c)-\widetilde P}^{l-1}-a_l(c)\rho \partial_\rho g(\rho,c)
\abs{P(\rho,c)-\widetilde P}^{l-1} \big( P(\rho,c)-\widetilde P\big)\\
&-a_l(c)g(\rho,c)\bigg(l \rho \partial_\rho P(\rho,c) \abs{P(\rho,c)-\widetilde P}^{l-1}-\abs{P(\rho,c)-\widetilde P}^{l-1} \big( P(\rho,c)-\widetilde P\big)\bigg)\\
&=\big(P(\rho,c)-\widetilde P\big)\abs{P(\rho,c)-\widetilde P}^{l-1}\Bigg[1-a_l(c)\big(\rho \partial_\rho g(\rho,c)-g(\rho,c)\big)-la_l(c) \dfrac{\rho g(\rho,c)\partial_\rho P(\rho,c)}{P(\rho,c)-\widetilde P}\Bigg].
\end{align*}
\begin{itemize}
    \item Choosing $g(\rho,c)=\rho-\widetilde\rho(c)$ and $a_l(c)=1/\widetilde\rho(c)$ ensures that the expression within the square bracket is non-positive, as the map $\rho \mapsto P(\rho,c)$ is non-decreasing. As a result, the map $\rho \mapsto g_l(\rho,c)$ is non-decreasing on $[0,\widetilde\rho(c)]$, non-increasing on $[\widetilde\rho(c),\infty)$, with 
    $g_l(\widetilde\rho(c),c)=0$. This implies that $g\leq 0$, which in turn leads us again to \eqref{ep2.7}.
\item Setting 
\[
g(\rho,c)=1-\frac{2\widetilde\rho(c)}{\rho+\widetilde\rho(c)},
\]
the expression inside the square bracket above becomes
\[
1-a_l(c)\Bigg[1-\Bigg(\frac{\rho}{\rho+\widetilde\rho(c)}\Bigg)^2+l\frac{\rho}{\rho+\widetilde\rho(c)}\partial_\rho P(\rho,c) \dfrac{\rho-\widetilde\rho(c)}{P(\rho,c)-\widetilde P}\Bigg].
\]
We now choose $a_l(c)>0$ such that 
\[
a_l(c)\Bigg(1+l \partial_\rho P(\rho,c)\dfrac{\rho-\widetilde \rho(c)}{P(\rho,c)-\widetilde P}\Bigg)\leqslant 1,
\]
and, following the same reasoning as before, we obtain that $g_l\geqslant 0$, which in turn implies 
\begin{gather}\label{ep2.8}
a_l(c)\abs{P(\rho,c)-\widetilde P}^{l-1}\big(P(\rho,c)-\widetilde P\big)\big(\rho-\widetilde \rho(c)\big)\leqslant \big(\rho+\widetilde\rho(c)\big)H_l(\rho,c).
\end{gather}
In general, $a_l(c)$ depends only on $l$ and the upper bound of the density. However, in specific cases, such as when 
the pressure law of the phase "+" (resp. "-") is polynomial, $a_l(1)$ (resp. $a_l(0)$, can be chosen independently 
of the density's upper bound.
\end{itemize}

The potential energies $H_l$ will serve to derive an estimate for the $L^{l+1}_{t,x}$-norm of the pressure fluctuation. Indeed, from \eqref{ep2.6}-\eqref{ep2.2}, one readily obtains
\begin{gather}\label{ep2.9}
    \dfrac{d}{dt} \int_{\R^3}H_l(\rho,c)+\int_{\R^3}\abs{P(\rho,c)-\widetilde P}^{l-1}\big(P(\rho,c)-\widetilde P\big)\dvg u=0.
\end{gather}
Next, we express the velocity divergence in terms of the flux $F$ given by
\begin{gather}\label{ep2.12}
F=\nu(\rho,c)\dvg u-P(\rho,c)+\widetilde P,
\end{gather}
and apply H\"older's and Young's inequalities, and we arrive at
\begin{gather}\label{sec3:eq12}
    \dfrac{d}{dt} \int_{\R^3}H_l(\rho,c)+\dfrac{1}{l+1}\int_{\R^3}\dfrac{1}{\nu(\rho,c)}\abs{P(\rho,c)-\widetilde P}^{l+1}\leqslant \dfrac{1}{l+1}\int_{\R^3}\dfrac{1}{\nu(\rho,c)}\abs{F}^{l+1}.
\end{gather}
We have already known (see \eqref{sec2:eq2}) that this flux can be rewritten as
\begin{gather}\label{sec3:eq5}
F = -(-\Delta)^{-1} \dvg(\rho \dot u) + [(-\Delta)^{-1}\dvg \dvg , \mu(\rho,c) - \widetilde\mu] (2 \D u),
\end{gather}
and its $L_{t,x}^{l+1}$-bound will follow from energy estimates for the material acceleration $\dot u$.

\subsection{Proof of Lemma \ref{sec2:lem4}}\label{proof:sec2:lem4}
The goal of this section is to derive estimates for the functionals $\mathcal{A}_1$, $\mathcal{A}_2$, and $\mathcal{A}_3$.
The starting point (see Section \ref{sec3.1:decay}) will be to prove that, under the low frequency assumption \eqref{init:lowfrequency}, we gain a time decay for the classical energy.  

\subsubsection{Time-decay of the classical energy}\label{sec3.1:decay} 
This paragraph is dedicated to proving the following estimate.
\begin{lemma}\label{lem3.1}
    There exists a constant $c_*>0$, depending only on the 
lower and upper bounds of the density, such that if $E_0 \leqslant c_*$, then the following holds:
\begin{align}\label{ep3.8}
    \sup_{[0,t]}\scalar{\cdot}^{1+r_0} E&+ \int_0^t\scalar{t'}^{r_0} E(t')dt'+\int_0^t \scalar{t'}^{1+r_0}\int_{\R^3}\big(2\mu(\rho,c)\abs{\D u}^2+\lambda(\rho,c)(\dvg u)^2\big)dt'
    \leqslant C_*  C_0.
\end{align}
\end{lemma}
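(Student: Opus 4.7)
The strategy is a Matsumura--Nishida type decay argument: combine the basic energy identity \eqref{sec3:eq4} with the low-frequency $L^{p_0}$ control from \eqref{init:lowfrequency} to establish polynomial decay of $E$, and then upgrade it to the weighted bound \eqref{ep3.8}. To begin with, the smallness $E_0\leqslant c_*$ together with the convexity of $\rho\mapsto H_1(\rho,c)$ and \eqref{ep2.7}--\eqref{ep2.8} confines the density to a neighborhood of the equilibria $\widetilde\rho(c)$. In that regime the classical energy is coercive,
\[
E(t)\sim_\ast \|\sqrt{\rho}\,u(t)\|_{L^2(\R^3)}^{2}+\|P(\rho,c)(t)-\widetilde P\|_{L^2(\R^3)}^{2}.
\]

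The next step I would carry out is the propagation of the low-frequency norms
\[
\sup_{[0,t]}\bigl(\|\rho u\|_{L^{p_0}(\R^3)}+\|P(\rho,c)-\widetilde P\|_{L^{p_0}(\R^3)}\bigr)\leqslant C_\ast\bigl(\|\rho_0 u_0\|_{L^{p_0}}+\|P(\rho_0,c_0)-\widetilde P\|_{L^{p_0}}\bigr).
\]
This is obtained by viewing \eqref{ep2.1} as a perturbation of its linearization about the constant state and invoking the $L^{p_0}$ boundedness of the associated parabolic--hyperbolic semigroup; the nonlinear contributions are absorbed using $E_0\leqslant c_*$ and \eqref{intro:eq2}.

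With these at hand, I would derive a Nash-type inequality. Gagliardo--Nirenberg interpolation together with Sobolev embedding gives
\[
\|u\|_{L^2}^{2}\leqslant C\|\rho u\|_{L^{p_0}}^{2\theta}\|\nabla u\|_{L^2}^{2(1-\theta)},\qquad \theta=\frac{2p_0}{6-p_0},
\]
and a parallel estimate for the pressure fluctuation, obtained by combining the effective flux identity \eqref{ep2.12}--\eqref{sec3:eq5} with the same interpolation (so that $\|P(\rho,c)-\widetilde P\|_{L^2}\lesssim \|\dvg u\|_{L^2}+\|(-\Delta)^{-1}\dvg(\rho\dot u)\|_{L^2}+\text{commutator}$), yields
\[
E(t)^{1+1/\beta}\leqslant C_\ast C_0^{\,1/\beta}\int_{\R^3}\bigl(2\mu(\rho,c)|\D u|^2+\lambda(\rho,c)|\dvg u|^2\bigr),\qquad \beta=3\bigl(\tfrac{1}{p_0}-\tfrac{1}{2}\bigr)>1+r_0.
\]
Inserted into the Bernoulli-type ODE $E'+cE^{1+1/\beta}\leqslant 0$ coming from \eqref{sec3:eq4}, this yields the pointwise decay $E(t)\leqslant C_\ast C_0\langle t\rangle^{-\beta}$.

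To conclude, I multiply \eqref{sec3:eq4} by $\langle t\rangle^{1+r_0}$ and integrate, producing
\[
\frac{d}{dt}\bigl(\langle t\rangle^{1+r_0}E\bigr)+\langle t\rangle^{1+r_0}\!\int_{\R^3}\bigl(2\mu|\D u|^2+\lambda|\dvg u|^2\bigr)\leqslant C\langle t\rangle^{r_0}E.
\]
Since $\int_0^{\infty}\langle t'\rangle^{r_0-\beta}\,dt'<\infty$ -- which is precisely the content of the assumption $r_0<3/p_0-5/2$ -- each of the three terms on the left of \eqref{ep3.8} is controlled by $E_0+C_\ast C_0\leqslant C_\ast C_0$. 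The main obstacle is the low-frequency propagation in the second step: unlike for the constant-coefficient one-phase compressible Navier--Stokes system, the density-dependent viscosity and the discontinuity of $\rho$ across $\Sigma$ make direct $L^{p_0}$ semigroup bounds delicate, and one has to leverage both the smallness of $\|\mu(\rho_0,c_0)-\widetilde\mu\|_{L^\infty}$ (or of $\llbracket\mu\rrbracket$ under \eqref{sec1:jumpcondition}) and the effective flux formulation \eqref{sec2:eq2} to close the estimate.
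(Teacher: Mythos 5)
Your approach (a Matsumura--Nishida / Nash-interpolation / Bernoulli ODE decay argument) is genuinely different from the paper's, which instead splits $\varrho$ and $u$ into high- and low-frequency parts, controls the high-frequency part by Plancherel together with the effective-flux identity, and controls the low-frequency part by a Duhamel representation for the linearized semigroup $e^{-t\mathbb A}$ using the Gaussian-type Fourier bound \eqref{ep3.9}. Your choice of $\beta=3(1/p_0-1/2)$ and the threshold $r_0<\beta-1=\tfrac{3}{p_0}-\tfrac52$ match the paper's constraint, so the exponent bookkeeping is consistent. However, two steps in your proposal are genuine gaps, not merely deferred technicalities.

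First, the propagation $\sup_{[0,t]}\big(\|\rho u\|_{L^{p_0}}+\|G(\rho,c)\|_{L^{p_0}}\big)\lesssim_\ast$ (initial data) is asserted but not established, and it is exactly the step the paper avoids. The paper does not propagate $L^{p_0}$ norms; it instead exploits the semigroup decay estimate for the low-frequency part (integrable in time precisely because $r_0-\tfrac{3}{p_0}+\tfrac32<-1$) and never needs a uniform $L^{p_0}$ bound on the solution. Proving that propagation for a system with density-dependent viscosity and a density jump across a free surface would itself be comparable in difficulty to the whole lemma.

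Second, your Bernoulli ODE cannot close for the full energy $E$ as written. The dissipation $\int 2\mu|\D u|^2+\lambda(\dvg u)^2$ controls only $\|\nabla u\|_{L^2}^2$, while $E$ contains the potential part $\int H_1(\rho,c)\sim_\ast\|G(\rho,c)\|_{L^2}^2$, on which the energy identity \eqref{sec3:eq4} provides no damping. Your attempted fix via the effective-flux identity $G(\rho,c)=\nu(\rho,c)\dvg u-F$ introduces $F=-(-\Delta)^{-1}\dvg(\rho\dot u)+[\check{\mathcal K},\mu-\widetilde\mu](2\D u)$, whose $L^2$ norm is controlled by $\mathcal A_1$ and the commutator, neither of which is available at this stage of the argument; so the Nash-type inequality $E^{1+1/\beta}\lesssim\int(2\mu|\D u|^2+\lambda(\dvg u)^2)$ does not hold for the potential part. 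The paper sidesteps this by writing $\rho\dot u=\dpt(\rho u)+\dvg(\rho u\otimes u)$ and integrating by parts in time inside the space-time integral $\int_0^t\langle t'\rangle^{r_0}\int_{\R^3}G(\rho,c)\varrho^h$ (see \eqref{sec3:eq6}), which turns the troublesome term into boundary-in-time and convective pieces that the classical energy does control; this is fundamentally an integral-in-time trick that has no counterpart in a pointwise-in-time Bernoulli ODE.

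In short, to make your route work you would need both a separate $L^{p_0}$ propagation lemma and a replacement for the Nash-type inequality on the pressure part (e.g., a Bogovskii-type duality bound or a direct higher-order energy functional), neither of which is supplied. The paper's high/low-frequency + Duhamel strategy avoids both obstacles.
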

\begin{proof}[Proof of Lemma \ref{lem3.1}] 
The proof of \eqref{ep3.8} proceeds as follows: we establish a uniform-in-time bound on the \\ $L^2((0,t)\times \R^3)$-norm of both the high- and low-frequency components of the velocity and pressure fluctuations. These in turn lead to \eqref{ep3.8}. Our method is similar to that  in \cite{hu2020optimal},but with major differences. 

We shall introduce the following notations: $\cF$ will denote the Fourier transform:
\[
\cF f(\xi)=\int_{\R^3} f(x) e^{-i x\cdot \xi} dx.
\]
For a   smooth real function $\psi$  satisfying,  $\psi(\xi)=1$ for $\abs{\xi}\leqslant 1/2$, and $\psi(\xi)=0$ for $\abs{\xi}\geqslant 1$,  we define the low-frequency part of $f$, denoted $f^l$, and high-frequency part of $f$, denoted  $f^h$,  as follows:
\[
f^l=\cF ^{-1}\big( \xi \mapsto \psi(\xi) \cF (f)(\xi)\big),\quad f^h=f-f^l. 
\]
\paragraph{\textbf{Step 1: $L^2_{t,x}$-bound for the high-frequency components}}
By applying the Plancherel–Parseval identity, we obtain:
\begin{align*}
\normb{u^h}_{L^2(\R^3)}^2&= \int_{\R^3}\big(1-\psi(\xi)\big)^2\abs{\cF u(\xi)}^2d\xi\\
                         &\leqslant \int_{\abs{\xi}\geqslant 1/2}\abs{\cF u(\xi)}^2d\xi\\
                         &\leqslant 4\int_{\abs{\xi}\geqslant 1/2}\abs{\xi}^2\abs{\cF u(\xi)}^2d\xi\\
                         &\leqslant 4 \norm{\nabla u}_{L^2(\R^3)}^2.
\end{align*}
It immediately follows that (see \eqref{sec3:eq4})
\begin{equation}\label{sec3:eq9}
    \int_0^t\scalar{t'}^{r_0}\normb{u^h(t')}^2_{L^2(\R^3)}dt'\leqslant C_* \int_0^t\scalar{t'}^{r_0}\norm{\nabla u(t')}_{L^2(\R^3)}^2dt'.
\end{equation}
Next, from the expressions of the effective flux (see \eqref{ep2.12}-\eqref{sec3:eq5}), we have
    \begin{gather*}
    G(\rho,c) =(-\Delta)^{-1}\dvg (\rho \dot u)+\nu(\rho,c)\dvg u-[(-\Delta)^{-1}\dvg \dvg ,\mu(\rho,c)-\widetilde\mu](2\D u)
\end{gather*}
and it follows that 
\begin{align*}
\int_{\R^3} G(\rho,c)\varrho^h &=\int_{\R^3}\big((-\Delta)^{-1}\dvg (\rho \dot{u})\big)\varrho ^h\\
&+\int_{\R^3}\big(\nu(\rho,c)\dvg u-[(-\Delta)^{-1}\dvg \dvg ,\mu(\rho,c)-\widetilde\mu](2\D u)\big)\varrho^h, 
\end{align*}
where $\varrho$ denotes the density fluctuation around its equilibrium states
\[
\varrho=\partial_{\rho} P(\widetilde\rho(c),c)\big(\rho-\widetilde\rho(c)\big).
\]
We estimate the last term above as: 
\begin{align*}
    &\Bigg|\int_0^t\scalar{t'}^{r_0}\int_{\R^3}\big(\nu(\rho,c)\dvg u-[(-\Delta)^{-1}\dvg \dvg ,\mu(\rho,c)-\widetilde\mu](2\D u)\big)\varrho^hdt'\Bigg|\\
    &\leqslant \eta \int_0^t\scalar{t'}^{r_0}\normb{\varrho}_{L^2(\R^3)}^2dt'+ \frac{C_*}{\eta}\int_0^t\scalar{t'}^{r_0}\norm{\nabla u}_{L^2(\R^3)}^2dt'.
\end{align*}
We express $\rho\dot u=\dpt (\rho u)+\dvg (\rho u\otimes u)$, and the remaining term is computed as
\begin{align}
&\int_{\R^3}\big((-\Delta)^{-1}\dvg (\rho \dot{u})\big)\varrho^h=\dfrac{d}{dt}\int_{\R^3}\big((-\Delta)^{-1}\dvg (\rho u)\big)\varrho^h\notag\\
&-\int_{\R^3}\big((-\Delta)^{-1}\dvg (\rho u)\big)\dpt \varrho^h+\int_{\R^3}\big((-\Delta)^{-1}\dvg\dvg  (\rho u\otimes u)\big)\varrho^h.\label{sec3:eq6}
\end{align}
We bound the last term as: 
\begin{align*}
\Bigg|\int_0^t\scalar{t'}^{r_0}\int_{\R^3}\big((-\Delta)^{-1}\dvg\dvg  (\rho u\otimes u)\big)\varrho^h\Bigg|
&\leqslant C_*\int_0^t\scalar{t'}^{r_0}\normb{\varrho}_{L^{2}(\R^3)}\norm{u}_{L^{4}(\R^3)}^2.
\end{align*}
Observing that $\varrho$ solves $\dpt \varrho+\dvg (\varrho u)+\widetilde\rho(c)\partial_{\rho} P(\widetilde\rho(c),c)\dvg u=0$, writing $\rho=(\rho-\widetilde\rho)+ \widetilde\rho$, and applying H\"older and Sobolev's inequalities, we obtain :
\begin{align*}
    &\Bigg|\int_0^t\scalar{t'}^{r_0}\int_{\R^3}\big((-\Delta)^{-1}\dvg (\rho u)\big)\dpt \varrho^h\Bigg|
    \leqslant \Bigg|\int_0^t\scalar{t'}^{r_0}\int_{\R^3}\big((-\Delta)^{-1}\dvg (\rho u)\big) \dvg (\varrho u)^h\Bigg|\\
    &+\Bigg|\int_0^t\scalar{t'}^{r_0}\int_{\R^3}\big((-\Delta)^{-1}\dvg (\rho u)\big)\big(\widetilde\rho(c)\partial_{\rho} P(\widetilde\rho(c),c)\dvg u\big)^h\Bigg|\\
    &\leqslant \Bigg|\int_0^t\scalar{t'}^{r_0}\int_{\R^3}\big((-\Delta)^{-1}\dvg \big((\rho-\widetilde\rho) u\big)\big) \dvg (\varrho u)^h\Bigg|+\widetilde\rho\Bigg|\int_0^t\scalar{t'}^{r_0}\int_{\R^3}\big((-\Delta)^{-1}\dvg u\big) \dvg (\varrho u)^h\Bigg|\\
    &+\Bigg|\int_0^t\scalar{t'}^{r_0}\int_{\R^3}\big((-\Delta)^{-1}\dvg \big((\rho-\widetilde\rho) u\big)\big)\big(\widetilde\rho(c)\partial_{\rho} P(\widetilde\rho(c),c)\dvg u\big)^h\Bigg|\\
    &+\widetilde\rho\Bigg|\int_0^t\scalar{t'}^{r_0}\int_{\R^3}\big((-\Delta)^{-1}\dvg  u\big)\big(\widetilde\rho(c)\partial_{\rho} P(\widetilde\rho(c),c)\dvg u\big)^h\Bigg|\\
    &\leqslant C\sup_{[0,t]}\big(1+\normb{\rho-\widetilde\rho}_{L^3(\R^3)}\big)\big(1+ \normb{\varrho}_{L^3(\R^3)} \big)\int_0^t\scalar{t'}^{r_0}\norm{\nabla u}_{L^2(\R^3)}^2.
\end{align*}
The time integral of the remaining term in \eqref{sec3:eq6} is
\begin{align*}
    \Bigg|\int_0^t\scalar{t'}^{r_0}\dfrac{d}{dt}\int_{\R^3}\big((-\Delta)^{-1}\dvg (\rho u)\big)\varrho^hdt'\Bigg|&\leqslant C\sup_{[0,t]}\big(\scalar{\cdot}^{r_0}\normb{\rho u}_{L^2(\R^3)}\normb{\varrho}_{L^2(\R^3)}\big)\\
    &+C\int_0^t\scalar{t'}^{r_0-1}\normb{\rho u}_{L^2(\R^3)}\normb{\varrho}_{L^2(\R^3)}dt'.
\end{align*}
Summing everything together, we obtain:
\begin{align*}
\int_0^t\int_{\R^3} G(\rho,c)\varrho^h&\leqslant \eta \int_0^t\scalar{t'}^{r_0}\norm{\varrho}_{L^2(\R^3)}^2dt'+C\sup_{[0,t]}\big(\scalar{\cdot}^{r_0}\normb{\rho u}_{L^2(\R^3)}\normb{\varrho}_{L^2(\R^3)}\big)\\
&+C\int_0^t\scalar{t'}^{r_0-1}\normb{\rho u}_{L^2(\R^3)}\normb{\varrho}_{L^2(\R^3)}dt'+C_*\int_0^t\scalar{t'}^{r_0}\normb{\varrho}_{L^{2}(\R^3)}\norm{u}_{L^{4}(\R^3)}^2dt'\\
&+C_*\sup_{[0,t]}\Big(\frac{1}{\eta}+\normb{\rho-\widetilde\rho}_{L^3(\R^3)}\Big)\big(1+ \normb{\varrho}_{L^3(\R^3)} \big)\int_0^t\scalar{t'}^{r_0}\norm{\nabla u}_{L^2(\R^3)}^2dt'.
\end{align*}
Recalling  that $\widetilde P= P(\widetilde\rho(c),c)$, we infer
\begin{gather}\label{sec3:eq7}
G(\rho,c)= \big(\rho-\widetilde\rho(c)\big)\int_0^1 \partial_{\rho} P\big(r \rho+ (1-r)\widetilde\rho(c),c\big)d  r,
\end{gather}
and using the monotonicity of the map $\rho \mapsto P(\rho,c)$, we obtain
\begin{align*}
&\frac{1}{C_*}\int_0^t\scalar{t'}^{r_0} \normb{\varrho}_{L^2(\R^3)}dt'\leqslant \int_0^t\scalar{t'}^{r_0}\int_{\R^3} G(\rho,c)\varrho dt'\\
&= \int_0^t\scalar{t'}^{r_0}\int_{\R^3} G(\rho,c)\varrho^ldt' +\int_0^t\scalar{t'}^{r_0}\int_{\R^3} G(\rho,c)\varrho^hdt'\\
&\leqslant \eta \int_0^t\scalar{t'}^{r_0}\normb{\varrho}_{L^2(\R^3)}^2dt'+ \int_0^t\scalar{t'}^{r_0}\int_{\R^3} G(\rho,c)\varrho^l+C\sup_{[0,t]}\big(\scalar{\cdot}^{r_0}\normb{\rho u}_{L^2(\R^3)}\normb{\varrho}_{L^2(\R^3)}\big)\\
&+C\int_0^t\scalar{t'}^{r_0-1}\normb{\rho u}_{L^2(\R^3)}\normb{\varrho}_{L^2(\R^3)}dt'+C_*\int_0^t\scalar{t'}^{r_0}\normb{\varrho}_{L^{2}(\R^3)}\norm{u}_{L^{4}(\R^3)}^2dt'\\
&+C_*\sup_{[0,t]}\Big(\frac{1}{\eta}+\normb{\rho-\widetilde\rho}_{L^3(\R^3)}\Big)\big(1+ \normb{\varrho}_{L^3(\R^3)} \big)\int_0^t\scalar{t'}^{r_0}\norm{\nabla u}_{L^2(\R^3)}^2dt'.
\end{align*}
From \eqref{sec3:eq7}, we estimate the second term of the RHS above as:
\[
\abs{\int_0^t\scalar{t'}^{r_0}\int_{\R^3} G(\rho,c)\varrho^l}dt'\leqslant  \eta\int_0^t\scalar{t'}^{r_0}\normb{\varrho}_{L^2(\R^3)}^2dt'+\frac{C_*}{\eta}\int_0^t\scalar{t'}^{r_0}\normb{\varrho^l}_{L^2(\R^3)}^2dt'.
\]
Holder's and Young's inequalities imply (since $r_0\in (0,1)$)
\[
C\int_0^t\scalar{t'}^{r_0-1}\normb{\rho u}_{L^2(\R^3)}\normb{\varrho}_{L^2(\R^3)}dt'\leqslant \eta \int_0^t\scalar{t'}^{r_0}
\normb{\varrho}_{L^2(\R^3)}^2dt'+ \frac{C}{\eta} E_0.
\]
Interpolation inequality yields
\begin{align*}
C_*\int_0^t\scalar{t'}^{r_0}&\normb{\varrho}_{L^{2}(\R^3)}\norm{u}_{L^{4}(\R^3)}^2dt'\leqslant C_*\int_0^t\scalar{t'}^{r_0}\normb{\varrho}_{L^{2}(\R^3)}\norm{u}_{L^{2}(\R^3)}^{\frac{1}{2}}\normb{\nabla u}_{L^2(\R^3)}^{\frac{3}{2}}\\
&\leqslant C_*\Bigg(\int_0^t\scalar{t'}^{r_0}\normb{\varrho}_{L^{2}(\R^3)}^4\norm{u}_{L^{2}(\R^3)}^2dt'\Bigg)^{1/4}\Bigg(\int_0^t\scalar{t'}^{r_0}\normb{\nabla u}_{L^2(\R^3)}^2dt'\Bigg)^{3/4}\\
&\leqslant \eta \int_0^t\scalar{t'}^{r_0} \normb{\varrho}_{L^2(\R^3)}^2dt'+ \frac{C_*}{\eta} E_0^{\frac{2}{3}}\int_0^t\scalar{t'}^{r_0}\normb{\nabla u}_{L^2(\R^3)}^2dt'.
\end{align*}
It follows that 
\begin{align}\label{sec3:eq8}
\int_0^t\scalar{t'}^{r_0} \normb{\varrho}_{L^2(\R^3)}^2dt'&\leqslant C_* \int_0^t\scalar{t'}^{r_0}\normb{\varrho^l}_{L^2(\R^3)}^2dt'+C\sup_{[0,t]}\big(\scalar{\cdot}^{r_0}\normb{\rho u}_{L^2(\R^3)}\normb{\varrho}_{L^2(\R^3)}\big)\notag\\
&+C_* E_0+ C_*\big(1+E_0^{2/3}+\sup_{[0,t]}\normb{\rho-\widetilde\rho}_{L^3(\R^3)}\big(1+ \normb{\varrho}_{L^3(\R^3)} \big)\big)\int_0^t\scalar{t'}^{r_0}\normb{\nabla u}_{L^2(\R^3)}^2dt'.
\end{align}
Estimates \eqref{sec3:eq8}-\eqref{sec3:eq9} conclude this paragraph, and we will now derive bounds for the low-frequency component of the velocity and density fluctuations. 
\paragraph{\textbf{Step 2: $L^2_{t,x}$-bound for the low-frequency components}} In order to derive an estimate for the $L^2_{t,x}$-bound of the low-frequency component of the velocity and density fluctuations, we first write the following equations from \eqref{ep2.1}:
\begin{gather*}
    \begin{cases}
        \dpt \varrho + \partial_{\rho}P(\widetilde\rho,1)\dvg (\rho u)&=\dvg \big(\partial_\rho P(\widetilde\rho,1)(\rho-\widetilde\rho) u-\partial_{\rho} P(\widetilde\rho(c),c))(\rho-\widetilde\rho(c)) u\big)\\
        &+\big(\widetilde\rho \partial_{\rho} P(\widetilde\rho,1)-\widetilde\rho(c) \partial_{\rho} P(\widetilde\rho(c),c)\big)\dvg u\\
        &=:\mathbb F,\\
        \dpt (\rho u)+\nabla \varrho &-\frac{\widetilde\mu}{\widetilde\rho}\Delta (\rho u)-\frac{2\widetilde\mu+\widetilde\lambda}{\widetilde\rho}\nabla\dvg (\rho u)\\
        &=-\dvg (\rho u\otimes u)-\nabla\big( P(\rho,c)-\partial_{\rho} P(\widetilde\rho(c),c)\varrho-\widetilde P\big)+\dvg \big(2(\mu(\rho,c)-\widetilde\mu)\D u\big)\\
        &+\nabla\big((\lambda(\rho,c)-\widetilde\lambda)\dvg u\big)+ \frac{\widetilde\mu}{\widetilde\rho}\Delta \big(\widetilde\rho-\rho) u\big)+\frac{2\widetilde\mu+\widetilde\lambda}{\widetilde\rho}\nabla\dvg \big(\widetilde\rho-\rho)u\big)\\
        &=: \mathbb G. 
    \end{cases}
\end{gather*}
Defining the differential operator $\mathbb{A}$ by
\[
\mathbb A= \begin{pmatrix}
    0 & \partial_{\rho} P(\widetilde\rho,1)\dvg \\
     \nabla & -\frac{\widetilde\mu}{\widetilde\rho}\Delta-\frac{2\widetilde\mu+\widetilde\lambda}{\widetilde\rho}\nabla\dvg
\end{pmatrix},
\]
the pair $(\varrho, \rho u)$ can then be written as:
\begin{gather}\label{ep3.2}
    \begin{pmatrix}
        \varrho\\
        \rho u
    \end{pmatrix}(t)=e^{-t \mathbb A } \begin{pmatrix}
        \varrho\\
        \rho u
    \end{pmatrix}(0)+\int_0^t e^{-\mathbb (t-s)\mathbb A} \begin{pmatrix}
        \mathbb F\\
        \mathbb G
    \end{pmatrix}(s)ds.
\end{gather}
The semi-group $\big(e^{-t \mathbb{A}}\big)_{t\geqslant 0}$ is known (see \cite{charve2010global,LiLargetime}) to satisfy the following property: there exist positive constants $C$ and $k_0$ depending only on the constants coefficients of $\mathbb A$, such that: 
\begin{gather}\label{ep3.9}
\abs{\cF\big( e^{-t \mathbb A } U\big)(\xi)}\leqslant C e^{-k_0 t\abs{\xi}^2}\abs{\cF \big(U\big)(\xi)},\; \text{ for all } \abs{\xi}\leqslant 1.
\end{gather}
It immediately follows that for all $m\in \N$, and $U\in L^q(\R^3)$, with $q\in [1,2]$, we have
\begin{gather*}
 \norm{\big(e^{-t \mathbb A } \nabla^m U\big)^l}_{L^2(\R^3)}\leqslant C\scalar{t}^{-\frac{3}{2}(\frac{1}{q}-\frac{1}{2}+\frac{m}{3})}\norm{U}_{L^q(\R^3)},\; \text{ for all } t>0.
\end{gather*}
As a result, we have the following bounds.
\begin{itemize}
    \item With $p_0\in (1, 6/5)$ and  $r_0\in (0,3/p_0-5/2)$, we have $r_0-\frac{3}{p_0}+\frac{3}{2}<-1$, and  we can estimate the first term on the RHS of \eqref{ep3.2} as:
    \begin{align*}
        \int_0^t\scalar{t'}^{r_0}\norm{ e^{-t' \mathbb A } 
        \begin{pmatrix}
            \varrho\\
        \rho u
        \end{pmatrix}_{|t=0}^l}^2_{L^2(\R^3)}d t'&\leqslant C \norm{(\varrho, \rho u)_{|t=0}}_{L^{p_0}(\R^3)}^2\int_0^t\scalar{t'}^{r_0-\frac{3}{p_0}+\frac{3}{2}}d\tau\\
        &\leqslant C \norm{(\varrho, \rho u)_{|t=0}}_{L^{p_0}(\R^3)}^2.
    \end{align*}
    \item The source term $\mathbb{G}$ is expressed as 
    \[
    \mathbb G= \mathbb G_1+\mathbb G_2, \;\text{ where }\; \mathbb G_2=\frac{\widetilde\mu}{\widetilde\rho}\Delta \big(\widetilde\rho-\rho) u\big)+\frac{2\widetilde\mu+\widetilde\lambda}{\widetilde\rho}\nabla\dvg \big(\widetilde\rho-\rho)u\big),
    \]
    and we estimate
    \begin{equation*}
        \norm{\int_0^{t'} e^{-\mathbb (t'-s)\mathbb A} \begin{pmatrix}
        0\\
        \mathbb G_2^l
    \end{pmatrix}(s)ds}_{L^2(\R^3)} \leqslant C\int_0^{t'}\scalar{t'-s}^{-\frac{5}{4}}\normb{\rho-\widetilde\rho}_{L^2(\R^3)}\norm{u}_{L^6(\R^3)}(s)ds.
    \end{equation*}
   Given that for all $s\in(0,t')$, we have $\scalar{t'}^{\frac{r_0}{2}}\leqslant \scalar{s}^{\frac{r_0}{2}}+\scalar{t-s}^{\frac{r_0}{2}}$, we have 
    \begin{align*}
        &\norm{\scalar{t'}^{\frac{r_0}{2}}\int_0^{t'} e^{-\mathbb (t'-s)\mathbb A} \begin{pmatrix}
        0\\
        \mathbb G_2^l
    \end{pmatrix}(s)ds}_{L^2(0,t),L^2(\R^3))} \\
    &\leqslant C\sup_{[0,t]}\normb{\rho-\widetilde\rho}_{L^2(\R^3)}\norm{\int_0^{t'}\scalar{t'-s}^{-\frac{5}{4}+\frac{r_0}{2}}\norm{u}_{L^6(\R^3)}(s)ds}_{L^2(0,t)}\\
    &+C\sup_{[0,t]}\normb{\rho-\widetilde\rho}_{L^2(\R^3)}\norm{\int_0^{t'}\scalar{t'-s}^{-\frac{5}{4}}\scalar{s}^{\frac{r_0}{2}}\norm{u}_{L^6(\R^3)}(s)ds}_{L^2((0,t))},
    \end{align*}
    and Young's inequality for convolution and Sobolev embedding yield:
    \begin{align*}
        &\norm{\scalar{t'}^{\frac{r_0}{2}}\int_0^{t'} e^{-\mathbb (t'-s)\mathbb A} \begin{pmatrix}
        0\\
        \mathbb G_2^l
    \end{pmatrix}(s)ds}_{L^2(0,t),L^2(\R^3))} \\
    &\leqslant C\sup_{[0,t]}\normb{\rho-\widetilde\rho}_{L^2(\R^3)}\Big(\norm{\nabla u}_{L^{1}((0,t),L^2(\R^3))}+\norm{\scalar{\cdot}^{\frac{r_0}{2}}\nabla u}_{L^{5/4}((0,t)\times\R^3)}\Big).
    \end{align*}

 For the remaining term, we first observe that (see \eqref{ep2.11}) 
 \[
 \norm{\widetilde\rho(c(t))-\widetilde\rho,\; \partial_\rho P(\widetilde\rho(t),c(t))-\partial_\rho P(\widetilde \rho,1)}_{L^p(\R^3)}\leqslant C \abs{D(t)}^{1/p}\leqslant C \check E_0^{1/p},
 \]
 and via Taylor expansion
  \[
 \norm{P(\rho,c)-\varrho-\widetilde P}_{L^1(\R^3)}\leqslant C_*\normb{\varrho}_{L^2(\R^3)}^2.
 \]
 It then follows that (recall that $r_0<1/2$)
 \begin{align*}
        &\norm{\scalar{t'}^{\frac{r_0}{2}}\int_0^{t'} e^{-\mathbb (t'-s)\mathbb A} \begin{pmatrix}
        \mathbb F\\
        \mathbb G_1
    \end{pmatrix}^l(s)ds}_{L^2((0,t)\times \R^3)} \\
    &\leqslant C_*\norm{\int_0^{t'}\scalar{t-s}^{-\frac{5}{4}+\frac{r_0}{2}}\normb{\varrho}_{L^2(\R^3)}^2(s)ds}_{L^2(0,t)}+C_*\norm{\int_0^{t'}\scalar{t'-s}^{-\frac{5}{4}}\scalar{s}^{\frac{r_0}{2}}\normb{\varrho}_{L^2(\R^3)}^2(s)ds}_{L^2(0,t)}\\
    &+ C_*\Big(E_0^\frac{1}{2}+\check{E}_0^\frac{1}{2}\Big)\norm{\int_0^{t'}\scalar{t'-s}^{-\frac{3}{4}+\frac{r_0}{2}}\norm{\nabla u}_{L^2(\R^3)}(s)ds}_{L^2(0,t)}\\
    &+ C_*\Big(E_0^\frac{1}{2}+\check{E}_0^\frac{1}{2}\Big)\norm{\int_0^{t'}\scalar{t'-s}^{-\frac{3}{4}}\scalar{s}^{\frac{r_0}{2}}\norm{\nabla u}_{L^2(\R^3)}(s)ds}_{L^2(0,t)}\\
    &+C_*\Big(E_0^\frac{1}{2}+\check{E}_0^\frac{1}{2}\Big)\norm{\int_0^{t'}\scalar{t'-s}^{-\frac{5}{4}+\frac{r_0}{2}}\norm{\nabla u}_{L^2(\R^3)}(s)ds}_{L^2(0,t)}\\
    &+C_*\Big(E_0^\frac{1}{2}+\check{E}_0^\frac{1}{2}\Big)\norm{\int_0^{t'}\scalar{t'-s}^{-\frac{5}{4}}\scalar{s}^{\frac{r_0}{2}}\norm{\nabla u}_{L^2(\R^3)}(s)ds}_{L^2(0,t)}\\
    &\leqslant C_* \big( E_0^{\frac{1}{2}}+\check E_0^{\frac{1}{2}}\big)\Big(\norm{\nabla u}_{L^1((0,t), L^2(\R^3))}+\norm{\scalar{\cdot}^{\frac{r_0}{2}}\nabla u}_{L^{5/4}((0,t), L^2(\R^3))}\Big)+C_*\norm{\scalar{\cdot}^{\frac{r_0}{4}}\varrho}_{L^4((0,t), L^2(\R^3))}^2.
 \end{align*}
 
\end{itemize}
Summing all these computations, we arrive at
\begin{align}
\int_0^t\scalar{t'}^{r_0}\norm{\varrho^l, (\rho u)^l}_{L^2(\R^3)}^2(t')dt'&\leqslant C \norm{(\varrho, \rho u)_{|t=0}}_{L^{p_0}(\R^3)}^2+C_*E_0\int_0^t\scalar{t'}^{r_0}\norm{\varrho}_{L^2(\R^3))}^2\notag\\
&+C_* \big( E_0+\check E_0\big)\Big(\norm{\nabla u}_{L^1((0,t), L^2(\R^3))}+\norm{\scalar{\cdot}^{\frac{r_0}{2}}\nabla u}_{L^{5/4}((0,t), L^2(\R^3))}\Big).\label{sec3:eq13}
\end{align}
Applying H\"older's and Sobolev's inequalities, we have 
\[
\normb{\widetilde\rho u^l}_{L^2(\R^3)}^2\leqslant \normb{(\rho u)^l}_{L^2(\R^3)}^2+ \normb{\rho-\widetilde\rho}_{L^3(\R^3)}^2\norm{\nabla u}_{L^2(\R^3)}^2.
\]
Combining these with \eqref{sec3:eq9} and \eqref{sec3:eq8}, and the smallness of $E_0$ (to absorb the second term on the RHS of \eqref{sec3:eq13} into the LHS) we derive:
\begin{align*}
\int_0^t\scalar{t'}^{r_0}\normb{\varrho, u}_{L^2(\R^3)}^2dt'&\leqslant C_* \Big(E_0+  \norm{(\varrho, \rho u)_{|t=0}}_{L^{p_0}(\R^3)}^2+\sup_{[0,t]}\big(\scalar{\cdot}^{r_0}\normb{\rho u}_{L^2(\R^3)}\normb{\varrho}_{L^2(\R^3)}\big)\Big)\\
&+C_* \big( E_0+\check E_0\big)\Big(\norm{\nabla u}_{L^1((0,t), L^2(\R^3))}+\norm{\scalar{\cdot}^{\frac{r_0}{2}}\nabla u}_{L^{5/4}((0,t), L^2(\R^3))}\Big)\\
&+ C_*\big(1+E_0^{2/3}+\check E_0^{2/3}\big)\int_0^t\scalar{t'}^{r_0}\normb{\nabla u}_{L^2(\R^3)}^2dt'.
\end{align*}
With this estimate in hand, we multiply \eqref{sec3:eq4} by $\scalar{t'}^{1+r_0}$ and integrate in time to obtain:
\begin{align*}
    \scalar{t}^{1+r_0} E(t)+\int_0^t \scalar{t'}^{r_0}E(t')dt'&+\int_0^t \scalar{t'}^{1+r_0}\int_{\R^3}\big(2\mu(\rho,c)\abs{\D u}^2+\lambda(\rho,c)(\dvg u)^2\big)(t')dt'\\
    &\leqslant  E_0+C\int_0^t \scalar{t'}^{r_0}E(t')dt'\\
    &\leqslant C_* \Big(E_0+  \norm{(\varrho, \rho u)_{|t=0}}_{L^{p_0}(\R^3)}^2+\sup_{[0,t]}\big(\scalar{\cdot}^{r_0}\normb{\rho u}_{L^2(\R^3)}\normb{\varrho}_{L^2(\R^3)}\big)\Big)\\
&+C_* \big( E_0+\check E_0\big)\Big(\norm{\nabla u}_{L^1((0,t), L^2(\R^3))}+\norm{\scalar{\cdot}^{\frac{r_0}{2}}\nabla u}_{L^{5/4}((0,t), L^2(\R^3))}\Big)\\
&+ C_*\big(1+E_0^{2/3}+\check E_0^{2/3}\big)\int_0^t\scalar{t'}^{r_0}\normb{\nabla u}_{L^2(\R^3)}^2dt'.
\end{align*}
It is straightforward to obtain the following inequalities:
\begin{align*}
\int_0^t\norm{\nabla u(t')}_{L^2(\R^3)}dt'&\leqslant \Bigg(\int_0^t\scalar{t'}^{-2r(1+r_0)}dt'\Bigg)^{\frac{1}{2}}\Bigg(\int_0^t\norm{\nabla u(t')}_{L^2(\R^3)}^2\Bigg)^{\frac{1}{2}-r}\Bigg(\int_0^t\scalar{t'}^{1+r_0}\norm{\nabla u(t')}_{L^2(\R^3)}^2dt'\Bigg)^r\\
&\leqslant C(r)E_0^{\frac{1}{2}-r}\Bigg(\int_0^t\scalar{t'}^{1+r_0}\norm{\nabla u(t')}_{L^2(\R^3)}^2dt'\Bigg)^r,\;  \text{ with }\; \frac{1}{2(1+r_0)}<r<\frac{1}{2};\\
\norm{\scalar{\cdot}^{\frac{r_0}{2}}\nabla u}_{L^{5/4}((0,t), L^2(\R^3))}^{5/4}&\leqslant C(r) E_0^{\frac{5}{8}-\check r}\Bigg(\int_0^t\scalar{t'}^{1+r_0}\norm{\nabla u(s)}_{L^2(\R^3)}^2ds\Bigg)^{\check r},\; \text{ with }\;  \frac{5}{8}-\frac{1/4}{1+r_0}<\check r<\frac{5}{8};\\
\int_0^t\scalar{t'}^{r_0}\normb{\nabla u}_{L^2(\R^3)}^2dt'&\leqslant E_0^{\frac{1}{1+r_0}} \Bigg(\int_0^t\scalar{t'}^{1+r_0}\norm{\nabla u(t')}_{L^2(\R^3)}^2dt'\Bigg)^{\frac{r_0}{1+r_0}}.
\end{align*}
We choose 
\[
\check{r}=\frac{5}{4}r,\;\text{ with }\; \max\Bigg\{1-\frac{r_0}{1+r_0};1-\frac{2/5}{1+r_0}\Bigg\}< 2r<1,
\]
and apply Young's inequality to obtain
\begin{align*}
    \sup_{[0,t]}\scalar{\cdot}^{1+r_0} E&+ \int_0^t\scalar{t'}^{r_0} E(t')dt'+\int_0^t \scalar{t'}^{1+r_0}\int_{\R^3}\big(2\mu(\rho,c)\abs{\D u}^2+\lambda(\rho,c)(\dvg u)^2\big)dt'\\
    &\leqslant C_* \Big( E_0\big(1+E_0+\check{E}_0\big)+E_0^{\frac{r}{1-r}}\big(E_0+\check{E}_0\big)^{\frac{1}{1-r}}+\norm{(\varrho, \rho u)_{|t=0}}_{L^{p_0}(\R^3)}^2\Big).
\end{align*}
This completes the proof of \eqref{ep3.8}, and we now proceed to the bound of the functionals $\mathcal{A}_1$, $\mathcal{A}_2$, and $\mathcal{A}_3$ defined  in \eqref{sec2:eq48}.
\end{proof}

\subsubsection{Bounds for the functional $\mathcal{A}_1$}\label{sec3.1:funcA1} This subsection aim to provide a bound for the functional $\mathcal{A}_1$ defined in \eqref{sec2:eq48}. 
\begin{proof}
To achieve this, we use the material acceleration $\dot u$
as a test function in the momentum equation $\eqref{ep2.1}_3$ and we obtain 
\begin{align}
     \dfrac{d}{dt}\int_{\R^3}\Big[\mu(\rho,c)\abs{\D u}^2&+\dfrac{1}{2}\lambda(\rho,c)\abs{\dvg u}^2\Big]+\int_{\R^3}\rho\abs{\dot u}^2=- \int_{\R^3} \left(\rho \partial_\rho \mu(\rho,c)-\mu(\rho,c)\right) \abs{\D u}^2 \dvg u\nonumber\\
    &-2 \int_{\R^3}\mu(\rho,c) \D^{jk} u \partial_k u^l \partial_l u^j-\dfrac{1}{2} \int_{\R^3}\left(\rho \partial_\rho \lambda(\rho,c)-\lambda(\rho,c)\right)(\dvg u)^3\nonumber\\
    &-\int_{\R^3}\lambda(\rho,c)\dvg u \nabla u^k \partial_k u+\dfrac{d}{dt}\int_{\R^3} G(\rho,c)\dvg u\nonumber\\
    &+ \int_{\R^3} (\rho \partial_{\rho} P(\rho,c)- G(\rho,c))(\dvg u)^2+\int_{\R^3} G(\rho,c)\nabla u^k \cdot\partial_k u.\label{sec3:eq11}
\end{align}
We multiply \eqref{sec3:eq4} by a constant $L\geqslant 1$ and add it 
to the above equality, then multiply the resulting expression by $\kappa_1(t)=\scalar{t}^{1+r_0}$ before integrating in 
time:
\begin{align}
    &\frac{\kappa_1(t)}{2}\int_{\R^3}\Big[2\mu(\rho,c)\abs{\D u}^2+\lambda(\rho,c)\abs{\dvg u}^2\Big](t,x)dx+\int_0^t \kappa_1\int_{\R^3}\rho\abs{\dot u}^2\notag\\
    &+ L\kappa_1(t) E(t)+\int_{0}^t\kappa_1(t')\int_{\R^3}\Big[2L\mu(\rho,c)\abs{\D u}^2+ \big(L\lambda(\rho,c)-\partial_{\rho} P(\rho,c)\big)(\dvg u)^2\Big](t')dt'\notag\\
    &=L E_0+\int_{\R^3}\Big[\mu(\rho_0,c_0)\abs{\D u_0}^2+\dfrac{1}{2}\lambda(\rho_0,c_0)\abs{\dvg u_0}^2\Big]+L\int_0^t\kappa_1'(t') E(t')dt'\notag\\
    &+\int_0^t\kappa_1'(t')\int_{\R^3}\Big[\mu(\rho,c)\abs{\D u}^2+\dfrac{1}{2}\lambda(\rho,c)\abs{\dvg u}^2\Big]dt'-\int_{\R^3} G(\rho_0,c_0)\dvg u_0\notag\\
    &+\kappa_1(t)\int_{\R^3} G(\rho,c)\dvg u-\int_0^t\kappa_1'(t')\int_{\R^3} G(\rho,c)\dvg u-2 \int_0^t\kappa_1(t')\int_{\R^3}\mu(\rho,c) \D^{jk} u \partial_k u^l \partial_l u^j\notag\\
    &- \int_0^t\kappa_1(t')\int_{\R^3} \left(\rho \partial_\rho \mu(\rho,c)-\mu(\rho,c)\right) \abs{\D u}^2 \dvg u-\dfrac{1}{2} \int_0^t\kappa_1(t')\int_{\R^3}\left(\rho \partial_\rho \lambda(\rho,c)-\lambda(\rho,c)\right)(\dvg u)^3\notag\\
    &-\int_0^t\kappa_1(t')\int_{\R^3}\lambda(\rho,c)\dvg u \nabla u^k \partial_k u- \int_0^t\kappa_1(t')\int_{\R^3} G(\rho,c)(\dvg u)^2+\int_0^t\kappa_1(t')\int_{\R^3} G(\rho,c)\nabla u^k \cdot\partial_k u.\label{sec3:eq10}
\end{align}
 In view of the classical energy balance \eqref{sec3:eq4} and \eqref{ep3.8}, the terms appearing in the second and third lines above are all bounded by
    \[
     C_*\big(\norm{\nabla u_0}_{L^2(\R^3)}^2+ L C_0\big).
    \]
    With the help of H\"older and Young's inequalities the term that follows can be bounded as follows: 
    \[
    \Bigg|\kappa_1(t)\int_{\R^3} G(\rho,c)\dvg u\Bigg|\leqslant \frac{\kappa_1(t)}{\eta L}\norm{\dvg u(t)}_{L^2(\R^3)}^2+L\eta \kappa_1(t)\norm{G(\rho,c)}_{L^2(\R^3)}^2.
    \]
Next, using \eqref{ep2.4} we have:
\begin{align*}
    -\int_0^t\kappa_1'(t')\int_{\R^3} G(\rho,c)\dvg u&=\kappa_1'(t) H_1(\rho(t),c(t))-\int_0^t\kappa_1''(t')H_1(\rho,c)(t')dt',
\end{align*}
and hence
\[
\Bigg|\int_0^t\kappa_1'(t')\int_{\R^3} G(\rho,c)\dvg u\Bigg|\leqslant C_*  C_0.
\]
All the remaining terms in the RHS of \eqref{sec3:eq10} are bounded by 
\[
C_*\int_0^t\kappa_1(t') \big(\norm{\nabla u}_{L^3(\R^3)}^3+\norm{G(\rho,c)}_{L^3(\R^3)}^3\big)(t')dt'.
\]
In sum, we obtain 
\begin{align*}
    \mathcal{A}_1(t)+ L\kappa_1(t) E(t)+&\int_{0}^t\kappa_1(t')\int_{\R^3}\Big[2L\mu(\rho,c)\abs{\D u}^2+ \big(L\lambda(\rho,c)-\partial_{\rho} P(\rho,c)\big)(\dvg u)^2\Big](t')dt'\\
    &\leqslant L\eta \kappa_1(t)\normb{G(\rho,c)}_{L^2(\R^3)}^2+
\frac{1}{\eta L}\kappa_1(t)\norm{\dvg u(t)}_{L^2(\R^3)}^2\\
    &+C_*\Bigg(\norm{\nabla u_0}_{L^2(\R^3)}^2+ L C_0+\int_0^t\kappa_1(t') \big(\norm{\nabla u}_{L^3(\R^3)}^3+\norm{G(\rho,c)}_{L^3(\R^3)}^3\big)(t')dt'\Bigg).
\end{align*}
Given that 
\[
\normb{G(\rho,c)}_{L^2(\R^3)}^2\leqslant C_* \int_{\R^3} H_1(\rho,c),
\]
we first fix $\eta$ small to absorb the first RHS term into the left, and then choose $L$ large, depending on the shear viscosity lower bound, to ensure that the last LHS term is nonnegative and the second RHS term is absorbed into the left. It then follows that 
\begin{align}\label{sec3:eq14}
    \mathcal{A}_1(t)&\leqslant C_*\Bigg(\norm{\nabla u_0}_{L^2(\R^3)}^2+C_0+\int_0^t\kappa_1(t')  \big(\norm{\nabla u}_{L^3(\R^3)}^3+\normb{G(\rho,c)}_{L^3(\R^3)}^3\big)(t')dt'\Bigg).
\end{align}
What remains is to estimate the last two terms above. 

To achieve this, we recall the expression of the velocity gradient \eqref{sec2:eq3}, from which we deduce
\begin{align}
    \norm{\nabla u}_{L^{l+1}(\R^3)}&\leqslant \frac{1}{\mu_*}C_l^{1/(l+1)}\Big(\norm{(-\Delta)^{-1}\nabla \mathbb P(\rho \dot u)}_{L^{l+1}(\R^3)}+\norm{(-\Delta)^{-1}\dvg (\rho \dot u)}_{L^{l+1}(\R^3)}+\norm{ G(\rho,c)}_{L^{l+1}(\R^3)}\Big)\notag\\
    &+\frac{1}{\mu_*}C_l^{1/(l+1)}\Big(\norm{\big[\mathcal{K}, \mu(\rho,c)-\widetilde\mu\big]\D u}_{L^{l+1}(\R^3)}+\norm{\big[\check{\mathcal{K}}, \mu(\rho,c)-\widetilde\mu\big]\D u}_{L^{l+1}(\R^3)}\Big).\label{sec3:eq20}
\end{align}
Next, we recall \eqref{sec3:eq12}, from which we deduce:
\begin{align*}
\kappa_1(t)  \int_{\R^3}H_2(\rho,c)(t)&+\dfrac{1}{3\mu^*}\int_0^t\kappa_1(t') \norm{G(\rho,c)}^{3}_{L^3(\R^3)}(t')dt' \\
&\leqslant \int_{\R^3}H_2(\rho_0,c_0)+\int_0^t\kappa_1'(t')\int_{\R^3}H_2(\rho,c)dt'+\dfrac{1}{3\mu_*}\int_0^t\kappa_1(t') \norm{F}^{3}_{L^3(\R^3)}(t')dt'.
\end{align*}
It then follows that (see the expression of $F$ in \eqref{sec3:eq5})
\begin{align}
   \kappa_1(t) \int_{\R^3}H_2(\rho,c)(t,x) dx&+ \dfrac{1}{3\mu^*}\int_0^t\kappa_1(t')\Big(\norm{G(\rho,c)}^{3}_{L^3(\R^3)}+\mu_*^3\norm{\nabla u}_{L^3(\R^3)}^3\Big)(t')dt'\notag\\
   &\leqslant C\Bigg(\int_{\R^3}H_2(\rho_0,c_0)+\int_0^t\kappa_1'(t') \int_{\R^3}H_2(\rho,c)\Bigg)\notag\\
   &+\frac{C}{\mu_*}\int_0^t\kappa_1(t')\Big(\norm{(-\Delta)^{-1}\nabla \mathbb P(\rho \dot u)}_{L^{3}(\R^3)}^3+\norm{(-\Delta)^{-1}\dvg (\rho \dot u)}_{L^{3}(\R^3)}^3\Big)(t')dt'\notag\\
    &+\frac{C}{\mu_*}\int_0^t\kappa_1(t')\Big(\norm{\big[\mathcal{K}, \mu(\rho,c)-\widetilde\mu\big]\D u}_{L^{3}(\R^3)}^3+\norm{\big[\check{\mathcal{K}}, \mu(\rho,c)-\widetilde\mu\big]\D u}_{L^{3}(\R^3)}^3\Big)(t')dt'.\notag
\end{align}
We will now estimate the terms appearing in the RHS above. 
\begin{itemize}
    \item From \eqref{ep2.7}-\eqref{ep2.8}, we have:
    \begin{align}
        \widetilde\rho(c) H_{l+1} (\rho,c)&\leqslant \abs{P(\rho,c)-\widetilde P}^{l}\big(P(\rho,c)-\widetilde P\big)\big(\rho-\widetilde \rho(c)\big)\notag\\
        &\leqslant C_*\abs{P(\rho,c)-\widetilde P}^{l-1}\big(P(\rho,c)-\widetilde P\big)\big(\rho-\widetilde \rho(c)\big)\notag\\
        &\leqslant C_*\frac{\rho+\widetilde\rho(c)}{a_l(c)} H_l(\rho,c).\label{sec3:eq17}
    \end{align}
    In particular, we have (see \eqref{ep3.8})
    \[
    \int_{\R^3}H_2(\rho_0,c_0)+\int_0^t\kappa_1'(t')\int_{\R^3}H_2(\rho,c)\leqslant C_* C_0.
    \]
    \item Owing to the expressions \eqref{sec2:eq1}-\eqref{sec2:eq2} and Gagliardo-Nirenberg's inequality, we have:
    \begin{align}
        &\frac{C}{\mu_*}\int_0^t\kappa_1(t')\Big(\norm{(-\Delta)^{-1}\nabla \mathbb P(\rho \dot u)}_{L^{3}(\R^3)}^3+\norm{(-\Delta)^{-1}\dvg (\rho \dot u)}_{L^{3}(\R^3)}^3\Big)(t')dt'\notag\\
        &\leqslant \frac{C}{\mu_*}\int_0^t \kappa_1(t')\norm{\rho \dot u}_{L^2(\R^3)}^{\frac{3}{2}} \norm{(-\Delta)^{-1}\nabla \mathbb P(\rho \dot u),\,(-\Delta)^{-1}\dvg (\rho \dot u)}_{L^{2}(\R^3)}^{\frac{3}{2}}(t')dt'\notag\\
        &\leqslant C_* \Bigg(\int_0^t \kappa_1(t')\norm{\rho \dot u}_{L^2(\R^3)}^2\Bigg)^{3/4}\Bigg(\int_0^t \kappa_1(t')\bigg(\norm{\nabla u}_{L^2(\R^3)}^6+\norm{G(\rho,c)}_{L^2(\R^3)}^6\bigg)(t')dt'\Bigg)^{1/4}\notag\\
        &\leqslant C_* E^{\frac{1}{4}}_0\big(\mathcal{A}_1(t)^{\frac{5}{4}}+C_0^{\frac{5}{4}}\big).\notag
    \end{align}
    \item Applying \eqref{ap:eq6} with $p=3$ and $\alpha'=\frac{\alpha}{2}$, we obtain
          \begin{align}
              \norm{\big[\mathcal{K}, \mu(\rho,c)-\widetilde\mu\big]\D u}_{L^{3}(\R^3)}&\leqslant C\Big(\abs{\mu(\rho,c)}_{\dot \cC^{\alpha/2}_{\pw,\Sigma}(\R^3)}+ \norm{\llbracket \mu(\rho,c)\rrbracket}_{L^\infty(\Sigma)}\ell^{-\alpha/2}_{\Sigma}\Big)\norm{\nabla u}_{L^{6/(2+\alpha)}(\R^3)}\notag\\
              &+C\norm{\llbracket \mu(\rho,c)\rrbracket}_{L^\infty(\Sigma)}\norm{\nabla u}_{L^3(\R^3)}\notag\\
              &\leqslant C\Big( \check \phi_\Sigma\norm{\nabla u}_{L^{6/(2+\alpha)}(\R^3)}+\norm{\llbracket \mu(\rho,c)\rrbracket}_{L^\infty(\Sigma)}\norm{\nabla u}_{L^3(\R^3)}\Big),\label{sec3:eq35}
          \end{align}
    and hence 
    \begin{align}
        \int_0^t\kappa_1(t') \norm{\big[\mathcal{K}, \mu(\rho,c)-\widetilde\mu\big]\D u}_{L^{3}(\R^3)}^3dt'
        &\leqslant  C\sup_{[0,t]}\norm{\llbracket \mu(\rho,c)\rrbracket}_{L^\infty(\Sigma)}^3\int_0^t\kappa_1(t') \norm{\nabla u}_{L^{3}(\R^3)}^3dt'\notag\\
        &+C\int_0^t\kappa_1(t')\check\phi_\Sigma^3\norm{\nabla u}_{L^{6/(2+\alpha)}(\R^3)}^3dt'.\notag
    \end{align}
    The same estimate holds for 
    \[
    \int_0^t\kappa_1(t') \norm{\big[\check{\mathcal{K}}, \mu(\rho,c)-\widetilde\mu\big]\D u}_{L^{3}(\R^3)}^3dt'.
    \]
\end{itemize}
 Combining all these estimates, we infer 
\begin{align*}
   \kappa_1(t) \int_{\R^3}H_2(\rho,c)(t) &+ \dfrac{1}{3\mu^*}\int_0^t\kappa_1(t')\Big(\norm{G(\rho,c)}^{3}_{L^3(\R^3)}+\mu_*^3\norm{\nabla u}_{L^3(\R^3)}^3\Big)(t')dt'\\
   &\leqslant C_*  C_0+\frac{C}{\mu_*}\sup_{[0,t]}\norm{\llbracket \mu(\rho,c)\rrbracket}_{L^\infty(\Sigma)}^3\int_0^t\kappa_1(t') \norm{\nabla u}_{L^{3}(\R^3)}^3dt'\\
   &+C_* E^{\frac{1}{4}}_0\big(\mathcal{A}_1(t)^{\frac{5}{4}}+C_0^{\frac{5}{4}}\big)+C_*\int_0^t\kappa_1(t')\check\phi_\Sigma^3\norm{\nabla u}_{L^{6/(2+\alpha)}(\R^3)}^3dt'.
\end{align*}
Therefore, if 
\begin{gather}\label{sec3:eq27}
C\frac{\mu^*}{\mu_*}\Bigg(\frac{1}{\mu_*}\sup_{[0,t]}\norm{\llbracket \mu(\rho,c)\rrbracket}_{L^\infty(\Sigma)}\Bigg)^3\leqslant \frac{1}{6} ,
\end{gather}
the second term on the RHS above can be absorbed into the LHS, and applying interpolation and Young's inequalities to the last term we obtain:
\begin{align}\label{sec3:eq42}
   \kappa_1(t) \int_{\R^3}H_2(\rho,c)(t) &+ \int_0^t\kappa_1(t')\Big(\norm{G(\rho,c)}^{3}_{L^3(\R^3)}+\norm{\nabla u}_{L^3(\R^3)}^3\Big)(t')dt'\notag\\
    &\leqslant C_* \Big( C_0+E^{\frac{1}{4}}_0\big(\mathcal{A}_1(t)^{\frac{5}{4}}+C_0^{\frac{5}{4}}\big)\Big)+C_*\int_0^t\kappa_1(t')\check\phi_\Sigma^{\frac{3}{\alpha}}\norm{\nabla u}_{L^{2}(\R^3)}^3dt'.
\end{align}
Returning to \eqref{sec3:eq14}, we deduce:
\begin{align}
    \mathcal{A}_1(t)+ \kappa_1(t) \int_{\R^3}H_2(\rho,c)(t) &+ \int_0^t\kappa_1(t')\Big(\norm{G(\rho,c)}^{3}_{L^3(\R^3)}+\norm{\nabla u}_{L^3(\R^3)}^3\Big)(t')dt'\notag\\
    &\leqslant C_*\Big(C_0+ \norm{\nabla u_0}_{L^2(\R^3)}^2+E^{\frac{1}{4}}_0\big(\mathcal{A}_1(t)^{\frac{5}{4}}+C_0^{\frac{5}{4}}\big)\Big)+C_*\int_0^t\kappa_1(t')\check \phi_\Sigma^{\frac{3}{\alpha}}\norm{\nabla u}_{L^{2}(\R^3)}^3dt'\notag.
\end{align}
We estimate the last term above as 
\[
\int_0^t\kappa_1(t')\check \phi_\Sigma^{\frac{3}{\alpha}}\norm{\nabla u}_{L^{2}(\R^3)}^3dt'\leqslant \check \phi_\Sigma^{\frac{3}{\alpha}} \sup_{[0,t]} \big(\kappa_1\norm{\nabla u}_{L^2(\R^3)}^2\big)^{1/2}\Bigg(\int_0^t\kappa_1(t')\norm{\nabla u}_{L^2(\R^3)}^2\Bigg)^{1/2}\Bigg(\int_0^t\norm{\nabla u}_{L^2(\R^3)}^2\Bigg)^{1/2}
\]
and  we apply  Young's inequality to obtain:
\begin{align}
\mathcal{A}_1(t)+ \sup_{[0,t]}\kappa_1 \int_{\R^3}H_2(\rho,c) &+ \int_0^t\kappa_1(t')\Big(\norm{G(\rho,c)}^{3}_{L^3(\R^3)}+\norm{\nabla u}_{L^3(\R^3)}^3\Big)(t')dt'\notag\\
    &\leqslant C_*\Big(C_0\big(1+\check \phi_\Sigma^{\frac{6}{\alpha}} E_0\big)+ \norm{\nabla u_0}_{L^2(\R^3)}^2+E^{\frac{1}{4}}_0\big(\mathcal{A}_1(t)^{\frac{5}{4}}+C_0^{\frac{5}{4}}\big)\Big).\notag
\end{align}
This completes the proof of \eqref{sec2:eq53}.
\end{proof}
\subsubsection{Bound for the functional $\mathcal{A}_2$}\label{sec3.1:funcE2} This section is devoted to deriving a bound for the functional $\mathcal{A}_2$ defined in \eqref{sec2:eq48} above.
\begin{proof}
The functional $\mathcal{A}_2$ arises when applying the material derivative $\dpt \cdot + \dvg (\cdot\, u)$ to the momentum equation $\eqref{ep2.1}_3$, followed by a testing against $\dot u$. This yields
    \begin{align}
    \dfrac{1}{2}\dfrac{d}{dt}\int_{\R^3} \rho\abs{\dot u}^2&+\int_{\R^3} \Big(2\mu(\rho,c)\abs{\D \dot u}^2+\lambda(\rho,c)(\dvg \dot u)^2\Big)= \int_{\R^3} (\rho \partial_\rho \lambda(\rho,c)-\lambda(\rho,c))(\dvg u)^2 \dvg \dot u \nonumber\\
    &+ \int_{\R^3} \lambda(\rho,c) \nabla u^k \cdot \partial_k u \dvg \dot u+ \int_{\R^3} \lambda(\rho,c)\dvg u \partial_j u^k \partial_k \dot u^j+2\int_{\R^3}\mu(\rho,c)\partial_k u\cdot \nabla \dot u^j \D^{jk} u\nonumber\\
    &+\int_{\R^3}\mu(\rho,c) \big(\partial_j u\cdot \nabla u^k + \partial_k u\cdot \nabla u^j\big)\partial_k \dot u^j+2\int_{\R^3}\left(\rho \partial_\rho \mu(\rho,c)-\mu(\rho,c)\right)\dvg u\D^{jk} u \partial_k \dot u^j\notag\\
    &+\int_{\R^3}G(\rho,c)\big(\dvg u\dvg \dot u-\partial_j u^k \partial_k \dot u^j\big)-\int_{\R^3} \rho \partial_\rho P(\rho,c)\dvg u\dvg \dot u.\notag
\end{align}
The computation is similar to that of the one-phase model (see e.g. \cite[Appendix A]{zodji2023discontinuous}), and we do not repeat it in this paper. The last term is
\begin{align*}
    -\int_{\R^3} \rho \partial_\rho P(\rho,c)\dvg u\dvg \dot u&=-\frac{1}{2}\dfrac{d}{dt}\int_{\R^3}\rho \partial_{\rho} P(\rho,c)(\dvg u)^2-\frac{1}{2}\int_{\R^3}\rho^2 \partial_{\rho}^2 P(\rho,c) (\dvg u)^3\\
    &-\int_{\R^3}\rho \partial_{\rho} P(\rho,c)\dvg u \nabla u^j \cdot \partial_j u.
\end{align*}
By combining the above expressions and multiplying by $\kappa_2(t')=\sigma(t')\scalar{t'}^{\frac{3}{2}(1+r_0)}$ we obtain (since $\kappa_2(0)=0$):
\begin{align}
    &\dfrac{\kappa_2(t)}{2}\int_{\R^3} \rho\Big(\abs{\dot u}^2+\partial_{\rho} P(\rho,c)(\dvg u)^2\Big)(t,x)dx+\int_0^t\int_{\R^3} \kappa_2(t')\Big(2\mu(\rho,c)\abs{\D \dot u}^2+\lambda(\rho,c)(\dvg \dot u)^2\Big)dt' \notag\\
    &\leqslant\dfrac{1}{2}\int_0^t\kappa_2'\int_{\R^3} \rho\Big(\abs{\dot u}^2+\partial_{\rho} P(\rho,c)(\dvg u)^2\Big)+C_*\int_0^t \kappa_2\big(\norm{\nabla u,\,G(\rho,c)}_{L^4(\R^3)}^4+\norm{\nabla u}_{L^3(\R^3)}^3\big).\label{sec3:eq15}
\end{align}
In the following items, we estimate the terms on the RHS  above.
\begin{itemize}
    \item Since $\kappa_2'(t')\leqslant C\scalarb{t'}^{\frac{1+3r_0}{2}}\leqslant C\kappa_1(t')$,  it follows that the first term on the RHS of \eqref{sec3:eq15}  can be estimated as follows: 
    \begin{gather}\label{sec3:eq23}
    \dfrac{1}{2}\int_0^t\kappa_2'(t')\int_{\R^3} \rho\Big(\abs{\dot u}^2+\partial_{\rho} P(\rho,c)(\dvg u)^2\Big)(t',x)dxdt' \leqslant C_*\big(C_0+\mathcal{A}_1(t)\big).
    \end{gather}
    \item Multiplying \eqref{sec3:eq12} by a time weight $\check\kappa$, and integrating in time yields
    \begin{align*}
        \check\kappa(t)\int_{\R^3} H_l(\rho,c)(t,x)dx&+\dfrac{1}{(l+1)\mu^*}\int_0^t\check\kappa\norm{G(\rho,c)}_{L^{l+1}(\R^3)}^{l+1}\leqslant \check\kappa(0)\int_{\R^3} H_l(\rho_0,c_0)(x)dx+\int_0^t\check\kappa'\int_{\R^3} H_l(\rho,c)\\
        &+\dfrac{1}{(l+1)\mu_*}\int_0^t\check\kappa\Big(\norm{(-\Delta)^{-1}\dvg (\rho \dot u)}_{L^{l+1}(\R^3)}^{l+1}+\norm{\big[\check{\mathcal{K}}, \mu(\rho,c)-\widetilde\mu\big]\D u}_{L^{l+1}(\R^3)}^{l+1}\Big).
    \end{align*}
    Combining this with \eqref{sec3:eq20}, we obtain 
\begin{align}
   \check\kappa(t) \int_{\R^3}&H_l(\rho,c)(t)+ \dfrac{1}{(l+1)\mu^*}\int_0^t\check\kappa(t')\Big(\norm{G(\rho,c)}^{l+1}_{L^{l+1}(\R^3)}+\mu_*^{l+1}\norm{\nabla u}_{L^{l+1}(\R^3)}^{l+1}\Big)(t')dt'\notag\\
   &\leqslant \check\kappa(0)\int_{\R^3} H_l(\rho_0,c_0)(x)dx+C\int_0^t\check\kappa'\int_{\R^3} H_l(\rho,c)\notag\\
   &+\frac{C_l}{(l+1)\mu_*}\int_0^t\check\kappa(t')\Big(\norm{(-\Delta)^{-1}\nabla \mathbb P(\rho \dot u)}_{L^{l+1}(\R^3)}^{l+1}+\norm{(-\Delta)^{-1}\dvg (\rho \dot u)}_{L^{l+1}(\R^3)}^{l+1}\Big)(t')dt'\notag\\
    &+\frac{C_l}{(l+1)\mu_*}\int_0^t\check\kappa(t')\Big(\norm{\big[\mathcal{K}, \mu(\rho,c)-\widetilde\mu\big]\D u}_{L^{l+1}(\R^3)}^{l+1}+\norm{\big[\check{\mathcal{K}}, \mu(\rho,c)-\widetilde\mu\big]\D u}_{L^{l+1}(\R^3)}^{l+1}\Big)(t')dt'.\notag
\end{align}
Applying the commutator estimate \eqref{ap:eq6} with $\alpha'=\alpha/2$, and $p=l+1$,  we obtain the following bound for the last term above:
\begin{align*}
    &\int_0^t\check\kappa(t')\Big(\norm{\big[\mathcal{K}, \mu(\rho,c)-\widetilde\mu\big]\D u}_{L^{l+1}(\R^3)}^{l+1}+\norm{\big[\check{\mathcal{K}}, \mu(\rho,c)-\widetilde\mu\big]\D u}_{L^{l+1}(\R^3)}^{l+1}\Big)(t')dt'\\
    &\leqslant C_l\sup_{[0,t]}\norm{\llbracket \mu(\rho,c)\rrbracket}_{L^\infty(\Sigma)}^{l+1}\int_0^t\check \kappa(t') \norm{\nabla u}_{L^{l+1}(\R^3)}^{l+1}dt'+C_l\int_0^t\check\kappa(t')\check\phi_\Sigma^{l+1}\norm{\nabla u}_{L^{r}(\R^3)}^{l+1}dt',
\end{align*}
where $\frac{1}{r}=\frac{\alpha}{6}+\frac{1}{l+1}$. Hence, under the smallness of the shear viscosity jump 
\[
C_l\frac{\mu^*}{\mu_*}\Bigg(\frac{1}{\mu_*}\sup_{[0,t]}\norm{\llbracket \mu(\rho,c)\rrbracket}_{L^\infty(\Sigma)}\Bigg)^{l+1}\leqslant \frac{1}{2(l+1)}, 
\]
we have 
\begin{align}\label{sec3:eq16}
   \check\kappa(t) \int_{\R^3}H_l(\rho,c)(t) &+ \int_0^t\check\kappa(t')\Big(\norm{G(\rho,c)}^{l+1}_{L^{l+1}(\R^3)}+\norm{\nabla u}_{L^{l+1}(\R^3)}^{l+1}\Big)(t')dt'\notag\\
   &\leqslant C_{*,l}\Bigg(\check\kappa(0)\int_{\R^3} H_l(\rho_0,c_0)(x)dx+\int_0^t\check\kappa'\int_{\R^3} H_l(\rho,c)+\int_0^t\check\kappa(t')\check\phi_\Sigma^{l+1}\norm{\nabla u}_{L^{r}(\R^3)}^{l+1}dt'\Bigg)\notag\\
   &+C_{*,l}\int_0^t\check\kappa(t')\Big(\norm{(-\Delta)^{-1}\nabla \mathbb P(\rho \dot u)}_{L^{l+1}(\R^3)}^{l+1}+\norm{(-\Delta)^{-1}\dvg (\rho \dot u)}_{L^{l+1}(\R^3)}^{l+1}\Big)(t')dt'.
\end{align}
Next, we interpolate
\[
\int_0^t\check\kappa(t')\check\phi_\Sigma^{l+1}\norm{\nabla u}_{L^{r}(\R^3)}^{l+1}dt'\leqslant \Bigg(\int_0^t\check\kappa(t')\norm{\nabla u}_{L^{l+1}(\R^3)}^{l+1}dt'\Bigg)^{1-\frac{\alpha(l+1)}{3(l-1)}}\Bigg(\int_0^t\kappa_3(t')\check\phi_\Sigma^{\frac{3}{\alpha}(l-1)}\norm{\nabla u}_{L^{2}(\R^3)}^{l+1}dt'\Bigg)^{\frac{\alpha(l+1)}{3(l-1)}};
\]
and we express
\[
\int_0^t\check\kappa'\int_{\R^3} H_l(\rho,c)= \int_0^{\sigma(t)}\check\kappa'\int_{\R^3} H_l(\rho,c)+\int_{\sigma(t)}^t\check\kappa'\int_{\R^3} H_l(\rho,c).
\]
By iterating \eqref{sec3:eq17}, it is straightforward to obtain $H_l(\rho,c)\leqslant C_{*,l} H_1(\rho,c)$ and therefore
\[
\int_0^{\sigma(t)}\check\kappa'\int_{\R^3} H_l(\rho,c)\leqslant C_{*,l} C_0 \int_0^{\sigma(t)} \check\kappa',
\]
while for the second term, we obtain
\begin{align*}
    \int_{\sigma(t)}^t\check\kappa'\int_{\R^3} H_l(\rho,c)\leqslant \Bigg(\int_{\sigma(t)}^t\check\kappa\int_{\R^3} H_l(\rho,c)\Bigg)^{\eta}\Bigg(C_{*,l}\int_{\sigma(t)}^t\big(\check\kappa'\check\kappa^{-\eta}\big)^{\frac{1}{1-\eta}}\int_{\R^3} H_1(\rho,c)\Bigg)^{1-\eta}, \;\text{ for }\;\eta\in (0,1).
\end{align*}
We now refer back to \eqref{sec3:eq16}, from which Young's inequality provides
\begin{align}\label{sec3:eq18}
   &\check\kappa(t) \int_{\R^3}H_l(\rho,c)(t) + \int_0^t\check\kappa(t')\Big(\norm{G(\rho,c)}^{l+1}_{L^{l+1}(\R^3)}+\norm{\nabla u}_{L^{l+1}(\R^3)}^{l+1}\Big)(t')dt'\notag\\
   &\leqslant C_{*,l}\Bigg[C_0 \Bigg(\check\kappa(0)+\int_0^{\sigma(t)} \check\kappa'\Bigg)+\int_0^t\kappa_3(t')\check\phi_\Sigma^{\frac{3}{\alpha}(l-1)}\norm{\nabla u}_{L^{2}(\R^3)}^{l+1}dt'+\int_{\sigma(t)}^t\big(\check\kappa'\check\kappa^{-\eta}\big)^{\frac{1}{1-\eta}}\int_{\R^3} H_1(\rho,c)\Bigg]\notag\\
   &+C_{*,l}\int_0^t\check\kappa(t')\Big(\norm{(-\Delta)^{-1}\nabla \mathbb P(\rho \dot u)}_{L^{l+1}(\R^3)}^{l+1}+\norm{(-\Delta)^{-1}\dvg (\rho \dot u)}_{L^{l+1}(\R^3)}^{l+1}\Big)(t')dt'.
\end{align}
In particular for $l=3$, and $\check\kappa=\kappa_3=\sigma(t)^{\frac{1}{2}}\scalar{t}^{2(1+r_0)}$, we infer
\[
\int_0^t\kappa_3(t')\check\phi_\Sigma^{\frac{6}{\alpha}}\norm{\nabla u}_{L^{2}(\R^3)}^{4}dt'\leqslant C_*\check\phi_\Sigma^{\frac{6}{\alpha}} C_0\mathcal{A}_1(t),
\]
and for $\eta=\frac{1+r_0}{2+r_0}$,
\[
\int_{\sigma(t)}^t\big(\kappa_3'\kappa_3^{-\eta}\big)^{\frac{1}{1-\eta}}\int_{\R^3} H_1(\rho,c)\leqslant C_*\int_0^t\scalar{t'}^{r_0} \int_{\R^3}H_1(\rho,c)dt'\leqslant  C_*C_0.
\]
Next, Gagliardo-Nirenberg's inequality yields 
\begin{align*}
    &\int_0^t\kappa_3(t')\Big(\norm{(-\Delta)^{-1}\nabla \mathbb P(\rho \dot u)}_{L^{4}(\R^3)}^4+\norm{(-\Delta)^{-1}\dvg (\rho \dot u)}_{L^{4}(\R^3)}^4\Big)(t')dt'\\
    &\leqslant C \sup_{[0,t]}\bigg[\sigma^\frac{1}{2}\kappa_1\norm{\rho \dot u}_{L^2(\R^3)}\norm{\nabla u,\, G(\rho,c)}_{L^2(\R^3)}\bigg]\int_0^t\kappa_1 \norm{\rho \dot u}_{L^2(\R^3)}^2\\
    &\leqslant C_* \mathcal{A}_2(t)^{1/2}\big(C_0^{1/2}+\mathcal{A}_1(t)^{1/2}\big)\mathcal{A}_1(t).
\end{align*}
From these, we  deduce the following bound for the next-to-last term in \eqref{sec3:eq15}:
\begin{align}
   \kappa_3(t) \int_{\R^3}H_3(\rho,c)(t) &+ \int_0^t\kappa_3(t')\Big(\norm{G(\rho,c)}^{4}_{L^4(\R^3)}+\norm{\nabla u}_{L^4(\R^3)}^4\Big)(t')dt'\notag\\
   &\leqslant C_*\bigg( C_0\big(1+\check\phi_\Sigma^{\frac{6}{\alpha}} \mathcal{A}_1(t)\big)+\mathcal{A}_2(t)^{1/2}\big(C_0^{1/2}+\mathcal{A}_1(t)^{1/2}\big)\mathcal{A}_1(t)\bigg).\label{sec3:eq22}
\end{align}
To estimate the last term, we start with 
\begin{align}
\int_0^t\sigma^{\frac{1}{2}} \scalar{}^{\frac{3}{2}(1+r_0)}\norm{\nabla u}_{L^3(\R^3)}^3&\leqslant  C\Bigg(\int_0^t \scalar{t'}^{(1+r_0)}\norm{\nabla u}_{L^2(\R^3)}^2dt'\Bigg)^{\frac{3}{4}}\Bigg(\int_0^t \sigma(t')^{2}\scalar{t'}^{3(1+r_0)}\norm{\nabla u}_{L^6(\R^3)}^6dt'\Bigg)^{\frac{1}{4}}\notag\\
&\leqslant C_* C_0^{3/4}\Bigg(\int_0^t \sigma(t')^{2}\scalar{t'}^{3(1+r_0)}\norm{\nabla u}_{L^6(\R^3)}^6dt'\Bigg)^{\frac{1}{4}}.\label{sec3:eq21}
\end{align}
In \eqref{sec3:eq18}, we take $l=5$, 
$\check\kappa=\kappa_4=\sigma^2\scalar{}^{3(1+r_0)}$ to obtain 
\begin{align}
   \kappa_4(t) \int_{\R^3}&H_5(\rho,c)(t) + \int_0^t\kappa_4(t')\Big(\norm{G(\rho,c)}^{6}_{L^{6}(\R^3)}+\norm{\nabla u}_{L^{6}(\R^3)}^{6}\Big)(t')dt'\notag\\
   &\leqslant C_{*}C_0 \big(1+\check\phi_\Sigma^{\frac{12}{\alpha}}\mathcal{A}_1(t)^2\big)+C_{*}\int_0^t\kappa_4(t')\norm{(-\Delta)^{-1}\nabla \mathbb P(\rho \dot u,\,(-\Delta)^{-1}\dvg (\rho \dot u)}_{L^{6}(\R^3)}^{6}dt'\notag\\
   &\leqslant C_{*}C_0 \big(1+\check\phi_\Sigma^{\frac{12}{\alpha}}\mathcal{A}_1(t)^2\big)+C_{*}\int_0^t\kappa_4(t')\norm{\rho \dot u}_{L^{2}(\R^3)}^{6}dt'\notag\\
   &\leqslant C_{*}C_0 \big(1+\check\phi_\Sigma^{\frac{12}{\alpha}}\mathcal{A}_1(t)^2\big)+C_{*}\mathcal{A}_1(t)\mathcal{A}_2(t)^2.\label{sec3:eq19}
\end{align}
\end{itemize}
Finally, we  combine \eqref{sec3:eq19}-\eqref{sec3:eq21}-\eqref{sec3:eq22}-\eqref{sec3:eq23}-\eqref{sec3:eq15} and we obtain:
\begin{align*}
\mathcal{A}_2(t)+\sup_{[0,t]}\kappa_3 \int_{\R^3}H_3(\rho,c) &+ \int_0^t\kappa_3(t')\Big(\norm{G(\rho,c)}^{4}_{L^4(\R^3)}+\norm{\nabla u}_{L^4(\R^3)}^4\Big)(t')dt'\\
&\leqslant C_*C_0\Big(1+\mathcal{A}_1(t)^2\Big)+ C_*\mathcal{A}_1(t)\Big(1+\mathcal{A}_1(t)^2+\check\phi_\Sigma^{\frac{6}{\alpha}} C_0\Big).
\end{align*}
This completes the proof of \eqref{sec2:eq54}.
\end{proof}
\subsubsection{Bound for the functional $\mathcal{A}_3$}
This section aims at obtaining bound for the energy functional $\mathcal{A}_3$. 
\begin{proof}
   To obtain the functional $\mathcal{A}_3$, we first apply the material derivative $\dpt \,\cdot +\dvg (\,\cdot\, u)$ to the momentum equations \eqref{ep2.1} and obtain
\begin{gather}\label{sec3:eq36}
 \dpt (\rho \dot u^j)+\dvg (\rho \dot u^j u)=\partial_k \{\dot \Pi^{jk}\}+\partial_k\{\Pi^{jk}\dvg u\}-\dvg \{\partial_k u\Pi^{jk}\},
 \end{gather}
 where 
 \begin{align}
 \dot \Pi^{jk}&=2\mu(\rho,c)\D \dot u-\mu(\rho,c)\partial_j u^l \partial_l u^k -\mu(\rho,c)\partial_k u^l\partial_l u^j-2\rho \partial_\rho\mu(\rho,c)\D ^{jk} u\dvg u\nonumber\\
 &+\left(\lambda(\rho,c)\dvg \dot u-\lambda(\rho,c)\nabla u^l\cdot \partial_l u -\rho \partial_\rho \lambda(\rho,c)(\dvg u)^2+\rho \partial_\rho P(\rho,c)\dvg u  \right)\delta^{jk}.\notag
 \end{align}
We use $\kappa_5\ddot u$, with $\kappa_5=\sigma^{1+\check{\alpha}}\scalar{}^{\frac{3}{2}(1+r_0)}$ as a test function to obtain
\begin{align}
    \int_0^t\kappa_5\norm{\sqrt{\rho}\ddot u}_{L^2(\R^3)}^2&+\frac{1}{2}\kappa_5(t)\int_{\R^3}\big(2\mu(\rho,c)\abs{\D \dot u}^2+\lambda(\rho,c)(\dvg \dot u)^2\big)(t,x)dx\notag\\
    &=\frac{1}{2}\int_0^t\kappa_5'\int_{\R^3}\big(2\mu(\rho,c)\abs{\D \dot u}^2+\lambda(\rho,c)(\dvg \dot u)^2\big)-\kappa_5(t)\int_{\R^3} \rho \partial_\rho P(\rho,c)\dvg \dot u\dvg u \notag\\
    &+\int_0^{t}\kappa_5'\int_{\R^3} \rho  \partial_\rho P(\rho,c)\dvg \dot u\dvg u+\int_0^t\kappa_5\int_{\R^3} \rho  \partial_\rho P(\rho,c)(\dvg \dot u)^2\notag\\
        &+\kappa_5(t)J_{1}(t)-\int_0^{t} \kappa_5' J_{1}(s)ds+ \int_0^t\kappa_5J_{2}(s)ds+ \int_0^t\kappa_5J_{3}(s)ds.\label{sec3:eq25}
\end{align}
Above, $J_{1},\; J_{2},\; J_{3}$ are of the form
\begin{gather}\label{sec3:eq24}
\begin{cases}
J_{1}&= \displaystyle  \int_{\R^3} \phi(\rho,c)\partial_{j_1} \dot u^{j_2}\partial_{j_3} u^{j_4} \partial_{j_5} u^{j_6}
+\int_{\R^3} \partial_{j_1} \dot u^{j_2}\partial_{j_3} u^{j_4} G(\rho,c),\\
J_{2}&= \displaystyle \int_{\R^3} \phi(\rho,c)\partial_{j_1} \dot u^{j_2}\partial_{j_3} u^{j_4} \partial_{j_5} u^{j_6}\partial_{j_7} u^{j_8}
+\int_{\R^3} \partial_{j_1} \dot u^{j_2}\partial_{j_3} u^{j_4} \partial_{j_5} u^{j_6} G(\rho,c)+\int_{\R^3}  \psi(\rho,c) \partial_{j_1} \dot u ^{j_2}  \partial_{j_3}  u ^{j_4} \partial_{j_5} u ^{j_6},\\
J_{3}&= \displaystyle  \int_{\R^3} \phi(\rho,c)\partial_{j_1} \dot u^{j_2}\partial_{j_3}\dot u^{j_4} \partial_{j_5} u^{j_6}
+\int_{\R^3} \partial_{j_1} \dot u^{j_2}\partial_{j_3}\dot u^{j_4} G(\rho,c),
\end{cases}
\end{gather}
 where $\phi$ is either the viscosities $\mu$, $\lambda$ or $\rho  \partial_\rho \mu$ , $\rho \partial_\rho  \lambda$, $\rho^2 \partial_\rho^2 \mu$ or $\rho^2 \partial_\rho^2 \lambda$, while $\psi$ corresponds to either  $\rho \partial_\rho  P$ or $\rho^2\partial_\rho^2 P$.  The computations are analogous to those in \cite[Section B.2]{zodji2023discontinuous} and we do not repeat it here. In the following steps, we provide estimates for the RHS terms of \eqref{sec3:eq25}.
 \paragraph{\textbf{Step 1}} In this paragraph, we will estimate the first term of the RHS of \eqref{sec3:eq25}. For $\check{\alpha}=1$, it is clear that the underlying term is controlled as follows
 \[
 \frac{1}{2}\int_0^t\kappa_5'\int_{\R^3}\big(2\mu(\rho,c)\abs{\D \dot u}^2+\lambda(\rho,c)(\dvg \dot u)^2\big)\leqslant C \mathcal{A}_2(t).
 \]
Even if  $\check\alpha\in [0,1)$, we still have 
\[
\frac{1}{2}\int_{\sigma(t)}^t\kappa_5'\int_{\R^3}\big(2\mu(\rho,c)\abs{\D \dot u}^2+\lambda(\rho,c)(\dvg \dot u)^2\big)\leqslant C \mathcal{A}_2(t), 
\]
and to address the case of small time, we shall make use of a duality method. Hence, we write $\dot u=v+\check{v}$ where $v$ and $\check{v}$ satisfy:
\begin{gather}\label{sec3:eq29}
 \begin{cases}
  \dpt (\rho v)+\dvg (\rho v\otimes u)=\dvg \big(2\mu(\rho,c) \D v\big)+\nabla \big(\lambda(\rho,c)\dvg v\big),\\
  (\rho v)_{|t=0}= (\rho \dot u)_{|t=0},
  \end{cases}
 \end{gather}
 and 
 \begin{gather}\label{sec3:eq33}
 \begin{cases}
     \dpt (\rho \check{v})+\dvg(\rho \check{v}\otimes u)-\dvg \big(2\mu(\rho,c)\D \check{v}\big)-\nabla\big(\lambda(\rho,c)\dvg \check{v}\big)=\dvg (\mathscr{R}),\\
     (\rho \check{v})_{|t=0}=0,
 \end{cases}
 \end{gather}
 with
 \begin{align*}
     (\dvg \mathscr R)^j&= -\partial_k\big(\mu(\rho,c)\partial_j u^l \partial_l u^k +\mu(\rho,c)\partial_k u^l\partial_l u^j+2\rho \partial_\rho\mu(\rho,c)\D ^{jk} u\dvg u\big)\\
     &-\partial_j\big(\lambda(\rho,c)\nabla u^l\cdot \partial_l u +\rho \partial_\rho \lambda(\rho,c)(\dvg u)^2-\rho \partial_\rho P(\rho,c)\dvg u \big)+\partial_k\big(\Pi^{jk}\dvg u\big)-\dvg \big(\partial_k u\Pi^{jk}\big).
 \end{align*}
Recalling that $(\rho \dot u)_{|t=0}\in L^2(\R^3)$ (see \eqref{sec2:compa}), one can establish the existence and uniqueness of $v$, and thus of $\check{v}$, through a regularizing argument. We now aim at deriving bounds for $v$ and $\check{v}$, which will then imply the desired bound for $\dot u$.

For $\tau\in (0,\sigma(t)]$, and $\check \varphi\in L^2((0,\tau)\times \R^3)$, we consider 
the linear nonhomogeneous problem 
 \begin{gather}\label{sec3:eq26}
 \begin{cases}
 \rho \dpt  w+ \rho (u\cdot\nabla w)+\dvg (2\mu(\rho,c)\D w)+\nabla (\lambda(\rho,c)\dvg w)=\rho\check \vph, \\
 w_{|t=\tau}=0.
 \end{cases}
 \end{gather}
 The existence of $w$ can be established by reversing the arrow of time, and we readily have (recall that $\tau\in [0,1]$)
 \begin{gather}
     \frac{1}{2}\sup_{[0,\tau]}\norm{\sqrt{\rho} w}_{L^2(\R^3)}^2+\int_0^\tau\int_{\R^3}\bigg[2\mu(\rho,c)\abs{\D w}^2+\lambda(\rho,c)(\dvg w)^2\Big]\leqslant C \int_0^\tau\norm{\sqrt{\rho} \check \vph}_{L^2(\R^3)}^2.\notag
 \end{gather}
 Next, using $\dot w$ as a test function, we obtain
  \begin{align}
 \int_0^{\tau'}\norm{\sqrt{\rho}\dot {w}}^2_{L^2(\R^3)}&+ \dfrac{1}{2}\int_{\R^3}\big[2\mu(\rho,c)\abs{\D w}^2+\lambda(\rho,c)\abs{\dvg w}^2\big](\tau',x)dx\notag\\
 &\leqslant \int_0^{\tau'}\norm{\sqrt{\rho}\dot w}_{L^2(\R^3)}\norm{\sqrt\rho \check \vph}_{L^2(\R^3)}+C\int_0^{\tau'}\norm{\nabla u}_{L^3(\R^3)}\norm{\nabla w}_{L^3(\R^3)}^2.\label{sec3:eq28}
 \end{align}
Repeating the computations that led to \eqref{sec2:eq3}, we find
\begin{align*}
    \nabla w&=-\frac{1}{\mu(\rho,c)}\nabla(-\Delta)^{-1}\mathbb P \big(\rho (\dot w-\check{\vph})\big)+\nabla (-\Delta)^{-1}\nabla\bigg(\frac{1}{\nu(\rho,c)}(-\Delta)^{-1}\dvg \big(\rho (\dot w-\check{\vph})\big)\bigg)\notag\\
            &+\frac{1}{\mu(\rho,c)}\big[\mathcal{K}, \mu(\rho,c)-\widetilde \mu\big](2\D w)-\nabla (-\Delta)^{-1}\nabla\bigg(\frac{1}{\nu(\rho,c)}\big[\check{\mathcal{K}},\mu(\rho,c)-\widetilde \mu\big](2\D w)\bigg);
\end{align*}
and by means of the commutator estimate \eqref{ap:eq6}, it follows, under the assumption of a small viscosity jump (see \eqref{sec3:eq27})
\begin{align*}
\norm{\nabla w}_{L^3(\R^3)}&\leqslant C_*\Big(\norm{(-\Delta)^{-1}\nabla \big(\rho(\dot w-\check{\vph})\big)}_{L^3(\R^3)}+\check \phi_\Sigma^{\frac{1}{\alpha}}\norm{\nabla w}_{L^{2}(\R^3)}\Big) \\
                           &\leqslant C_*\Big(\norm{\nabla w}_{L^2(\R^3)}^{1/2}\norm{\rho(\dot w-\check{\vph})}_{L^2(\R^3)}^{1/2}+\check \phi_\Sigma^{\frac{1}{\alpha}}\norm{\nabla w}_{L^{2}(\R^3)}\Big).
\end{align*}
Therefore, by Young's inequality, \eqref{sec3:eq28} leads to:
\begin{align*}
    \int_0^{\tau'}\norm{\sqrt{\rho}\dot {w}}^2_{L^2(\R^3)}&+\int_{\R^3}\big[2\mu(\rho,c)\abs{\D w}^2+\lambda(\rho,c)\abs{\dvg w}^2\big](\tau',x)dx\notag\\
 &\leqslant C_*\int_0^{\tau'}\norm{\sqrt\rho \check \vph}_{L^2(\R^3)}^2+C_*\int_0^{\tau'}\Big(\norm{\nabla u}_{L^3(\R^3)}^2+\check \phi_\Sigma^{\frac{2}{\alpha}}\norm{\nabla u}_{L^3(\R^3)}\Big)\norm{\nabla w}_{L^2(\R^3)}^2,
\end{align*}
from which Gr\"onwall's Lemma yields
 \begin{gather}\label{sec3:eq32}
 \int_0^{\tau}\norm{\sqrt{\rho}\dot {w}}_{L^2(\R^3)}^2+ \sup_{[0,\tau]}\norm{\nabla w}_{L^2(\R^3)}^2
 \leqslant \Bigg(C_*\int_0^{\tau}\norm{\sqrt{\rho}\check \vph}_{L^2(\R^3)}^2\Bigg)\exp{\Bigg[C_*\int_0^{\tau}\Big(\norm{\nabla u}_{L^3(\R^3)}^2+\check \phi_\Sigma^{\frac{2}{\alpha}}\norm{\nabla u}_{L^3(\R^3)}\Big)\Bigg]}.
 \end{gather}
 
We now use $w$ as a test function in \eqref{sec3:eq29} and we obtain
\begin{align*}
 \Bigg|\int_0^{\tau}\int_{\R^3} (\rho v)(\tau',x) \check\vph(\tau',x) d\tau' dx \Bigg|&=\Bigg|\int_{\R^3}(\rho v)_{|t=0}(x) w(0,x)dx\Bigg|\\
 &\leqslant \norm{(\rho v)_{|t=0}}_{\dot H^{-1}(\R^3)}\norm{w(0)}_{\dot H^{1}(\R^3)}\\
 &\leqslant C_* \norm{(\rho v)_{|t=0}}_{\dot H^{-1}(\R^3)}\norm{\check\vph}_{L^2((0,\tau)\times \R^3)}\exp{\Bigg[C_*\int_0^{\tau}\Big(\norm{\nabla u}_{L^3(\R^3)}^2+\check \phi_\Sigma^{\frac{2}{\alpha}}\norm{\nabla u}_{L^3(\R^3)}\Big)\Bigg]}.
 \end{align*}
By duality, it follows that 
 \begin{gather*}
 \int_0^{\tau}\norm{\rho v(\tau')}_{L^2(\R^3)}^2d\tau' \leqslant C_*\norm{(\rho v)_{|t=0}}_{\dot H^{-1}(\R^3)}^2\exp{\Bigg[C_*\int_0^{\tau}\Big(\norm{\nabla u}_{L^3(\R^3)}^2+\check \phi_\Sigma^{\frac{2}{\alpha}}\norm{\nabla u}_{L^3(\R^3)}\Big)\Bigg]},
 \end{gather*}
and  using $\sigma v$ as a testing function in \eqref{sec3:eq29} yields  
 \begin{align*}
 \sup_{[0,\tau]} \sigma \norm{\sqrt\rho v}_{L^2(\R^3)}^2 +\int_0^{\tau}\sigma \norm{\nabla v}_{L^2(\R^3)}^2 &\leqslant C_*\int_0^{\tau}\norm{\sqrt{\rho}v}_{L^2(\R^3)}^2\\
 &\leqslant C_* \norm{(\rho  v)_{|t=0}}_{\dot H^{-1}(\R^3)}^2\exp{\Bigg[C_*\int_0^{\tau}\Big(\norm{\nabla u}_{L^3(\R^3)}^2+\check \phi_\Sigma^{\frac{2}{\alpha}}\norm{\nabla u}_{L^3(\R^3)}\Big)\Bigg]}.
 \end{align*}
As well, by using  $v$ as a test function in \eqref{sec3:eq29} it is straightforward to obtain 
 \[
 \sup_{[0,\tau]} \norm{\sqrt{\rho}v}_{L^2(\R^3)}^2+\int_0^{\tau}\norm{\nabla v}_{L^2(\R^3)}^2 \leqslant C_* \norm{(\rho v)_{|t=0}}_{L^2(\R^3)}^2.
 \]
 Finally, we apply a Riesz-Thorin theorem  to conclude that 
 \begin{gather}\label{sec3:eq30}
 \sup_{[0,\tau]} \sigma^{\check{\alpha}}\norm{\sqrt{\rho}v}_{L^2(\R^3)}^2+\int_0^{\tau}\sigma^{\check{\alpha}}\norm{\nabla v}_{L^2(\R^3)}^2 \leqslant C_*\norm{(\rho \dot u)_{|t=0}}_{\dot H^{-\check{\alpha}}(\R^3)}^2\exp{\Bigg[C_*\int_0^{\tau}\Big(\norm{\nabla u}_{L^3(\R^3)}^2+\check \phi_\Sigma^{\frac{2}{\alpha}}\norm{\nabla u}_{L^3(\R^3)}\Big)\Bigg]}.
 \end{gather}
This ends the estimate for $v$ and we now turn to the one of $\check{v}$.

To achieve this, we rather consider the following Cauchy problem 
\begin{gather}\label{sec3:eq31}
 \begin{cases}
 \rho \dpt \check w+\rho (u\cdot\nabla)\check w+\dvg \big(2\mu(\rho,c)\D \check w\big)+\nabla\big(\lambda(\rho,c)\dvg \check w\big)=0,\\
 \check w_{|t=\tau}=\check\vph,
 \end{cases}
 \end{gather}
where  $0<\tau\leqslant \sigma(t)$ and $\check\vph=\check{\vph}(x)\in L^2(\R^3)$ is a given function. A basic energy estimate yields 
 \begin{gather*}
 \frac{1}{2}\sup_{[0,\tau]}\norm{\sqrt{\rho}\check w}_{L^2(\R^3)}^2+ \int_0^{\tau} \int_{\R^3}\left[2\mu(\rho,c)\abs{\D \check w}^2+\lambda(\rho,c)(\dvg \check w)^2\right]\leqslant \frac{1}{2}\norm{\sqrt{\rho}(\tau)\check\vph}_{L^2(\R^3)}^2.
 \end{gather*}
Let $\sigma_\tau$ denotes the time weight $\sigma_\tau=\tau-s$, $s\in [0,\tau]$. Using $\sigma_\tau \dot{\check{w}}$ as a test function in \eqref{sec3:eq31} and following the same computations that lead to \eqref{sec3:eq32}, we infer
  \begin{gather}
 \int_0^\tau  \sigma_\tau \norm{ \sqrt{\rho}\dot {\check w}}_{L^2(\R^3)}^2+\sup_{[0,\tau]}\sigma_\tau\norm{\nabla \check w}^2_{L^2(\R^3)}
 \leqslant C_*\norm{\sqrt{\rho}(\tau)\check\vph}_{L^2(\R^3)}^2\exp{\Bigg[C_*\int_0^{\tau}\Big(\norm{\nabla u}_{L^3(\R^3)}^2+\check \phi_\Sigma^{\frac{2}{\alpha}}\norm{\nabla u}_{L^3(\R^3)}\Big)\Bigg]}.\notag
 \end{gather}
 We now use $\check{w}$ as a test function in \eqref{sec3:eq33} and we obtain
 \begin{align*}
     \Bigg|\int_{\R^3} (\rho \check{v})(\tau,x) \check\vph(x)dx\Bigg| &=\Bigg|\int_0^\tau\int_{\R^3}\mathscr{R}^{jk}(\tau',x) \partial_k \check{w}^j(\tau',x)dxd\tau'\Bigg|\\
     &\leqslant C_*\int_0^\tau\bigg(\norm{\nabla u}_{L^4(\R^3)}^2+\norm{G(\rho,c)}_{L^4(\R^3)}^2+ \norm{\nabla u}_{L^2(\R^3)}\bigg)(\tau')\norm{\nabla \check w(\tau')}_{L^2(\R^3)}d\tau'\\
     &\leqslant C_*\norm{\sqrt{\rho}(\tau)\check\vph}_{L^2(\R^3)} \Bigg[E_0^{\frac{1}{2}}+\int_0^\tau \sigma_{\tau}^{-\tfrac{1}{2}}\Big(\norm{\nabla u}_{L^4(\R^3)}^2+\norm{ G(\rho,c)}_{L^4(\R^3)}^2\Big)\Bigg]\\
     &\times \exp{\Bigg[C_*\int_0^{\tau}\Big(\norm{\nabla u}_{L^3(\R^3)}^2+\check \phi_\Sigma^{\frac{2}{\alpha}}\norm{\nabla u}_{L^3(\R^3)}\Big)\Bigg]}.
 \end{align*}
  Hence, by duality we obtain 
\begin{gather*}
     \norm{\sqrt{\rho}\check{v}(\tau)}_{L^2(\R^3)}\leqslant C_*\Bigg[E_0^{\tfrac{1}{2}}+\int_0^\tau \sigma_{\tau}^{-\tfrac{1}{2}}\norm{\nabla u,\, G(\rho,c)}_{L^4(\R^3)}^2\Bigg]\exp{\Bigg[C_*\int_0^{\tau}\Big(\norm{\nabla u}_{L^3(\R^3)}^2+\check \phi_\Sigma^{\frac{2}{\alpha}}\norm{\nabla u}_{L^3(\R^3)}\Big)\Bigg]}.
\end{gather*}
Note that for all $r\in \big(0,4/(2+\check{\alpha})\big)$,  
we have 
\[
\norm{\norm{\nabla u,\, G(\rho,c)}_{L^4(\R^3)}^2}_{L^r(0,\sigma(t))}^{2}\leqslant C_r \int_0^{\sigma(t)}\sigma^{\frac{\check{\alpha}}{2}} \norm{\nabla u,\, G(\rho,c)}_{L^4(\R^3)}^4,
\]
and Hardy-Littlewood-Sobolev's inequality (see \cite[Theorem 1.7]{bahouri2011fourier}) yields 
\[
\norm{\tau \mapsto \int_0^\tau \sigma_{\tau}^{-\tfrac{1}{2}}\left(\norm{\nabla u}_{L^4(\R^3)}^2+\norm{ G(\rho,c)}_{L^4(\R^3)}^2\right)}_{L^{2r/(2-r)}(0,\sigma(t))}^2\leqslant C\int_0^t\sigma^{\frac{\check\alpha}{2}}\norm{\nabla u,\, G(\rho,c)}^4_{L^4(\R^3)}.
\]
By H\"older's inequality, it follows that for $r\in \big(2/(1+\check{\alpha}), 4/(2+\check{\alpha})\big)$
\begin{align*}
    \int_{0}^{\sigma(t)}\tau^{\check{\alpha}-1}&\norm{\sqrt{\rho}\check{v}(\tau)}_{L^2(\R^3)}^2d\tau \leqslant  C_r\norm{\sqrt{\rho} \check{v}}_{L^{2r/(2-r)}(\R^3)}^2\\
    &\leqslant C_{*,r}\Bigg(E_0+\int_0^{\sigma(t)} \sigma^{\frac{\check\alpha}{2}}\norm{\nabla u,\, G(\rho,c)}^4_{L^4(\R^3)}\Bigg)\exp{\Bigg[C_*\int_0^{\tau}\Big(\norm{\nabla u}_{L^3(\R^3)}^2+\check \phi_\Sigma^{\frac{2}{\alpha}}\norm{\nabla u}_{L^3(\R^3)}\Big)\Bigg]}.
\end{align*}
We  now use $\sigma^{\check{\alpha}} \check{v}$ as a test function in \eqref{sec3:eq33} 
and we obtain 
\begin{align}
    \sup_{[0,\sigma(t)]} \sigma^{\check{\alpha}}\norm{\sqrt{\rho}\check v}_{L^2(\R^3)}^2&+\int_0^{\sigma(t)}\sigma^{\check{\alpha}}\norm{\nabla\check v}_{L^2(\R^3)}^2 \leqslant C_*\Bigg(E_0+\int_{0}^{\sigma(t)}\sigma^{\check{\alpha}-1}\norm{\sqrt{\rho}\check{v}}_{L^2(\R^3)}^2 +\int_0^{\sigma(t)}\sigma^{\check{\alpha}} \norm{\nabla u,\, G(\rho,c)}_{L^4(\R^3)}^4\Bigg)\notag\\
    &\leqslant C_*\Bigg(E_0+\int_0^{\sigma(t)} \sigma^{\frac{\check\alpha}{2}}\norm{\nabla u,\, G(\rho,c)}^4_{L^4(\R^3)}\Bigg)\exp{\Bigg[C_*\int_0^{\tau}\Big(\norm{\nabla u}_{L^3(\R^3)}^2+\check \phi_\Sigma^{\frac{2}{\alpha}}\norm{\nabla u}_{L^3(\R^3)}\Big)\Bigg]}.\label{sec3:eq34}
\end{align}

Finally, from \eqref{sec3:eq30}-\eqref{sec3:eq34} and  $\dot u=v+\check{v}$,
we obtain that
\begin{align}
    &\sup_{[0,\sigma(t)]} \sigma^{\check{\alpha}}\norm{\sqrt{\rho}\dot u}_{L^2(\R^3)}^2+\int_0^{\sigma(t)}\sigma^{\check{\alpha}}\norm{\nabla\dot u
    }_{L^2(\R^3)}^2 \notag\\
    &\leqslant C_*\Bigg(E_0+\norm{(\rho \dot u)_{|t=0}}_{\dot H^{-\check{\alpha}}(\R^3)}^2+\int_0^{\sigma(t)} \sigma^{\frac{\check\alpha}{2}}\norm{\nabla u,\, G(\rho,c)}^4_{L^4(\R^3)}\Bigg)\exp{\Bigg[C_*\int_0^{\tau}\Big(\norm{\nabla u}_{L^3(\R^3)}^2+\check \phi_\Sigma^{\frac{2}{\alpha}}\norm{\nabla u}_{L^3(\R^3)}\Big)\Bigg]}.\notag
\end{align}
Following the computations that go from \eqref{sec3:eq18} to \eqref{sec3:eq22}, we obtain
\begin{align}\label{sec3:eq44}
        \int_0^{\sigma(t)}\tau^{\frac{\check{\alpha}}{2}}\norm{G(\rho,c),\,\nabla u}_{L^4(\R^3)}^4(\tau)d\tau
   &\leqslant C_*\bigg( C_0\big(1+\check\phi_\Sigma^{\frac{6}{\alpha}} \mathcal{A}_1(t)\big)+\mathcal{A}_1(t)\big( C_0^{1/2}+ \mathcal{A}_1(t)^{1/2}\big)\sup_{[0,\sigma(t)]} \sigma^{\frac{\check{\alpha}}{2}}\norm{\sqrt{\rho}\dot u}_{L^2(\R^3)}\bigg),
\end{align}
and  Young's inequality yields 
\begin{align*}
    &\sup_{[0,\sigma(t)]} \sigma^{\check{\alpha}}\norm{\sqrt{\rho}\dot u}_{L^2(\R^3)}^2+\int_0^{\sigma(t)}\sigma^{\check{\alpha}}\norm{\nabla\dot u
    }_{L^2(\R^3)}^2\\
    &\leqslant  C_*\Bigg(C_0\big(1+\check\phi_\Sigma^{\frac{6}{\alpha}} \mathcal{A}_1(t)\big)+\norm{(\rho \dot u)_{|t=0}}_{\dot H^{-\check{\alpha}}(\R^3)}^2+\mathcal{A}_1(t)^2\big( C_0+ \mathcal{A}_1(t)\big)\Bigg)\exp{\Bigg[C_*\int_0^{\tau}\Big(\norm{\nabla u}_{L^3(\R^3)}^2+\check \phi_\Sigma^{\frac{2}{\alpha}}\norm{\nabla u}_{L^3(\R^3)}\Big)\Bigg]},
\end{align*}
and we conclude this step  with the following bound for the first term on the RHS of \eqref{sec3:eq25}
\begin{align}
    \check{\mathcal A}_2(t)&:=\sup_{[0,t]}\sigma^{\check{\alpha}}\scalar{}^{\frac{3}{2}(1+r_0)}\norm{\sqrt{\rho}\dot u}_{L^2(\R^3)}^2+\int_0^t\sigma^{\check{\alpha}}\scalar{}^{\frac{3}{2}(1+r_0)}\int_{\R^3}\big(2\mu(\rho,c)\abs{\D \dot u}^2+\lambda(\rho,c)(\dvg \dot u)^2\big)\notag\\
    &\leqslant C_*\Bigg(\mathcal{A}_2(t)+C_0\big(1+\check\phi_\Sigma^{\frac{6}{\alpha}} \mathcal{A}_1(t)\big)+\norm{(\rho \dot u)_{|t=0}}_{\dot H^{-\check{\alpha}}(\R^3)}^2+\mathcal{A}_1(t)^2\big( C_0+ \mathcal{A}_1(t)\big)\Bigg)\notag\\
    &\times \exp{\Bigg[C_*\int_0^{\tau}\Big(\norm{\nabla u}_{L^3(\R^3)}^2+\check \phi_\Sigma^{\frac{2}{\alpha}}\norm{\nabla u}_{L^3(\R^3)}\Big)\Bigg]}.\label{sec3:eq41}
\end{align}
The others terms are estimated in the following steps. 
\paragraph{\textbf{Step 2}}
In what follows, we estimate all the remaining RHS terms in \eqref{sec3:eq25}, 
except for the last one,  which is addressed next.
\begin{itemize}
    \item We readily bound the second term on the RHS of \eqref{sec3:eq25} as follows:
    \begin{align*}
        \Bigg|\kappa_5(t)\int_{\R^3} \rho \partial_\rho P(\rho,c)\dvg \dot u\dvg u\Bigg|&\leqslant  \eta \kappa_5t)\norm{\dvg \dot u(t)}_{L^2(\R^3)}^2+\frac{C_*}{\eta}\norm{\dvg u(t)}_{L^2(\R^3)}^2\\
        &\leqslant \eta \kappa_5t)\norm{\dvg \dot u(t)}_{L^2(\R^3)}^2+\frac{C_*}{\eta}\mathcal{A}_2(t).
    \end{align*}
    The two  terms that follow  are
    \begin{align*}
        \Bigg|\int_0^{t}\kappa_5'\int_{\R^3} \rho  \partial_\rho P(\rho,c)\dvg \dot u\dvg u\Bigg|+\Bigg|\int_0^t\kappa_5\int_{\R^3} \rho  \partial_\rho P(\rho,c)(\dvg \dot u)^2\Bigg|&\leqslant C_*\Big(E_0+ \check{\mathcal{A}}_2(t)\Big).
    \end{align*}
    \item Next, using \eqref{sec3:eq22}-\eqref{sec3:eq44}, we estimate the sixth and final terms in the expression of $J_2$ (see \eqref{sec3:eq24}) as follows:
    \begin{align*}
        \Bigg|\int_0^{t} \kappa_5' J_{1}(s)ds\Bigg|&+\Bigg|\int_0^t\kappa_5\int_{\R^3}  \psi(\rho,c) \partial_{j_1} \dot u ^{j_2}  \partial_{j_3}  u ^{j_4} \partial_{j_5} u ^{j_6}\Bigg|\notag\\
        &\leqslant C_*\int_0^t \sigma^{\check\alpha}\scalar{}^{\frac{3}{2}(1+r_0)} \Big(\norm{\nabla \dot u}_{L^2(\R^3)}^2+\norm{\nabla u}_{L^4(\R^3)}^4\Big)\\
        &\leqslant C_*\bigg(\check{\mathcal{A}}_2(t)+ C_0\big(1+\check\phi_\Sigma^{\frac{6}{\alpha}} \mathcal{A}_1(t)\big)+\big(C_0+\mathcal{A}_1(t)\big)\mathcal{A}_1(t)^2\bigg).
    \end{align*}
    Following the computations leading to \eqref{sec3:eq19}, we estimate the remaining terms of $J_2$ as follows:
    \begin{align}
        \Bigg|\int_0^t\kappa_5\int_{\R^3} \phi(\rho,c)\partial_{j_1} \dot u^{j_2}\partial_{j_3} u^{j_4} \partial_{j_5} u^{j_6}\partial_{j_7} u^{j_8}\Bigg|
&+\Bigg|\int_0^t\kappa_5\int_{\R^3} \partial_{j_1} \dot u^{j_2}\partial_{j_3} u^{j_4} \partial_{j_5} u^{j_6} G(\rho,c)\Bigg|\notag\\
&\leqslant  C_*\int_0^t \kappa_5 \Big(\norm{\nabla \dot u}_{L^2(\R^3)}^2+\norm{\nabla u, G(\rho,c)}_{L^6(\R^3)}^6\Big)\notag\\
&\leqslant C_{*}C_0 \big(1+\check\phi_\Sigma^{\frac{12}{\alpha}}\mathcal{A}_1(t)^2\big)+C_{*}\mathcal{A}_2(t)\big(1+\mathcal{A}_1(t)\check{\mathcal{A}}_2(t)\big).\label{sec3:eq45}
    \end{align}
    \item For the first term in the last line of \eqref{sec3:eq25}, we have 
    we estimate 
    \[
    \abs{\kappa_5(t) J_1(t)}\leqslant \eta\kappa_5(t) \norm{\nabla\dot u(t)}_{L^2(\R^3)}^2+\frac{C_*}{\eta}\kappa_5(t)\norm{\nabla u(t), G(\rho,c)(t)}_{L^4(\R^3)}^4.
    \]
    From \eqref{ep2.8}, we have $\absb{G(\rho,c)}^4\leqslant C_* H_3(\rho,c)$; and \eqref{sec3:eq22} yields: 
    \[
    \sigma(t)^{\frac{1}{2}}\scalar{t}^{2(1+r_0)}\norm{ G(\rho,c)(t)}_{L^4(\R^3)}^4\leqslant C_*\bigg( C_0\big(1+\check\phi_\Sigma^{\frac{6}{\alpha}} \mathcal{A}_1(t)\big)+\mathcal{A}_2(t)^{1/2}\big(C_0^{1/2}+\mathcal{A}_1(t)^{1/2}\big)\mathcal{A}_1(t)\bigg).
    \]
    From \eqref{sec3:eq20} and the commutator estimate \eqref{ap:eq6}, it follows that 
    \begin{align*}
        \norm{\nabla u}_{L^{4}(\R^3)}&\leqslant \frac{C}{\mu_*}\Big(\norm{(-\Delta)^{-1}\nabla \mathbb P(\rho \dot u)}_{L^{4}(\R^3)}+\norm{(-\Delta)^{-1}\dvg (\rho \dot u)}_{L^4(\R^3)}+\norm{ G(\rho,c)}_{L^4(\R^3)}\Big)\notag\\
    &+\frac{C}{\mu_*}\Big(\norm{\big[\mathcal{K}, \mu(\rho,c)-\widetilde\mu\big]\D u}_{L^4(\R^3)}+\norm{\big[\check{\mathcal{K}}, \mu(\rho,c)-\widetilde\mu\big]\D u}_{L^4(\R^3)}\Big)\\
    &\leqslant \frac{C}{\mu_*}\Big(\norm{(-\Delta)^{-1}\nabla \mathbb P(\rho \dot u)}_{L^{4}(\R^3)}+\norm{(-\Delta)^{-1}\dvg (\rho \dot u)}_{L^4(\R^3)}+\norm{ G(\rho,c)}_{L^4(\R^3)}\Big)\notag\\
    &+\frac{C}{\mu_*}\Big( \check \phi_\Sigma\norm{\nabla u}_{L^{12/(2\alpha+3)}(\R^3)}+\norm{\llbracket \mu(\rho,c)\rrbracket}_{L^\infty(\Sigma)}\norm{\nabla u}_{L^4(\R^3)}\Big)
    \end{align*}
    and, using the smallness of the shear viscosity jump together with 
    interpolation and Young's inequality, we obtain 
    \begin{align*}
        \norm{\nabla u}_{L^{4}(\R^3)}&\leqslant C_*\norm{(-\Delta)^{-1}\nabla \mathbb P(\rho \dot u),\,(-\Delta)^{-1}\dvg (\rho \dot u),\, G(\rho,c)}_{L^4(\R^3)}+ C_* \check \phi_\Sigma^{\frac{3}{2\alpha}}\norm{\nabla u}_{L^{2}(\R^3)}.
    \end{align*}
    Then, Gagliardo–Nirenberg's inequality yields
    \begin{align*}
        &\sigma^{\frac{1}{2}+\check{\alpha}}\scalar{}^{\frac{11}{4}(1+r_0)}\norm{(-\Delta)^{-1}\nabla \mathbb P(\rho \dot u),\,(-\Delta)^{-1}\dvg (\rho \dot u)}_{L^4(\R^3)}^4\\
        &\leqslant C_*\Big[\sigma \scalar{}^{\frac{3}{2}(1+r_0)}\norm{\sqrt\rho \dot u}_{L^2(\R^3)}^2\Big]^{\frac{1}{2}}\Big[\sigma^{\check{\alpha}} \scalar{}^{\frac{3}{2}(1+r_0)}\norm{\sqrt\rho \dot u}_{L^2(\R^3)}^2\Big]\Big[\scalar{}^{1+r_0}\norm{\nabla u,\,G(\rho,c)}_{L^2(\R^3)}^2\Big]^{\frac{1}{2}}\\
        &\leqslant C_* \check{\mathcal{A}}_2(t)\mathcal{A}_1(t)^{1/2}\big(C_0^{1/2}+\mathcal{A}_1(t)^{1/2}\big),
    \end{align*}
    and hence 
    \begin{align}
        \sup_{[0,t]}\sigma^{\frac{1}{2}+\check{\alpha}}\scalar{}^{\frac{3}{2}(1+r_0)}\norm{\nabla u,\, G(\rho,c)}_{L^{4}(\R^3)}^4&\leqslant C_* \big(C_0+ \check\phi_\Sigma^{\frac{6}{\alpha}} \mathcal{A}_1(t)\big(C_0+\mathcal{A}_1(t)\big)\big)\notag\\
        &+ C_* \mathcal{A}_1(t)^{1/2}\big(\mathcal{A}_1(t)+\mathcal{A}_2(t)+\check{\mathcal{A}}_2(t)\big) \big(C_0^{1/2}+\mathcal{A}_1(t)^{1/2}\big).\label{sec3:eq40}
    \end{align}
    Finally, we have the following estimate for the first term in the last line of \eqref{sec3:eq25}: 
    \begin{align*}
        \abs{\kappa_5(t) J_1(t)}&\leqslant \eta\kappa_5(t) \norm{\nabla\dot u(t)}_{L^2(\R^3)}^2\\
        &+\frac{C_*}{\eta}\Big(C_0+ \mathcal{A}_1(t)\check \phi_\Sigma^{\frac{6}{\alpha}}\big(C_0+\mathcal{A}_1(t)\big)+ \mathcal{A}_1(t)^{1/2}\big(\mathcal{A}_1(t)+\mathcal{A}_2(t)+\check{\mathcal{A}}_2(t)\big) \big(C_0^{1/2}+\mathcal{A}_1(t)^{1/2}\big)\Big).
    \end{align*}
    The remaining term will be addressed in the following item.
\end{itemize}
 \paragraph{\textbf{Step 3}} 
We estimate the remaining term in \eqref{sec3:eq25} as follows:
 \[
 \abs{\int_0^t\kappa_5J_{3}(s)ds}\leqslant C_*\mathcal{A}_2(t)+C_* \Bigg(\int_0^t \sigma^{\frac{5}{4}+\frac{3}{2}\check{\alpha}}\scalar{}^{\frac{3}{2}(1+r_0)}\norm{\nabla \dot u}_{L^3(\R^3)}^3\Bigg)^{2/3}\Bigg(\int_0^t \sigma^{\frac{1}{2}}\scalar{}^{\frac{3}{2}(1+r_0)}\norm{\nabla u}_{L^3(\R^3)}^3\Bigg)^{1/3}.
 \]
 We now express $\nabla \dot u$ in order to estimate its $L^3(\R^3)$-norm. To achieve this, we apply to \eqref{sec3:eq36} the computations that lead to \eqref{sec2:eq3}. Indeed, applying the Leray projector $\mathbb P$ to \eqref{sec3:eq36}, we obtain
\[
\mathbb P(\rho \ddot u)=\dvg \mathbb P\big(2\mu(\rho,c)\D \dot u\big)+ \dvg \mathbb P\bigg(\big(\Pi-2\rho \partial_{\rho}\mu(\rho,c)\D u\big) \dvg u-\Pi\cdot \nabla u-\mu(\rho,c)\big(\nabla u\cdot \nabla u +(\nabla u\cdot\nabla u)^t\big)\bigg)
\]
and since 
\begin{align*}
\dvg \mathbb P\big(2\mu(\rho,c)\D \dot u\big)&= \dvg \mathbb P\big(\big(\mu(\rho,c)-\widetilde\mu\big)(2\D \dot u)\big)+\widetilde\mu \Delta \mathbb{P} \dot u,
\end{align*}
it follows that 
\begin{align}\label{sec3:eq38}
\mu(\rho,c)\nabla \mathbb P \dot u&= -(-\Delta)^{-1}\nabla \mathbb P(\rho \ddot u)+\big[\mathcal{K}, \mu(\rho,c)-\widetilde\mu\big](2\D \dot u)\notag\\
&+\nabla (-\Delta)^{-1}\dvg \mathbb P\bigg(\big(\Pi-2\rho \partial_{\rho}\mu(\rho,c)\D u\big) \dvg u-\Pi\cdot \nabla u-\mu(\rho,c)\big(\nabla u\cdot \nabla u +(\nabla u\cdot\nabla u)^t\big)\bigg).
\end{align}
Next, applying the divergence operator to \eqref{sec3:eq36}, we obtain
\begin{align*}
    \dvg (\rho \ddot u)&=\dvg \dvg \big(2\mu(\rho,c)\D\dot u\big)+\Delta \big(\lambda(\rho,c)\dvg \dot u-\lambda(\rho,c)\nabla u^l\cdot \partial_l u -\rho \partial_\rho \lambda(\rho,c)(\dvg u)^2+\rho \partial_\rho P(\rho,c)\dvg u\big)\\
    &+ \dvg \dvg\bigg(\big(\Pi-2\rho \partial_{\rho}\mu(\rho,c)\D u\big) \dvg u-\Pi\cdot \nabla u-\mu(\rho,c)\big(\nabla u\cdot \nabla u +(\nabla u\cdot\nabla u)^t\big)\bigg),
\end{align*}
and therefore, $\check{F}$ given by 
\[
\check{F}=\nu(\rho,c)\dvg \dot u-\lambda(\rho,c)\nabla u^l\cdot \partial_l u -\rho \partial_\rho \lambda(\rho,c)(\dvg u)^2+\rho \partial_\rho P(\rho,c)\dvg u
\]
can be expressed as 
\begin{align}
    \check{F}&=-(-\Delta)^{-1}\dvg (\rho \ddot u)+\big[\mathcal{K}', \mu(\rho,c)-\widetilde\mu\big](2\D u)\notag\\
    &+(-\Delta)^{-1}\dvg \dvg\bigg(\big(\Pi-2\rho \partial_{\rho}\mu(\rho,c)\D u\big) \dvg u-\Pi\cdot \nabla u-\mu(\rho,c)\big(\nabla u\cdot \nabla u +(\nabla u\cdot\nabla u)^t\big)\bigg).\label{sec3:eq39}
\end{align}
In turn, we can express $\dvg \dot u$ as
\begin{gather}\label{sec3:eq37}
    \nu(\rho,c)\dvg \dot u=\check F+\lambda(\rho,c)\nabla u^l\cdot \partial_l u +\rho \partial_\rho \lambda(\rho,c)(\dvg u)^2-\rho \partial_\rho P(\rho,c)\dvg u.
\end{gather}
From \eqref{sec3:eq37}-\eqref{sec3:eq38} it follows that
\begin{align*}
    \norm{\nabla \dot u}_{L^3(\R^3)}&=\norm{\nabla \mathbb P \dot u-\nabla (-\Delta)^{-1}\nabla\dvg \dot u}_{L^3(\R^3)}\\
    &\leqslant C_*\Big(\norm{(-\Delta)^{-1}\nabla \mathbb P(\rho \ddot u),\,(-\Delta)^{-1}\dvg (\rho \ddot u)}_{L^3(\R^3)}+\norm{\nabla u,\, G(\rho,c)}_{L^6(\R^3)}^2+C_*\norm{\dvg u}_{L^3(\R^3)}\Big)\\
    &+\frac{C}{\mu_*}\norm{\big[\mathcal{K}', \mu(\rho,c)-\widetilde\mu\big](2\D u),\,\big[\mathcal{K}, \mu(\rho,c)-\widetilde\mu\big](2\D \dot u)}_{L^3(\R^3)},
\end{align*}
and the commutator estimate \eqref{ap:eq6},  together with the smallness of the shear viscosity jump yields
\begin{align}\label{sec3:eq43}
\norm{\nabla \dot u}_{L^3(\R^3)}&\leqslant C_*\norm{(-\Delta)^{-1}\nabla \mathbb P(\rho \ddot u),\,(-\Delta)^{-1}\dvg (\rho \ddot u)}_{L^3(\R^3)}\notag\\
&+C_*\Big(\norm{\nabla u,\, G(\rho,c)}_{L^6(\R^3)}^2+\norm{\dvg u}_{L^3(\R^3)}+\check \phi_\Sigma^{\frac{1}{\alpha}}\norm{\nabla \dot u}_{L^{2}(\R^3)}\Big),
\end{align}
and therefore 
\begin{align}
    &\int_0^t\sigma^{\frac{5}{4}+\frac{3}{2}\check\alpha}\scalar{}^{\frac{3}{2}(1+r_0)}\norm{\nabla \dot u}_{L^3(\R^3)}^3\notag\\
    &\leqslant C_*\Bigg(\int_0^t \kappa_5\norm{\sqrt{\rho}\ddot u}_{L^2(\R^3)}^2\bigg)^{3/4}\Bigg(\int_0^t \sigma^{2+3\check{\alpha}}\scalar{}^{\frac{3}{2}(1+r_0)}\norm{(-\Delta)^{-1}\nabla \mathbb P(\rho \ddot u),\,(-\Delta)^{-1}\dvg (\rho \ddot u)}_{L^2(\R^3)}^{6}\Bigg)^{1/4}\notag\\
    &+C_*\int_0^t\sigma^{\frac{5}{4}+\frac{3}{2}\check\alpha}\scalar{}^{\frac{3}{2}(1+r_0)}\big(\norm{\nabla u,\, G(\rho,c)}_{L^6(\R^3)}^6+\norm{\dvg u}_{L^3(\R^3)}^3+\check \phi_\Sigma^{\frac{3}{\alpha}}\norm{\nabla \dot u}_{L^{2}(\R^3)}^3\big).\notag
\end{align}
With the help of the expressions of $(-\Delta)^{-1}\nabla \mathbb P(\rho \ddot u)$ and $(-\Delta)^{-1}\dvg (\rho \ddot u)$ in \eqref{sec3:eq38}-\eqref{sec3:eq39}, we have
\begin{align*}
    &\int_0^t \sigma^{2+3\check{\alpha}}\scalar{}^{\frac{3}{2}(1+r_0)}\norm{(-\Delta)^{-1}\nabla \mathbb P(\rho \ddot u),\,(-\Delta)^{-1}\dvg (\rho \ddot u)}_{L^2(\R^3)}^{6}\\
    &\leqslant C_*\int_0^t\sigma^{2+3\check{\alpha}}\scalar{}^{\frac{3}{2}(1+r_0)}\Big(\norm{\nabla \dot u}_{L^2(\R^3)}^6+\norm{\nabla u,\, G(\rho,c)}_{L^4(\R^3)}^{12}+\norm{\dvg u}_{L^2(\R^3)}^6\Big)\\
    &\leqslant C_*\Bigg( C_0 \mathcal{A}_1(t)^2+ \mathcal{A}_3(t)^2\int_0^t\sigma^{\check{\alpha}}\norm{\nabla \dot u}_{L^2(\R^3)}^2+\Big[\sup_{[0,t]}\sigma^{\frac{3}{4}+\frac{3\check{\alpha}}{2}}\norm{\nabla u, G(\rho,c)}_{L^4(\R^3)}^4\Big]^2\int_0^t\sigma^{\frac{1}{2}}\norm{\nabla u, G(\rho,c)}_{L^4(\R^3)}^4\Bigg).
\end{align*}
It follows that 
\begin{align*}
    \int_0^t\sigma^{\frac{5}{4}+\frac{3}{2}\check\alpha}\scalar{}^{\frac{3}{2}(1+r_0)}\norm{\nabla \dot u}_{L^3(\R^3)}^3&\leqslant  C_*\mathcal{A}_3(t)^{3/4}\Big( C_0 \mathcal{A}_1(t)^2+ \mathcal{A}_3(t)^2\check{\mathcal{A}}_2(t)\Big)^{1/4}\\
    &+C_* \mathcal{A}_3(t)^{3/4}\Big[\sup_{[0,t]}\sigma^{\frac{3}{4}+\frac{3\check{\alpha}}{2}}\norm{\nabla u, G(\rho,c)}_{L^4(\R^3)}^4\Big]^{1/2}\Bigg(\int_0^t\sigma^{\frac{1}{2}}\norm{\nabla u, G(\rho,c)}_{L^4(\R^3)}^4\Bigg)^{1/4}\\
    &+C_*\int_0^t\sigma^{\frac{5}{4}+\frac{3}{2}\check\alpha}\scalar{}^{\frac{3}{2}(1+r_0)}\big(\norm{\nabla u,\, G(\rho,c)}_{L^6(\R^3)}^6+\norm{\dvg u}_{L^3(\R^3)}^3+\check \phi_\Sigma^{\frac{3}{\alpha}}\norm{\nabla \dot u}_{L^{2}(\R^3)}^3\big),
\end{align*}
and we recall the estimates \eqref{sec3:eq40}-\eqref{sec3:eq22}-\eqref{sec3:eq21}-\eqref{sec3:eq19}.

Summing all the estimates above and applying Young’s inequality, we conclude that 
\begin{align*}
    \mathcal{A}_3(t)+\int_0^t\sigma^{\frac{5}{4}+\frac{3}{2}\check\alpha}\scalar{}^{\frac{3}{2}(1+r_0)}\norm{\nabla \dot u}_{L^3(\R^3)}^3&\leqslant C_*\Big( \check C_0+ \mathcal{L}\bigg(\check\phi_\Sigma^{\frac{2}{\alpha}}C_0,\,\mathcal{A}_1(t)\bigg)\Big)\exp{\Bigg[C_*
 \int_0^{\sigma(t)} \Big(\norm{\nabla u}_{L^3(\R^3)}^2+\check \phi_\Sigma^{\frac{2}{\alpha}}\norm{\nabla u}_{L^3(\R^3)}\Big)\Bigg]},
\end{align*}
where $\mathcal{L}$ is a polynomial, whose exact expression is not needed for the remaining steps in the proof of Theorem \ref{th1}. Estimate \eqref{sec2:eq55} finally follows from (see \eqref{sec3:eq42})
\[
\int_0^{\sigma(t)}\norm{\nabla u}_{L^3(\R^3)}^3\leqslant C_* C_0\big(1+\check{\phi}_\Sigma^{\frac{3}{\alpha}}E_0^{\frac{1}{2}}\mathcal{A}_1(t)^{\frac{1}{2}}\big)+ C_* E_0^{\frac{1}{4}}\big(\mathcal{A}_1(t)^{\frac{5}{4}}+ C_0^{\frac{5}{4}} \big).
\]
\end{proof}
\subsection{Proof of Lemma \ref{sec2:lem5}}\label{proof:sec2:lem5}
\begin{proof}
From\eqref{sec3:eq43}, we have
\begin{align}
    &\int_0^t\sigma^{\frac{1}{2}+\check{\alpha}}\scalar{}^{1+r_0}\norm{\nabla \dot u}_{L^3(\R^3)}^2\notag\\
    &\leqslant C_*\Bigg(\int_0^t \sigma^{1+\check{\alpha}}\scalar{}^{\frac{3}{2}(1+r_0)}\norm{\rho\ddot u}_{L^2(\R^3)}^2\Bigg)^{1/2}\Bigg(\int_0^t\sigma^{\check{\alpha}}\scalar{}^{1+r_0}\norm{(-\Delta)^{-1}\nabla \mathbb P(\rho \ddot u),\,(-\Delta)^{-1}\dvg (\rho \ddot u)}_{L^2(\R^3)}^2\Bigg)^{1/2}\notag\\
    &+C_*\int_0^t\sigma^{\frac{1}{2}+\check{\alpha}}\scalar{}^{1+r_0}\big(\norm{\nabla u,\, G(\rho,c)}_{L^6(\R^3)}^4+\norm{\dvg u}_{L^3(\R^3)}^2+\check \phi_\Sigma^{\frac{2}{\alpha}}\norm{\nabla \dot u}_{L^{2}(\R^3)}^2\big)\notag\\
    &\leqslant C_*\mathcal{A}_3(t)^{1/2}\Bigg(\int_0^t\sigma^{\check{\alpha}}\scalar{}^{1+r_0}\Big( \norm{\nabla\dot u}_{L^2(\R^3)}^2+\norm{\nabla u}_{L^4(\R^3)}^4+\norm{\dvg u}_{L^2(\R^3)}^2\Big)\Bigg)^{1/2}\notag\\
    &+C_*\int_0^t\sigma^{\frac{1}{2}+\check{\alpha}}\scalar{}^{1+r_0}\big(\norm{\nabla u,\, G(\rho,c)}_{L^6(\R^3)}^4+\norm{\dvg u}_{L^3(\R^3)}^2+\check \phi_\Sigma^{\frac{2}{\alpha}}\norm{\nabla \dot u}_{L^{2}(\R^3)}^2\big).\label{sec3:eq47}
\end{align}
Using \eqref{sec3:eq44}, we deduce 
\begin{align*}
\int_0^t\sigma^{\check{\alpha}}\scalar{}^{1+r_0}\Big( \norm{\nabla\dot u}_{L^2(\R^3)}^2+\norm{\nabla u}_{L^4(\R^3)}^4&+\norm{\dvg u}_{L^2(\R^3)}^2\Big)\\
&\leqslant C_*\Big(\check{\mathcal{A}}_2(t)+C_0\big(1+\check\phi_\Sigma^{\frac{6}{\alpha}} \mathcal{A}_1(t)\big)+\big(C_0+\mathcal{A}_1(t)\big)\mathcal{A}_1(t)^2\Big),
\end{align*}
and with the help of \eqref{sec3:eq45}, the remaining terms can be estimated as follows:
\begin{align*}
    &\int_0^t\sigma^{\frac{1}{2}+\check{\alpha}}\scalar{}^{1+r_0}\big(\norm{\nabla u,\, G(\rho,c)}_{L^6(\R^3)}^4+\norm{\dvg u}_{L^3(\R^3)}^2+\check \phi_\Sigma^{\frac{2}{\alpha}}\norm{\nabla \dot u}_{L^{2}(\R^3)}^2\big)\\
    &\leqslant \Bigg(\int_0^t\scalar{}^{1+r_0}\norm{\nabla u, G(\rho,c)}_{L^6(\R^3)}^2\Bigg)^{1/2}\Bigg(\int_0^t\sigma^{1+2\check{\alpha}}\scalar{}^{1+r_0}\big(\norm{\nabla u,\, G(\rho,c)}_{L^6(\R^3)}^6+\norm{\dvg u}_{L^2(\R^3)}^2\big)\Bigg)^{1/2}+C_*\check \phi_\Sigma^{\frac{2}{\alpha}}\check{\mathcal{A}}_2(t)\\
    &\leqslant \Bigg(\int_0^t\scalar{}^{1+r_0}\norm{\nabla u, G(\rho,c)}_{L^6(\R^3)}^2\Bigg)^{1/2}\Bigg(C_{*}C_0 \big(1+\check\phi_\Sigma^{\frac{12}{\alpha}}\mathcal{A}_1(t)^2\big)+C_{*}\mathcal{A}_2(t)\big(1+\mathcal{A}_1(t)\check{\mathcal{A}}_2(t)\big)\Bigg)^{1/2}+C_*\check \phi_\Sigma^{\frac{2}{\alpha}}\check{\mathcal{A}}_2(t).
\end{align*}
The $L^2((0,t),L^6(\R^3))$-norm of $\scalar{}^{(1+r_0)/2}\big(\nabla u, G(\rho,c)\big)$ will be estimated in what follows.

To achieve this, we first deduce from \eqref{ep2.9}-\eqref{ep2.12} to obtain 
\begin{align*}
\dfrac{d}{dt} \int_{\R^3}H_5(\rho,c)+\int_{\R^3}\dfrac{1}{\nu(\rho,c)}\abs{G(\rho,c)}^{6}&=-\int_{\R^3}\dfrac{1}{\nu(\rho,c)} F G(\rho,c)\abs{G(\rho,c)}^4,
\end{align*}
and from \eqref{ep2.7}-\eqref{ep2.8}, we have 
\begin{gather}\label{sec3:eq46}
\widetilde\rho(c) g(\rho,c)H_l(\rho,c)\leqslant \abs{G(\rho,c)}^{l+1}\leqslant \frac{\rho+\widetilde\rho(c)}{a_l(c)}g(\rho,c)H_{l}(\rho,c),\;\text{ where }\; g(\rho,c)=\frac{G(\rho,c)}{\rho-\widetilde\rho(c)}.
\end{gather}
We will use the following notations: 
\begin{gather}
    a_{l,*}=\inf_c a_{l}(c),\;\; g^*=\sup_{\rho,c} g(\rho,c),\;\; g_*=\inf_{\rho,c} g(\rho,c),\;\; k_*=\dfrac{\rho_* g_*}{2\rho^* g^*}\;\text{ and }\;  k^*_l=\frac{\nu^*}{a_{l,*}\nu_*}.
\end{gather}
It follows  that 
\begin{align*}
\dfrac{d}{dt} \int_{\R^3}H_5(\rho,c)+\int_{\R^3}\widetilde\rho(c)\dfrac{g(\rho,c)}{\nu(\rho,c)}H_5(\rho,c)&\leqslant \Bigg(\int_{\R^3}\dfrac{1}{\nu(\rho,c)} \abs{F}^6\Bigg)^{1/6}\Bigg(\int_{\R^3} \frac{\rho+\widetilde\rho(c)}{a_5(c)}\frac{g(\rho,c)}{\nu(\rho,c)}H_5(\rho,c)\Bigg)^{5/6}\\
&\leqslant \frac{1}{\nu_*}\Bigg(\frac{2\rho^* g^* }{a_{5,*}}\Bigg)^{5/6} \norm{F}_{L^6(\R^3)}\Bigg(\int_{\R^3}H_5(\rho,c)\Bigg)^{5/6}.
\end{align*}
Hence
\begin{gather}\label{sec3:eq48}
    3\dfrac{d}{dt}\Bigg(\int_{\R^3}H_5(\rho,c)\Bigg)^{1/3}+\dfrac{\rho_* g_*}{\nu^*}\Bigg(\int_{\R^3}H_5(\rho,c)\Bigg)^{1/3}\leqslant \frac{1}{\nu_*}\Bigg(\frac{2\rho^* g^* }{a_{5,*}}\Bigg)^{5/6} \norm{F}_{L^6(\R^3)}\Bigg(\int_{\R^3}H_5(\rho,c)\Bigg)^{1/6},
\end{gather}
and  Young's inequality yields 
\begin{gather*}
    3\dfrac{d}{dt}\Bigg(\int_{\R^3}H_5(\rho,c)\Bigg)^{1/3}+\dfrac{\rho_* g_*}{2\nu^*}\Bigg(\int_{\R^3}H_5(\rho,c)\Bigg)^{1/3}\leqslant \frac{\nu^*\nu_*^{-2}}{2\rho_* g_*}\Bigg(\frac{2\rho^* g^*}{a_{5,*}}\Bigg)^{5/3} \norm{F}_{L^6(\R^3)}^{2}.
\end{gather*}
Upon multiplying by the time weight $\kappa_1=\scalar{}^{1+r_0}$ and integrating in time, we obtain:
\begin{align*}
 \kappa_1(t)\Bigg(\int_{\R^3}H_5(\rho,c)(t)\Bigg)^{1/3}+\dfrac{\rho_* g_*}{6\nu^*}\int_0^t\kappa_1\Bigg(\int_{\R^3}H_5(\rho,c)\Bigg)^{1/3}&\leqslant \Bigg(\int_{\R^3}H_5(\rho_0,c_0)\Bigg)^{1/3}+\int_0^t\kappa_1'\Bigg(\int_{\R^3}H_5(\rho,c)\Bigg)^{1/3}\\
        &+\frac{\nu^*\nu_*^{-2}}{6\rho_* g_*}\Bigg(\frac{2\rho^* g^*}{a_{5,*}}\Bigg)^{5/3}\int_0^t\kappa_1\norm{F}_{L^6(\R^3)}^{2}.
\end{align*}
From  \eqref{sec3:eq17}-\eqref{ep3.8} we infer
\[
\Bigg(\int_{\R^3}H_5(\rho_0,c_0)\Bigg)^{1/3}+\int_0^t\kappa_1'\Bigg(\int_{\R^3}H_5(\rho,c)\Bigg)^{1/3}\leqslant C_*  C_0,
\]
and  \eqref{sec3:eq46}, yields
\[
\int_0^t\kappa_1\Bigg(\int_{\R^3}H_5(\rho,c)\Bigg)^{1/3}\geqslant \Big(\frac{a_{5,*}}{2\rho^* g^*}\Big)^{1/3}\int_0^t\kappa_1\norm{G(\rho,c)}_{L^6(\R^3)}^2.
\]
It then follows that 
\begin{gather*}
    k_*^2\int_0^t\kappa_1\norm{G(\rho,c)}_{L^6(\R^3)}^2\leqslant C_* C_0+{k_5^*}^2\int_0^t\kappa_1\norm{F}_{L^6(\R^3)}^{2}.
\end{gather*}
Once again, we combine \eqref{sec3:eq20}–\eqref{ap:eq6} to deduce that 
\begin{align*}
        k_*^2\int_0^t\kappa_1\Big(\norm{G(\rho,c)}_{L^6(\R^3)}^2+\mu_*^2\norm{\nabla u}_{L^6(\R^3)}^2\Big)&\leqslant C_* C_0 +C_*\int_0^t\kappa_1\norm{(-\Delta)^{-1}\nabla \mathbb P(\rho \dot u),\,(-\Delta)^{-1}\dvg (\rho \dot u)}_{L^6(\R^3)}^2\notag\\
    &+C\big(k_*^2+{k_5^*}^2\big)\int_0^t\kappa_1\norm{\big[\mathcal{K}, \mu(\rho,c)-\widetilde\mu\big]\D u,\,\big[\check{\mathcal{K}}, \mu(\rho,c)-\widetilde\mu\big]\D u}_{L^6(\R^3)}^2\\
    &\leqslant C_* C_0 +C_*\int_0^t\kappa_1\Big(\norm{\sqrt\rho \dot u}_{L^2(\R^3)}^2+\check{\phi}_\Sigma^2\norm{\nabla u}_{L^{6/(1+\alpha)}(\R^3)}^2\Big)\notag\\
    &+C\big(k_*^2+{k_5^*}^2\big)\sup_{[0,t]}\norm{\llbracket\mu(\rho,c)\rrbracket}_{L^\infty(\Sigma)}^2\int_0^t\kappa_1\norm{\nabla u}_{L^6(\R^3)}^2.
\end{align*}
Under the smallness assumption on the shear-viscosity jump 
\[
 \frac{C}{\mu_*^2}\Bigg(1+\frac{{k_5^*}^2}{k_*^2}\Bigg)\sup_{[0,t]}\norm{\llbracket\mu(\rho,c)\rrbracket}_{L^\infty(\Sigma)}^2\leqslant \frac{1}{2},
\]
the last term can be absorbed into the LHS, and Young’s inequality gives
\[
\int_0^t\kappa_1\norm{G(\rho,c),\,\nabla u}_{L^6(\R^3)}^2
\leqslant C_* \Big(C_0 \big(1+\check{\phi}_{\Sigma}^{\frac{2}{\alpha}}\big)+\mathcal{A}_1(t)\Big).
\]
This completes the estimate of the last term in \eqref{sec3:eq47}.

Using an interpolation inequality, we have:
\begin{align*}
\int_0^t \norm{\dot u}_{L^{6/(2-\alpha)}(\R^3)}\leqslant C_*\Bigg(\int_0^t\kappa_1\norm{\dot u}_{L^2(\R^3)}^2\Bigg)^{\frac{1-\alpha}{4}}\Bigg(\int_0^t\sigma\kappa_1\norm{\nabla\dot u}_{L^2(\R^3)}^2\Bigg)^{\frac{1+\alpha}{4}}\Bigg(\int_0^t\kappa^{-1}_1\sigma^{-\frac{1+\alpha}{2}}\Bigg)^{\frac{1}{2}},
\end{align*}
and 
\begin{itemize}
    \item if $\alpha\in (0, 1/2)$, 
    \begin{align*}
         \int_0^t \norm{\dot u}_{L^{3/(1-\alpha)}(\R^3)}&\leqslant C_*\int_0^t \norm{\rho\dot u}_{L^2(\R^3)}^{\tfrac{1-2\alpha}{2}}\norm{\nabla \dot u}_{L^2(\R^3)}^{\tfrac{1+2\alpha}{2}}\\
         &\leqslant C_*\Bigg(\int_0^t \kappa_1\norm{\rho\dot u}_{L^2(\R^3)}^2\Bigg)^{\tfrac{1-2\alpha}{4}}\Bigg(\int_0^t\sigma\kappa_1\norm{\nabla \dot u}_{L^2(\R^3)}^2\Bigg)^{\tfrac{1+2\alpha}{4}}\Bigg(\int_0^t\kappa_1^{-1}\sigma^{-\frac{1+2\alpha}{2}}\Bigg)^{1/2};
    \end{align*}
    \item  If $\alpha\in [1/2,1)$, we have $\check\alpha\in (0,3/2-\alpha)$ and whence 
    \begin{align*}
\int_0^t \norm{\dot u}_{L^{3/(1-\alpha)}(\R^3)}&\leqslant C_*\int_0^t \norm{\nabla \dot u}_{L^2(\R^3)}^{2(1-\alpha)}\norm{\nabla\dot u}_{L^3(\R^3)}^{2\alpha-1}\\
&\leqslant C_*\Bigg(\int_0^t\sigma^{\check{\alpha}}\kappa_1\norm{\nabla\dot u}_{L^2(\R^3)}^2\Bigg)^{1-\alpha}\Bigg(\int_0^t\sigma^{\frac{1}{2}+\check{\alpha}}\kappa_1\norm{\nabla\dot u}_{L^3(\R^3)}^2\Bigg)^{\alpha-\frac{1}{2}}\Bigg(\int_0^t\kappa_1^{-1}\sigma^{\frac{1}{2}-(\alpha+\check{\alpha})}\Bigg)^{\frac{1}{2}},
\end{align*}
\end{itemize}

The final step is devoted to obtaining an $L^1((0,t),L^p(\R^3))$ estimate for $\nabla u$ and $G(\rho,c)$, with $p\in [6/\alpha,\infty)$.

To achieve this, we follow the computations that lead to \eqref{sec3:eq48} and we obtain  
\[
p\dfrac{d}{dt}\Bigg[\int_{\R^3} H_{p-1}(\rho,c)\Bigg]^{\frac{1}{p}}+\dfrac{\rho_{*} g_*}{\nu^*}\Bigg[\int_{\R^3} H_{p-1}(\rho,c)\Bigg]^{\frac{1}{p}}\leqslant \dfrac{1}{\nu_*}\Bigg(\frac{2\rho^* g^*}{a_{p,*}}\Bigg)^{\frac{1}{p'}}\norm{F}_{L^{p}(\R^3)},
\]
and time integration, together with \eqref{sec3:eq46}, yields:
\[
k_* \int_0^t\norm{G(\rho,c)}_{L^p(\R^3)}\leqslant C_* C_0+k_p^*\int_0^t\norm{F}_{L^{p}(\R^3)}.
\]
It follows that (using again \eqref{sec3:eq20}–\eqref{ap:eq6}):
\begin{align*}
            k_*\int_0^t\norm{G(\rho,c),\,\mu_*\nabla u}_{L^p(\R^3)}&\leqslant C_* C_0 +C_*\int_0^t\Big(\norm{\rho \dot u}_{L^{3p/(3+p)}(\R^3)}+\check{\phi}_\Sigma\norm{\nabla u}_{L^{6p/(1+p\alpha)}(\R^3)}\Big)\notag\\
    &+C\big(k_*+{k_p^*}\big)\sup_{[0,t]}\norm{\llbracket\mu(\rho,c)\rrbracket}_{L^\infty(\Sigma)}\int_0^t\norm{\nabla u}_{L^p(\R^3)}.
\end{align*}
Under the smallness assumption on the shear-viscosity jump 
\[
 \frac{C}{\mu_*}\Bigg(1+\frac{k_p^*}{k_*}\Bigg)\sup_{[0,t]}\norm{\llbracket\mu(\rho,c)\rrbracket}_{L^\infty(\Sigma)}\leqslant \frac{1}{2},
\]
the last term can be absorbed into the LHS, and Young’s inequality gives
\begin{align*}
&\int_0^t\norm{G(\rho,c),\,\nabla u}_{L^p(\R^3)}
\leqslant C_* C_0 +C_*\int_0^t\Big(\norm{\rho \dot u}_{L^{3p/(3+p)}(\R^3)}+\check{\phi}_\Sigma^{\frac{3}{\alpha}\big(1-\frac{2}{p}\big)}\norm{\nabla u}_{L^{2}(\R^3)}\Big)\\
&\leqslant C_*C_0\Bigg(1+\check{\phi}_\Sigma^{\frac{3}{\alpha}\big(1-\frac{2}{p}\big)}\Bigg)+C_*\Bigg(\int_0^t\kappa_1\norm{\dot u}_{L^{2}(\R^3)}^2\Bigg)^{\frac{1}{4}+\frac{3}{2p}}\Bigg(\int_0^t\sigma\kappa_1\norm{\nabla \dot u}_{L^{2}(\R^3)}^2\Bigg)^{\frac{1}{4}-\frac{3}{2p}}\Bigg(\int_0^t\kappa^{-1}_1\sigma^{-\frac{1}{2}+\frac{3}{p}}\Bigg)^{\frac{1}{2}}.
\end{align*}
This completes the proof of Lemma \ref{sec2:lem5}.
\end{proof}
\section{Proof of the main result}\label{sec:theorem}
This section is devoted to the final step in the proof of the main result, Theorem \ref{th1}. It is divided into three steps. First, (see Section \ref{part4:sec1}) we construct an approximate sequence $(c^n,\rho^n,u^n)_{n\in \N}$ for the Cauchy problem \eqref{ep2.1}.  This sequence comes with the a priori estimates summarized in Section \ref{part2}. In Section \ref{part4:sec2}, we close these estimates using a bootstrap argument.  
\subsection{Construction of approximate solutions}\label{part4:sec1}
Usually, the approximate sequence $(c^n,\rho^n,u^n)$ is constructed as the solution of the Cauchy problem \eqref{intro:twofluid} with initial data $(c_0^n, \rho_0^n, u_0^n)$ obtained by mollifying $(c_0,\rho_0,u_0)$ with a smooth kernel.
This approach is unsuited here, because smoothing the initial data would remove the density discontinuity. Furthermore, classical arguments \textit{à la Feireisl-Novotn\'y-Petzeltov\'a}, or \textit{à la P.–L. Lions} fail in the density-dependent 
viscosity case; in particular, the well-known effective flux and vorticity degenerate completely. Motivated by this, we establish in  \cite{zodji2023well} a local-in-time existence of weak solutions to the system \eqref{ep2.1} within a functional framework tailored to our setting.
In particular, beyond the initial data assumptions  \eqref{initialvolum}-\eqref{init:density-velocity}-\eqref{density:piecewise}, it is enough to require  $(\rho \dot u)_{|t=0}\in L^2(\R^3)$ to construct the approximate sequence $(c^n, \rho^n, u^n)$ to \eqref{ep2.1}. In other terms, we shall consider the Cauchy problem:
\begin{gather}\label{sec4:eq3}
\begin{cases}
    \dpt c^n+u^n\cdot\nabla c^n=0,\\
    \dpt \rho^n +\dvg (\rho^n u^n)=0,\\
    \dpt (\rho^n u^n)+\dvg (\rho^n u^n\otimes u^n)+\nabla P(\rho^n,c^n)=\dvg \big(2\mu(\rho^n,c^n)\D u^n\big)+\nabla\big(\lambda(\rho^n,c^n)\dvg u^n\big),\\
    (c^n,\rho^n,u^n)_{|t=0}= (c_0,\rho_0, u_0^n),
\end{cases}
\end{gather}
with $u_0^n\in H^1(\R^3)\cap L^{p_0}(\R^3)$, fulfilling the so-called compatibility condition
\[
    (\rho^n \dot u^n)_{|t=0}=\dvg \big(2\mu(\rho_0,c_0)\D u_0^n+\big(\lambda(\rho_0,c_0)\dvg u_0^n-P(\rho_0,c_0)+\widetilde P\big) I_3\big)\in L^2(\R^3).
\]
We construct the sequence of initial velocity $(u_0^n)_n$  as follows. 
\begin{lemma}\label{sec4:lemma:1} There exists a sequence $(u_0^n)_n$ of initial velocities such that
\[
\dvg \big(2\mu(\rho_0,c_0)\D u_0^n\big)+\nabla\big(\lambda(\rho_0,c_0)\dvg u_0^n\big)-\nabla P(\rho_0,c_0)\in L^2(\R^3);
\]
and 
\begin{gather}\label{sec4:eq8}
\normb{u_0^n-u_0}_{H^1(\R^3)}^2+\normb{u_0^n-u_0}_{L^{p_0}(\R^3)}^2\xrightarrow{n\to \infty} 0.
\end{gather}
\end{lemma}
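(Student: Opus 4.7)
The plan is to construct $u_0^n$ as the solution of a perturbed stationary Lamé-type system at $t=0$. Introduce the initial stress tensor
\[
G_0 \;:=\; 2\mu(\rho_0,c_0)\,\D u_0 + \bigl(\lambda(\rho_0,c_0)\,\dvg u_0 - P(\rho_0,c_0) + \widetilde P\bigr)\,I_3,
\]
which lies in $L^2(\R^3)$ under the standing hypotheses \eqref{init:density-velocity} and satisfies
\[
\dvg\bigl(2\mu(\rho_0,c_0)\D u_0\bigr) + \nabla\bigl(\lambda(\rho_0,c_0)\dvg u_0\bigr) - \nabla P(\rho_0,c_0) \;=\; \dvg G_0 \;\in\; \dot H^{-1}(\R^3).
\]
For a standard mollifier family $(\varphi_\epsilon)_{\epsilon>0}$ set $G_0^\epsilon := G_0*\varphi_\epsilon$, so that $G_0^\epsilon \to G_0$ in $L^2(\R^3)$ and $\dvg G_0^\epsilon = G_0*\nabla\varphi_\epsilon \in L^2(\R^3)$. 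For each $n$, pick $\epsilon_n\to 0$ (to be fixed later) and define $u_0^n \in H^1(\R^3)$ as the unique Lax--Milgram solution of
\[
-\dvg\bigl(2\mu(\rho_0,c_0)\D u_0^n\bigr) - \nabla\bigl(\lambda(\rho_0,c_0)\dvg u_0^n\bigr) + \tfrac{1}{n}(u_0^n-u_0) \;=\; -\dvg G_0^{\epsilon_n} - \nabla\bigl(P(\rho_0,c_0)-\widetilde P\bigr);
\]
coercivity on $H^1(\R^3)$ comes from the Korn identity $\int|\D v|^2 = \tfrac{1}{2}\int(|\nabla v|^2 + (\dvg v)^2)$ combined with $\mu\geq\mu_*>0$. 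Rearranging delivers $\dvg(2\mu\D u_0^n)+\nabla(\lambda\dvg u_0^n) - \nabla P(\rho_0,c_0) = \dvg G_0^{\epsilon_n} + \tfrac{1}{n}(u_0^n-u_0)\in L^2(\R^3)$, the required compatibility.

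For the $H^1$-convergence, the difference $v_0^n := u_0^n - u_0$ solves $-A v_0^n + \tfrac{1}{n}v_0^n = \dvg(G_0 - G_0^{\epsilon_n})$ with $A v := \dvg(2\mu\D v)+\nabla(\lambda\dvg v)$. Testing against $v_0^n$, integrating by parts and invoking the Korn identity yields
\[
\mu_*\|\nabla v_0^n\|_{L^2}^2 + \tfrac{1}{n}\|v_0^n\|_{L^2}^2 \;\leq\; \|G_0 - G_0^{\epsilon_n}\|_{L^2}\,\|\nabla v_0^n\|_{L^2},
\]
so $\|\nabla v_0^n\|_{L^2}\lesssim \|G_0 - G_0^{\epsilon_n}\|_{L^2}$ and $\|v_0^n\|_{L^2}\lesssim \sqrt n\,\|G_0 - G_0^{\epsilon_n}\|_{L^2}$. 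Choosing the mollification scale $\epsilon_n$ so that $\|G_0 - G_0^{\epsilon_n}\|_{L^2} = o(n^{-1/2})$ ensures $v_0^n\to 0$ in $H^1(\R^3)$.

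The main obstacle is the $L^{p_0}$-convergence, because $p_0\in(1,6/5)$ lies below $2$ and cannot be reached by interpolating between the $L^2$ and (Sobolev-derived) $L^6$ bounds above. I plan to split the stress tensor as $G_0 = G_0^{\mathrm{p}} + G_0^{\mathrm{v}}$, with $G_0^{\mathrm{p}} := -(P(\rho_0,c_0) - \widetilde P)\,I_3 \in L^2\cap L^{p_0}(\R^3)$ by \eqref{init:lowfrequency} and $G_0^{\mathrm{v}}\in L^2(\R^3)$, and to mollify the two parts separately so that the pressure error tends to zero in both $L^2$ and $L^{p_0}$ while the viscous error tends to zero only in $L^2$. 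A duality argument against $\phi\in C_c^\infty(\R^3)\cap L^{p_0'}$ through the (symmetric) resolvent equation $-A\psi+\tfrac{1}{n}\psi = \phi$ yields
\[
\int v_0^n\cdot\phi \;=\; -\int(G_0 - G_0^{\epsilon_n}):\nabla\psi,
\]
whose pressure contribution is controlled by $\|G_0^{\mathrm{p}} - G_0^{\mathrm{p},\epsilon_n}\|_{L^{p_0}}\|\nabla\psi\|_{L^{p_0'}}$ and whose viscous contribution by $\|G_0^{\mathrm{v}} - G_0^{\mathrm{v},\epsilon_n}\|_{L^2}\|\nabla\psi\|_{L^2}$; an appropriate calibration of $\epsilon_n$ together with a supremum over $\phi$ of unit $L^{p_0'}$-norm will close the estimate. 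The sharp difficulty is obtaining a uniform-in-$n$ $L^{p_0'}$-elliptic estimate for $\nabla\psi$ when $A$ carries piecewise-H\"older viscosity coefficients; I would handle this by freezing the coefficients at their equilibria $(\widetilde\mu,\widetilde\lambda)$, applying classical Calder\'on--Zygmund $L^q$-regularity for the constant-coefficient Lam\'e resolvent (valid for every $q\in(1,\infty)$), and absorbing the variable-coefficient remainder through commutator bounds of the same piecewise-H\"older type used throughout \cref{sec2:lem1}.
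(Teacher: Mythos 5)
The penalty--mollification architecture you propose (a stationary Lam\'e operator with a vanishing zeroth-order regularization and a mollified right-hand side, closed by calibrating the mollification scale against the penalty) is the same as the paper's, which uses $d_n u_0^n$ where you use $\tfrac{1}{n}(u_0^n-u_0)$; these are equivalent after relabeling, and your $H^1$ step is sound.

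The $L^{p_0}$ step, however, has a genuine gap, and it is not quite where you flag it. You split $G_0=G_0^{\mathrm p}+G_0^{\mathrm v}$, observe that $G_0^{\mathrm v}$ is controlled only in $L^2$, and plan to pair its mollification error against $\nabla\psi$ in $L^2$. This requires an estimate $\norm{\nabla\psi}_{L^2}\lesssim_n\norm{\phi}_{L^{p_0'}}$, and this inequality is \emph{false for every fixed $n$}. Indeed, since $p_0'>6>2$, the dilations $\phi_R(x)=R^{-3/p_0'}\phi_1(x/R)$ preserve the $L^{p_0'}$-norm, while for the constant-coefficient resolvent with symbol $(d_n+\widetilde\mu\abs{\xi}^2)^{-1}$ a Plancherel computation gives
\[
\norm{\nabla\psi_R}_{L^2(\R^3)}^2 \;=\; (2\pi)^{-3} R^{3-6/p_0'}\int_{\R^3} \frac{R^{-2}\abs{\eta}^2}{\big(d_n+\widetilde\mu\abs{\eta}^2/R^2\big)^2}\abs{\hat\phi_1(\eta)}^2 d\eta \;\sim\; R^{\,1-6/p_0'}\,d_n^{-2}\norm{\nabla\phi_1}_{L^2(\R^3)}^2 \;\longrightarrow\;\infty
\]
as $R\to\infty$, because $1-6/p_0'>0$. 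So the supremum you would need in the duality argument is $+\infty$, and no calibration of $\epsilon_n$ can cure this: the problem is not a bad $n$-dependent constant but the nonexistence of the inequality. (By contrast, the concern you flagged --- a uniform-in-$n$ $L^{p_0'}$ estimate for $\nabla\psi$ with variable coefficients --- is a real but milder issue: the constant-coefficient Mikhlin bound degenerates only like $d_n^{-1/2}$, which \emph{is} compensable by mollification, and the commutator absorption via a smallness condition on $\llbracket\mu\rrbracket$ is in line with the paper's Appendix. The $L^2$-pairing problem is of a different, non-quantitative nature.)

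The paper circumvents exactly this obstacle by a structural rewriting that your $G_0^{\mathrm p}+G_0^{\mathrm v}$ split discards. Rather than keeping the viscous residual as $2\mu_0\D u_0+\lambda_0\dvg u_0\,I_3$ (purely $L^2$), the paper splits $\Pi_0$ into a \emph{low-integrability} part $\Pi_{0,1}=2(\mu_0-\widetilde\mu)\D u_0+(\lambda_0-\widetilde\lambda)\dvg u_0-(P_0-\widetilde P)$, whose coefficients $\mu_0-\widetilde\mu,\lambda_0-\widetilde\lambda$ lie in $L^{\check p}\cap L^\infty$ with $\check p>2$ and whose Young-convolution estimate against $\nabla\mathscr Z_n$ beats the $d_n$-loss, plus the \emph{constant-coefficient} piece $\widetilde\mu\Delta u_0+(2\widetilde\mu+\widetilde\lambda)\nabla\dvg u_0$. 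For this second piece, after applying the explicit resolvent kernel $\mathscr Z_n$ both derivatives land on the kernel and one is left with the multiplier $\xi\mapsto\abs{\xi}^2/(d_n+\widetilde\mu\abs{\xi}^2)$ --- \emph{uniformly} bounded on $L^{p_0}$ by Mikhlin, no $d_n$-degeneracy --- acting on $\omega_n*u_0-u_0$. The troublesome $\D u_0\in L^2$ data has been converted into $u_0\in L^{p_0}$ data (recall $\rho_0 u_0\in L^{p_0}$ and $\rho_0\geq\rho_{0,*}>0$), and $\norm{\omega_n*u_0-u_0}_{L^{p_0}}\to 0$ with no rate required. This conversion is the missing idea; it is essential, not a technical refinement, and is what makes the $L^{p_0}$ convergence possible.
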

We provide the proof of Lemma \ref{sec4:lemma:1} below. With the sequence $(u_0^n)_n$ in place, we construct $(c^n,\rho^n, u^n)_n$ by applying the Theorem \ref{sec2:thlocal}. This theorem  refines our recent work in \cite{zodji2023well}, where local-in-time well-posedness for \eqref{sec4:eq3} was obtained assuming 
\[
\norm{\mu(\rho_0,c_0)-\widetilde\mu}_{\cC^\alpha_{\pw,\Sigma_0}(\R^3)}\ll 1.
\]
Indeed, here we only require smallness of the shear viscosity \emph{jump}. In particular, when the shear viscosity is continuous, the condition holds trivially. Thus, our result aligns with the well-posedness theory for smooth data.

\begin{proof}[Proof of Lemma \ref{sec4:lemma:1}]
    Following Section 3.5 of \cite{zodji2023discontinuous}, we consider a sequence of 
    nonnegative and smooth mollifier $(\omega_n)_{n\in \N}$, and we denote by $\Pi_0$ the stress tensor at the initial time 
    \[
\Pi_0=2\mu(\rho_0,c_0)\D u_0+\big(\lambda(\rho_0,c_0)\dvg u_0-P(\rho_0,c_0)+\widetilde P\big) I_3.
   \]
We now consider the elliptic equation
   \begin{equation}\label{sec4:eq1}
    -\dvg\Big(2\mu(\rho_0,c_0)\D u_0^n+\big(\lambda(\rho_0,c_0)\dvg u^n_0- P(\rho_0,c_0)+\widetilde P\big) I_3\Big)+ d_n u_0^n=-\dvg\big(\omega_n*\Pi_0\big),
\end{equation}
where $(d_n)_{n\in \N}$ is a sequence of positive real numbers is a sequence of positive real numbers to be specified later. 

For each $n\in \N$, the existence and uniqueness of $u_0^n\in H^1(\R^3)$ follow from the Lax–Milgram theorem. We are now left to prove \eqref{sec4:eq8}. We add 
$\dvg \big(\Pi_0\big)$ to both sides of \eqref{sec4:eq1}
\begin{gather}\label{sec4:eq2}
-\dvg \big(2\mu(\rho_0,c_0)\D \big( u_0^n- u_0\big)\big)-\nabla\big(\lambda(\rho_0,c_0)\dvg \big( u_0^n- u_0\big)\big)+d_n u^n_0 =-\dvg\big(\omega_n*\Pi_0-\Pi_0\big),
\end{gather}
and basic energy estimate yields 
\begin{align*}
\int_{\R^3}\big( 2\mu(\rho_0,c_0)\absb{\D (u_0^n-u_0)}^2&+\lambda(\rho_0,c_0)\absb{\dvg (u_0^n-u_0)}^2\big)(x)dx+d_n \int_{\R^3}\absb{u_0^n}^2(x)dx\\
&=\int_{\R^3} \big(\omega_n * \Pi_0 -\Pi_0\big)(x) \D (u_0^n-u_0)(x)dx+d_n \int_{\R^3}u_0^n(x) u_0(x)dx.
\end{align*}
Applying Young’s and Korn’s inequalities, we get
\begin{align}
\int_{\R^3}\big[\mu(\rho_0,c_0)\absb{\D(u_0^n-u_0)}^2&+\lambda(\rho_0,c_0)\big(\dvg (u_0^n-u_0)\big)^2\big](x)dx+\frac{d_n}{2} \normb{u_0^n}^2_{L^2(\R^3)}\notag\\
&\leqslant \frac{1}{\mu_*}\norm{\omega_n* \Pi_0-\Pi_0}_{L^2(\R^3)}^2+\frac{d_n}{2}\normb{u_0}_{L^2(\R^3)}^2,\label{sec4:eq5}
\end{align}
and therefore 
\[
\lim_{n\to \infty}\normb{\nabla u_0^n-\nabla u_0}_{L^2(\R^3)}= 0\; \text{ and }\; \limsup_{\delta\to 0} \normb{u_0^n}_{L^2(\R^3)}\leqslant \normb{u_0}_{L^2(\R^3)},
\]
as soon as $(d_n)_n$ satisfies 
\begin{gather}\label{sec4:eq4}
d_n\xrightarrow{n\to \infty}0\;\text{ and }\; d_n^{-1}\norm{\omega_n*\Pi_0-\Pi_0}_{L^2(\R^3)}^2\xrightarrow{n\to \infty} 0.
\end{gather}
As a consequence, $u_0^n \xrightarrow{n \to \infty} u_0$ strongly in $H^1(\R^3)$. It now remains to prove $\normb{u_0^n-u_0}_{L^{p_0}(\R^3)}\xrightarrow{n\to \infty}0$.

To achieve this, we set $v_0^n= u_0^n- u_0$, and we rewrite \eqref{sec4:eq2} as 
\begin{align*}
-\widetilde\mu \Delta v_0^n-\big(\widetilde\mu+\widetilde\lambda\big)\nabla\dvg v_0^n +d_n v_0^n&=-d_n u_0-\dvg \big( \omega_n*\Pi_0- \Pi_0\big)\\
&+\dvg \big( 2\big(\mu(\rho_0,c_0)-\widetilde\mu\big)\D v^n_0\big)+\nabla\big(\big(\lambda(\rho_0,c_0)-\widetilde\lambda\big)\dvg v^n_0\big)=:h_0^n.
\end{align*}
From this, we deduce (recall that $\mathbb P$ denotes the Leray projector)
\begin{align*}
\mathbb P v_0^n= \mathscr Z_n* \mathbb P h_0^n,\;\text{ and }\;
\big( \mathbb I-\mathbb P\big) v_0^n= \check{\mathscr Z}_n* \big(\mathbb I-\mathbb P\big) h_0^n;
\end{align*}
where 
\[
\big(\mathscr{F}\mathscr Z_n\big)(\xi)=\frac{1}{d_n+ \widetilde\mu \abs{\xi}^2}, \; \text{ and }\; \big(\mathscr{F}\check{\mathscr Z}_n\big)(\xi)=\frac{1}{d_n+ (2\widetilde\mu+\widetilde\lambda) \abs{\xi}^2}.
\]
For any $a>0$, we have 
\[
\frac{1}{1+a\abs{\xi}^2}=\int_0^\infty e^{-t(1+a\abs{\xi}^2)}dt,
\]
and whence 
\[
\check{\mathscr{K}}_{a}(x):=\mathscr{F}^{-1}\Big(\xi\mapsto\frac{1}{1+a\abs{\xi}^2}\Big)(x)=\int_0^\infty e^{-t} \mathscr{F}^{-1}\Big(\xi\mapsto e^{-a t\abs{ \xi}^2}\Big)dt=a^{-3/2} \int_0^\infty e^{-t} \mathscr{K}_{t}\Big(\frac{x}{\sqrt a}\Big)dt,
\]
where  $(\mathscr{K}_t)_{t>0}$ denotes the heat kernel. As a result, for all $q\in [1,\infty]$ and $p\in [1,3/2)$, we have

\[
\norm{\check{\mathscr{K}}_{a}}_{L^q(\R^3)}\leqslant C a^{-\frac{3}{2}(1-1/q)},\;\text{ and }\; \norm{\nabla \check{\mathscr{K}}_{a}}_{L^p(\R^3)}\leqslant C a^{-2+3/(2p)}.
\]
Since $\mathscr{Z}_{n}={d_n}^{-1}\check{\mathscr{K}}_{\widetilde\mu/d_n}$,
it is straightforward to derive the following bound (for some $p\in (1,p_0)$, and $1/\check{p}+1/p=1/2+1/p_0$):
\begin{align*}
    \norm{\mathscr{Z}_{n}*\dvg \mathbb P\big( 2\big(\mu(\rho_0,c_0)-\widetilde\mu\big)\D v^n_0\big)}_{L^{p_0}(\R^3)}&+\norm{\mathscr{Z}_{n}*\mathbb P\nabla\big(\big(\lambda(\rho_0,c_0)-\widetilde\lambda\big)\dvg v^n_0\big)}_{L^{p_0}(\R^3)}\\
    &\leqslant C\norm{\nabla \mathscr{Z}_{n}}_{L^p(\R^3)}\normb{\nabla v_0^n}_{L^{2}(\R^3)}\normb{\mu(\rho_0,c_0)-\widetilde\mu,\, \lambda(\rho_0,c_0)-\widetilde\lambda}_{L^{\check p}(\R^3)}\\
    &\leqslant C_* d_n^{1-\frac{3}{2p}}\normb{\nabla v_0^n}_{L^{2}(\R^3)}\normb{\mu(\rho_0,c_0)-\widetilde\mu,\, \lambda(\rho_0,c_0)-\widetilde\lambda}_{L^{\check p}(\R^3)}.
\end{align*}
Hence, from \eqref{sec4:eq4}–\eqref{sec4:eq5}, we obtain
\begin{gather}\label{sec4:eq6}
\norm{\mathscr{Z}_{n}*\dvg \mathbb P\big( 2\big(\mu(\rho_0,c_0)-\widetilde\mu\big)\D v^n_0\big)}_{L^{p_0}(\R^3)}+\norm{\mathscr{Z}_{n}*\mathbb P\nabla\big(\big(\lambda(\rho_0,c_0)-\widetilde\lambda\big)\dvg v^n_0\big)}_{L^{p_0}(\R^3)} \xrightarrow{n\to \infty} 0.
\end{gather}

Next, for $R>0$ we write 
\begin{align*}
d_n \mathscr{Z}_n* \mathbb P u_0&=\check{\mathscr{K}}_{\widetilde\mu/d_n}* \mathbb P u_0\\
&=\check{\mathscr{K}}_{\widetilde\mu/d_n}* \mathbb P \big( \mathbb 1_{B_R}u_0\big)+\check{\mathscr{K}}_{\widetilde\mu/d_n}* \mathbb P \big( \mathbb 1_{B_R^c}u_0\big),
\end{align*}
and it follows that 
\[
\norm{d_n \mathscr{Z}_n* \mathbb P u_0}_{L^{p_0}(\R^3)}\leqslant C_*\Big(d_n^{\frac{3}{2}(1/p-1/p_0)} R^{3(1/p-1/p_0)}\normb{u_0}_{L^{p_0}(\R^3)}+\normb{\mathbb 1_{B_R^c} u_0}_{L^{p_0}(\R^3)}\Big).
\]
Choosing $R= d_n^{-\frac{1}{4}}$, it is immediate that the first term converges to zero as $n\to \infty$; the same holds for the second term by the dominated convergence theorem.
In the following will now  prove 
\begin{gather}\label{sec4:eq7}
\norm{\mathscr{Z}_{n}*\dvg \mathbb P\big( \omega_n*\Pi_0- \Pi_0\big)}_{L^{p_0}(\R^3)}\xrightarrow{n\to \infty} 0.
\end{gather}

Following the computations that led to \eqref{sec4:eq6}, we obtain \eqref{sec4:eq7}
provided that 
\[
d_n^{-\frac{1}{2}} \norm{\omega_n*\Pi_0-\Pi_0}_{L^{p_0}(\R^3)}\xrightarrow{n\to \infty} 0.
\]
However, this would require assuming $\Pi_0\in L^{p_0}(\R^3)$, and hence  $\nabla u_0\in L^{p_0}(\R^3)$ which we aim to avoid. To this end, we express 
\begin{align*}
\Pi_0&=2\big(\mu(\rho_0,c_0)-\widetilde\mu\big)\D u_0+\big(\lambda(\rho_0,c_0)-\widetilde\lambda\big)\dvg u_0+\widetilde\mu\Delta u_0+\big(2\widetilde\mu+\widetilde\lambda\big)\nabla\dvg u_0-P(\rho_0,c_0)+\widetilde P\\
&=: \Pi_{0,1}+\widetilde\mu\Delta u_0+\big(2\widetilde\mu+\widetilde\lambda\big)\nabla\dvg u_0,
\end{align*}
and assuming that 
\[
d_n^{-\frac{1}{2}} \norm{\omega_n*\Pi_{0,1}-\Pi_{0,1}}_{L^{p_0}(\R^3)}\xrightarrow{n \to 0} 0, \;\text{ we readily obtain } \;
\norm{\mathscr{Z}_{n}*\dvg \mathbb P\big( \omega_n*\Pi_{0,1}- \Pi_{0,1}\big)}_{L^{p_0}(\R^3)}\xrightarrow{n\to \infty} 0.
\]
Then we write the remaining term as 
\[
-\widetilde\mu\mathscr{F}^{-1}\Bigg(\xi \mapsto \frac{\abs{\xi}^2}{d_n+\widetilde\mu \abs{\xi}^2}\mathscr{F}\mathbb P\big( \omega_n* u_0-u_0\big)\Bigg)-\big(2\widetilde\mu+\widetilde\lambda\big)\mathscr{F}^{-1}\Bigg(\xi \mapsto \frac{\xi }{d_n+\widetilde\mu \abs{\xi}^2}\xi\cdot \mathscr{F}\mathbb P\big( \omega_n* u_0-u_0\big)\Bigg),
\]
and a Fourier multiplier theorem (see e.g. Theorem 6.2.7 of \cite{GrafakosClassical}) yields
\begin{align*}
    &\norm{\widetilde\mu\mathscr{F}^{-1}\Bigg(\xi \mapsto \frac{\abs{\xi}^2}{d_n+\widetilde\mu \abs{\xi}^2}\mathscr{F}\mathbb P\big( \omega_n* u_0-u_0\big)\Bigg)+\big(2\widetilde\mu+\widetilde\lambda\big)\mathscr{F}^{-1}\Bigg(\xi \mapsto \frac{\xi }{d_n+\widetilde\mu \abs{\xi}^2}\xi\cdot \mathscr{F}\mathbb P\big( \omega_n* u_0-u_0\big)\Bigg)}_{L^{p_0}(\R^3)}\\
    &\leqslant  C_*\norm{\omega_n* u_0 - u_0}_{L^{p_0}(\R^3)}\xrightarrow{n\to  \infty} 0.
\end{align*}
Putting everything together, we obtain 
\[
\normb{\mathbb P\big(u_0^n-u_0\big)}_{L^{p_0}(\R^3)}\xrightarrow{n\to \infty} 0.
\]
Following the same computations, we also get 
\[
\normb{\big(\mathbb I-\mathbb P\big)\big(u_0^n-u_0\big)}_{L^{p_0}(\R^3)}\xrightarrow{n\to \infty} 0.
\]
This completes the proof of Lemma \ref{sec4:lemma:1}. In addition, from \eqref{sec4:eq2} one readily deduces the strong convergence 
\[
\dvg \big(2\mu(\rho_0,c_0)\D u_0^n\big)+\nabla\big(\lambda(\rho_0,c_0)\dvg u_0^n\big)-\nabla P(\rho_0,c_0)\xrightarrow{n\to \infty} \dvg\big(\Pi_0\big),
\]
 in $\dot H^{-\check\alpha}(\R^3)$, as soon as $\dvg\big(\Pi_0\big)\in \dot H^{-\check\alpha}(\R^3)$.
\end{proof}
\subsection{The bootstrap argument and final steps}\label{part4:sec2}
This section is devoted to closing all the a priori estimates derived in Section \ref{part2}.  Since the initial density 
is bounded away from zero and, in view of the assumptions on the shear viscosity coefficient $\mu=\mu(\rho,c)$,
we have the following bounds 
\begin{equation*}
    0<\rho_{0,*}=\inf_{\R^3}\rho_0\leqslant \sup_{\R^3}\rho_0=\rho^*_0<\infty;
\end{equation*}
\begin{gather*}
    0<\mu_{0,*}:=\inf_{\R^3}\mu(\rho_0,c_0)\leqslant \sup_{\R^3}\mu(\rho_0,c_0)=\mu^*_0<\infty.
    \end{gather*}
Let us define 
\begin{align*}
    L_0= \abs{ f(\rho_0,c_0)}_{\dot \cC^{\alpha}_{\pw,\Sigma_0}(\R^3)}+\norm{\Sigma_0}_{\text{Lip}}+\abs{\Sigma_0}_{\dot \cC^{1+\alpha}}+ \abs{\nabla\vph_0}_{\text{inf}}^{-1}+\abs{\Sigma_0}_{\text{inf}}^{-1},
\end{align*}
and let us consider the constants $\eps_1$, and $\eps_2$  given in Theorem \ref{sec2:thlocal} below, for $\alpha'=\alpha/2$. It is obvious that 
there exists \\
$\delta_0=\delta_0(\mu_{0,*},\mu^*_0,L_0)>0$ such that if 
\[
 \norm{\llbracket\mu(\rho_0,c_0) \rrbracket }_{L^\infty(\Sigma_0)}\leqslant \delta_0, 
\]
 then 
 \[
 \frac{1}{\mu_{0,*}}  \norm{\llbracket\mu(\rho_0,c_0) \rrbracket }_{L^\infty(\Sigma_0)}\leqslant \frac{\eps_1}{10},\quad\text{and} \quad \Psi\big(\Sigma_0, \mu(\rho_0,c_0), \nu(\rho_0,c_0)\big)\leqslant \frac{\eps_2}{10}. 
 \]
As a consequence, Theorem \ref{sec2:thlocal} ensures the existence of a unique solution $(c^n,\rho^n, u^n)$ to \eqref{sec4:eq3}, defined on a maximal time interval $[0,T_n)$.  On this time interval,  the solution has 
enough regularity such that the computations performed in the Section \ref{part2}-\ref{lemmasProof} make sense. We now 
consider the functionals $\vartheta$, $\check{\vartheta}$, $\delta$, $\mathcal{A}_{1}$, $\mathcal{A}_{2}$ and $\mathcal{A}_{3}$, 
 with $(c,\rho,u)$ replaced by $(c^n,\rho^n,u^n)$.  Let $T\in (0, T_n)$ be such that, for every $t\in [0,T]$ we have  
\begin{align*}
\begin{cases}
    \displaystyle 0<\frac{\rho_{0,*}}{10}\leqslant \inf_{x\in \R^3}\rho^n(t,x)\leqslant \sup_{x\in \R^3}\rho^n(t,x)\leqslant 10\rho_{0}^*,\\ 
    \vartheta(t)\leqslant L_1, \quad
    \check{\vartheta}(t)\leqslant L_2,\quad
    \delta (t)\leqslant 10\delta_0,\quad
    \mathcal{A}_{1}(t)\leqslant L_3;
\end{cases}
\end{align*}
where $L_1$, $L_2$, and $L_3$ will be fixed later. On the time interval $[0,T]$ the density and the viscosity are lower and upper bounded as in \eqref{sec2:eq29}. In particular, whenever $\delta_0$,  $E_0$ are below thresholds depending only 
on the upper and lower bounds of density (and viscosity), Lemma \ref{sec2:lem4} ensures that  
\[
\mathcal{A}_{1}(t)\leqslant C_*\Big(C_0\big(1+\check \phi_{\Sigma_n}^{\frac{6}{\alpha}} E_0\big)+ \norm{\nabla u_0}_{L^2(\R^3)}^2+E^{\frac{1}{4}}_0\big(\mathcal{A}_1(t)^{\frac{5}{4}}+C_0^{\frac{5}{4}}\big)\Big), 
\]
so that we can choose 
\[
L_3= 5 C_*\Big(C_0\big(1+\check \phi_{\Sigma_n}^{\frac{6}{\alpha}} E_0\big)+ \norm{\nabla u_0}_{L^2(\R^3)}^2\Big)
\]
and deduce that on the time intervale $[0,T]$, we have
\[
\mathcal{A}_1(t)\leqslant \frac{3}{5} L_3.
\]
Recalling the expression of $\check{\phi}_{\Sigma_n}$ from \eqref{sec2:eq27}, it follows that
\[
\check{\phi}_{\Sigma_n}\leqslant C_* L_2^{\frac{1}{2}}\big(1+ \delta_0 K_0e^{C' L_1}\big), 
\]
and we shall assume that
\[
C_* L_2^{\frac{1}{2}}\big(1+ \delta_0 K_0e^{C' L_1}\big) \big(C_0+\norm{\nabla u_0}_{L^2(\R^3)}^2\big)^\beta \leqslant C, \quad\text{ for some }  \beta\in (0, \alpha).
\]
This implies that 
\[
 \check{\phi}_{\Sigma_n}^{\frac{6}{\alpha}}\big(C_0+\norm{\nabla u_0}_{L^2(\R^3)}^2\big)\leqslant  C,
\]
and  Lemma \ref{sec2:lem5} provides us with 
\begin{gather*}
    \int_0^t\Big(\norm{\nabla u^n, \, G(\rho^n,c^n)}_{L^p(\R^3)}+\norm{\dot u^n}_{L^{6/(2-\alpha)}(\R^3)}\Big)\leqslant C_*L_3^{1/2}\big(1+ L_3\big),\; \text{ and }\;\int_0^t \norm{\rho^n\dot u^n}_{L^{3/(1-\alpha)}(\R^3)}\leqslant C_*\check{L}_3,
\end{gather*}
where  $\check{L}_3$ depends nonlinearly on $\rho_{*,0}$, $\rho_{0}^*$, $\check C_0$.  

Let us denote by $\varrho^n$ the density $\rho^n$ along the flow map $\mathcal{X}^n$ of $u^n$. It is straightforward to deduce from  the mass equation \eqref{sec4:eq3} that: 
\[
\dpt \log \big(\varrho^n/\widetilde\rho(c_0)\big) +\frac{1}{\nu(\varrho^n,c_0)}\frac{P(\varrho^n,c_0)-\widetilde P}{\log \big(\varrho^n/\widetilde\rho(c_0)\big)}\log \big( \varrho^n/\widetilde\rho(c_0)\big)=-\frac{1}{\nu(\varrho^n,c_0)} F^n\circ \mathcal{X}^n,
\]
and hence 
\[
\log \big( \varrho^n/ \widetilde\rho(c_0)\big)= e^{-\int_0^t g^n(t')dt'} \log \big( \rho_0/ \widetilde\rho(c_0)\big)-\int_0^t\frac{e^{-\int_{t'}^t g^n(t'')dt''}}{\nu(\varrho^n(t'),c_0)} (F^n\circ \mathcal{X}^n)(t')dt',
\]
where 
\[
g^n=\frac{1}{\nu(\varrho^n,c_0)}\frac{P(\varrho^n,c_0)-\widetilde P}{\log \big(\varrho^n/\widetilde\rho(c_0)\big)}.
\]
It follows that 
\[
\sup_{x\in \R^3}\rho^n(t,x)\leqslant \rho_0^*e^{C_*\int_0^t\norm{F^n}_{L^\infty(\R^3)}},\;\text{ and }\;  \inf_{x\in \R^3}\rho^n(t,x)\geqslant \rho_{0,*}e^{-C_*\int_0^t\norm{F^n}_{L^\infty(\R^3)}},
\]
where the $L^1((0,t),L^\infty(\R^3))$ norm of $F^n$ is estimated as 
\begin{align*}
\int_0^t\norm{F^n}_{L^\infty(\R^3)}&\leqslant C_*\int_0^t \big(\norm{\rho^n \dot u^n}_{L^2(\R^3)}+\norm{\rho^n \dot u^n}_{L^{6/(2-\alpha)}(\R^3)}+\phi_{\Sigma_n}\norm{\nabla u^n}_{L^{p}(\R^3)}\big)\\
&+ C_*\sup_{[0,t]}\norm{\llbracket \mu(\rho^n,c^n)\rrbracket}_{L^\infty(\Sigma_n)}\int_0^t\norm{\nabla u^n}_{\cC^{\alpha/2}_{\pw,\Sigma_n}(\R^3)}.
\end{align*}
From the expression of $\phi_{\Sigma_n}$ given \eqref{sec2:eq27}, we obtain 
\[
\int_0^t \norm{F^n}_{L^\infty(\R^3)}\leqslant  C_* L_3^{1/2}\big(1+ K_0 e^{C'L_1}L_2^{1/2}\big)\big(1+L_3\big)+ C_*\delta_0 L_1^{1/2}\big(L_1^{1/2}+ L_2^{1/2}\big).
\]
We assume that  $C_0$, $\norm{\nabla u_0}_{L^2(\R^3)}$ and $\delta_0$ are small so that the RHS above is less than $2\log (3)$. This implies 
\[
\sup_{x\in \R^3}\rho^n(t,x)\leqslant 9 \rho_0^*,\;\text{ and }\;  \inf_{x\in \R^3}\rho^n(t,x)\geqslant \frac{\rho_{0,*}}{9}.
\]
Under the assumption that 
\[
C_*\delta_0^{1/2} \big(\delta_0^{1/2} e^{C'L_1} +L_2^{1/2}\big)\leqslant 1, 
\]
\eqref{sec2:eq7} holds and and the bounds \eqref{sec2:pwholder}–\eqref{sec2:Lipschtz} follow. In particular, one can choose 
\[
L_2=3C_*\big( \check{\vartheta}(0)+\check{L}_3\big) e^{C_* K_0(L_1+L_3^{1/2}(1+ L_3)) e^{C L_1}} . 
\]
What remains is to close the estimates for $\vartheta$ and $\delta$. Recall that 
\begin{align*}
  \vartheta(t)&\leqslant e^{C_*\int_0^t\normb{w^n}_{L^\infty(\R^3)}}\Bigg\{C_*\abs{f(\rho_0,c_0)}_{\cC^{\alpha/2}_{\pw,\Sigma_0}(\R^3)}+C_*L_3^{1/2}\big(1+L_3\big)\big(1+ L_2\big)+C_*\delta_0^{1/2}\big(1+\delta_0^{1/2} L_2\big)K_0 e^{C' L_1}L_3^{1/2}\big(1+ L_3\big)\notag\\
&+C_*\delta_0^{1/2}L_2^{1/2}\big( L_2+\delta_0^{1/2} L_1\big(1+  K_0e^{C' L_1}\big)\big) + C_*\int_0^t \Big[\ell_{\Sigma_n}^{-\frac{\alpha}{2}}\norm{\llbracket G(\rho^n,c^n)\rrbracket}_{L^\infty\cap L^{p}(\Sigma_n)}
        +\abs{\llbracket G(\rho^n,c^n)\rrbracket}_{\dot \cC^{\alpha/2}(\Sigma_n)}\Big]\Bigg\},
\end{align*}
with 
\begin{align}\label{sec4:eq10}
   \sup_{[0,t]}\norm{f(\rho^n,c^n)}_{L^\infty(\R^3)}&+\int_0^t\norm{w^n}_{L^\infty(\R^3)}\notag\\
   &\leqslant C_*\Bigg(\norm{f(\rho_0,c_0)}_{L^\infty(\R^3)}+\big(1+L_2^{1/2} \big(1+ K_0 e^{C' L_1}\big)\big)L_3^{1/2}\big(1+L_3\big)+\delta_0L^{1/2}_1\big(L^{1/2}_1+L^{1/2}_2\big)\Bigg). 
\end{align}
From the rough bounds \eqref{sec2:eq22}–\eqref{sec2:eq50} for the pressure jumps, we derive
\begin{align}
    &\int_0^t \Big[\ell_{\Sigma_n}^{-\frac{\alpha}{2}}\norm{\llbracket G(\rho^n,c^n)\rrbracket}_{L^\infty\cap L^{p}(\Sigma_n)}
        +\abs{\llbracket G(\rho^n,c^n)\rrbracket}_{\dot \cC^{\alpha/2}(\Sigma_n)}\Big]\notag\\
        &\leqslant C_*e^{C' L_1}\Bigg(L_2^{1/2} K_0\norm{f(\rho_0,c_0)}_{L^\infty(\R^3)}+\abs{f(\rho_0,c_0)}_{\dot \cC^{\alpha/2}_{\pw,\Sigma_0}(\R^3)}+L_3^{1/2}\big(1+ L_3\big) \big(1+ L_2+ K_0e^{C' L_1}\big)\notag\\
        &+\big(1+L_2^{1/2} \big(1+ K_0 e^{C' L_1}\big)\big)L_3^{1/2}\big(1+L_3\big)+\delta_0^{1/2}L^{1/2}_1L_2^{1/2}\big(L^{1/2}_1+K_0 \delta_0^{1/2}\big(L_1 e^{C' L_1}+L^{1/2}_2\big)\big)\Bigg). \label{sec4:eq9}
\end{align}
One  can first choose 
\[
L_1= \Big[10^5 C_* \Big(\big( \check{\vartheta}(0)+\check{L}_3\big)^{1/2} K_0 \norm{f(\rho_0,c_0)}_{L^\infty(\R^3)}+\abs{f(\rho_0,c_0)}_{\dot \cC^{\alpha/2}_{\pw,\Sigma_0}(\R^3)}+ L_3^{1/4}\Big)\Big] e^{C_*\big(1+ L_3^{1/2}+\norm{f(\rho_0,c_0)}_{L^\infty(\R^3)}\big)},
\]
and then $L_3$ and $\delta_0$ small enough so that 
\[
\vartheta(t)\leqslant \Big(10^{-4} L_1\Big) e^{C_* K_0(L_1^{1/2}+L_3^{1/2}(1+ L_3)) e^{C L_1} }.
\]
Therefore, under the smallness assumption on  $\norm{f(\rho_0,c_0)}_{\cC^{\alpha/2}_{\pw,\Sigma_0}(\R^3)}$ and $L_3$, we have
\[
\vartheta(t)\leqslant \frac{9}{10} L_1. 
\]
Observe that when the constitutive coefficients satisfy \eqref{sec1:jumpcondition} or \eqref{sec1:jumpconditionbis}, we obtain  sharper bounds (see Lemma \ref{sec2:lem3}) for the pressure jump, which we shall use instead of \eqref{sec4:eq9}. This sharper bounds allow us to 
require smallness only on $\delta_0$ and $L_3$, rather than  $\norm{f(\rho_0,c_0)}_{\cC^{\alpha/2}_{\pw,\Sigma_0}(\R^3)}$. Furthermore, one readily proves that
\[
\delta(t)\leqslant 9 \delta_0,
\]
assuming that $\normb{\llbracket f(\rho_0,c_0)\rrbracket}_{\cC^{\alpha/2}(\Sigma_0)}$ (and also $\normb{\llbracket \mu(c_0)\rrbracket}_{\cC^{\alpha/2}(\Sigma_0)}=\text{cnst}$, if \eqref{sec1:jumpcondition} holds) is small enough.  If neither \eqref{sec1:jumpcondition} nor \eqref{sec1:jumpconditionbis} holds, we should 
make use of the rough bound \eqref{sec4:eq10} and deduce the above conclusion  assuming  that $L_1$ and $L_3$ are small.  A continuation argument yields $T=T_n$. 

To complete Theorem \ref{th1}, we shall show that $T_n=\infty$ using the continuation criterion  (see item 3 of Theorem \ref{sec2:thlocal}).
The convergence of the sequence $(c^n,\rho^n,u^n)_n$  to a solution $(c,\rho,u)$ of  \eqref{intro:twofluid} follows by arguments 
analogous those of \cite[Section 3.2]{zodji2023discontinuous}. The solution $(c,\rho,u)$ has the regularity stated in Theorem \ref{th1}. In particular the velocity $u$ is Lipschitz, which allows a change of variables to Lagrangian coordinates and, by adapting Proposition 3.1 of the above reference, yields the uniqueness of $(c,\rho,u)$. Let us observe that we will need  $\sigma \nabla u\in L^2_\loc ([0,\infty), L^\infty(\R^3))$. This improved time integrability follows by repeating the computations leading to \eqref{sec2:Lipschtz}. 
This proves Theorem \ref{th1}.

\section*{Acknowledgment}
This work  was carried out during my PhD studies; and I thank my supervisors Cosmin Burtea and David Gérard-Varet, for fruitful discussions and careful reading of this manuscript. 

I was funded by the European 
Union’s Horizon 2020 research and innovation program under the Marie Skłodowska-Curie grant agreement No 945332.

\appendix
\section{Local-in-time well-posedness}\label{appendix:sec1}
This section aims to outline how the smallness condition on the shear viscosity fluctuation in Theorem 1.1 and Corollary 1.5 of \cite{zodji2023well} can be relaxed to a smallness condition on the viscosity jump, as stated in Theorem \ref{sec2:thlocal} below. 

\begin{theorem}\label{sec2:thlocal} Consider the system \eqref{intro:twofluid} with initial data $(c_0,\rho_0,u_0)$ satisfying \eqref{initialvolum}-\eqref{init:density-velocity}-\eqref{density:piecewise}-\eqref{sec2:compa}, and let $\alpha'\in (0,\alpha)$. There exist positive constants 
$\eps_1$, and $\eps_2$ depending only on $\alpha$ and $\alpha'$ such that if 
\begin{gather} \label{ap:eq54}
\frac{1}{\inf_{\R^3}\mu(\rho_0,c_0)} \norm{\llbracket \mu(\rho_0,c_0)\rrbracket}_{L^\infty(\Sigma_0)}\leqslant \eps_1,
\end{gather}
and \footnote{Recall that  $\mathfrak{P}_{\Sigma}$ is a polynomial in $\abs{\Sigma}$, $\norm{\Sigma}_{\text{Lip}}$, and $\abs{\Sigma}_{\text{inf}}^{-1}$.}
\begin{align}
    \Psi\big(\Sigma_0, \mu(\rho_0,c_0), \nu(\rho_0,c_0)\big)&:= \frac{1}{\inf_{\R^3}\mu(\rho_0,c_0)}\Big(\abs{\llbracket \mu(\rho_0,c_0) \rrbracket }_{\dot \cC^{\alpha'}(\Sigma_0)}+ \big(\ell^{-\alpha'}_{\Sigma_0}+\mathfrak{P}_{\Sigma_0} \abs{\Sigma_0}_{\dot \cC^{1+\alpha'}}\big)\norm{\llbracket \mu(\rho_0,c_0)\rrbracket}_{L^\infty(\Sigma_0)}\Big)\notag\\
    &+\Bigg(\frac{1}{\inf_{\R^3} \mu(\rho_0,c_0)}\norm{\llbracket \mu(\rho_0,c_0)\rrbracket}_{L^\infty(\Sigma_0)}\Bigg)^{\frac{\alpha}{\alpha'}}\Bigg\{\frac{1}{\inf_{\R^3}\mu(\rho_0,c_0)}\Bigg[\abs{\mu(\rho_0,c_0)}_{\dot \cC^{\alpha}_{\pw,\Sigma_0}(\R^3)}\notag\\
    &+ \norm{\llbracket \mu(\rho_0,c_0)\rrbracket}_{L^\infty(\Sigma_0)}\Bigg(\frac{1}{\inf_{\R^3}\mu(\rho_0,c_0)}\abs{\mu(\rho_0,c_0),\,\nu(\rho_0,c_0)}_{\dot \cC^\alpha_{\pw,\Sigma_0}(\R^3)}+\mathfrak{P}_{\Sigma_0} \big|\Sigma_0\big|_{\dot \cC^{1+\alpha}}+\ell^{-\alpha}_{\Sigma_0}\Bigg)\Bigg]\notag\\
    &+\Bigg(\frac{1}{\inf_{\R^3}\mu(\rho_0,c_0)}\Bigg)^{\frac{\alpha}{\alpha-\alpha'}}\Bigg[\abs{ \mu(\rho_0,c_0)}_{\dot \cC^{\alpha-\alpha'}_{\pw,\Sigma_0}(\R^3)}+\norm{\llbracket \mu(\rho_0,c_0)\rrbracket}_{L^\infty(\Sigma_0)}\notag\\
    &\times \Big(\frac{1}{\inf_{\R^3}\mu(\rho_0,c_0)}\abs{\mu(\rho_0,c_0),\,\nu(\rho_0,c_0)}_{\dot \cC^\alpha_{\pw,\Sigma_0}(\R^3)}+\mathfrak{P}_{\Sigma_0} \big|\Sigma_0\big|_{\dot \cC^{1+\alpha}}+ \ell^{-\alpha+\alpha'}_{\Sigma_0}\Big)\Bigg]^{\frac{\alpha}{\alpha-\alpha'}}\Bigg\}\notag\\
    &<\eps_2,\label{ap:eq53}
\end{align}
then there exist a time $T > 0$ and a unique solution $(c, \rho, u)$ satisfying the following properties.
\begin{enumerate}
    \item \textbf{Energy bounds}: 
    \begin{align*}
        &u\in \cC([0,T], H^1(\R^3)),\;\, \dot u\in \cC([0,T], L^2(\R^3));\\
        &\sqrt{\sigma}\nabla \dot u, \sigma \ddot u\in L^\infty((0,T), L^2(\R^3));\\
        &\nabla \dot u, \sqrt{\sigma}\ddot u, \sigma \nabla \dot u\in L^2((0,T)\times  \R^3).
    \end{align*}

    \item \textbf{Piecewise H\"older bounds}: There exists $r_\alpha \in (0, 8/3)$ such that 
    \[
    \sigma^{r_\alpha/4} \nabla u \in L^4((0, T); L^\infty(\R^3)).
    \]
   As a consequence, the velocity field $u$ admits a flow map $\mathscr{X}$ that transports the initial domain $D_0$ to $D(t) = \mathscr{X}(t) D_0$, and $c(t)=\mathbb{1}_{D(t)}$, $t \in [0, T]$. Moreover, we have: for all $t\in [0,T]$,
    \[
    \rho(t)\in \cC^\alpha_{\pw,\partial D(t)}(\R^3),\; \text{ and }\;\int_0^{T}\sigma^{r_\alpha}\norm{\nabla u(t')}_{\dot \cC^\alpha_{\pw,\partial D(t')}(\R^3)}^4dt'<\infty.
    \]
    \item \textbf{Continuation criterion}: Let $T^*$ be the lifespan of $(c,\rho,u)$. If $T^*<\infty$, then we have either
    \begin{equation*}
        \limsup_{t\to T^*}\Bigg(\norm{\rho(t),\,\frac{1}{\rho(t)}, \frac{1}{\mu(\rho(t),c(t))}}_{L^\infty(\R^3)}+\norm{\rho(t)}_{\dot \cC^\alpha_{\pw,\partial D(t)}(\R^3)}+\norm{\nabla u(t), \dot u(t)}_{L^2(\R^3)}\Bigg)=\infty,
    \end{equation*}
    or
    \begin{equation*}
        \limsup_{t\to T^*}\Bigg( \norm{\partial D(t)}_{\text{Lip}}+\abs{\partial D(t)}_{\dot \cC^{1+\alpha}}+\frac{1}{\abs{\partial D(t)}_{\text{inf}}}\Bigg)=\infty,
    \end{equation*}
    or 
    \begin{equation*}
        \limsup_{t\to T^*}\Bigg(\frac{1}{\inf_{\R^3}\mu(\rho(t),c(t))} \norm{\llbracket \mu(\rho(t),c(t))\rrbracket}_{L^\infty(\partial D(t))}\Bigg)\geqslant 10 \eps_1,
    \end{equation*}
    or 
    \begin{equation*}
        \limsup_{t\to T^*} \Psi\big(\partial D(t), \mu(\rho(t),c(t)), \nu(\rho(t),c(t))\big)\geqslant 10\eps_2.
    \end{equation*}
\end{enumerate}
\end{theorem}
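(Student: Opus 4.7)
The plan is to adapt the local-in-time well-posedness proof from Theorem 1.1 of \cite{zodji2023well}. There, the same result was obtained under the stronger hypothesis that the full piecewise H\"older norm $\normb{\mu(\rho_0,c_0) - \widetilde\mu}_{\cC^\alpha_{\pw,\Sigma_0}(\R^3)}$ be small; the present refinement replaces this by the smallness conditions \eqref{ap:eq54}--\eqref{ap:eq53}, which control only the $L^\infty$-norm of the \emph{jump} of $\mu$ together with the geometric quantity $\Psi$. The underlying scheme --- regularization of the initial data, change of variables to Lagrangian coordinates, linearization around a known state, and a fixed-point argument for the velocity --- carries over unchanged, so I would focus the work on isolating each place where smallness of the full piecewise H\"older fluctuation was used in the prior argument, and replacing it by the refined commutator estimates now available.

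More precisely, denote by $\mathcal{X}$ the Lagrangian flow and set $v(t,y) = u(t,\mathcal{X}(t,y))$. The mass and volume-fraction equations are solved along characteristics, so that $c(t,x) = c_0(\mathcal{X}^{-1}(t,x))$ and $\rho(t,\mathcal{X}(t,y))$ satisfies an ODE driven by the effective flux $F$ of \eqref{sec2:eq2}. The remaining unknown $v$ solves a quasi-linear momentum equation whose coefficients depend nonlinearly on $(\rho,c)$ and whose forcing contains commutators of Riesz operators with $\mu(\rho,c) - \widetilde\mu$. I would set up the fixed-point map $v \mapsto v'$ where $v'$ solves the system obtained by freezing the coefficients and source at the iterate $v$, and look for a fixed point in the functional class of item~(1) together with the piecewise H\"older bound $\sigma^{r_\alpha/4}\nabla u \in L^4((0,T); L^\infty(\R^3))$ of item~(2). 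The latter is precisely what gives a well-defined flow map and propagates the piecewise $\cC^\alpha$-regularity of $\rho$ and the $\cC^{1+\alpha}$-regularity of $\partial D(t)$.

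The main obstacle --- and the only place where the hypotheses genuinely enter --- is closing the piecewise H\"older and Lipschitz estimates for $\nabla u$ from the representation \eqref{sec2:eq3}. In \cite{zodji2023well} the commutator terms $[\mathcal{K},\mu(\rho,c) - \widetilde\mu](2\D u)$ and $[\check{\mathcal{K}},\mu(\rho,c) - \widetilde\mu](2\D u)$ were handled by the full $\cC^\alpha_{\pw,\Sigma}$-norm of $\mu(\rho,c) - \widetilde\mu$, forcing that norm to be small so as to be absorbed into the left-hand side. Here I would instead invoke the sharper commutator estimates \eqref{ap:eq6}, \eqref{ap:eq10}, \eqref{ap:eq19}, \eqref{ap:eq41} (the very same ones driving Lemmas~\ref{sec2:lem1} and \ref{sec2:lem5}), which split each commutator into a \emph{H\"older-semi-norm piece} multiplied by a weaker Lebesgue norm of $\D u$ --- controlled by $\Psi$ and treated perturbatively via the smallness assumption \eqref{ap:eq53} --- and an \emph{$L^\infty$-jump piece} multiplied by the strong norm of $\nabla u$ --- absorbed on the left thanks to \eqref{ap:eq54}. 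This is the exact mechanism by which the global argument of the paper tolerates a large piecewise H\"older semi-norm of $\mu$, and it transplants almost verbatim to the short-time setting: the only simplification is that the time-weights $\scalar{t}^{1+r_0}$ (which relied on the low-frequency assumption) can be discarded, since one works on a short interval $[0,T]$ with $\sigma(t)=t$, and every extra time integral carries a positive power of $T$.

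With the commutator step in place, the remaining ingredients are routine: the existence of a fixed point on $[0,T]$ with $T$ small follows from the time-weighted energy estimates of Lemmas~\ref{sec2:lem4}--\ref{sec2:lem5}, whose constants degenerate only through the geometric and regularity quantities listed in item~(3); the contraction property is obtained by a direct stability estimate in Lagrangian coordinates, where the difference of two iterates satisfies a linear system with quadratic right-hand side; uniqueness follows from the same stability estimate. The continuation criterion is then read off the a priori bounds: as long as the density, its piecewise H\"older norm, the velocity and acceleration energies, the geometric quantities of $\partial D(t)$ and the smallness constants \eqref{ap:eq54}--\eqref{ap:eq53} stay strictly below $10\varepsilon_1$ and $10\varepsilon_2$, all constants in the fixed-point argument remain finite and the local existence time can be extended past $T^\ast$, which is the standard way to conclude item~(3).
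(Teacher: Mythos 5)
Your proposal is correct and follows essentially the same approach as the paper: reduce to the stationary elliptic estimate for $\nabla v$ from the representation of the velocity gradient, treat the viscosity-dependent terms by the piecewise-H\"older commutator estimates, and then feed the improved elliptic estimate back into the fixed-point scheme of \cite{zodji2023well}. The one step you leave implicit --- and which is the decisive algebraic observation --- is the move from the constant-coefficient decomposition \eqref{ap:eq1} (where the Riesz operators act on the \emph{product} $(\mu-\widetilde\mu)\D v$, so that both $L^p$ and piecewise-H\"older bounds inevitably cost the full norm of $\mu-\widetilde\mu$) to the variable-coefficient decomposition \eqref{ap:eq13}, where factoring out $\frac{1}{\mu}$ and $\frac{1}{\nu}$ turns those terms into genuine \emph{commutators} $[\mathcal{K},\mu-\widetilde\mu]$ and $[\check{\mathcal{K}},\mu-\widetilde\mu]$. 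It is only after this rewriting that Lemma~\ref{ap:commutator} (i.e.\ \eqref{ap:eq6}, \eqref{ap:eq10}, \eqref{ap:eq19}, \eqref{ap:eq41}) becomes applicable and yields the split you describe --- a strong-norm term multiplied by the jump $\norm{\llbracket\mu\rrbracket}_{L^\infty(\Sigma)}$, absorbed under \eqref{ap:eq54}, plus lower-order terms multiplied by H\"older semi-norms of $\mu,\nu$, controlled by $\Psi$ and treated perturbatively under \eqref{ap:eq53}. You invoke the right commutator estimates and the right absorption mechanism, so the gap is purely one of presentation; making the structural shift \eqref{ap:eq1}$\to$\eqref{ap:eq13} explicit is what would turn the sketch into a complete argument, as the paper does in \eqref{ap:eq29}--\eqref{ap:eq46} before concluding that the remainder of the proof proceeds as in \cite{zodji2023well}.
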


As pointed out in Remark 1.2 of \cite{zodji2023well}, the smallness assumption of the shear viscosity fluctuation 
is imposed to derive an estimate for the $L^p(\R^3)\cap \cC^\alpha_{\pw,\Sigma_0}(\R^3)$-norm (with $p\in (2,16]$)  for the gradient 
of the solution $v=v(x)\in \R^3$ of the following (stationary) elliptic equation:
\begin{equation} \label{ap:eq2}
    -\dvg \big(2\mu\D v+\lambda\dvg v I_3\big)=\dvg G.
\end{equation}
Here $\mu=\mu(x);\lambda=\lambda(x)\in \cC^\alpha_{\pw,\Sigma_0}(\R^3)$, and  $G=\big(G^{jk}(x)\big)_{1\leqslant j,k\leqslant 3}\in L^p(\R^3)\cap \cC^\alpha_{\pw,\Sigma_0}(\R^3)$ are given. The argument was to perturb the shear viscosity around the constant state $\widetilde\mu$ 
\begin{equation*}
-\widetilde\mu \Delta v- \nabla \big((\widetilde\mu+\lambda)\dvg v\big)=\dvg G+\dvg \big( 2(\mu-\widetilde\mu )\D v\big). 
\end{equation*}
From this we deduce  
\begin{equation}\label{ap:eq4}
\begin{cases}
-\widetilde\mu \Delta\mathbb{P} v=  \dvg \mathbb{P} G   +\dvg\mathbb{P}\big(2(\mu-\widetilde\mu )\D v\big),\\
-\Delta\big((2\widetilde\mu+\lambda)\dvg v\big)=\dvg \dvg G +\dvg \dvg \big(2(\mu-\widetilde\mu )\D v\big),
\end{cases}
\end{equation}
and hence
\begin{align}
    \widetilde\mu \nabla v&= \widetilde\mu \nabla \mathbb{P} v+\widetilde\mu \nabla \big( \mathbb{I}-\mathbb{P}\big) v\notag\\
    &=\nabla (-\Delta)^{-1} \dvg \mathbb{P} G-\widetilde\mu \nabla(-\Delta)^{-1}\nabla\dvg v+\nabla (-\Delta)^{-1}\dvg \mathbb{P}\big(2(\mu-\widetilde\mu )\D v\big)\notag\\
    &=\nabla (-\Delta)^{-1}\dvg \mathbb{P} G-\nabla (-\Delta)^{-1}\nabla\Big(\frac{\widetilde\mu}{2\widetilde\mu+\lambda}(-\Delta)^{-1}\dvg \dvg G\Big)\notag\\
    &+\nabla (-\Delta)^{-1}\dvg\mathbb{P}\big(2(\mu-\widetilde\mu )\D v\big)-\nabla (-\Delta)^{-1}\nabla\Big(\frac{\widetilde\mu}{2\widetilde\mu+\lambda}(-\Delta)^{-1}\dvg \dvg \big(2(\mu-\widetilde\mu )\D v\big)\Big).\label{ap:eq1}
\end{align}
Since the linear operators $\partial_j (-\Delta)^{-1}\partial_k$
are continuous on $L^p(\R^3)$, with $p\in (1,\infty)$, it follows that there exists a constant $C(p)>0$ such that 
\[
\widetilde\mu \norm{\nabla v}_{L^p(\R^3)}\leqslant C(p)\norm{G}_{L^p(\R^3)}+  C (p)\norm{\mu-\widetilde\mu}_{L^\infty(\R^3)}\norm{\nabla v}_{L^p(\R^3)}.
\]
Then we assume that the shear viscosity fluctuation is small
\begin{equation}\label{ap:eq25}
C(p)\norm{\mu-\widetilde\mu }_{L^\infty(\R^3)}< \widetilde\mu,
\end{equation}
in order to absorb the last term on the LHS, and whence derive an estimate for the $L^p(\R^3)$-norm of $\nabla v$. By a similar (yet more technical) perturbation argument, we derived piecewise H\"older bound for the velocity gradient. 

To pass from the smallness condition of the viscosity fluctuation to that of its jump across the discontinuity surface, we first derive the following expression from $\eqref{ap:eq4}_1$ (see the computations that led to \eqref{sec2:eq3}):
\begin{align}
    \nabla v&=\nabla\mathbb{P} v-\nabla (-\Delta)^{-1}\nabla \dvg v\notag\\
    &= \frac{1}{\mu}\nabla (-\Delta)^{-1}\dvg \mathbb{P} G-\nabla (-\Delta)^{-1}\nabla\bigg(\frac{1}{2\mu+\lambda}(-\Delta)^{-1}\dvg \dvg G\bigg)\notag\\
    &+\frac{1}{\mu}\big[\nabla(-\Delta)^{-1}\dvg \mathbb{P},\mu-\widetilde\mu\big]\big(2\D v\big)-\nabla (-\Delta)^{-1}\nabla\bigg( \frac{1}{2\mu+\lambda}\big[(-\Delta)^{-1}\dvg \dvg, \mu-\widetilde\mu\big] \big(2\D v\big)\bigg).\label{ap:eq13}
\end{align}
We estimate the $L^p(\R^3)$ and piecewise H\"older norms of the last two terms above using the following lemma, whose proof is deferred to the end of this section.
\begin{lemma}\label{ap:commutator} Let $\mathcal{T}$ be a singular integral operator of Calder\'on-Zygmung type of even-order, associated with a homogeneous kernel $K$. Assume that the assumptions and notations in Definition \ref{def:interface} hold for  $\Sigma$.
Let $p\in (1,\infty)$, $w,g\in \cC^\alpha_{\pw,\Sigma}(\R^3)$, and $g\in L^p(\R^3)$.
    \begin{enumerate}
        \item $L^p(\R^3)$- \textbf{\upshape estimate}. Let $\alpha'\in (0,\alpha] \cap (0,\frac{3}{p'})$ ($p'$ denotes the H\"older  conjugate exponent of $p$), and assume that $g\in L^r(\R^3)$, with $\frac{1}{r}=\frac{\alpha'}{3}+\frac{1}{p}<1$. We have 
        \begin{equation}\label{ap:eq6}
            \norm{\big[\mathcal{T}, w\big]g}_{L^p(\R^3)}\leqslant C\abs{w}_{\dot \cC^{\alpha'}_{\pw,\Sigma}(\R^3)}\norm{g}_{L^r(\R^3)}+ C\norm{\llbracket w\rrbracket}_{L^\infty(\Sigma)}\big(\ell^{-\alpha'}_{\Sigma}\norm{g}_{L^r(\R^3)}+ \norm{g}_{L^p(\R^3)}\big).
        \end{equation}
        \item $L^\infty(\R^3)$- \textbf{\upshape estimate}. Let $\alpha'\in (0,\alpha]$, $r\in (\frac{3}{\alpha'},\infty]$, and assume that $g\in L^r(\R^3)$. We have  
        \begin{align}\label{ap:eq10}
            \norm{\big[\mathcal{T}, w\big] g}_{L^\infty(\R^3)}&\leqslant  C\Big(\abs{ w}_{\dot \cC^{\alpha'}_{\pw,\Sigma}(\R^3)}+\ell^{-\alpha'}_{\Sigma}\norm{\llbracket w\rrbracket }_{L^\infty(\Sigma)}\Big)\norm{g}_{L^r(\R^3)}\notag\\
    &+C\Big(\norm{\llbracket w\rrbracket}_{L^\infty(\Sigma)}\norm{g}_{\cC^{\alpha'}_{\pw,\Sigma}(\R^3)}+\ell_{\Sigma}^{-\alpha'}\norm{ w}_{L^\infty(\R^3)}\norm{g}_{L^p(\R^3)}\Big).
        \end{align}
        \item $\dot \cC^\beta_{\pw,\Sigma}(\R^3)$- \textbf{\upshape estimate}. Let $\beta\in (0,\alpha]$ and $\alpha'\in (0,\min\{\alpha,\beta\}]$. Then, we have
        \begin{align}\label{ap:eq19}
            \abs{\big[\mathcal{T}, w\big]g}_{\dot \cC^\beta_{\pw,\Sigma}(\R^3)}&\leqslant C\Big(\abs{ w}_{\dot \cC^{\beta}_{\pw,\Sigma}(\R^3)}+ \big(\ell^{-\beta}_{\Sigma}+\mathfrak{P}_{\Sigma} \abs{\Sigma}_{\dot \cC^{1+\beta}}\big)\norm{\llbracket w\rrbracket}_{L^\infty(\Sigma)}\Big)\norm{\mathcal{T} g, g}_{L^\infty(\R^3)}\notag\\
    &+C\Big( \abs{ w }_{\dot \cC^{\beta-\alpha'}_{\pw,\Sigma}(\R^3)}+ \ell^{-\beta+\alpha'}_{\Sigma}\norm{\llbracket w\rrbracket}_{L^\infty(\Sigma)}\Big)\abs{g}_{\dot \cC^{\alpha'}_{\pw,\Sigma}(\R^3)}+C\norm{\llbracket w\rrbracket}_{L^\infty(\Sigma)}\abs{g}_{\dot \cC^\beta_{\pw,\Sigma}(\R^3)}.
        \end{align}
        Above, $\mathfrak{P}_\Sigma$ is a polynomial in $\abs{\Sigma}$, $\norm{\Sigma}_{\text{Lip}}$, and $\abs{\Sigma}_{\text{inf}}^{-1}$.
    \end{enumerate}
    The constant $C$ is independent of $g$, $w$ and $\Sigma$.
\end{lemma}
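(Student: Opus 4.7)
The plan is to prove all three estimates starting from the integral representation
\[
[\mathcal{T}, w]g(x) = \text{p.v.} \int_{\R^3} K(x-y)\bigl(w(y)-w(x)\bigr)g(y)\,dy,
\]
and combining two standard tools: (i) a decomposition $w = w^e + (w-w^e)$, where $w^e$ is a global $\cC^{\alpha'}$ Hölder extension of $w$ (as in \eqref{ap:eq11}--\eqref{ap:eq9}) satisfying $|w^e|_{\dot\cC^{\alpha'}(\R^3)}\leqslant C|w|_{\dot\cC^{\alpha'}_{\pw,\Sigma}(\R^3)}$ and $\|w-w^e\|_{L^\infty(\R^3)}\leqslant C\ell_\Sigma^{\alpha'}\|\llbracket w\rrbracket\|_{L^\infty(\Sigma)}$, with $w-w^e$ supported on one side of $\Sigma$; and (ii) a geometric split of the $y$-integration into a near-diagonal region $\{|x-y|\leqslant d(x,\Sigma)\}$ and its complement. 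In the near-diagonal region $x,y$ lie on the same component of $\R^3\setminus\Sigma$, so $|w(y)-w(x)|\leqslant |w|_{\dot\cC^{\alpha'}_{\pw,\Sigma}(\R^3)}|x-y|^{\alpha'}$ is available; in the far region we use $\|\llbracket w\rrbracket\|_{L^\infty(\Sigma)}$ together with $L^p$-boundedness of $\mathcal{T}$.

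For the $L^p$ bound \eqref{ap:eq6}, the near-diagonal integrand is dominated by the fractional kernel $|x-y|^{\alpha'-3}|g(y)|$; the condition $\alpha'<3/p'$ is exactly the one making Hardy--Littlewood--Sobolev applicable on this fractional integral, yielding the $\|g\|_{L^r}$ term with $1/r=1/p+\alpha'/3$. The far-diagonal contribution is handled by writing $\mathcal{T}[(w-w^e)g]-(w-w^e)\mathcal{T}g$ and applying the $L^p$-boundedness of $\mathcal{T}$, which produces the $\|\llbracket w\rrbracket\|_{L^\infty(\Sigma)}\|g\|_{L^p}$ piece. The gain $\ell_\Sigma^{-\alpha'}\|g\|_{L^r}$ emerges when one optimizes the cutoff distance: restricting the "far" region to $|x-y|\geqslant \ell_\Sigma$ yields a factor $\ell_\Sigma^{-\alpha'}$ out of an integrated tail.

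For the pointwise $L^\infty$ bound \eqref{ap:eq10}, the same geometric split is made pointwise in $x$, but with $g\in L^r$ and $r>3/\alpha'$ the fractional integral $\int_{|x-y|\leqslant 1}|x-y|^{\alpha'-3}|g(y)|\,dy$ is controlled by Hölder's inequality alone, producing $\|g\|_{L^r}$. The second line of \eqref{ap:eq10} arises by symmetrization: on the same side of $\Sigma$ we rewrite $(w(y)-w(x))g(y) = (w(y)-w(x))(g(y)-g(x)) + g(x)(w(y)-w(x))$, and the zero-average property of even-order kernels on shells kills the $g(x)$ piece, leaving a controlled remainder involving $|g|_{\dot\cC^{\alpha'}_{\pw,\Sigma}}\|\llbracket w\rrbracket\|_{L^\infty(\Sigma)}$; the term $\ell_\Sigma^{-\alpha'}\|w\|_{L^\infty}\|g\|_{L^p}$ again arises from the tail of the kernel beyond scale $\ell_\Sigma$.

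The main obstacle lies in the piecewise-Hölder estimate \eqref{ap:eq19}. Given $x_1,x_2$ on the same side of $\Sigma$ with $|x_1-x_2|$ small, I will split $[\mathcal T, w]g(x_1)-[\mathcal T, w]g(x_2)$ into a bulk contribution from regions far from $\Sigma$ (handled by standard Hölder-regularity arguments for $\cC^{\alpha'}$ symbols) and a boundary-layer contribution where $\Sigma$ is probed. The novel difficulty comes from the even-order nature of $\mathcal{T}$: unlike odd-order Riesz transforms, even-order operators are \emph{not} bounded on $L^\infty$ for characteristic functions, and one must exploit the $\cC^{1+\beta}$-regularity of $\Sigma$ through a curvilinear parametrization, using the Gancedo--García-Juárez type estimate \eqref{ap:eq23} for even-order singular operators on $\dot\cC^\beta_{\pw,\Sigma}$. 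This is precisely what produces the factor $\mathfrak{P}_\Sigma|\Sigma|_{\dot\cC^{1+\beta}}\|\llbracket w\rrbracket\|_{L^\infty(\Sigma)}$ in the final bound, while the remaining terms $|w|_{\dot\cC^\beta_{\pw,\Sigma}}\|\mathcal{T}g,g\|_{L^\infty}$ and $\|\llbracket w\rrbracket\|_{L^\infty}|g|_{\dot\cC^\beta_{\pw,\Sigma}}$ arise from the Leibniz-type distribution of the $\dot\cC^\beta_{\pw,\Sigma}$ semi-norm across the factors $w$ and $\mathcal{T}g$ in the commutator.
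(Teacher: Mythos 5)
Your proposal reaches for the same core tools as the paper---the Hoff-type extension $w^e$ with the splitting $[\mathcal{T},w]g=[\mathcal{T},w^e]g+[\mathcal{T},w-w^e]g$, Hardy--Littlewood--Sobolev for the $w^e$-piece, the extra cancellation of even-order kernels, and the Gancedo--Garc\'ia-Ju\'arez quantitative bound for part~(3)---so the overall route is the paper's. But two claims in your outline are incorrect and would derail the argument. First, the bound $\|w-w^e\|_{L^\infty(\R^3)}\leqslant C\ell_\Sigma^{\alpha'}\|\llbracket w\rrbracket\|_{L^\infty(\Sigma)}$ is false; by \eqref{ap:eq5} with $q=\infty$ the correct bound is $\|w-w^e\|_{L^\infty(\R^3)}\leqslant C\|\llbracket w\rrbracket\|_{L^\infty(\Sigma)}$, with no power of $\ell_\Sigma$. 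The factor $\ell_\Sigma^{-\alpha'}$ appearing in \eqref{ap:eq6} is not an artifact of any cutoff---it is built into the global seminorm of the extension, $|w^e|_{\dot\cC^{\alpha'}(\R^3)}\leqslant C\big(|w|_{\dot\cC^{\alpha'}_{\pw,\Sigma}(\R^3)}+\ell_\Sigma^{-\alpha'}\|\llbracket w\rrbracket\|_{L^\infty(\Sigma)}\big)$, which is \eqref{ap:eq9}.

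Second, your geometric split at $|x-y|\leqslant d(x,\Sigma)$ with ``optimizing the cutoff distance'' cannot give \eqref{ap:eq6}: a tail estimate of $\int_{|x-y|>d(x,\Sigma)}|x-y|^{-3}|g(y)|\,dy$ by H\"older produces a factor $d(x,\Sigma)^{-3/p}$ that blows up as $x\to\Sigma$, and no choice of cutoff fixes this. The paper proves \eqref{ap:eq6} with no geometric split at all: the pointwise domination $|[\mathcal{T},w^e]g(x)|\leqslant C\,|w^e|_{\dot\cC^{\alpha'}}\big(|\cdot|^{\alpha'-3}*|g|\big)(x)$ plus Hardy--Littlewood--Sobolev handles the extension piece, and the Coifman--Rochberg--Weiss-type bound $\|[\mathcal{T},b]g\|_{L^p}\leqslant C\|b\|_{L^\infty}\|g\|_{L^p}$ applied with $b=w-w^e$ handles the remainder. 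For \eqref{ap:eq10} and \eqref{ap:eq19} your sketch is closer to the mark, but the structural device that makes the symmetrization rigorous is that $w-w^e\equiv 0$ on $D^c$, so one may write $[\mathcal{T},w-w^e]g=[\mathcal{T},w-w^e](g\mathbb{1}_{D})-(w-w^e)\mathcal{T}(g\mathbb{1}_{D^c})$ for $x\in D$ and $[\mathcal{T},w-w^e]g=\mathcal{T}((w-w^e)g\mathbb{1}_D)$ for $x\in D^c$, then invoke the extra cancellation $\big|\int_{D^c\cap B(\check x,\ell_\Sigma)}K(x-y)\,dy\big|\leqslant C$ at boundary scale and, for~(3), Theorem~2.3 of \cite{gancedo2022quantitative} applied to $g(\check x)\int_D\big(K(x-z)-K(y-z)\big)\,dz$; note that \eqref{ap:eq23} is a \emph{corollary} of the present lemma, not an input to its proof.
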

 The proof of Lemma \ref{ap:commutator} combines, in a nontrivial manner, the extension theorem for piecewise H\"older regular functions (see \cite{hoff2002dynamics}), and the continuity of Calder\'on-Zygmung operators on $L^q(\R^3)\cap \cC^\beta_{\pw,\Sigma}(\R^3)$, with $q\in (1,\infty)$ (see \cite{gancedo2022quantitative,gancedo2023global}).

 We now apply it to the last terms of \eqref{ap:eq13}.
\paragraph{\textbf{Estimate of the $L^p(\R^3)$ norm}}
 We consider $p\in (2,\infty)$, and  $r\in [2, p)$ such that $\frac{1}{r}=\frac{\alpha'}{3}+\frac{1}{p}$, for some $\alpha'\in (0,\alpha]$. 
By applying \eqref{ap:eq6} to the last terms in \eqref{ap:eq13}, we obtain
\begin{align}
    &\norm{\frac{1}{\mu}\big[\nabla(-\Delta)^{-1}\dvg \mathbb{P},\mu-\widetilde\mu\big]\big(2\D v\big);\nabla (-\Delta)^{-1}\nabla\bigg( \frac{1}{2\mu+\lambda}\big[(-\Delta)^{-1}\dvg \dvg, \mu-\widetilde\mu\big] \big(2\D v\big)\bigg)}_{L^p(\R^3)}\notag\\
    &\leqslant \frac{C}{\mu_*}\norm{\llbracket \mu \rrbracket}_{L^\infty(\Sigma_0)} \norm{\nabla v}_{L^p(\R^3)}+C_*\norm{\nabla v}_{L^r(\R^3)}\Big(\abs{\mu}_{\dot \cC^{\alpha'}_{\pw,\Sigma_0}(\R^3)}+ \norm{\llbracket \mu \rrbracket}_{L^\infty(\Sigma_0)}\ell^{-\alpha'}_{\Sigma_0}\Big),
\end{align}
and it follows that 
\begin{align}\label{ap:eq29}
\norm{\nabla v}_{L^p(\R^3)}&\leqslant \frac{C}{\mu_*}\norm{\llbracket \mu \rrbracket}_{L^\infty(\Sigma_0)} \norm{\nabla v}_{L^p(\R^3)}
\notag\\
&+C_*\bigg(\norm{F}_{L^p(\R^3)}+\norm{\nabla v}_{L^r(\R^3)}\Big(\abs{\mu}_{\dot \cC^{\alpha'}_{\pw,\Sigma_0}(\R^3)}+ \norm{\llbracket \mu \rrbracket}_{L^\infty(\Sigma_0)}\ell^{-\alpha'}_{\Sigma_0}\Big)\bigg).
\end{align}
Let us fix an arbitrary $\varepsilon\in (0,1)$ and assume that 
\begin{equation}\label{ap:eq32}
    \frac{C}{\mu_*} \norm{\llbracket \mu \rrbracket}_{L^\infty(\Sigma_0)} \leqslant \varepsilon.
\end{equation}
The first term on the RHS of \eqref{ap:eq29} can then be absorbed into the LHS, yielding:
\begin{equation*}
\norm{\nabla v}_{L^p(\R^3)}\leqslant \frac{C_*}{1-\varepsilon}\bigg(\norm{F}_{L^p(\R^3)}+\norm{\nabla v}_{L^r(\R^3)}\Big(\abs{\mu}_{\dot \cC^{\alpha'}_{\pw,\Sigma_0}(\R^3)}+ \norm{\llbracket \mu \rrbracket}_{L^\infty(\Sigma_0)}\ell^{-\alpha'}_{\Sigma_0}\Big)\bigg).
\end{equation*}
An energy estimate on \eqref{ap:eq2} yields
\[
\norm{\nabla v}_{L^2(\R^3)}\leqslant C_*\norm{F}_{L^2(\R^3)},
\]
and interpolation inequality gives
\begin{equation*}
\norm{\nabla v}_{L^r(\R^3)}\leqslant C\norm{\nabla u}_{L^p(\R^3)}^{1-\theta}\norm{\nabla v}_{L^2(\R^3)}^{\theta}\\
\leqslant C_*\norm{\nabla u}_{L^p(\R^3)}^{1-\theta}\norm{F}_{L^2(\R^3)}^\theta,\text{ with}\quad \frac{1}{2}-\frac{1}{p}=\frac{\alpha'}{3\theta}.
\end{equation*}
Finally,  applying Young’s inequality, we obtain: 
\begin{equation}\label{ap:eq31}
\norm{\nabla v}_{L^p(\R^3)}\leqslant \frac{C_*}{1-\varepsilon}\norm{F}_{L^p(\R^3)}+C_*(1-\varepsilon)^{-\frac{1}{\theta}}\norm{F}_{L^2(\R^3)}\Big(\abs{\mu}_{\dot \cC^{\alpha'}_{\pw,\Sigma_0}(\R^3)}+ \norm{\llbracket \mu \rrbracket}_{L^\infty(\Sigma_0)}\ell^{-\alpha'}_{\Sigma_0}\Big)^{\frac{1}{\theta}}.
\end{equation}
We emphasize that only the smallness of the shear viscosity jump is required (see \eqref{ap:eq32}), rather than that of the 
overall shear viscosity fluctuation (see \eqref{ap:eq25}). We now turn to deriving an estimate for the piecewise H\"older norm of $\nabla v$.
\paragraph{\textbf{Estimate of the $\cC^\alpha_{\pw,\Sigma_0}(\R^3)$ norm}}
We fix some $\alpha'\in (0,\alpha)$ and $r\in (3/\alpha',\infty)$. First, we apply \eqref{ap:eq10} and \eqref{ap:eq19}  to obtain
\begin{align}
    \norm{\big[\nabla(-\Delta)^{-1}\dvg \mathbb{P},\mu-\widetilde\mu\big]\big(2\D v\big)}_{L^\infty(\R^3)}
    &\leqslant C\Big(\abs{\mu}_{\dot \cC^{\alpha'}_{\pw,\Sigma_0}(\R^3)}+\ell^{-\alpha'}_{\Sigma_0}\norm{\mu-\widetilde\mu}_{L^\infty(\R^3)}\Big)\norm{\nabla v}_{L^r(\R^3)} \notag\\
    &+C\norm{\llbracket \mu\rrbracket}_{L^\infty(\Sigma_0)}\norm{\nabla v}_{\cC^{\alpha'}_{\pw,\Sigma_0}(\R^3)},\label{ap:eq35}
\end{align}
and 
\begin{align}
    &\abs{\big[\nabla(-\Delta)^{-1}\dvg \mathbb{P},\mu-\widetilde\mu\big]\big(2\D v\big)}_{\dot \cC^{\alpha}_{\pw,\Sigma_0}(\R^3)}\notag\\
    &\leqslant C\Big(\abs{\mu }_{\dot \cC^{\alpha}_{\pw,\Sigma_0}(\R^3)}+ \big(\ell^{-\alpha}_{\Sigma_0}+\mathfrak{P}_{\Sigma_0} \abs{\Sigma_0}_{\dot \cC^{1+\alpha}}\big)\norm{\llbracket \mu\rrbracket}_{L^\infty(\Sigma_0)}\Big)\norm{\nabla(-\Delta)^{-1}\dvg \mathbb{P} \D v,\, \D  v}_{L^\infty(\R^3)}\notag\\
    &+C\Big( \abs{ \mu }_{\dot \cC^{\alpha-\alpha'}_{\pw,\Sigma_0}(\R^3)}+ \ell^{-\alpha+\alpha'}_{\Sigma_0}\norm{\llbracket \mu \rrbracket}_{L^\infty(\Sigma_0)}\Big)\abs{\nabla v}_{\dot \cC^{\alpha'}_{\pw,\Sigma_0}(\R^3)}+C\norm{\llbracket \mu \rrbracket}_{L^\infty(\Sigma_0)}\abs{\nabla v}_{\dot \cC^\alpha_{\pw,\Sigma_0}(\R^3)}.\label{ap:eq36}
\end{align}

Next, we estimate the second-to-last term in \eqref{ap:eq13} as follows:
\begin{align}
    &\abs{\frac{1}{\mu}\big[\nabla(-\Delta)^{-1}\dvg \mathbb{P},\mu-\widetilde\mu\big]\big(2\D v\big)}_{\dot \cC^{\alpha}_{\pw,\Sigma_0}(\R^3)}\notag\\
    &\leqslant \dfrac{1}{\mu_*^2}\abs{\mu}_{\dot \cC^\alpha_{\pw,\Sigma_0}(\R^3)}\norm{\big[\nabla(-\Delta)^{-1}\dvg \mathbb{P},\mu-\widetilde\mu\big]\big(2\D v\big)}_{L^\infty(\R^3)}+\frac{1}{\mu_*}\abs{\big[\nabla(-\Delta)^{-1}\dvg \mathbb{P},\mu-\widetilde\mu\big]\big(2\D v\big)}_{\dot \cC^{\alpha}_{\pw,\Sigma_0}(\R^3)}\notag\\
    &\leqslant \dfrac{C}{\mu_*^2}\abs{\mu}_{\dot \cC^\alpha_{\pw,\Sigma_0}(\R^3)}\norm{\nabla v}_{L^r(\R^3)}\Big(\abs{\mu}_{\dot \cC^{\alpha'}_{\pw,\Sigma_0}(\R^3)}+\ell^{-\alpha'}_{\Sigma_0}\norm{\mu-\widetilde\mu}_{L^\infty(\R^3)}\Big)\notag\\
    &+\frac{C}{\mu_*}\norm{\nabla u,\, \nabla \mathbb P v}_{L^\infty(\R^3)}\Bigg[\abs{\mu}_{\dot \cC^\alpha_{\pw,\Sigma_0}(\R^3)}\Big(1+\frac{1}{\mu_*}\norm{\llbracket \mu\rrbracket}_{L^\infty(\Sigma_0)}\Big)+ \big(\ell^{-\alpha}_{\Sigma_0}+\mathfrak{P}_{\Sigma_0} \abs{\Sigma_0}_{\dot \cC^{1+\alpha}}\big)\norm{\llbracket \mu\rrbracket}_{L^\infty(\Sigma_0)}\Bigg]\notag\\
    &+\frac{C}{\mu_*}\norm{\nabla v}_{\dot \cC^{\alpha'}_{\pw,\Sigma_0}(\R^3)}\Bigg[\abs{ \mu }_{\dot \cC^{\alpha-\alpha'}_{\pw,\Sigma_0}(\R^3)}+\norm{\llbracket \mu \rrbracket}_{L^\infty(\Sigma_0)}\Big(\ell^{-\alpha+\alpha'}_{\Sigma_0}+\frac{1}{\mu_*}\abs{\mu}_{\dot \cC^\alpha_{\pw,\Sigma_0}(\R^3)}\Big)\Bigg]\notag\\
    &+\frac{C}{\mu_*}\norm{\llbracket \mu \rrbracket}_{L^\infty(\Sigma_0)}\abs{\nabla v}_{\dot \cC^\alpha_{\pw,\Sigma_0}(\R^3)}.\label{ap:eq33}
\end{align}
Applying \eqref{ap:eq23}, the last term in \eqref{ap:eq13} can be estimated as 
\begin{align}
    &\abs{\nabla (-\Delta)^{-1}\nabla\bigg( \frac{1}{2\mu+\lambda}\big[(-\Delta)^{-1}\dvg \dvg, \mu-\widetilde\mu\big] \big(2\D v\big)\bigg)}_{\dot \cC^\alpha_{\pw,\Sigma_0}(\R^3)}\notag\\
    &\leqslant \frac{C}{\mu_*}\Big(\frac{1}{\mu_*}\abs{2\mu+\lambda}_{\dot \cC^\alpha_{\pw,\Sigma_0}(\R^3)}+\mathfrak{P}_{\Sigma_0} \big|\Sigma_0\big|_{\dot \cC^{1+\alpha}}\Big)\norm{\big[(-\Delta)^{-1}\dvg\dvg ,\mu-\widetilde\mu\big](2\D v)}_{L^\infty(\R^3)}\notag\\
    &+\frac{C}{\mu_*}\abs{\big[(-\Delta)^{-1}\dvg \dvg, \mu-\widetilde\mu\big] \big(2\D v\big)}_{\dot \cC^\alpha_{\pw,\Sigma_0}(\R^3)},\notag
\end{align}
and hence (similar estimates to \eqref{ap:eq35}-\eqref{ap:eq36} also hold for $[(-\Delta)^{-1}\dvg \dvg, \mu-\widetilde\mu] (2\D v)$)
\begin{align}
     &\abs{\nabla (-\Delta)^{-1}\nabla\bigg( \frac{1}{2\mu+\lambda}\big[(-\Delta)^{-1}\dvg \dvg, \mu-\widetilde\mu\big] \big(2\D v\big)\bigg)}_{\dot \cC^\alpha_{\pw,\Sigma_0}(\R^3)}\notag\\
     &\leqslant \frac{C}{\mu_*}\norm{\nabla v}_{L^r(\R^3)}\Big(\frac{1}{\mu_*}\abs{\nu}_{\dot \cC^\alpha_{\pw,\Sigma_0}(\R^3)}+\mathfrak{P}_{\Sigma_0} \big|\Sigma_0\big|_{\dot \cC^{1+\alpha}}\Big)\Big(\abs{\mu}_{\dot \cC^{\alpha'}_{\pw,\Sigma_0}(\R^3)}+\ell^{-\alpha'}_{\Sigma_0}\norm{\mu-\widetilde\mu}_{L^\infty(\R^3)}\Big)\notag\\
     &+\frac{C}{\mu_*}\norm{\nabla v}_{L^\infty(\R^3)}\Bigg[\abs{\mu }_{\dot \cC^{\alpha}_{\pw,\Sigma_0}(\R^3)}+ \norm{\llbracket \mu\rrbracket}_{L^\infty(\Sigma_0)}\Big(\frac{1}{\mu_*}\abs{\nu}_{\dot \cC^\alpha_{\pw,\Sigma_0}(\R^3)}+\mathfrak{P}_{\Sigma_0} \big|\Sigma_0\big|_{\dot \cC^{1+\alpha}}+\ell^{-\alpha}_{\Sigma_0}\Big)\Bigg]\notag\\
     &+\frac{C}{\mu_*}\norm{\nabla v}_{\dot \cC^{\alpha'}_{\pw,\Sigma_0}(\R^3)}\Bigg[\abs{ \mu }_{\dot \cC^{\alpha-\alpha'}_{\pw,\Sigma_0}(\R^3)}+\norm{\llbracket \mu\rrbracket}_{L^\infty(\Sigma_0)}\Big(\frac{1}{\mu_*}\abs{\nu}_{\dot \cC^\alpha_{\pw,\Sigma_0}(\R^3)}+\mathfrak{P}_{\Sigma_0} \big|\Sigma_0\big|_{\dot \cC^{1+\alpha}}+ \ell^{-\alpha+\alpha'}_{\Sigma_0}\Big)\Bigg]\notag\\
     &+\frac{C}{\mu_*}\norm{\llbracket \mu \rrbracket}_{L^\infty(\Sigma_0)}\abs{\nabla v}_{\dot \cC^\alpha_{\pw,\Sigma_0}(\R^3)}.\label{ap:eq34}
\end{align}
Combining \eqref{ap:eq13}, \eqref{ap:eq33}, and \eqref{ap:eq34}, we deduce that 
\begin{align*}
    \abs{\nabla v}_{\dot \cC^\alpha_{\pw,\Sigma_0}(\R^3)}&\leqslant \abs{\nabla u_G}_{\dot \cC^\alpha_{\pw,\Sigma_0}(\R^3)}+ \frac{C}{\mu_*}\norm{\nabla v}_{L^r(\R^3)}\Big(\frac{1}{\mu_*}\abs{\mu,\,\nu}_{\dot \cC^\alpha_{\pw,\Sigma_0}(\R^3)}+\mathfrak{P}_{\Sigma_0} \big|\Sigma_0\big|_{\dot \cC^{1+\alpha}}\Big)\Big(\abs{\mu}_{\dot \cC^{\alpha'}_{\pw,\Sigma_0}(\R^3)}+\ell^{-\alpha'}_{\Sigma_0}\norm{\mu-\widetilde\mu}_{L^\infty(\R^3)}\Big)\notag\\
     &+\frac{C}{\mu_*}\norm{\nabla v,\, \nabla \mathbb P v}_{L^\infty(\R^3)}\Bigg[\abs{\mu }_{\dot \cC^{\alpha}_{\pw,\Sigma_0}(\R^3)}+ \norm{\llbracket \mu\rrbracket}_{L^\infty(\Sigma_0)}\Big(\frac{1}{\mu_*}\abs{\mu,\,\nu}_{\dot \cC^\alpha_{\pw,\Sigma_0}(\R^3)}+\mathfrak{P}_{\Sigma_0} \big|\Sigma_0\big|_{\dot \cC^{1+\alpha}}+\ell^{-\alpha}_{\Sigma_0}\Big)\Bigg]\notag\\
     &+\frac{C}{\mu_*}\norm{\nabla v}_{\dot \cC^{\alpha'}_{\pw,\Sigma_0}(\R^3)}\Bigg[\abs{ \mu }_{\dot \cC^{\alpha-\alpha'}_{\pw,\Sigma_0}(\R^3)}+\norm{\llbracket \mu\rrbracket}_{L^\infty(\Sigma_0)}\Big(\frac{1}{\mu_*}\abs{\mu,\,\nu}_{\dot \cC^\alpha_{\pw,\Sigma_0}(\R^3)}+\mathfrak{P}_{\Sigma_0} \big|\Sigma_0\big|_{\dot \cC^{1+\alpha}}+ \ell^{-\alpha+\alpha'}_{\Sigma_0}\Big)\Bigg]\notag\\
     &+\frac{C}{\mu_*}\norm{\llbracket \mu \rrbracket}_{L^\infty(\Sigma_0)}\abs{\nabla v}_{\dot \cC^\alpha_{\pw,\Sigma_0}(\R^3)}.
\end{align*}
Hereafter, $\nabla u_G$ denotes 
\[
\nabla u_G=\frac{1}{\mu}\nabla (-\Delta)^{-1}\dvg \mathbb{P} G-\nabla (-\Delta)^{-1}\nabla\bigg(\frac{1}{2\mu+\lambda}(-\Delta)^{-1}\dvg \dvg G\bigg).
\]
The smallness of the shear viscosity jump (see \eqref{ap:eq32}) helps absorb the last term above into the LHS. 
Next, we use the interpolation inequality
\begin{equation}\label{ap:eq48}
\abs{\nabla v}_{\dot \cC^{\alpha'}_{\pw,\Sigma_0}(\R^3)}\leqslant 2 \norm{\nabla v}_{L^\infty(\R^3}^{1-\frac{\alpha'}{\alpha}}\abs{\nabla v}_{\dot \cC^{\alpha}_{\pw,\Sigma_0}(\R^3)}^\frac{\alpha'}{\alpha}
\end{equation}
and Young's inequality to obtain
\begin{align*}
    \abs{\nabla v}_{\dot \cC^\alpha_{\pw,\Sigma_0}(\R^3)} &\leqslant  \frac{C}{1-\varepsilon}\abs{\nabla u_G}_{\dot \cC^\alpha_{\pw,\Sigma_0}(\R^3)}  +C\Psi_\eps\norm{\nabla v,\,\nabla \mathbb P v}_{L^\infty(\R^3)} \notag\\
    &+\dfrac{C}{\mu_*(1-\eps)}\norm{\nabla v}_{L^r(\R^3)}\Big(\frac{1}{\mu_*}\abs{\mu,\,\nu}_{\dot \cC^\alpha_{\pw,\Sigma_0}(\R^3)}+\mathfrak{P}_{\Sigma_0} \big|\Sigma_0\big|_{\dot \cC^{1+\alpha}}\Big)\Big(\abs{\mu}_{\dot \cC^{\alpha'}_{\pw,\Sigma_0}(\R^3)}+\ell^{-\alpha'}_{\Sigma_0}\norm{\mu-\widetilde\mu}_{L^\infty(\R^3)}\Big), 
\end{align*}
where, 
\begin{align*}
 \Psi_\eps &=\frac{1}{\mu_*(1-\eps)}\Bigg[\abs{\mu }_{\dot \cC^{\alpha}_{\pw,\Sigma_0}(\R^3)}+ \norm{\llbracket \mu\rrbracket}_{L^\infty(\Sigma_0)}\Big(\frac{1}{\mu_*}\abs{\mu,\,\nu}_{\dot \cC^\alpha_{\pw,\Sigma_0}(\R^3)}+\mathfrak{P}_{\Sigma_0} \big|\Sigma_0\big|_{\dot \cC^{1+\alpha}}+\ell^{-\alpha}_{\Sigma_0}\Big)\Bigg]\\
&+\frac{1}{\big(\mu_* (1-\eps)\big)^{\alpha/(\alpha-\alpha')}}\Bigg[\abs{ \mu }_{\dot \cC^{\alpha-\alpha'}_{\pw,\Sigma_0}(\R^3)}+\norm{\llbracket \mu\rrbracket}_{L^\infty(\Sigma_0)}\Big(\frac{1}{\mu_*}\abs{\mu,\,\nu}_{\dot \cC^\alpha_{\pw,\Sigma_0}(\R^3)}+\mathfrak{P}_{\Sigma_0} \big|\Sigma_0\big|_{\dot \cC^{1+\alpha}}+ \ell^{-\alpha+\alpha'}_{\Sigma_0}\Big)\Bigg]^{\frac{\alpha}{\alpha-\alpha'}}.\notag
\end{align*}
For the reader convenience, we shall denotes by $C_{**}$ a generic constant that depends on the piecewise regularity of $\mu$, $\lambda$, as well $\ell_{\Sigma_0}$,  $\mathfrak{P}_{\Sigma_0}$, $\big|\Sigma_0\big|_{\dot \cC^{1+\alpha}}$.  For instance the above estimate can be written as 
\begin{gather}\label{ap:eq45}
\abs{\nabla v}_{\dot \cC^\alpha_{\pw,\Sigma_0}(\R^3)}\leqslant \frac{C}{1-\varepsilon}\abs{\nabla u_G}_{\dot \cC^\alpha_{\pw,\Sigma_0}(\R^3)} +\frac{C_{**}}{1-\eps}\norm{\nabla v}_{L^r(\R^3)}+C\Psi_\eps\norm{\nabla v,\,\nabla \mathbb P v}_{L^\infty(\R^3)}.
\end{gather}

We will now  derive an estimate for the $L^\infty$-norm of $\nabla v, \nabla\mathbb P v$.

Observe that \eqref{ap:eq35} already provides a bound for the second term in the expression \eqref{ap:eq13}. We express the last term as 
\begin{align}
    -\nabla (-\Delta)^{-1}\nabla\bigg( \frac{1}{2\mu+\lambda}\big[(-\Delta)^{-1}\dvg \dvg, \mu-\widetilde\mu\big] \big(2\D v\big)\bigg)&=-\Big[\nabla (-\Delta)^{-1}\nabla,\, \frac{1}{2\mu+\lambda}\Big]\big[(-\Delta)^{-1}\dvg \dvg, \mu-\widetilde\mu\big] \big(2\D v\big)\notag\\
    &- \frac{1}{2\mu+\lambda}\nabla (-\Delta)^{-1}\nabla\bigg(\big[(-\Delta)^{-1}\dvg \dvg, \mu-\widetilde\mu\big] \big(2\D v\big)\bigg),\label{ap:eq52}
\end{align}
and applying \eqref{ap:eq10}-\eqref{ap:eq41}, we obtain 
\begin{align*}
    &\norm{\Big[\nabla (-\Delta)^{-1}\nabla,\, \frac{1}{2\mu+\lambda}\Big]\big[(-\Delta)^{-1}\dvg \dvg, \mu-\widetilde\mu\big] \big(2\D v\big)}_{L^\infty(\R^3)}\\
    &\leqslant \frac{C}{\mu_*}\Big(\frac{1}{\mu_*}\abs{\nu}_{\dot \cC^{\alpha'}_{\pw,\Sigma_0}(\R^3)}+\ell^{-\alpha'}_{\Sigma_0}\Big)\norm{\big[(-\Delta)^{-1}\dvg \dvg, \mu-\widetilde\mu\big] \big(2\D v\big)}_{L^r(\R^3)} \notag\\
    &+\frac{C}{\mu_*^2}\norm{\llbracket \nu\rrbracket}_{L^\infty(\Sigma_0)}\norm{\big[(-\Delta)^{-1}\dvg \dvg, \mu-\widetilde\mu\big] \big(2\D v\big)}_{\cC^{\alpha'}_{\pw,\Sigma_0}(\R^3)}\\
    &\leqslant C_{**} \Big(\norm{\nabla v}_{L^r(\R^3)}+\norm{\nabla v}_{L^{3/(\alpha-\alpha')}(\R^3)}\Big)+\frac{C}{\mu_*^2}\norm{\llbracket \nu\rrbracket}_{L^\infty(\Sigma_0)}\norm{\llbracket \mu\rrbracket}_{L^\infty(\Sigma_0)}\norm{\nabla v}_{\dot \cC^{\alpha'}_{\pw,\Sigma_0}(\R^3)}\\
    &+\frac{C}{\mu_*^2}\norm{\llbracket \nu\rrbracket}_{L^\infty(\Sigma_0)}\Big(\abs{\llbracket \mu \rrbracket }_{\dot \cC^{\alpha'}(\Sigma_0)}+ \big(\ell^{-\alpha'}_{\Sigma_0}+\mathfrak{P}_{\Sigma_0} \abs{\Sigma_0}_{\dot \cC^{1+\alpha'}}\big)\norm{\llbracket \mu\rrbracket}_{L^\infty(\Sigma_0)}\Big)\norm{\nabla v}_{L^\infty(\R^3)}.
\end{align*}
Using \eqref{ap:eq17}-\eqref{ap:eq10}-\eqref{ap:eq41} we can estimate the last term of \eqref{ap:eq52} as follows
\begin{align*}
    &\norm{\frac{1}{2\mu+\lambda}\nabla (-\Delta)^{-1}\nabla\bigg(\big[(-\Delta)^{-1}\dvg \dvg, \mu-\widetilde\mu\big] \big(2\D v\big)\bigg)}_{L^\infty(\R^3)}\\
    &\leqslant \frac{C}{\mu_*} \ell_{\Sigma_0}^{-\frac{3}{r}}\norm{\big[(-\Delta)^{-1}\dvg \dvg, \mu-\widetilde\mu\big] \big(2\D v\big)}_{L^r(\R^3)}
    +\frac{C}{\mu_*}\norm{\big[(-\Delta)^{-1}\dvg \dvg, \mu-\widetilde\mu\big] \big(2\D v\big)}_{\cC^{\alpha'}_{\pw,\Sigma_0}(\R^3)}\\
    &\leqslant C_{**} \Big(\norm{\nabla v}_{L^r(\R^3)}+\norm{\nabla v}_{L^{3/(\alpha-\alpha')}(\R^3)}\Big)+\frac{C}{\mu_*}\norm{\llbracket \mu\rrbracket}_{L^\infty(\Sigma_0)}\norm{\nabla v}_{\dot \cC^{\alpha'}_{\pw,\Sigma_0}(\R^3)}\\
    &+\frac{C}{\mu_*}\Big(\abs{\llbracket \mu \rrbracket }_{\dot \cC^{\alpha'}(\Sigma_0)}+ \big(\ell^{-\alpha'}_{\Sigma_0}+\mathfrak{P}_{\Sigma_0} \abs{\Sigma_0}_{\dot \cC^{1+\alpha'}}\big)\norm{\llbracket \mu\rrbracket}_{L^\infty(\Sigma_0)}\Big)\norm{\nabla v}_{L^\infty(\R^3)}.
\end{align*}
By combining all these computations, we obtain the following estimate (we freely use  $\frac{1}{\mu_*}\normb{\llbracket\nu\rrbracket}_{L^\infty(\Sigma_0)}\leqslant C$):
\begin{align}
    \norm{\nabla v}_{L^\infty(\R^3)}&\leqslant \norm{\nabla u_G}_{L^\infty(\R^3)}+ C_{**} \Big(\norm{\nabla v}_{L^r(\R^3)}+\norm{\nabla v}_{L^{3/(\alpha-\alpha')}(\R^3)}\Big)+\frac{C}{\mu_*}\norm{\llbracket \mu\rrbracket}_{L^\infty(\Sigma_0)}\norm{\nabla v}_{\dot \cC^{\alpha'}_{\pw,\Sigma_0}(\R^3)}\notag\\
    &+\frac{C}{\mu_*}\Big(\abs{\llbracket \mu \rrbracket }_{\dot \cC^{\alpha'}(\Sigma_0)}+ \big(\ell^{-\alpha'}_{\Sigma_0}+\mathfrak{P}_{\Sigma_0} \abs{\Sigma_0}_{\dot \cC^{1+\alpha'}}\big)\norm{\llbracket \mu\rrbracket}_{L^\infty(\Sigma_0)}\Big)\norm{\nabla v}_{L^\infty(\R^3)}.\notag
\end{align}
Applying the Leray projector $\mathbb P$ to \eqref{ap:eq5} and repeating the above computations, we observe that a similar estimate also holds  for $\nabla \mathbb P$. Next, \eqref{ap:eq48} and  Young's inequality yield
\begin{align}
    \norm{\nabla v,\, \nabla \mathbb P v}_{L^\infty(\R^3)}&\leqslant C\norm{\nabla u_G,\, \mathbb P \nabla u_G}_{L^\infty(\R^3)}+ C_{**} \Big(\norm{\nabla v}_{L^r(\R^3)}+\norm{\nabla v}_{L^{3/(\alpha-\alpha')}(\R^3)}\Big)+\Big(\frac{C}{\mu_*}\norm{\llbracket \mu\rrbracket}_{L^\infty(\Sigma_0)}\Big)^{\frac{\alpha}{\alpha'}}\norm{\nabla v}_{\dot \cC^{\alpha}_{\pw,\Sigma_0}(\R^3)}\notag\\
    &+\frac{C}{\mu_*}\Big(\abs{\llbracket \mu \rrbracket }_{\dot \cC^{\alpha'}(\Sigma_0)}+ \big(\ell^{-\alpha'}_{\Sigma_0}+\mathfrak{P}_{\Sigma_0} \abs{\Sigma_0}_{\dot \cC^{1+\alpha'}}\big)\norm{\llbracket \mu\rrbracket}_{L^\infty(\Sigma_0)}\Big)\norm{\nabla v}_{L^\infty(\R^3)},\notag
\end{align}
and  \eqref{ap:eq45} leads to  
\begin{align}
    \norm{\nabla v,\, \nabla \mathbb P v}_{L^\infty(\R^3)}&\leqslant  C_{**}(\eps) \Big(\norm{\nabla v}_{L^r(\R^3)}+\norm{\nabla v}_{L^{3/(\alpha-\alpha')}(\R^3)}\Big)\notag\\
    &+C\norm{\nabla u_G, \mathbb P \nabla u_G}_{L^\infty(\R^3)}+\frac{1}{1-\varepsilon}\Big(\frac{C}{\mu_*}\norm{\llbracket \mu\rrbracket}_{L^\infty(\Sigma_0)}\Big)^{\frac{\alpha}{\alpha'}}\abs{\nabla u_G}_{\dot \cC^\alpha_{\pw,\Sigma_0}(\R^3)}\notag\\
    &+\Bigg[\Psi_\eps\Big(\frac{C}{\mu_*}\norm{\llbracket \mu\rrbracket}_{L^\infty(\Sigma_0)}\Big)^{\frac{\alpha}{\alpha'}}+\frac{C}{\mu_*}\Big(\abs{\llbracket \mu \rrbracket }_{\dot \cC^{\alpha'}(\Sigma_0)}+ \big(\ell^{-\alpha'}_{\Sigma_0}+\mathfrak{P}_{\Sigma_0} \abs{\Sigma_0}_{\dot \cC^{1+\alpha'}}\big)\norm{\llbracket \mu\rrbracket}_{L^\infty(\Sigma_0)}\Big)\Bigg]\norm{\nabla v,\nabla\mathbb P v}_{L^\infty(\R^3)} \label{ap:eq40}.
\end{align}
Assuming that 
\begin{gather}\label{ap:eq47}
\Psi_\eps\Big(\frac{C}{\mu_*}\norm{\llbracket \mu\rrbracket}_{L^\infty(\Sigma_0)}\Big)^{\frac{\alpha}{\alpha'}}+\frac{C}{\mu_*}\Big(\abs{\llbracket \mu \rrbracket }_{\dot \cC^{\alpha'}(\Sigma_0)}+ \big(\ell^{-\alpha'}_{\Sigma_0}+\mathfrak{P}_{\Sigma_0} \abs{\Sigma_0}_{\dot \cC^{1+\alpha'}}\big)\norm{\llbracket \mu\rrbracket}_{L^\infty(\Sigma_0)}\Big) <\eps'<1,
\end{gather}
the last term can be absorbed in the LHS and interpolation and Young's inequalities imply: 
\begin{gather}\label{ap:eq46}
    \norm{\nabla v,\, \nabla \mathbb P v}_{L^\infty(\R^3)}\leqslant  C_{**}(\eps,\eps') \norm{\nabla v}_{L^2(\R^3)}
    +C(\eps,\eps')\Big(\norm{\nabla u_G, \mathbb P \nabla u_G}_{L^\infty(\R^3)}+\abs{\nabla u_G}_{\dot \cC^\alpha_{\pw,\Sigma_0}(\R^3)}\Big).
\end{gather}
Upon inserting this estimate into \eqref{ap:eq45}, we obtain 
\begin{gather}
    \norm{\nabla v,\, \nabla \mathbb P v}_{L^\infty(\R^3)}\leqslant  C_{**}(\eps,\eps') \norm{\nabla v}_{L^2(\R^3)}
    +C(\eps,\eps')\Big(\norm{\nabla u_G, \mathbb P \nabla u_G}_{L^\infty(\R^3)}+\abs{\nabla u_G}_{\dot \cC^\alpha_{\pw,\Sigma_0}(\R^3)}\Big).
\end{gather}

The above two estimate  with \(\dvg G = \dvg F - \rho \dot{u}\), will replace estimate (3.11) in \cite{zodji2023well}. We shall use \eqref{ap:eq31} 
to derive an estimate for the $L^p(\R^3)$-norm of $\nabla u$ and $\nabla \dot u$,  and hence close the energy estimates for $\dot{u}$ and $\ddot{u}$. Here, we only rely on \eqref{ap:eq32}--\eqref{ap:eq47}, which is significantly more favorable than the smallness condition (3.5) in \cite{zodji2023well}, in the sense that only the smallness of the viscosities jump is required, not that of the full fluctuation of the viscosities. The remainder of the proof proceeds identically.

\begin{proof}[Proof of Lemma \ref{ap:commutator}] 
    To prove Lemma \ref{ap:commutator}, we first observe that since $w\in \cC^\alpha_{\pw,\Sigma}(\R^3)$, there exists a $\cC^\alpha$-regular extension  $w^e$ of $w$ that satisfies the following properties:
\begin{align}
\norm{w^e}_{L^\infty(\R^3)}&\leqslant C\norm{w}_{L^\infty(\R^3)},\quad w^e=w \text{ on } D^c,\label{ap:eq11}\\
\norm{w- w^e}_{L^q(\R^3)}&\leqslant C \ell^{\frac{1}{q}}_{\Sigma}\norm{\llbracket w\rrbracket}_{L^q(\Sigma)}\text{ for all } q\in [1,\infty],\label{ap:eq5}\\
\abs{w-w^e}_{\dot \cC^\beta_{\pw,\Sigma}(\R^3)}&\leqslant  C\abs{\llbracket w\rrbracket}_{\dot \cC^{\beta}(\Sigma)}+ C\ell^{-\beta}_{\Sigma}\norm{\llbracket w\rrbracket}_{L^\infty(\Sigma)}, \quad \beta\in (0,\alpha],\label{ap:eq15}\\
\abs{w^e}_{\dot \cC^\beta(\R^3)}&\leqslant  C\abs{w}_{\dot \cC^{\beta}_{\pw,\Sigma}(\R^3)}+ C\ell^{-\beta}_{\Sigma}\norm{\llbracket w\rrbracket}_{L^\infty(\Sigma)}, \quad \beta\in (0,\alpha].\label{ap:eq9}
\end{align}
The above extension result is an updated version of Lemma A.2 in \cite{zodji2023well}, originally established in \cite[Lemma 5.2]{hoff2002dynamics} for the two-dimensional case. 

\paragraph{\textbf{Proof of \eqref{ap:eq6}}}
Our approach consists in substituting $w$  with $w^e+ (w-w^e)$, which yields 
\begin{equation}\label{ap:eq3}
\big[\mathcal{T}, w\big]g=\big[\mathcal{T}, w^e\big]g+\big[\mathcal{T}, (w-w^e)\big]g.
\end{equation}
Given that $w^e$ is $\cC^\alpha(\R^3)$-regular (in the whole space), the first term is now more \emph{regular} than the LHS. In particular, we have
\begin{align*}
\abs{\big[\mathcal{T}, w^e\big]g(x)}&=\Bigg|\int_{\R^3} K(x-y)\big(w^e(y)-w^e(x)\big) g(y)dy\Bigg|\\
                                     &\leqslant C\abs{w^e}_{\dot \cC^{\alpha'}(\R^3)} \big(\abs{\cdot}^{-3+\alpha'} \ast \abs{g}\big)(x), \text{ for all } \alpha'\in (0,\alpha].
\end{align*}
For $0<\alpha'< \frac{3}{p'}$, we can apply Hardy-Littlewood-Sobolev's inequality (see \cite[Theorem 1.7]{bahouri2011fourier}) to obtain: 
\begin{align}
\norm{\big[\mathcal{T}, w^e\big] g}_{L^p(\R^3)}&\leqslant C\abs{w^e}_{\dot \cC^{\alpha'}(\R^3)}\norm{g}_{L^r(\R^3)}, \text{ recall }  \frac{1}{r}=\frac{\alpha'}{3}+\frac{1}{p}<1\notag\\
&\leqslant C\big(\abs{w}_{\dot \cC^{\alpha'}_{\pw,\Sigma}(\R^3)}+ \ell^{-\alpha'}_{\Sigma}\norm{\llbracket w\rrbracket}_{L^\infty(\Sigma)}\big)\norm{g}_{L^r(\R^3)},\text{ recall } \eqref{ap:eq9}.  \label{ap:eq7}
\end{align}
To estimate the $L^p(\R^3)$-norm of the last term of \eqref{ap:eq3}, we rely on the boundedness of commutators between 
Calder\'on–Zygmund operators and $BMO(\R^3)$ functions on $L^p(\R^3)$:
\begin{align}
    \norm{\big[\mathcal{T}, (w-w^e)\big]g}_{L^p(\R^3)}&\leqslant C\norm{w-w^e}_{L^\infty(\R^3)}\norm{g}_{L^p(\R^3)}\notag\\
    &\leqslant C \norm{\llbracket w\rrbracket}_{L^\infty(\Sigma)}\norm{g}_{L^p(\R^3)}, \quad \text{recall }\eqref{ap:eq5}.\label{ap:eq8}
\end{align}
Estimate \eqref{ap:eq6} follows directly from \eqref{ap:eq7}–\eqref{ap:eq8}. We now proceed to the proof of \eqref{ap:eq10}.

\paragraph{\textbf{Proof of \eqref{ap:eq10}}}
We recall that the first term of the expression  \eqref{ap:eq3} reads:
\begin{align*}
&\big(\big[\mathcal{T}, w^e\big]g\big)(x)=\int_{\R^3} K(x-z)\big(w^e(z)-w^e(x)\big) g(z)dz\\
                                        &=\int_{B(x,1)} K(x-z)\big(w^e(z)-w^e(x)\big) g(z)dz+\int_{\R^3\setminus B(x,1)} K(x-z)\big(w^e(z)-w^e(x)\big) g(z)dz.
\end{align*}
To estimate the first term, we use the $\cC^{\alpha'}(\R^3)$ regularity of $w^e$ to mitigate the singularity of the kernel $K$:
\begin{align*}
\Bigg|\int_{B(x,1)} K(x-z)&\big(w^e(z)-w^e(x)\big) g(z)dz\Bigg|\leqslant  C\abs{w^e}_{\dot \cC^{\alpha'}(\R^3)}\int_{B(x,1)}\abs{x-z}^{-3+\alpha'}\abs{g(z)}dz\\
&\leqslant C\abs{w^e}_{\dot \cC^{\alpha'}(\R^3)}\norm{g}_{L^r(\R^3)}\Bigg(\int_{0}^1\tau ^{(-3+\alpha')r'+2})d\tau\Bigg)^{\frac{1}{r'}}\\
&\leqslant C\abs{w^e}_{\dot \cC^{\alpha'}(\R^3)}\norm{g}_{L^r(\R^3)}, \quad \text{recall that  }\frac{1}{r}<\frac{\alpha'}{3},\\
&\leqslant C\big(\abs{w}_{\dot \cC^{\alpha'}_{\pw,\Sigma}(\R^3)}+ \ell^{-\alpha'}_{\Sigma}\norm{\llbracket w\rrbracket}_{L^\infty(\Sigma)}\big)\norm{g}_{L^r(\R^3)} \text{ recall }\eqref{ap:eq9}.
\end{align*}
 Using the decay of the kernel $K$, we derive:
\begin{align*}
    \bigg|\int_{\R^3\setminus B(x,1)} K(x-z)&\big(w^e(z)-w^e(x)\big) g(z)dz\bigg|\leqslant C\norm{w^e}_{L^\infty(\R^3)}\int_{\R^3\setminus B(x,1)} \abs{x-z}^{-3}\abs{g(z)}dz\\
    &\leqslant C\norm{w^e}_{L^\infty(\R^3)}\norm{g}_{L^p(\R^3)}\Bigg(\int_{1}^\infty\tau ^{-3p'+2}d\tau\Bigg)^{\frac{1}{p'}}\\
    &\leqslant C\norm{w^e}_{L^\infty(\R^3)}\norm{g}_{L^p(\R^3)}, \quad\text{recall that } p'>1,\\
    &\leqslant C\norm{w}_{L^\infty(\R^3)}\norm{g}_{L^p(\R^3)},\quad\text{recall } \eqref{ap:eq11}.
\end{align*}
It follows that 
\begin{equation}\label{ap:eq12}
    \norm{\big[\mathcal{T}, w^e\big]g}_{L^\infty(\R^3)}\leqslant C\Big(\abs{w}_{\dot \cC^{\alpha'}_{\pw,\Sigma}(\R^3)}+ \ell^{-\alpha'}_{\Sigma}\norm{\llbracket w\rrbracket}_{L^\infty(\Sigma)}\Big)\norm{g}_{L^r(\R^3)}+C\norm{w}_{L^\infty(\R^3)}\norm{g}_{L^p(\R^3)}.
\end{equation}
To estimate the $L^\infty(\R^3)$-norm of the last term of \eqref{ap:eq3}, we use the fact that $(w-w^e)\mathbb{1}_{D^c}=0$
to express: 
\begin{itemize}
    \item for $x\in D$,
\begin{align}
\big[\mathcal{T}, (w-w^e)\big]g(x)&= \big[\mathcal{T}, (w-w^e)\big]\big(g \mathbb{1}_{D}\big)(x)-(w-w^e)(x)\mathcal{T}\big(g \mathbb{1}_{D^c}\big)(x).\label{ap:eq14}
\end{align}
\item for $x\in D^c$,
\begin{equation}\label{ap:eq16}
\big[\mathcal{T}, (w-w^e)\big]g(x)=\mathcal{T}\big((w-w^e)g \mathbb{1}_{D}\big)(x).
\end{equation}
\end{itemize}
Since $(w-w^e)\in \cC^{\alpha'}(\overline{D})$, we follow the computations leading to \eqref{ap:eq12} to estimate the first term in the expression \eqref{ap:eq14}:
\begin{align*}
&\norm{\big[\mathcal{T}, (w-w^e)\big]\big(g \mathbb{1}_{D}\big)}_{L^\infty(D)}\\
&\leqslant C\Big(\abs{w-w^e}_{\dot \cC^{\alpha'}(\overline{D})}+ \ell^{-\alpha'}_{\Sigma}\norm{\llbracket w-w^e\rrbracket}_{L^\infty(\Sigma)}\Big)\norm{g}_{L^r(D)}+C\norm{w-w^e}_{L^\infty(D)}\norm{g}_{L^p(D)}\\
&\leqslant C\Big(\abs{\llbracket w\rrbracket}_{\dot \cC^{\alpha'}(\Sigma)}+\ell^{-\alpha'}_{\Sigma}\norm{\llbracket w\rrbracket }_{L^\infty(\Sigma)}\Big)\norm{g}_{L^r(\R^3)} +C\norm{\llbracket w\rrbracket }_{L^\infty(\Sigma)}\norm{g}_{L^p(\R^3)},\text{ recall } \eqref{ap:eq5}-\eqref{ap:eq15}.
\end{align*}
To estimate the last term of \eqref{ap:eq14} we express:
\begin{align*}
\mathcal{T}\big(g \mathbb{1}_{D^c}\big)(x)&=\int_{D^c\cap B(\check{x},\ell_{\Sigma})} K(x-y) \big(g(y)-g(\check{x})\big)dy+g(\check{x})\int_{D^c\cap B(\check{x},\ell_{\Sigma})} K(x-y)dy\\
&+\int_{D^c\cap B^c(\check{x},\ell_{\Sigma})} K(x-y) g(y)dy,
\end{align*}
where $\check{x}\in \partial D$ such that $d(x, D^c)=\abs{x-\check{x}}$. It follows that 
\[
\Bigg|\int_{D^c\cap B(\check{x},\ell_{\Sigma})} K(x-y) \big(g(y)-g(\check{x})\big)dy\Bigg|\leqslant C \abs{g}_{\dot \cC^{\alpha'}_{\pw,\Sigma}(\R^3)},
\]
and 
\[
\Bigg|\int_{D^c\cap B^c(\check{x},\ell_{\Sigma})} K(x-y) g(y)dy\Bigg|\leqslant C \ell_{\Sigma}^{-\frac{3}{p}}\norm{g}_{L^p(\R^3)}.
\]
Using the extra-cancellation of even-order Riesz kernels (\cite{Bertozzi1993,mateu2009extra}), we bound the remaining term as
\[
\Bigg|g(\check{x})\int_{D^c\cap B(\check{x},\ell_{\Sigma})} K(x-y)dy\Bigg|\leqslant  C \norm{g}_{L^\infty(\R^3)}.
\]
We thus obtain the following bound for the last term of \eqref{ap:eq14}:
\begin{equation}\label{ap:eq51}
\abs{(w-w^e)(x)\mathcal{T}\big(g \mathbb{1}_{D^c}\big)(x)}\leqslant C\norm{\llbracket w\rrbracket}_{L^\infty(\Sigma)}\Big(\norm{g}_{\cC^{\alpha'}_{\pw,\Sigma}(\R^3)}+\ell_{\Sigma}^{-\frac{3}{p}}\norm{g}_{L^p(\R^3)}\Big).
\end{equation}
For \eqref{ap:eq16}, we consider $\check{x}\in \partial D$ such that $d(x,D)=\abs{x-\check{x}}$ and we express:
\begin{align*}
    \big[\mathcal{T}, (w-w^e)\big]g(x)&=\int_{D\cap B(\check{x},\ell_{\Sigma})} K(x-y)\big( (w-w^e)(y)- (w-w^e)(\check{x})g(y)dy\\
    &+(w-w^e)(\check{x})\int_{D\cap B(\check{x},\ell_{\Sigma})} K(x-y)g(y)dy\\
    &+\int_{D \cap B^c(\check{x},\ell_{\Sigma})} K(x-y)(w-w^e)(y)g(y)dy.
\end{align*}
Estimate \eqref{ap:eq51} also applies to the last two terms above, whereas the first one can be bounded as follows:
\[
\Bigg|\int_{D \cap B(\check{x},\ell_{\Sigma})} K(x-y)\big( (w-w^e)(y)- (w-w^e)(\check{x})g(y)dy\Bigg|\leqslant C\Big(\abs{\llbracket w\rrbracket}_{\dot \cC^{\alpha'}(\Sigma)}+ \ell^{-\alpha'}_{\Sigma}\norm{\llbracket w\rrbracket}_{L^\infty(\Sigma)}\Big)\norm{g}_{L^r(\R^3)}.
\]

In sum, we obtain the following bound for the last term of \eqref{ap:eq3} (recall that $\frac{3}{r}<\alpha'$):
\begin{align}
    \norm{\big[\mathcal{T}, (w-w^e)\big]g}_{L^\infty(\R^3)}&\leqslant C\Big(\abs{\llbracket w\rrbracket}_{\dot \cC^{\alpha'}(\Sigma)}+\ell^{-\alpha'}_{\Sigma}\norm{\llbracket w\rrbracket }_{L^\infty(\Sigma)}\Big)\norm{g}_{L^r(\R^3)}\notag\\
    &+C\norm{\llbracket w\rrbracket}_{L^\infty(\Sigma)}\Big(\norm{g}_{\cC^{\alpha'}_{\pw,\Sigma}(\R^3)}+\ell_{\Sigma}^{-\alpha'}\norm{g}_{L^p(\R^3)}\Big).\label{ap:eq18}
\end{align}
Finally, \eqref{ap:eq10} follows directly from \eqref{ap:eq12}–\eqref{ap:eq18}. We now move on to the proof of \eqref{ap:eq19}.

\paragraph{\textbf{Proof of \eqref{ap:eq19}}}
    Let $x,y$ be two points located on the same side of $\Sigma$ and let us assume  $\delta=\abs{x-y}\in (0, \ell_\Sigma/2)$. Without lost of generality, assume that $x,y\in \overline{D}$. 
    By taking the difference of the first term in expression \eqref{ap:eq3}, we obtain:
\begin{align}
\big(\big[\mathcal{T}, w^e\big]g\big)(x)&-\big(\big[\mathcal{T}, w^e\big]g\big)(y)= \int_{B(y,2\delta)} K(x-z)\big(w^e(z)-w^e(x)\big) g(z)dz\notag\\
&-\int_{B(y,2\delta)} K(y-z)\big(w^e(z)-w^e(y)\big) g(z)dz\notag\\
&+\big(w^e(y)-w^e(x)\big)\int_{\R^3\setminus B(y,2\delta)} K(x-z) g(z)dz\notag\\
&+\int_{\R^3\setminus B(y, 2\delta)} \big(K(x-z) -K(y-z)\big)\big(w^e(z)-w^e(y)\big) g(z)dz.\label{ap:eq20}
\end{align}

One derives easily the following bound for the first two terms of the expression above: 
\begin{align*}
\Bigg|\int_{B(y,2\delta)} K(x-z)\big(w^e(z)-w^e(x)\big) g(z)dz\Bigg|&+\Bigg| \int_{B(y,2\delta)} K(y-z)\big(w^e(z)-w^e(y)\big) g(z)dz\Bigg|\\
&\leqslant C\abs{w^e}_{\dot \cC^\beta(\R^3)}\norm{g}_{L^\infty(\R^3)}\abs{x-y}^\beta.
\end{align*}

To estimate the third term on the RHS of \eqref{ap:eq20}, we express
\begin{align}
\int_{\R^3\setminus B(y,2\delta)} K(x-z) g(z)dz&=\mathcal{T} g(x)-\int_{D\cap B(y,2\delta)} K(x-z) g(z)dz-\int_{D^c\cap B(y,2\delta)} K(x-z) g(z)dz,\label{ap:eq49}
\end{align}
and we estimate: 
\begin{align}
\Bigg|\int_{D\cap B(y,2\delta)} K(x-z) g(z)dz\Bigg|&\leqslant \Bigg|\int_{D\cap B(y,2\delta)} K(x-z) \big(g(z)-g(x)\big)dz\Bigg|+\abs{g(x)}\Bigg|\int_{D\cap B(y,2\delta)} K(x-z)dz\Bigg|\notag\\
&\leqslant C \abs{g}_{\dot \cC^{\alpha'}_{\pw,\Sigma}(\R^3)}\abs{x-y}^{\alpha'}+ C\norm{g}_{L^\infty(\R^3)},\label{ap:eq50}
\end{align}
where we make use of the extra cancellation of even-order Riesz transform (see \cite{Bertozzi1993,mateu2009extra}) to bound the last term of the RHS of the first line. For the last term of \eqref{ap:eq49}, we express 
\[
\int_{D^c\cap B(y,2\delta)} K(x-z) g(z)dz=\int_{D^c\cap B(y,2\delta)} K(x-z) \big(g(z)-g(\check{x})\big)dz+g(\check{x})\int_{D^c\cap B(y,2\delta)} K(x-z)dz,
\]
where $\check{x}\in \partial D$ is such that $d(x,\partial D)=\abs{x-\check{x}}$. We observe that that the estimate \eqref{ap:eq50} also holds for the last term of \eqref{ap:eq49} and we obtain:
\begin{align*}
\Bigg|\big(w^e(y)-w^e(x)\big)&\int_{\R^3\setminus B(y,2\delta)} K(x-z) g(z)dz\Bigg|\\
&\leqslant C\Big(\abs{w^e}_{\dot \cC^\beta(\R^3)}\norm{g,\,\mathcal{T} g}_{L^\infty(\R^3)}+\abs{w^e}_{\dot \cC^{\beta-\alpha'}(\R^3)}\abs{g}_{\dot \cC^{\alpha'}_{\pw,\Sigma}(\R^3)}\Bigg) \abs{x-y}^\beta.
\end{align*}

To estimate the last term of \eqref{ap:eq20}, we observe that 
\begin{align*}
\abs{K(x-z)-K(y-z)}&\leqslant\abs{x-y}\int_0^1\abs{\nabla K\big(\tau x+ (1-\tau) y-z\big)}d\tau \\
&\leqslant C \int_0^1\frac{\abs{x-y}}{\abs{\tau x+ (1-\tau) y-z}^4}d\tau.
\end{align*}
Hence, for all $z\notin B(y, 2\delta)$, and $\tau\in (0,1)$, we have:
\begin{align*}
\abs{\tau x+ (1-\tau) y-z}&\geqslant \abs{y-z}-\tau \abs{x-y}\\
                          &\geqslant\frac{1}{2} \abs{y-z} +\Big(\frac{1}{2} \abs{y-z}-\tau \delta\Big)\\
                          &\geqslant \frac{1}{2} \abs{y-z} +\Big(\frac{1}{2} (2\delta)-\tau \delta\Big)\\
                          &\geqslant \frac{1}{2} \abs{y-z} .
\end{align*}
In consequence,  we have:
\begin{align*}
    \Bigg|\int_{\R^3\setminus B(y, 2\delta)} &\big(K(x-z) -K(y-z)\big)\big(w^e(z)-w^e(y)\big) g(z)dz\Bigg|\\
    &\leqslant C\abs{w^e}_{\dot \cC^\beta(\R^3)} \norm{g}_{L^\infty(\R^3)}\abs{x-y}\int_{\R^3\setminus B(y,2\delta)}\abs{y-z}^{\beta-4}dz\\
    &\leqslant C\abs{w^e}_{\dot \cC^\beta(\R^3)}\norm{g}_{L^\infty(\R^3)} \abs{x-y}^\beta.
\end{align*}
We finally obtain:
\begin{align}
    \abs{\big[\mathcal{T}, w^e\big]g}_{\dot \cC^\beta_{\pw,\Sigma}(\R^3)}&\leqslant C\big(\abs{w^e}_{\dot \cC^\beta(\R^3)}\norm{g,\mathcal{T} g}_{L^\infty(\R^3)}+\abs{w^e}_{\dot \cC^{\beta-\alpha'}(\R^3)}\abs{g}_{\dot \cC^{\alpha'}_{\pw,\Sigma}(\R^3)}\big)\notag\\
&\leqslant C\Big(\abs{w}_{\dot \cC^{\beta}_{\pw,\Sigma}(\R^3)}+ \ell^{-\beta}_{\Sigma}\norm{\llbracket w\rrbracket}_{L^\infty(\Sigma)}\Big)\norm{g,\mathcal{T} g}_{L^\infty(\R^3)}\notag\\
&+C\Big(\abs{w}_{\dot \cC^{\beta-\alpha'}_{\pw,\Sigma}(\R^3)}+ \ell^{-\beta+\alpha'}_{\Sigma}\norm{\llbracket w\rrbracket}_{L^\infty(\Sigma)}\Big)\abs{g}_{\dot \cC^{\alpha'}_{\pw,\Sigma}(\R^3)},\label{ap:eq22}
\end{align}
where we make use of \eqref{ap:eq9}. 

Using \eqref{ap:eq14} we express 
    \begin{align}
        \big[\mathcal{T}, (w-w^e)\big]g(x)&-\big[\mathcal{T}, (w-w^e)\big]g(y)\notag\\
        &=
    \int_{D\cap B(x, \delta)} K(x-z)\big( (w-w^e)(z)-(w-w^e)(x)\big) g(z)dz\notag\\
    &-\int_{D\cap B(x, \delta)}K(y-z)\big( (w-w^e)(z)-(w-w^e)(y)\big) g(z)dz\notag\\
    &+\int_{D\cap B^c(x, \delta)} \big( K(x-z)-K(y-z)\big)\big((w-w^e)(z)-(w-w^e)(x)\big) g(z)dz\notag\\
    &+\big((w-w^e)(y)-(w-w^e)(x)\big)\Bigg(\int_{D\cap B^c(x,\delta)} K(y-z)g(z)dz+ \int_{D^c} K(y-z)g(z)dz\Bigg)\notag\\ 
    &-(w-w^e)(x)\int_{D^c} \big(K(x-z)-K(y-z)\big)g(z)dz.\label{ap:eq21}
    \end{align}
As in the previous step, we estimate the first three terms on the RHS as follows:
\begin{align*}
&\Bigg|\int_{D\cap B(x, \delta)} K(x-z)\big( (w-w^e)(z)-(w-w^e)(x)\big) g(z)dz\Bigg|+\Bigg|\int_{D\cap B(x, \delta)}K(y-z)\big( (w-w^e)(z)-(w-w^e)(y)\big) g(z)dz\Bigg|\\
&\Bigg|\int_{D\cap B^c(x, \delta)} \big( K(x-z)-K(y-z)\big)\big((w-w^e)(z)-(w-w^e)(x)\big) g(z)dz\Bigg|\leqslant C\abs{w-w^e}_{\dot \cC^\beta_{\pw,\Sigma}(\R^3)}\norm{g}_{L^\infty(\R^3)}\abs{x-y}^{\beta}.
\end{align*}
To bound the last term, we express: 
\begin{align*}
    &\int_{D^c} \big(K(x-z)-K(y-z)\big)g(z)dz\\
    &=\int_{D^c\cap B(\check x,2\delta)}K(x-z)\big(g(z)-g(\check{x})\big)dz-\int_{D^c\cap B(\check x,2\delta)} K(y-z)\big(g(z)-g(\check{y})\big)dz\\
    &+\big(g(\check{y})-g(\check{x})\big)\int_{D\cap B(\check x,2\delta)}K(y-z)dz+\int_{D^c\cap B^c (\check{x}, 2\delta)} \big(K(x-z)-K(y-z)\big)\big(g(z)-g(\check{x})\big)dz\\
    &-g(\check{x})\int_{D} \big(K(x-z)-K(y-z)\big)dz. 
\end{align*}
Following the preceding step, all terms on the RHS, except for the last, are bounded by 
\[
C\abs{g}_{\dot \cC^\beta(D^c)}\abs{x-y}^{\beta}.
\]
Next, we apply Theorem 2.3 of \cite{gancedo2022quantitative} to estimate  the last term as 
\[
\bigg|g(\check{x})\int_{D} \big(K(x-z)-K(y-z)\big)dz\Bigg|\leqslant C\norm{g}_{L^\infty(\R^3)}\mathfrak{P}_{\Sigma} \abs{\Sigma}_{\dot \cC^{1+\beta}}\abs{x-y}^\beta,
\]
where $\mathfrak{P}_\Sigma$ is a polynomial in  $\abs{\Sigma}$, $\norm{\Sigma}_{\text{Lip}}$ and $\abs{\Sigma}_{\text{inf}}^{-1}$.
For the remaining term of \eqref{ap:eq21}, we express 
\[
\int_{D\cap B^c(x,\delta)} K(y-z)g(z)dz+ \int_{D^c} K(y-z)g(z)dz=\mathcal{T} g-\int_{D\cap B(x,\delta)} K(y-z)g(z)dz,
\]
and we estimate 
\[
\Bigg|\int_{D\cap B^c(x,\delta)} K(y-z)g(z)dz+ \int_{D^c} K(y-z)g(z)dz\Bigg|
\leqslant C\Big(\norm{g,\,\mathcal{T} g}_{L^\infty(\R^3)}+ \abs{g}_{\dot \cC_{\pw,\Sigma}^\beta(\R^3)}\abs{x-y}^{\beta}\Big).
\]
In sum, we obtain:
\begin{align}
    \abs{\big[\mathcal{T}, (w-w^e)\big]g}_{\dot \cC^\beta_{\pw,\Sigma}(\R^3)}&\leqslant C\norm{\llbracket w\rrbracket}_{L^\infty(\Sigma)}\abs{g}_{\dot \cC^\beta_{\pw,\Sigma}(\R^3)}\notag\\
    &+C\Big(\abs{\llbracket w\rrbracket}_{\dot \cC^{\beta}(\Sigma)}+ \big(\ell^{-\beta}_{\Sigma}+\mathfrak{P}_{\Sigma} \abs{\Sigma}_{\dot \cC^{1+\beta}}\big)\norm{\llbracket w\rrbracket}_{L^\infty(\Sigma)}\Big)\norm{\mathcal{T} g, g}_{L^\infty(\R^3)}.\label{ap:eq24}
\end{align}
Estimate \eqref{ap:eq19} simply follows from \eqref{ap:eq22}-\eqref{ap:eq24}. This completes the proof of Lemma \ref{ap:commutator}. 

Let us observe that following the same computations as above, one obtains ($r\in [1,\infty)$ and $\alpha'\in (0,\alpha]$): 
\begin{equation}\label{ap:eq17}
            \norm{\mathcal{T} g}_{L^\infty(\R^3)}\leqslant C\Big(\ell_\Sigma^{-\frac{3}{r}}\norm{g}_{L^r(\R^3)}+\norm{g}_{\cC^{\alpha'}_{\pw,\Sigma}(\R^3)}\Big),
\end{equation}
and
\begin{equation}\label{ap:eq23}
            \norm{\mathcal{T} g}_{\dot \cC^\alpha_{\pw,\Sigma}(\R^3)}\leqslant C \Big(\abs{g}_{\dot \cC^\alpha_{pw,\Sigma}(\R^3)}
            +\norm{g}_{L^\infty(\R^3)}\mathfrak{P}_{\Sigma} \abs{\Sigma}_{\dot \cC^{1+\alpha}}\Big).
\end{equation}
Also, we may notice that in the case $\beta\in (0, \alpha)$, the third term on the RHS of \eqref{ap:eq20} can be estimated as 
\begin{align*}
\Bigg|\big(w^e(y)-w^e(x)\big)\int_{\R^3\setminus B(y,2\delta)} K(x-z) g(z)dz\Bigg| &\leqslant C\abs{w^e}_{\dot\cC^\alpha(\R^3)}\abs{x-y}^{\alpha}\int_{\abs{x-z}\geqslant 2 \delta}\abs{x-z}^{-3}\abs{g(z)}dz\\
&\leqslant C\abs{w^e}_{\dot\cC^\alpha(\R^3)}\norm{g}_{L^{3/(\alpha-\beta)}(\R^3)}\abs{x-y}^{\beta} 
\end{align*}
leading to 
\begin{align}\label{ap:eq41}
    \abs{\big[\mathcal{T}, w\big]g}_{\dot \cC^\beta_{\pw,\Sigma}(\R^3)}&\leqslant C\norm{\llbracket w\rrbracket}_{L^\infty(\Sigma)}\abs{g}_{\dot \cC^\beta_{\pw,\Sigma}(\R^3)}+C\Big(\abs{w}_{\dot\cC^\alpha(\R^3)}+ \ell_{\Sigma}^{-\alpha}\norm{\llbracket w\rrbracket}_{L^\infty(\Sigma)}\Big)\norm{g}_{L^{3/(\alpha-\beta)}(\R^3)}\notag\\
    &+C\Big(\abs{\llbracket w\rrbracket }_{\dot \cC^{\beta}(\Sigma)}+ \big(\ell^{-\beta}_{\Sigma}+\mathfrak{P}_{\Sigma} \abs{\Sigma}_{\dot \cC^{1+\beta}}\big)\norm{\llbracket w\rrbracket}_{L^\infty(\Sigma)}\Big)\norm{\mathcal{T} g, g}_{L^\infty(\R^3)}.
\end{align}

\end{proof}
\bibliographystyle{acm}
\bibliography{BiblioThese}

\begin{thebibliography}{10}

\bibitem{bahouri2011fourier}
{\sc Bahouri, H., Chemin, J.-Y., and Danchin, R.}
\newblock {\em Fourier analysis and nonlinear partial differential equations}, vol.~343 of {\em Grundlehren der mathematischen Wissenschaften [Fundamental Principles of Mathematical Sciences]}.
\newblock Springer, Heidelberg, 2011.

\bibitem{Bertozzi1993}
{\sc Bertozzi, A.~L., and Constantin, P.}
\newblock Global regularity for vortex patches.
\newblock {\em Comm. Math. Phys. 152}, 1 (1993), 19--28.

\bibitem{bocchi2022anisotropy}
{\sc Bocchi, E., Fanelli, F., and Prange, C.}
\newblock Anisotropy and stratification effects in the dynamics of fast rotating compressible fluids.
\newblock {\em Ann. Inst. H. Poincar\'e{} C Anal. Non Lin\'eaire 39}, 3 (2022), 647--704.

\bibitem{bresch2023extension}
{\sc Bresch, D., and Burtea, C.}
\newblock Extension of the {H}off solutions framework to cover {N}avier-{S}tokes equations for a compressible fluid with anisotropic viscous-stress tensor.
\newblock {\em Indiana Univ. Math. J. 72}, 5 (2023), 2145--2189.

\bibitem{bresch2024mathematical}
{\sc Bresch, D., Burtea, C., Gonin-Joubert, P., and Lagoutière, F.}
\newblock Mathematical justification of a compressible two-phase averaged system with temperature but without heat conductivity.
\newblock {\em arXiv:2407.16720\/} (2024).

\bibitem{bresch2022mathematical}
{\sc Bresch, D., Burtea, C., and Lagouti\`ere, F.}
\newblock Mathematical justification of a compressible bifluid system with different pressure laws: a continuous approach.
\newblock {\em Appl. Anal. 101}, 12 (2022), 4235--4266.

\bibitem{bresch2023mathematical}
{\sc Bresch, D., Burtea, C., and Lagouti\`ere, F.}
\newblock Mathematical justification of a compressible bi-fluid system with different pressure laws: a semi-discrete approach and numerical illustrations.
\newblock {\em J. Comput. Phys. 490\/} (2023), Paper No. 112259, 39.

\bibitem{BreschDesjardins2007}
{\sc Bresch, D., and Desjardins, B.}
\newblock On the existence of global weak solutions to the {Navier}-{Stokes} equations for viscous compressible and heat conducting fluids.
\newblock {\em J. Math. Pures Appl. (9) 87}, 1 (2007), 57--90.

\bibitem{bresch2018global}
{\sc Bresch, D., and Jabin, P.-E.}
\newblock Global existence of weak solutions for compressible {N}avier-{S}tokes equations: thermodynamically unstable pressure and anisotropic viscous stress tensor.
\newblock {\em Ann. of Math. (2) 188}, 2 (2018), 577--684.

\bibitem{Burtea_pressurerelaxation}
{\sc Burtea, C., Crin-Barat, T., and Tan, J.}
\newblock Pressure-relaxation limit for a one-velocity {B}aer-{N}unziato model to a {K}apila model.
\newblock {\em Math. Models Methods Appl. Sci. 33}, 4 (2023), 687--753.

\bibitem{charve2010global}
{\sc Charve, F., and Danchin, R.}
\newblock A global existence result for the compressible {N}avier-{S}tokes equations in the critical {$L^p$} framework.
\newblock {\em Arch. Ration. Mech. Anal. 198}, 1 (2010), 233--271.

\bibitem{denisova2000problem}
{\sc Denisova, I.~V.}
\newblock The problem of the motion of two compressible fluids separated by a closed free surface.
\newblock {\em Zap. Nauchn. Sem. S.-Peterburg. Otdel. Mat. Inst. Steklov. (POMI) 243\/} (1997), 61--86, 338--339.

\bibitem{DiPernaLions}
{\sc DiPerna, R.~J., and Lions, P.-L.}
\newblock Ordinary differential equations, transport theory and {S}obolev spaces.
\newblock {\em Invent. Math. 98}, 3 (1989), 511--547.

\bibitem{Feiresisl2001}
{\sc Feireisl, E., Novotn{\'y}, A., and Petzeltov{\'a}, H.}
\newblock On the existence of globally defined weak solutions to the {Navier}-{Stokes} equations.
\newblock {\em J. Math. Fluid Mech. 3}, 4 (2001), 358--392.

\bibitem{finn1957asymptotic}
{\sc Finn, R., and Gilbarg, D.}
\newblock Asymptotic behavior and uniquenes of plane subsonic flows.
\newblock {\em Comm. Pure Appl. Math. 10\/} (1957), 23--63.

\bibitem{gancedo2022quantitative}
{\sc Gancedo, F., and Garc\'ia-Ju\'arez, E.}
\newblock Quantitative {H}\"older estimates for even singular integral operators on patches.
\newblock {\em J. Funct. Anal. 283}, 9 (2022), Paper No. 109635, 42.

\bibitem{gancedo2023global}
{\sc Gancedo, F., and Garc\'ia-Ju\'arez, E.}
\newblock Global regularity of 2{D} {N}avier-{S}tokes free boundary with small viscosity contrast.
\newblock {\em Ann. Inst. H. Poincar\'e{} C Anal. Non Lin\'eaire 40}, 6 (2023), 1319--1352.

\bibitem{gancedo20252d}
{\sc Gancedo, F., Garc{\'\i}a-Ju{\'a}rez, E., and Luna-Velasco, P.}
\newblock On 2d navier-stokes free boundary: nonnegative density and small viscosity contrast.
\newblock {\em arXiv preprint arXiv:2507.09333\/} (2025).

\bibitem{AnalysisViscosityGerardMathieu}
{\sc G\'erard-Varet, D., and Hillairet, M.}
\newblock Analysis of the viscosity of dilute suspensions beyond {E}instein's formula.
\newblock {\em Arch. Ration. Mech. Anal. 238}, 3 (2020), 1349--1411.

\bibitem{gerard2022correction}
{\sc G\'erard-Varet, D., and Mecherbet, A.}
\newblock On the correction to {E}instein's formula for the effective viscosity.
\newblock {\em Ann. Inst. H. Poincar\'e{} C Anal. Non Lin\'eaire 39}, 1 (2022), 87--119.

\bibitem{gilbarg1977elliptic}
{\sc Gilbarg, D., and Trudinger, N.~S.}
\newblock {\em Elliptic partial differential equations of second order}, vol.~224 of {\em Grundlehren Math. Wiss.}
\newblock Springer, Cham, 1977.

\bibitem{GrafakosClassical}
{\sc Grafakos, L.}
\newblock {\em Classical {F}ourier analysis}, third~ed., vol.~249 of {\em Graduate Texts in Mathematics}.
\newblock Springer, New York, 2014.

\bibitem{Hoff95a}
{\sc Hoff, D.}
\newblock Global solutions of the {Navier}-{Stokes} equations for multidimensional compressible flow with discontinuous initial data.
\newblock {\em J. Differ. Equations 120}, 1 (1995), 215--254.

\bibitem{hoff2002dynamics}
{\sc Hoff, D.}
\newblock Dynamics of singularity surfaces for compressible, viscous flows in two space dimensions.
\newblock {\em Comm. Pure Appl. Math. 55}, 11 (2002), 1365--1407.

\bibitem{hu2020optimal}
{\sc Hu, X., and Wu, G.}
\newblock Optimal decay rates of isentropic compressible {N}avier-{S}tokes equations with discontinuous initial data.
\newblock {\em J. Differential Equations 269}, 10 (2020), 8132--8172.

\bibitem{ishii2010thermo}
{\sc Ishii, M., and Hibiki, T.}
\newblock {\em Thermo-fluid dynamics of two-phase flow}, second~ed.
\newblock Springer, New York, 2011.
\newblock With a foreword by Lefteri H. Tsoukalas.

\bibitem{jang2016compressible}
{\sc Jang, J., Tice, I., and Wang, Y.}
\newblock The compressible viscous surface-internal wave problem: local well-posedness.
\newblock {\em SIAM J. Math. Anal. 48}, 4 (2016), 2602--2673.

\bibitem{jang2016compressible-taylor}
{\sc Jang, J., Tice, I., and Wang, Y.}
\newblock The compressible viscous surface-internal wave problem: nonlinear {R}ayleigh-{T}aylor instability.
\newblock {\em Arch. Ration. Mech. Anal. 221}, 1 (2016), 215--272.

\bibitem{jang2016compressible-stability}
{\sc Jang, J., Tice, I., and Wang, Y.}
\newblock The compressible viscous surface-internal wave problem: stability and vanishing surface tension limit.
\newblock {\em Comm. Math. Phys. 343}, 3 (2016), 1039--1113.

\bibitem{kiselev2016finite}
{\sc Kiselev, A., Ryzhik, L., Yao, Y., and Zlato\v~s, A.}
\newblock Finite time singularity for the modified {SQG} patch equation.
\newblock {\em Ann. of Math. 184}, 3 (2016), 909--948.

\bibitem{kracmar2022weak}
{\sc Kracmar, S., Kwon, Y.-S., Ne{\v{c}}asov{\'a}, {\v{S}}., and Novotny, A.}
\newblock Weak solutions for a bifluid model for a mixture of two compressible noninteracting fluids with general boundary data.
\newblock {\em SIAM Journal on Mathematical Analysis 54}, 1 (2022), 818--871.

\bibitem{kubo2016some}
{\sc Kubo, T., Shibata, Y., and Soga, K.}
\newblock On some two phase problem for compressible and compressible viscous fluid flow separated by sharp interface.
\newblock {\em Discrete \& Continuous Dynamical Systems 36}, 7 (2016), 3741--3774.

\bibitem{LiLargetime}
{\sc Li, H.-L., and Zhang, T.}
\newblock Large time behavior of isentropic compressible {N}avier-{S}tokes system in {$\mathbb{R}^3$}.
\newblock {\em Math. Methods Appl. Sci. 34}, 6 (2011), 670--682.

\bibitem{liao2024globalInc}
{\sc Liao, X., and Zimmermann, R.}
\newblock Global-in-time well-posedness for the two-dimensional incompressible {Navier}-{Stokes} equations with freely transported viscosity coefficient.
\newblock {\em Preprint, {arXiv}:2409.06517 [math.{AP}] (2024)\/} (2024).

\bibitem{LionsCompressible}
{\sc Lions, P.-L.}
\newblock {\em Mathematical topics in fluid mechanics. {Vol}. 2: {Compressible} models}, vol.~10 of {\em Oxf. Lect. Ser. Math. Appl.}
\newblock Oxford: Clarendon Press, 1998.

\bibitem{mateu2009extra}
{\sc Mateu, J., Orobitg, J., and Verdera, J.}
\newblock Extra cancellation of even {C}alder\'on-{Z}ygmund operators and quasiconformal mappings.
\newblock {\em J. Math. Pures Appl. (9) 91}, 4 (2009), 402--431.

\bibitem{novotny2020weak}
{\sc Novotn\'y, A.}
\newblock Weak solutions for a bi-fluid model for a mixture of two compressible non interacting fluids.
\newblock {\em Sci. China Math. 63}, 12 (2020), 2399--2414.

\bibitem{novotny2020weak1}
{\sc Novotn\'y, A., and Pokorn\'y, M.}
\newblock Weak solutions for some compressible multicomponent fluid models.
\newblock {\em Arch. Ration. Mech. Anal. 235}, 1 (2020), 355--403.

\bibitem{tani1984two}
{\sc Tani, A.}
\newblock Two-phase free boundary problem for compressible viscous fluid motion.
\newblock {\em J. Math. Kyoto Univ. 24}, 2 (1984), 243--267.

\bibitem{vasseur2019global}
{\sc Vasseur, A., Wen, H., and Yu, C.}
\newblock Global weak solution to the viscous two-fluid model with finite energy.
\newblock {\em J. Math. Pures Appl. (9) 125\/} (2019), 247--282.

\bibitem{WangChanxinZhangglobalregultwophase}
{\sc Wang, W., Xie, C., and Zhang, Z.}
\newblock Global solutions of a dissipative {B}aer-{N}unziato type system for a mixture of two compressible heat conducting gases.
\newblock {\em J. Differential Equations 410\/} (2024), 666--715.

\bibitem{wen2021global}
{\sc Wen, H.}
\newblock On global solutions to a viscous compressible two-fluid model with unconstrained transition to single-phase flow in three dimensions.
\newblock {\em Calc. Var. Partial Differential Equations 60}, 4 (2021), Paper No. 158, 38.

\bibitem{zodji2023discontinuous}
{\sc Zodji, S.~M.}
\newblock Discontinuous solutions for the {N}avier–{S}tokes equations with density-dependent viscosity.
\newblock {\em Ann. Inst. H. Poincar{\'e} C Anal. Non Linéaire.\/} (2024).
\newblock Published online first.

\bibitem{zodji2023well}
{\sc Zodji, S.~M.}
\newblock A well-posedness result for the compressible two-fluid model with density-dependent viscosity.
\newblock {\em J. Math. Fluid Mech. 27}, 3 (2025), Paper No. 50, 52.

\end{thebibliography}
\end{document}